\documentclass{amsart}
\usepackage{amsmath,amsthm,amssymb,amsfonts, amscd}
\usepackage{tikz}
\usepackage{dsfont}
\usetikzlibrary{matrix}
\usepackage{mathdots}

\usepackage{amssymb,latexsym}
\usepackage{color}

\usepackage{amsthm}
\usepackage{eucal}

\usepackage[all,cmtip]{xy}

%
%

%
%
%
%
%

%
\newcommand{\wwedge}[1]{\sideset{}{^{#1}}\bigwedge}
\newcommand{\bfv}{\mathbf{v}}
%
%
%
\usepackage{float}
\newcommand{\biddots}{
\begin{tikzpicture}
\filldraw [black] (0,0) circle (1.8pt);
\filldraw [black] (.2,.2) circle (1.8pt);
\filldraw [black] (.4,.4) circle (1.8pt);
\end{tikzpicture}
}
\newcommand{\bddots}{
\begin{tikzpicture}
\filldraw [black] (0,.4) circle (1.8pt);
\filldraw [black] (.2,.2) circle (1.8pt);
\filldraw [black] (.4,0) circle (1.8pt);
\end{tikzpicture}
}
 
 \DeclareFontFamily{OT1}{rsfs}{}
 
\DeclareFontShape{OT1}{rsfs}{n}{it}{<-> rsfs10}{}
\DeclareMathAlphabet{\mathscr}{OT1}{rsfs}{n}{it}

\newcommand{\C}{\mathbb{C}}

\newcommand{\Z}{\mathbb{Z}}

\DeclareFontFamily{OT1}{rsfs}{}

\DeclareFontShape{OT1}{rsfs}{n}{it}{<-> rsfs10}{}
\DeclareMathAlphabet{\mathscr}{OT1}{rsfs}{n}{it}

\newcommand{\Hom}{\mathrm{Hom}}

\newcommand{\R}{\mathbb{R}}
\newcommand{\SO}{\mathrm{SO}}
\newcommand{\Sp}{\mathrm{Sp}}

\newcommand{\OO}{\mathrm{O}}

\swapnumbers
\newtheorem{thm}[subsection]{Theorem}  
\newtheorem{lem}[subsection]{Lemma}         
\newtheorem*{lem*}{Lemma}         
\newtheorem{prop}[subsection]{Proposition}
\newtheorem*{prop*}{Proposition}
\newtheorem{rmk}[subsection]{Remark}

\newtheorem{cor}[subsection]{Corollary}

\theoremstyle{definition}
\newtheorem{defn}[subsection]{Definition}

\numberwithin{equation}{subsection}

\newcommand{\cal}{\mathcal}
\newcommand{\GL}{\mathrm{GL}}

\newcommand{\SU}{\mathrm{SU}}

\newcommand{\vol}{\mathrm{vol}}

\setcounter{tocdepth}{1} 
\begin{document}


\begin{abstract}
 In Part 1 of  this paper  we construct  a spectral sequence converging to the relative Lie algebra cohomology associated to the action of any subgroup $G$ of the symplectic group  on the polynomial Fock model of the Weil representation, see  \S \ref{spectralsection}. These relative Lie algebra cohomology groups are of interest because they map to the cohomology of suitable arithmetic quotients of the symmetric space $G/K$ of $G$.   We  apply this spectral sequence to the case $G = \SO_0(n,1)$ in Sections \ref{kgeqnsection}, \ref{computationofcplus}, and \ref{computationofcminus} to  compute the relative Lie algebra cohomology groups $H^{\bullet} \big(\mathfrak{so}(n,1), \SO(n); \mathcal{P}(V^k) \big)$. Here  $V = \R^{n,1}$ is  Minkowski space and $\mathcal{P}(V^k)$ is the subspace of $L^2(V^k)$ consisting of all products of polynomials with the Gaussian.  In Part 2 of this paper  we compute the cohomology groups  $H^{\bullet}\big(\mathfrak{so}(n,1), \SO(n); L^2(V^k) \big)$ using spectral theory and representation theory, especially \cite{Li}. In Part 3 of this paper we compute  the maps between the polynomial Fock and $L^2$ cohomology groups  induced by the inclusions $\mathcal{P}(V^k) \subset L^2(V^k)$. 
\end{abstract}

\title[The relative Lie algebra cohomology for $\SO(n,1)$]{The relative Lie algebra cohomology of the Weil Representation of $\SO(n,1)$}
\author{Nicolas Bergeron, John J. Millson*, Jacob Ralston*}
\date{\today}
\thanks{* Partially supported by NSF grant DMS-1206999}

\maketitle

\section{Introduction}
\bigskip
We let $(V, (,))$ be Minkowski space $\R^{n,1}$ and $e_1, e_2, \ldots, e_{n+1}$ be the standard basis.  Let $V_+$ be the span of $e_1, \ldots, e_n$.  We will consider the connected real Lie group $G = \SO_0(n,1)$ with Lie algebra $\mathfrak{so}(n,1)$ and maximal compact subgroup $K=\SO(n)$ with Lie algebra $\mathfrak{so}(n)$, the subgroup of $G$ that fixes the last basis vector $e_{n+1}$.  Let $\mathcal{P}(V^k)$ be the space of all products of complex-valued polynomials with the Gaussian $\varphi_{0,k}$, see Equation \eqref{gaussian}.  Let $\mathcal{S}_k$ be the $\OO(n)$-invariant complex-valued polynomials on $V^k$ and $\mathcal{R}_k \subset \mathcal{S}_k$ be the $\OO(n)$-invariant complex-valued polynomials on $V_+^k$, see Section \ref{notation}.  We will consider the Weil representation with values in $\mathcal{P}(V^k)$ and $L^2(V^k)$.

We first summarize our results for the cohomology with values in $\mathcal{P}(V^k)$.  In the following theorem, let $\varphi_k$ be the cocycle constructed in the work of Kudla and Millson, \cite{KM2}, see Section \ref{notation}, Equation \eqref{vaprhikdef}.  In what follows, $c_1, \ldots, c_k$ are the cubic polynomials on $V^k$ defined in Equation \eqref{cubic}, $q_1, \ldots, q_n$ are the quadratic polynomials on $V^k$ defined in Equation \eqref{qalphadef}, and $\vol$ is defined in Equation \eqref{voldefn}.

\begin{thm}\label{main}
\hfill

\begin{enumerate}
\item  If $k < n$ then
\begin{equation*}
H^\ell \big(\mathfrak{so}(n,1) ,\SO(n); \mathcal{P}(V^k)\big) = \begin{cases}
\mathcal{R}_k \varphi_k &\text{ if } \ell=k \\
\mathcal{S}_k/(c_1, \ldots, c_k) \varphi_{0,k}\vol &\text{ if } \ell=n \\
0 &\text{ otherwise }
\end{cases}
\end{equation*}
\item If $k = n$ then
\begin{equation*}
H^\ell \big(\mathfrak{so}(n,1) ,\SO(n); \mathcal{P}(V^k)\big) = \begin{cases}
\mathcal{R}_k \varphi_k \oplus \mathcal{S}_k/(c_1, \ldots, c_k) \varphi_{0,k}\vol &\text{ if } \ell=n \\
0 &\text{ otherwise }
\end{cases}
\end{equation*}
\item If $k>n$
\begin{equation*}
H^\ell \big(\mathfrak{so}(n,1) ,\SO(n); \mathcal{P}(V^k)\big) = \begin{cases}
\big( \mathcal{P}_k/(q_1, \ldots, q_n) \big)^K \varphi_{0,k}\vol &\text{ if } \ell=n \\
0 &\text{ otherwise }
\end{cases}
\end{equation*}
\end{enumerate}
\end{thm}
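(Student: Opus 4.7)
The plan is to exploit the spectral sequence of Section \ref{spectralsection}. The relative Lie algebra cohomology is the cohomology of the Chevalley--Eilenberg complex $C^\bullet = (\bigwedge^\bullet \mathfrak{p}^* \otimes \mathcal{P}(V^k))^K$, whose top degree is $n$ since $\dim \mathfrak{p} = n$. I would first reduce to a manageable $E_1$-page by taking $K$-invariants: since $K = \SO(n)$ acts on $V = V_+ \oplus \R e_{n+1}$ fixing $e_{n+1}$, the First Fundamental Theorem for $\OO(n)$ expresses $K$-invariants in $\mathcal{P}(V^k)$ in terms of inner products among the $V_+$-components of the vectors $v_j$ (yielding invariants in $\mathcal{R}_k$) together with polynomials in the $e_{n+1}$-coordinates.

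Next, I would identify the differentials on the $E_1$-page as Koszul-type maps. The cubic polynomials $c_1,\ldots,c_k$ and the quadratic polynomials $q_1,\ldots,q_n$ should arise as the images of natural contraction or multiplication operators built from the $\mathfrak{p}^*$-factors interacting with the two components of $V$. When $k \le n$, the cubics $c_j$ are expected to form a regular sequence of length $k$ in $\mathcal{S}_k$; the resulting Koszul cohomology concentrates the top-degree contribution in $\mathcal{S}_k/(c_1,\ldots,c_k) \varphi_{0,k} \vol$. When $k > n$ there are too few cubics, and the count shifts so that the $n$ quadratics $q_\alpha$ become the binding relations, giving $(\mathcal{P}_k/(q_1,\ldots,q_n))^K \varphi_{0,k} \vol$. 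Independently, the Kudla--Millson cocycle $\varphi_k \in \bigwedge^k \mathfrak{p}^* \otimes \mathcal{P}(V^k)$ is already known to be closed, and I would verify that multiplying it by elements of $\mathcal{R}_k$ (which is natural since $\varphi_k$ is built only from $V_+$-data) produces cocycles that survive to $E_\infty$, accounting for the summand $\mathcal{R}_k \varphi_k$ in degree $k$ when $k \le n$.

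The main obstacle will be establishing the degeneration of the spectral sequence and the vanishing of intermediate cohomology. The boundary case $k = n$ is delicate: both families of classes coexist in the top degree, and one must verify that they remain linearly independent rather than being conflated by a higher differential. The case $k > n$ also requires a distinct argument, since the regular-sequence property of the cubics fails and a separate Koszul computation governed by $\dim V_+ = n$ --- using the quadratic relations $q_\alpha$ --- must be carried out in its place.
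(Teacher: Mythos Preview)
Your outline captures the broad strategy—spectral sequence plus Koszul-type computation—but it is missing the key structural device that makes the argument work, and as written it cannot yield the full result.

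The paper does not analyze the whole complex $C$ at once. It first splits $C = C_+ \oplus C_-$ as the $\pm 1$ eigenspaces of the involution $\iota\otimes\iota$, where $\iota\in\OO(n)$ negates $e_1$. This is what separates the degree-$k$ and degree-$n$ contributions: the Hodge star intertwines $C_+^\ell$ with $C_-^{n-\ell}$, so computing $C_+$ suffices. One then needs the explicit $\mathcal{S}_k$-module structure of $C_+^\ell$, and this is where nontrivial representation theory enters (Howe's description of $\wwedge{\ell}(V_+)$-isotypic components in the harmonics, the explicit intertwiner $\Psi$, and the resulting basis $\{\varPhi_J\}$). With this in hand, the associated graded of $C_+$ is the Koszul complex on the \emph{linear} elements $w_1,\ldots,w_k$—not the cubics—whose top cohomology is $\mathcal{S}_k/(w_1,\ldots,w_k)\cong\mathcal{R}_k$, yielding $H^k(C_+)=\mathcal{R}_k\varphi_k$. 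The associated graded of $C_-$ is a \emph{separate} Koszul complex, this time on the cubics $c_1,\ldots,c_k$, concentrated in degree $n$. Your proposal conflates these: you attribute the degree-$n$ part to the cubic Koszul complex (correct) but then treat $\mathcal{R}_k\varphi_k$ as something to be verified ``independently'' by checking that those classes survive. That only gives a lower bound on $H^k$; you have no mechanism to show $H^k$ is no larger, and without the $C_\pm$ splitting and the module computation for $C_+$ you will not get one.

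Two further points. First, the degeneration you flag as the main obstacle is not hard once the associated graded cohomology is concentrated in a single degree for each of $C_+$ and $C_-$ separately: Propositions \ref{grCzeroimpliesCzero} and \ref{generalspectral} then force $H^\bullet(C_\pm)\cong H^\bullet(\mathrm{gr}\,C_\pm)$ immediately. Second, for $k\geq n$ the paper does \emph{not} use the $C_\pm$ splitting; instead it works with the full (pre-invariant) complex $A=\wwedge{\bullet}\mathfrak{p}^*\otimes\mathcal{P}_k$, recognizes its associated graded as the Koszul complex on the quadratics $q_1,\ldots,q_n$, proves these form a regular sequence, and then takes $K$-invariants at the end. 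Your remark that ``the regular-sequence property of the cubics fails'' for $k>n$ is in the right spirit, but the substitute argument is not a different count with the same machinery—it is a genuinely different computation on a different complex.
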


The cohomology groups $H^\ell \big(\mathfrak{so}(n,1) ,\SO(n); \mathcal{P}(V^k)\big)$ are $\mathfrak{sp}(2k, \R)$-modules.  We now describe these modules.  If $k < \frac{n+1}{2}$, then as an $\mathfrak{sp}(2k, \R)$-module $\mathcal{R}_k \varphi_k$ is isomorphic to the space of $\mathrm{MU}(k)$-finite vectors in the holomorphic discrete series representation with parameter $(\frac{n+1}{2}, \cdots,\frac{n+1}{2}) $. If $k < n$, then the cohomology group $H^k\big(\mathfrak{so}(n,1) ,\SO(n); \mathcal{P}(V^k)\big)$ is an irreducible holomorphic representation because it was proved in \cite{KM2} that the class of $\varphi_k$ is a lowest weight vector in $H^k\big(\mathfrak{so}(n,1) ,\SO(n); \mathcal{P}(V^k)\big)$.  On the other hand, the cohomology group $H^n\big(\mathfrak{so}(n,1) ,\SO(n); \mathcal{P}(V^k)\big)$ is never irreducible.  Indeed, if $k < n$ then \\
$H^n\big(\mathfrak{so}(n,1) ,\SO(n); \mathcal{P}(V^k)\big)$ is the direct sum of two nonzero $\mathfrak{sp}(2k,\R)$-modules $H_+^n$ and $H_-^n$ and if $k = n$, then $H^n\big(\mathfrak{so}(n,1) ,\SO(n); \mathcal{P}(V^k)\big)$ is the direct sum of three nonzero $\mathfrak{sp}(2k,\R)$-modules $H_+^n$, $H_-^n$, and $\mathcal{R}_k(V) \varphi_k.$  In the case $k=1$, the authors, with Jens Funke, have shown that $H_+^n$ and $H_-^n$ are each irreducible.

We summarize this theorem in the following chart.  The symbol $\bullet$ means the corresponding group is non-zero.


\begin{figure} [H]
\begin{tikzpicture}
  \matrix (m) [matrix of math nodes,
    nodes in empty cells,nodes={minimum width=3ex,
    minimum height=2ex,outer sep=-5pt},
    column sep=1ex,row sep=1ex]{
    &&&&&&&&& \\
          n   &  &  \bullet & \bullet &  \bullet \bullet \bullet & \bullet & \bullet & \bullet & \bullet & \bullet \bullet \bullet & \\
          n-1 &    &  0 & 0 &  \cdots & 0 & \bullet &0 & 0 & 0 & \\
          n-2 &    &  0 & 0 &  \cdots  & \bullet &0 &0 & 0 & 0 & \\
          \vdots  &   &  0 & 0 &  \biddots &0 & 0 &0 & \vdots & \vdots & \\
          2   &  &  0 & \bullet &  \cdots &0 & 0 &0 & 0 & 0 & \\
          1  &   &  \bullet & 0 &  \cdots &0 & 0 &0 & 0 & 0 & \\
          \ell =0   &  &  0  & 0 &  \cdots &0 & 0 &0 & 0 & 0 & \\
    \quad\strut &  & k=1  &  2  &  \cdots &n-2 & n-1 &  n  & n+1 & \cdots & \strut \\};
\draw[thick] (m-1-2.east) -- (m-9-2.east) ;
\draw[thick] (m-9-1.north) -- (m-9-11.north) ;
\end{tikzpicture}
$$H^\ell(\mathfrak{so}(n,1), \SO(n); \mathcal{P}(V^k))$$
\end{figure}

We now summarize our results for the cohomology with values in $L^2(V^k)$.  In what follows, $\overline{H}^\ell$ will denote the reduced cohomology in degree $\ell$, that is, the space of $\ell$-cocycles divided by the closure of the space of the exact $\ell$-cocycles.  We will say that the cohomology group $H^\ell$ is reduced if it is equal to $\overline{H}^\ell$.  Note that these cohomology groups are modules for the action of $\mathrm{Mp}(2k, \R)$ and hence for its Lie algebra $\mathfrak{sp}(2k, \R)$.  We will put $K^\prime = \mathrm{MU}(k)$ and if $W$ is a $K^\prime$ module, then $W^{(K^\prime)}$ will denote the submodule of $K^\prime$-finite vectors.  We let $\phi_k$ be the form $(\varphi_{0,\mathbf{x}})_z$ of \cite{KM1}, page 230, the harmonic projection of $\varphi_k$.

To simplify notation in the next theorem and for the rest of this paper, we note that since we will be concerned only with computing the relative Lie algebra cohomology with $\mathfrak{g} = \mathfrak{so}(n,1)$ and $K = \SO(n)$, we will often write $H^\bullet(\mathcal{P}(V^k))$ and $C^\bullet(\mathcal{P}(V^k))$ in place of $H^\bullet(\mathfrak{so}(n,1) ,\SO(n); \mathcal{P}(V^k) )$ and $C^\bullet(\mathfrak{so}(n,1) ,\SO(n); \mathcal{P}(V^k) )$ and similarly for other $(\mathfrak{g}, K)$-modules $\mathcal{V}$.

\begin{thm} \label{L^2mainthm}
\hfill

\begin{enumerate} 
\item Assume that $k >n/2$ and $\ell \notin [\frac{n-1}{2}, \frac{n+1}{2}]$. Then we have:
$$H^\ell (L^2 (V^k)) = \overline{H}^\ell (L^2 (V^k)) = 0.$$

\item Suppose now $k\leq \frac{n}{2}$.  Then for $\ell \notin [\frac{n-1}{2} , \frac{n+1}{2}]$ we have
\begin{enumerate}
\item $H^{\ell} (L^2 (V^k))$ is reduced
\item $H^{\ell} (L^2 (V^k))$ is non-zero if and only if $\ell = k$ or $\ell = n-k$.
\end{enumerate}
Moreover, for $k < \frac{n}{2}$, the cohomology group $H^k (L^2 (V^k))$ is the discrete series representation of $\mathrm{Mp}(2k, \R)$ with parameter $(\frac{n+1}{2}, \ldots, \frac{n+1}{2})$ and hence
$$H^k (L^2 (V^k))^{(K^\prime)} = \mathcal{R}_k(V) \phi_k.$$ 

\item Suppose $n = 2m$.  Then $H^m (L^2 (V^k))$ is reduced and
$H^m (L^2 (V^k))$ is non-zero if and only if $k \geq m$.  Moreover, the cohomology group $H^m (L^2 (V^m))$ is the direct sum of two copies of the discrete series representations of $\mathrm{Mp}_{2m}(\R)$ with parameter $(\frac{n+1}{2}, \ldots, \frac{n+1}{2})$ and
$$H^m (L^2 (V^m))^{(K^\prime)} = \mathcal{R}_m(V) \phi_m \oplus \mathcal{R}_m(V) *\phi_m.$$

\item Suppose $n = 2m+1$.  Then $H^m(L^2(V^k))$ is reduced and $\overline{H}^m(L^2(V^k))$ is non-zero if and only if $k=m$.  Furthermore, $H^{m+1} (L^2(V^k))$ is reduced if and only if $k \leq m$ and
\begin{enumerate}
\item $H^{m+1} (L^2(V^k)) \neq 0$ if and only if $k \geq m$
\item $\overline{H}^{m+1} (L^2(V^k)) \neq 0$ if and only if $k = m.$
\end{enumerate}
Moreover, $H^m (L^2 (V^m))$ is the discrete series representation of $\mathrm{Mp}_{2m}(\R)$ with parameter $(\frac{n+1}{2}, \ldots, \frac{n+1}{2})$ and
$$H^m (L^2 (V^m))^{(K^\prime)} = \mathcal{R}_m(V) \phi_m.$$
\end{enumerate}
\end{thm}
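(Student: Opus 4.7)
The plan is to decompose $L^2(V^k)$ as a unitary $(\mathfrak{g}, K)$-module for $(\mathfrak{g}, K) = (\mathfrak{so}(n,1), \SO(n))$, compute the $(\mathfrak{g}, K)$-cohomology of each irreducible constituent using Vogan--Zuckerman, and then assemble the contributions while separately tracking reduced and unreduced cohomology. The crucial input is Li's work \cite{Li} on the Howe correspondence for the dual pair $(\Mp(2k, \R), \OO(n,1))$ acting on $L^2(V^k)$ via the Weil representation, which provides an explicit description of the $(\mathfrak{g}, K)$-spectrum of $L^2(V^k)$ together with the $\Mp(2k, \R)$-action on each isotypic piece.

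First I would use Li's results to write $L^2(V^k)$ as a direct integral of irreducible unitary representations of $\SO_0(n,1)$. The discrete part of the $\SO_0(n,1)$-spectrum is paired under theta correspondence with the discrete series of $\Mp(2k, \R)$ of parameter $(\frac{n+1}{2}, \ldots, \frac{n+1}{2})$, and the corresponding $\mathfrak{so}(n,1)$-modules are cohomologically induced modules $A_{\mathfrak{q}}(\lambda)$ for $\theta$-stable parabolics $\mathfrak{q}$ whose Levi factor has compact part $\mathfrak{u}(k)$ and noncompact part $\mathfrak{so}(n-2k,1)$, provided $2k \leq n$. Next I would invoke the Vogan--Zuckerman classification: for $\SO_0(n,1)$, the only unitary representations with non-zero $(\mathfrak{g}, K)$-cohomology are the trivial module, the discrete series (existing only when $n$ is even and contributing in degree $n/2$), and the $A_{\mathfrak{q}}(\lambda)$ just described, whose cohomology is concentrated in degrees $k$ and $n-k$. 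Pairing this list against Li's description produces the vanishing statements and identifies the non-vanishing cohomology with the promised Howe lifts; the explicit $K'$-finite description $\mathcal{R}_k(V) \phi_k$, respectively $\mathcal{R}_k(V) \phi_k \oplus \mathcal{R}_k(V) * \phi_k$, in parts (2)--(4) then follows from the lowest-weight construction of $\phi_k$ in \cite{KM1} and the seesaw relating the $\Mp(2k, \R)$-action on cohomology to the Howe lift of $A_{\mathfrak{q}}(\lambda)$.

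The reducedness statements require a separate analysis of whether the coboundary operators have closed image. Tempered principal series contribute zero $(\mathfrak{g}, K)$-cohomology by a standard infinitesimal-character argument, so reduced and unreduced cohomology agree away from the endpoints of the continuous spectrum. The only possible source of a discrepancy is a boundary point where a limit of complementary series produces a non-tempered unitary representation whose would-be cohomology class is captured only as a limit of $L^2$-coboundaries. This isolates the single failure of reducedness in case (4) when $n = 2m+1$ and $k = m$ in degree $m+1$. The main obstacle will be the delicate spectral analysis needed to separate reduced from unreduced cohomology in this boundary case: one must show that the differential has dense but not closed image into the space of $(m+1)$-cocycles, which amounts to a careful spectral gap argument for the Hodge--Kuga Laplacian on each $K'$-isotypic component together with explicit control of the Howe lift as a function of the complementary series parameter near the endpoint. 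Elsewhere the argument reduces to the Vogan--Zuckerman machinery combined with Li's explicit description of the discrete spectrum.
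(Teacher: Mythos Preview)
Your strategy differs substantially from the paper's. The paper does not decompose $L^2(V^k)$ as a direct integral of irreducible $(\mathfrak{g},K)$-modules and then apply Vogan--Zuckerman termwise. Instead it works geometrically: it splits $V^k$ (up to measure zero) into the two $G$-invariant cones $\Omega_{k,0}$ and $\Omega_{k-1,1}$, and for each cone transfers the problem, orbit by orbit, to the Hodge Laplacian on $L^2$-forms on hyperbolic space $D$ (for $\Omega_{k-1,1}$) or on a tube $\Gamma_{\mathbf{x}}\backslash D$ (for $\Omega_{k,0}$). Vanishing and reducedness are then deduced from uniform spectral-gap estimates for this Laplacian, taken from Pedon and Mazzeo--Phillips; non-vanishing comes from the Thom isomorphism on the tubes and the explicit Kudla--Millson harmonic form. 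Li's theta correspondence enters only at the end, to identify the $\Mp(2k,\R)$-module structure of the already-computed cohomology, not to compute the cohomology itself.

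Your approach has a genuine gap. The step ``tempered principal series contribute zero $(\mathfrak{g},K)$-cohomology, so reduced and unreduced agree away from the continuous-spectrum endpoints'' is false for direct integrals: an individual principal series $\mathcal{H}_{\sigma,\lambda}$ has vanishing $(\mathfrak{g},K)$-cohomology, but Borel's computation shows that $H^{m+1}\bigl(\mathfrak{g},K;\int_{\mathfrak{a}^*}^{\oplus}\mathcal{H}_{\sigma_m,\lambda}\,d\nu\bigr)$ is infinite-dimensional. This continuous-spectrum contribution, not a complementary-series endpoint, is what produces the non-reduced cohomology in part~(4), and it occurs for $k>m$, not for $k=m$ as you write (the theorem says $H^{m+1}$ is reduced if and only if $k\le m$; at $k=m$ the paper proves reducedness by showing the relevant cohomological representation is isolated in the image of the theta correspondence). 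To make your strategy work you would need a separate argument controlling $H^{\bullet}$ of the continuous part of the Plancherel decomposition, which is precisely the analytic work the paper's spectral-gap/transfer method is designed to handle.
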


\newpage

We summarize Theorem \ref{L^2mainthm} in the following three charts.  The symbol $\bullet$ means the corresponding group is non-zero.

\begin{figure} [H]
\begin{tikzpicture} 
  \matrix (m) [matrix of math nodes,
    nodes in empty cells,nodes={minimum width=3ex,
    minimum height=1ex,outer sep=-5pt},
    column sep=1ex,row sep=1ex]{
          n &    &  0  & 0 &  \cdots  & 0 &0 & 0 & \cdots & \\
          n-1 &    &  \bullet & 0 &  \cdots  & 0 &0 & 0 & \cdots & \\
          n-2  &   &  0 & \bullet &  \cdots  & 0 &0 & 0 & \cdots & \\
          \vdots    & &  0 & 0 &  \bddots  & 0 &0 & 0 & \cdots & \\
          m+1    & &  0 & 0 &  \cdots  & \bullet & 0 & 0 & \cdots & \\
          m     & &  0 & 0 &  \cdots  & 0 & \bullet & \bullet & \bullet \bullet \bullet & \\
          m-1    & &  0 & 0 &  \cdots  & \bullet &0 & 0 & \cdots & \\
          \vdots    & &  0 & 0 &  \biddots  & 0 &0 & 0 & \cdots & \\
          2     & &  0 & \bullet &  \cdots  & 0 &0 & 0 & \cdots & \\
          1     & &  \bullet & 0 &  \cdots  & 0 &0 & 0 & \cdots & \\
          \ell = 0     & &  0  & 0 &  \cdots  & 0 &0 & 0 & \cdots & \\
    \quad\strut & &  k= 1  &  2  &  \cdots  & m-1 &  m & m+1 & \cdots & & \strut \\};
\draw[thick] (m-1-2.east) -- (m-12-2.east) ;
\draw[thick] (m-12-1.north) -- (m-12-11.north) ;
\end{tikzpicture}

$H^\ell(\mathfrak{so}(2m,1), \SO(2m); L^2(V^k))$
\end{figure}

\begin{figure} [H]
\begin{tikzpicture}
  \matrix (m) [matrix of math nodes,
    nodes in empty cells,nodes={minimum width=3ex,
    minimum height=1ex,outer sep=-5pt},
    column sep=1ex,row sep=1ex]{
          n   &  &  0  & 0 &  \cdots & 0 & 0  & 0 & \cdots & \\
          n-1  &   &  \bullet & 0 &  \cdots & 0 & 0  & 0 & \cdots & \\
          n-2  &   &  0 & \bullet &  \cdots & 0 & 0  & 0 & \cdots & \\
          \vdots  &   &  0 & 0 &  \bddots & 0 & 0 & 0 & \cdots & \\
          m+ 2 &    &  0 & 0 &  \cdots  & \bullet & 0 & 0 &  \cdots & \\
          m+1 &    &  0 & 0 &  \cdots  & 0 & \bullet  & \bullet & \bullet \bullet \bullet & \\
          m  &   &  0 & 0 &  \cdots  & 0 & \bullet & 0 &  \cdots & \\
          m-1 &    &  0 & 0 &  \cdots  & \bullet & 0 & 0 &  \cdots & \\
          \vdots  &   &  0 & 0 &  \biddots & 0 & 0 & 0 & \cdots & \\
          2  &   &  0 & \bullet &  \cdots  & 0& 0 & 0 & \cdots & \\
          1  &   &  \bullet & 0 &  \cdots  & 0& 0  & 0 & \cdots & \\
         \ell = 0  &   &  0  & 0 &  \cdots  & 0 & 0 & 0 & \cdots & \\
    \quad\strut & &  k= 1  &  2  &  \cdots  & m-1 &  m & m+1 & \cdots &  & \strut \\};
\draw[thick] (m-1-2.east) -- (m-13-2.east) ;
\draw[thick] (m-13-1.north) -- (m-13-11.north) ;
\end{tikzpicture}

$H^\ell(\mathfrak{so}(2m+1,1), \SO(2m+1); L^2(V^k))$
\end{figure}

\begin{figure} [H]
\begin{tikzpicture}
  \matrix (m) [matrix of math nodes,
    nodes in empty cells,nodes={minimum width=3ex,
    minimum height=1ex,outer sep=-5pt},
    column sep=1ex,row sep=1ex]{
          n   &  &  0  & 0 &  \cdots & 0 & 0  & 0 & \cdots & \\
          n-1  &   &  \bullet & 0 &  \cdots & 0 & 0  & 0 & \cdots & \\
          n-2  &   &  0 & \bullet &  \cdots & 0 & 0  & 0 & \cdots & \\
          \vdots  &   &  0 & 0 &  \bddots & 0 & 0 & 0 & \cdots & \\
          m+ 2 &    &  0 & 0 &  \cdots  & \bullet & 0 & 0 &  \cdots & \\
          m+1 &    &  0 & 0 &  \cdots  & 0 & \bullet  & 0 & \cdots & \\
          m  &   &  0 & 0 &  \cdots  & 0 & \bullet & 0 &  \cdots & \\
          m-1 &    &  0 & 0 &  \cdots  & \bullet & 0 & 0 &  \cdots & \\
          \vdots  &   &  0 & 0 &  \biddots & 0 & 0 & 0 & \cdots & \\
          2  &   &  0 & \bullet &  \cdots  & 0& 0 & 0 & \cdots & \\
          1  &   &  \bullet & 0 &  \cdots  & 0& 0  & 0 & \cdots & \\
         \ell = 0  &   &  0  & 0 &  \cdots  & 0 & 0 & 0 & \cdots & \\
    \quad\strut & &  k= 1  &  2  &  \cdots  & m-1 &  m & m+1 & \cdots &  & \strut \\};
\draw[thick] (m-1-2.east) -- (m-13-2.east) ;
\draw[thick] (m-13-1.north) -- (m-13-11.north) ;
\end{tikzpicture}

$\overline{H}^\ell(\mathfrak{so}(2m+1,1), \SO(2m+1); L^2(V^k))$
\end{figure}


We now describe the map on cohomology induced by the inclusion $\mathcal{P}(V^k)$ into $L^2(V^k)$.

\begin{thm} \label{part3theorem}
\hfill
\begin{enumerate}
\item Suppose $k < \frac{n}{2}$ then the map from $H^k( \mathcal{P}(V^k))$ to $H^k( L^2(V^k))$ is an isomorphism onto the $K^\prime$-finite vectors.  The lowest weight vector for the $\mathfrak{sp}(2k)$-module $H^k(L^2(V^k))^{(K^\prime)}$ is the image of $\varphi_k$, namely $\phi_k$.

\item Suppose $k = \frac{n}{2}$, then the map from $H^k( \mathcal{P}(V^k))$ to $H^k( L^2(V^k))$ is an isomorphism onto the $K^\prime$-finite vectors $\mathcal{R}_m(V) \phi_m$, the first summand in Theorem \ref{L^2mainthm} (3).  The $K^\prime$-finite vectors in the cokernel are $\mathcal{R}_m(V) *\phi_m$, the second summand in Theorem \ref{L^2mainthm}

\item If $k > \frac{n}{2}$ or $\ell = n$ then the map $H^\ell( \mathcal{P}(V^k))$ to $H^\ell( L^2(V^k))$ is the zero map.
\end{enumerate}
\end{thm}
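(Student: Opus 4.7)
The plan is to leverage the fact that the inclusion $\mathcal{P}(V^k) \hookrightarrow L^2(V^k)$ is simultaneously $(\mathfrak{g},K)$- and $\mathrm{Mp}(2k,\R)$-equivariant, so the induced map on cohomology is an $\mathfrak{sp}(2k,\R)$-intertwiner sending $K^\prime$-finite vectors to $K^\prime$-finite vectors. The main input beyond the structure theorems already established is the identification, due to \cite{KM1}, of $\phi_k$ as the harmonic projection of $\varphi_k$: in their construction, $\varphi_k - \phi_k$ is exact via a primitive in the Schwartz space of $V^k$, hence \emph{a fortiori} exact in $L^2(V^k)$, so the image of $[\varphi_k] \in H^k(\mathcal{P}(V^k))$ is precisely $[\phi_k] \in H^k(L^2(V^k))$. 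This reduces the theorem to locating $[\phi_k]$ inside the decomposition of the $L^2$ cohomology established in Theorem \ref{L^2mainthm}.

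For part (1), where $k < n/2$, Theorem \ref{main}(1) identifies $H^k(\mathcal{P}(V^k)) = \mathcal{R}_k \varphi_k$ as an irreducible holomorphic discrete series of $\mathrm{Mp}(2k,\R)$ with parameter $(\frac{n+1}{2},\ldots,\frac{n+1}{2})$, and Theorem \ref{L^2mainthm}(2) identifies $H^k(L^2(V^k))^{(K^\prime)} = \mathcal{R}_k(V)\phi_k$ as the same irreducible module. Since the map carries the lowest weight vector $\varphi_k$ to the non-zero lowest weight vector $\phi_k$, Schur's lemma forces it to be an isomorphism onto $H^k(L^2(V^k))^{(K^\prime)}$.

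For part (2), where $k = n/2 = m$, the source $H^m(\mathcal{P}(V^m))^{(K^\prime)} = \mathcal{R}_m \varphi_m$ is one copy of the holomorphic discrete series, whereas Theorem \ref{L^2mainthm}(3) presents the target $H^m(L^2(V^m))^{(K^\prime)} = \mathcal{R}_m(V)\phi_m \oplus \mathcal{R}_m(V)*\phi_m$ as two copies. Since the image of $\varphi_m$ is $\phi_m$, the lowest weight vector of the first summand, Schur's lemma again forces the map to be an isomorphism onto $\mathcal{R}_m(V)\phi_m$, and the cokernel's $K^\prime$-finite part equals the remaining summand $\mathcal{R}_m(V)*\phi_m$.

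For part (3), the case $\ell = n$ is immediate because $H^n(L^2(V^k)) = 0$ for $n \geq 2$ by Theorem \ref{L^2mainthm}. For $k > n/2$ and $\ell < n$, Theorem \ref{main} shows the only potentially non-zero source is $H^k(\mathcal{P}(V^k)) = \mathcal{R}_k\varphi_k$ with $n/2 < k < n$ and $\ell = k$; if $k \notin [\frac{n-1}{2}, \frac{n+1}{2}]$ then $H^k(L^2(V^k)) = 0$ by Theorem \ref{L^2mainthm}(1) and the map vanishes trivially. The remaining critical case is $n = 2m+1$ odd with $k = \ell = m+1$, where $H^{m+1}(L^2(V^{m+1}))$ is non-zero but $\overline{H}^{m+1}(L^2(V^{m+1})) = 0$; here one must show that $[\phi_{m+1}]$ is actually an $L^2$-coboundary, not merely in the closure of coboundaries. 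I expect this to follow either by constructing an explicit $K^\prime$-finite primitive from the harmonic-projection description of $\phi_{m+1}$, or by appealing to the theta-correspondence decomposition of $L^2(V^{m+1})$ from \cite{Li} to rule out that the holomorphic discrete series with parameter $(\frac{n+1}{2},\ldots,\frac{n+1}{2})$ appears as an $\mathrm{Mp}(2k,\R)$-submodule of $H^{m+1}(L^2(V^{m+1}))$. Distinguishing genuine coboundaries from limits of coboundaries in this non-reduced critical range is the main obstacle; everything else reduces to Schur-type arguments against the structure theorems of Parts 1 and 2.
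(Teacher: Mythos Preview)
Your overall strategy—use the $\mathfrak{sp}(2k,\R)$-equivariance together with Schur's lemma to match irreducible lowest-weight modules—is correct and is exactly what the paper does for surjectivity. However, your justification that $[\varphi_k]\mapsto[\phi_k]$ has a gap: the claim that $\varphi_k-\phi_k$ is exact with a primitive in the Schwartz space of $V^k$ is not what \cite{KM1} establishes, and cannot be true as stated, since $\phi_k$ (given by \eqref{definingequationforphi}) is not a Schwartz function on $V^k$—it is only defined on the cone $\Omega_{k,0}$ and blows up where $\det(R)=0$. What \cite{KM1} and the paper actually establish is orbit-by-orbit: on each $G$-orbit in $\Omega_{k,0}$, transferred to the tube $E_{\mathbf{x}}$, the Hodge-harmonic projection of $\varphi_k$ is a nonzero scalar multiple of $\phi_k$ (Proposition~\ref{varphi=phi}). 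The paper's injectivity argument then runs by contradiction: if $f(\beta)\varphi_k$ were $L^2$-exact, its restriction to each orbit would be exact (via the canonical primitive $d^*\mathcal{G}$), hence its orbit-wise harmonic projection would vanish, contradicting Corollary~\ref{imagenonzero}. Once non-vanishing of the image is in hand, your Schur argument finishes parts (1) and (2) exactly as in the paper.

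For part (3) you correctly isolate the delicate case $n=2m+1$, $k=\ell=m+1$, where the target $H^{m+1}(L^2(V^{m+1}))$ is nonzero but non-reduced, and you are candid that distinguishing genuine coboundaries from limits of coboundaries is the obstacle. Note that the paper's Part~3 argument also invokes only the vanishing of $H^k(L^2(V^k))$ for $k>\tfrac{n+1}{2}$ and does not explicitly dispose of this boundary case either. Neither of your suggested approaches is carried out; resolving it would require either exhibiting an explicit $L^2$ primitive for $\varphi_{m+1}$ or showing, via the theta-correspondence description of $L^2(V^{m+1})$, that the holomorphic lowest-weight module $\mathcal{R}_{m+1}\varphi_{m+1}$ cannot embed into the non-reduced part of $H^{m+1}(L^2(V^{m+1}))$.
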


We now explain why it is important to compute the relative Lie algebra cohomology groups with values in the Weil representation for groups $G$ belonging to dual pairs for the study of the cohomology of  arithmetic quotients of the associated locally symmetric spaces. The key point is that {\it cocycles} of degree $k$  with values in the Weil representation of $\mathrm{Sp}(2k,\R) \times \mathrm{O}(p,q)$ or $\mathrm{U}(a,b) \times \mathrm{U}(p,q)$ give rise (using the theta distribution $\theta$ ) to {\it closed} differential $k$-forms on arithmetic locally symmetric spaces associated to such groups $G$.  This construction gives rise to a map $\theta$ from the relative Lie algebra cohomology of $G$ with values in the Weil representation to the ordinary cohomology of suitable arithmetic quotients $M$ of the symmetric space associated to $G$. Furthermore  if a class $\varphi$ is a  lowest weight vector for the action of $\mathfrak{sp}(2k,\R)$ then $\theta(\varphi)$ is the kernel of an integral operator giving a correspondence between Siegel modular forms of genus $k$ and $H^k(M)$. For all cohomology classes studied so far, the span of their images under $\theta$ is the span of the Poincar\'{e} duals of certain totally geodesic cycles in the arithmetic quotient, the ``special cycles''.  Furthermore, the refined Hodge projection of the map $\theta$ has been shown in \cite{BMM1} and \cite{BMM2} to be onto a certain refined Hodge type for low degree cohomology.  In particular, for $\mathrm{Sp}(2k,\R) \times \OO(n,1)$, it is shown in \cite{BMM1} that this map is onto the $H^k(M)$  for $k < \frac{n}{3}$.  For a description of the map $\theta$ and the induced correspondence between Siegel modular forms and cohomology classes, see the introduction of \cite{FM}, where the induced correspondence is called the geometric theta correspondence.

Further work will be needed to extend these results to the cases of $\SO(p,q)$ and $\SU(p,q)$.  The spectral sequence of Section \ref{spectralsection} exists for general $\SO(p,q)$ and $\SU(p,q)$ and the $E_1$ term will again be a Koszul complex.  However, we do not expect the vanishing results, Propositions \ref{K+vanishing} and \ref{Koszulvanishing}, for the corresponding Koszul cohomology groups to hold.  Also, there are general techniques for computing the relative Lie algebra cohomology with values in $L^2(V^k)$.  We expect there to be analogues of the main techniques used in our computation of the relative Lie algebra cohomology with values in $L^2(V^k)$.
It is important  to generalize  Theorem \ref{part3theorem} to $\mathrm{SO}(p,q)$ and $\mathrm{U}(p,q)$ for general $p$ and $q$ because at present there are many more techniques available for computing cohomology with $L^2$ cofficients than with $\mathcal{P}$ coefficients.


We would like to thank Roger Howe for helpful conversations, Bill Casselman for a conversation on smooth vectors and Jens Funke for making his computations for the case $n=1, k=1$ available to us.    We would like to thank Steve Halperin for the proof of Proposition \ref{Steveprop1}.  We would especially like to thank Matei Machedon, who supplied many of the ideas in Part 2.

\newpage

\part{The computation of $H^{\bullet}\big(\mathfrak{so}(n,1), \SO(n); \mathcal{P}(V^k) \big)$}

\section{The relative Lie algebra complexes} \label{notation}

We first establish notation that we will use throughout the paper.  Let \\
$e_1,\ldots,e_{n+1}$ be an orthogonal basis for $V$ such that $(e_i,e_i) =1, 1 \leq i \leq n$ \\
and $(e_{n+1},e_{n+1}) = -1$. We let $x_1,\ldots,x_n, t$ be the corresponding coordinates. 

We have the splitting (orthogonal for the Killing form)
$$\mathfrak{so}(n,1) = \mathfrak{so}(n) \oplus \mathfrak{p}_0$$
and denote the complexification $\mathfrak{p}_0 \otimes \C$ by $\mathfrak{p}$.

We recall that the map $\phi:\bigwedge^2(V) \to \mathfrak{so}(n,1)$ given by
$$\phi(u \wedge v)(w) = (u,w)v - (v,w)u $$
is an isomorphism.  Under this isomorphism the elements $e_i \wedge e_{n+1}, 1 \leq i \leq n$ are a basis for $\mathfrak{p}_0$.  We define $e_{i,n+1} = -e_i \wedge e_{n+1}$ and let $\omega_1,\ldots,\omega_n$ be the dual basis for $\mathfrak{p}_0^*$.  We let $\mathcal{I}_{\ell,n}$ ($\mathcal{I}$ for injections) be the set of all ordered $\ell$-tuples of distinct elements $I= (i_1,i_2,\ldots,i_{\ell})$ from $\{1,2,\ldots,n\}$, that is, the set of all injections from the set $\{1, \ldots, \ell\}$ to $\{1, \ldots, n\}$.  We let $\mathcal{S}_{\ell, n} \subset \mathcal{I}_{\ell,n}$ ($\mathcal{S}$ for stricly increasing) be the subset of strictly increasing $\ell$-tuples.  We define $\omega_I $ for $I \in \mathcal{I}_{\ell,n}$ as above by
$$\omega_I = \omega_{i_1} \wedge \cdots \wedge \omega_{i_{\ell}}.$$
We then define $\vol \in (\wwedge{n}\mathfrak{p}_0^*)^K$ by
\begin{equation} \label{voldefn}
\vol = \omega_1 \wedge \cdots \wedge \omega_n.
\end{equation}

Now let $V^k = \displaystyle \bigoplus_{i=1}^k V$.  We will use $\mathbf{v}$ to denote the element $(v_1,v_2,\cdots,v_k) \in V^k$.  We will often identify $V^k$ with the $((n+1) \times k)$-matrices 
$$\begin{pmatrix}
x_{11} & x_{12} & \cdots & x_{1k} \\
\vdots & \vdots & \ddots & \vdots \\
x_{n1} & x_{n2} & \cdots & x_{nk} \\
t_1 & t_2 & \cdots & t_k
\end{pmatrix}$$
over $\R$ using the basis $e_1, \ldots, e_{n+1}$.  Then $\mathbf{v}$ will correspond to the $(n+1) \times k$ matrix $X$ where $v_j$ is the $j^{th}$ column of the matrix.

Let $V_+$ be the span of $e_1, \ldots, e_n$ and $V_-$ be the span of $e_{n+1}$.  Then we have the splitting $V = V_+ \oplus V_-$ and the induced splitting $V^k = V_+^k \oplus V_-^k$.  We define, for $1 \leq i,j \leq k$, the quadratic function $r_{ij} \in \mathrm{Pol}(V_+^k)$ for $\mathbf{v} \in V_+^k$ by
\begin{equation}
r_{ij}(\mathbf{v}) = (v_i,v_j).
\end{equation}


We let $\mathcal{R}_k = \mathcal{R}_k(V_+)$ be the subalgebra of $\mathrm{Pol}(V_+^k)$ generated by the $r_{ij}$ for $1 \leq i,j \leq k$.  We note that 
$$\mathcal{R}_k = \mathrm{Pol}(V_+^k)^{\mathrm{O}(n)}$$
is the algebra of polynomial invariants of the group $\mathrm{O}(n)$ (the ``First Main Theorem'' for the orthogonal group, \cite{Weyl}, page 53).  Since (as a consequence of our assumption \eqref{k<n/2} immediately below) we will have $k < n$ except in Section \ref{kgeqnsection}, it is a polynomial algebra.  This follows from the ``Second Main Theorem'' for the orthogonal group, \cite{Weyl}, page 75. We will assume that

\begin{equation} \label{k<n/2}
k < n
\end{equation}
for the remainder of Part 1 except for Section \ref{kgeqnsection} where we compute the cohomology for the case $k \geq n$.  

For $K = \SO(n), \OO(n)$ (embedded in $\SO(n,1)$ fixing the last coordinate), or $\OO(n) \times \OO(1)$, any complex $\mathfrak{so}(n, 1) \times K$-module $\mathcal{V}$, and $\sigma : \OO(n,1) \to \mathrm{End}(\mathcal{V})$, we define
$$C^{\bullet}\big(\mathfrak{so}(n,1) ,K; \mathcal{V}\big)$$
by $C^i\big(\mathfrak{so}(n,1) ,K; \mathcal{V}\big) = (\wwedge{i}\mathfrak{p}_0^* \otimes \mathcal{V})^{K}$ and $d = \sum A(\omega_\alpha) \otimes \sigma(e_{\alpha} \wedge e_{n+1})$ as in Borel Wallach \cite{BW}.  Throughout Part 1, the symbol $C$ will denote the complex $C^{\bullet} \big(\mathfrak{so}(n,1) ,\SO(n); \mathrm{Pol}((V \otimes \C)^k)\big)$.

\subsection{The relative Lie algebra complex with values in the Schr\"odinger model}

In this paper, we will be concerned with two models of the Weil representation of the symplectic group $\mathrm{Sp}(2k(n+1), \R)$, the Schr\"odinger model and the Fock model.  For our cohomology computations, we will use the subspace of $U(k(n+1))$-finite vectors.  In the first case, this is the space $\mathcal{P}(V^k)$ and in the second it is $\mathrm{Pol}((V \otimes \C)^k)$.  These two spaces are related by the Bargmann transform, $B: \mathcal{P}(V^k) \to \mathrm{Pol}((V \otimes \C)^k)$ sending the Gaussian (see \eqref{gaussian}) to $1$ and, more generally, Hermite functions to monomials, see \cite{Igusa} Chapter 1 Section 8.

We define a distinguished element $\varphi_{0,k}$, the Gaussian, of the Schwartz space $\mathcal{S}(V^k)$ by
\begin{equation} \label{gaussian}
\varphi_{0,k}(\mathbf{v}) = \prod_{i=1}^k \exp{ \big((-1/2)(x_{1i}^2 + \cdots + x_{ni}^2 + t_i^2) \big)}.
\end{equation}
Then $\mathcal{P}(V^k)$ is the subspace of the space of complex-valued Schwartz functions on $V^k$ which are products $p(\mathbf{v}) \varphi_{0,k}(\mathbf{v})$ where $p$ is a complex-valued polynomial.

We form the cochain complex $C^{\bullet} \big(\mathfrak{so}(n,1) ,\SO(n); \mathcal{P}(V^k)\big), d$ where $d = \sum_j d^{(j)}$ and
$$d^{(j)} = \sum_{\alpha=1}^n A(\omega_\alpha) \otimes (x_{\alpha,j} \frac{\partial}{\partial t_{j}} + t_{j} \frac{\partial}{\partial x_{\alpha,j}}), \quad 1 \leq j \leq k.$$
Here $A(\omega_\alpha)$ denotes the operation of left exterior multiplication by $\omega_\alpha$.

In Parts 2 and 3 we consider the cochain complex $C^{\bullet} \big(\mathfrak{so}(n,1) ,\SO(n); L^2(V^k)\big)$.  In fact, $L^2(V^k)$ will be replaced by the smooth vectors $L^2(V^k)^\infty$, see the beginning of Part 2, equipped with the same differential.

For the remainder of Part 1 we will work in the Fock model, namely the polynomials on $(V \otimes \C)^k$, denoted $\mathrm{Pol}((V \otimes \C)^k)$.

\subsection{The relative Lie algebra complex with values in the Fock model}

Similar to the above identification, we identify $(V \otimes \C)^k$ with the $((n+1) \times k)$-matrices
$$\begin{pmatrix}
z_{11} & z_{12} & \cdots & z_{1k} \\
\vdots & \vdots & \ddots & \vdots \\
z_{n1} & z_{n2} & \cdots & z_{nk} \\
w_1 & w_2 & \cdots & w_k
\end{pmatrix}$$
over $\C$ using the basis $e_1, \ldots, e_{n+1}$.  Then $\mathbf{v}$ will correspond to the $(n+1) \times k$ matrix $Z$ where $v_j$ is the $j^{th}$ column of the matrix.

The earlier splitting $V = V_+ \oplus V_-$ induces the splitting $(V \otimes \C)^k = (V_+ \otimes \C)^k \oplus (V_- \otimes \C)^k$.  By abuse of notation, we define, for $1 \leq i,j \leq k$, the quadratic function $r_{ij} \in \mathrm{Pol}((V_+ \otimes \C)^k)$ for $\mathbf{v} \in (V_+ \otimes \C)^k$ by
\begin{equation} \label{r_{ij}}
r_{ij}(\mathbf{v}) = (v_i,v_j).
\end{equation}
Here $(\ ,\ )$ denotes the complex bilinear extension of $(\ ,\ )$ to $V \otimes \C$.

We let $\mathcal{R}_k(V_+ \otimes \C) = \mathrm{Pol}((V_+ \otimes \C)^k)^{\OO(n,\C)}$.  We will abuse notation and sometimes use the symbol $\mathcal{R}_k$ in place of $\mathcal{R}_k(V_+ \otimes \C)$.  The meaning of $\mathcal{R}_k$ should be clear from context.  We note that the $r_{ij}, 1 \leq i,j \leq k$, generate $\mathcal{R}_k$.  

In the following we will be concerned with the relative Lie algebra complex where 
$$C^\ell\big(\mathfrak{so}(n,1) ,\SO(n); \mathrm{Pol}((V \otimes \C)^k)\big) \cong \big( \wwedge{\ell}(\mathfrak{p}_0^*) \otimes \mathrm{Pol}((V \otimes \C)^k) \big)^{\SO(n)}$$
and $d = \sum d^{(j)}$ with

\begin{equation} \label{defnofd}
d^{(j)} = \sum_{\alpha=1}^n A(\omega_\alpha) \otimes (\frac{\partial^2}{\partial z_{\alpha,j} \partial w_j} - z_{\alpha,j} w_{j}), \quad 1 \leq j \leq k.
\end{equation}

The reader will show that 
$$C^{\bullet} \big(\mathfrak{so}(n,1) ,\SO(n); \mathrm{Pol}((V \otimes \C)^k)\big) \cong C^{\bullet} \big(\mathfrak{so}(n,1, \C) ,\SO(n, \C); \mathrm{Pol}((V \otimes \C)^k)\big)$$
where $C^{\bullet} \big(\mathfrak{so}(n,1, \C) ,\SO(n, \C); \mathrm{Pol}((V \otimes \C)^k)\big) = \big( \wwedge{\bullet}\mathfrak{p}^* \otimes \mathrm{Pol}((V \otimes \C)^k) \big)^{\SO(n,\C)}$.
We will use this isomorphism between cochain complexes throughout the paper.

Note that there is the tensor product map $\mathrm{Pol}((V \otimes \C)^a )\otimes \mathrm{Pol}((V \otimes \C)^b) \to \mathrm{Pol}((V \otimes \C)^{a+b})$ given by 
$$(f_1 \otimes f_2)(\mathbf{v}) = f_1(v_1,\cdots,v_a) f_2(v_{a+1},\cdots,v_{a+b}).$$

The tensor product map induces a bigraded product
\begin{align*}
C^i\big(\mathfrak{so}(n,1) ,\SO(n); \mathrm{Pol}((V \otimes \C)^a)\big) \otimes & C^j\big(\mathfrak{so}(n,1) , \SO(n); \mathrm{Pol}((V \otimes \C)^b)\big) \\
&\to C^{i+j}\big(\mathfrak{so}(n,1) ,\SO(n); \mathrm{Pol}((V \otimes \C)^{a+b})\big)
\end{align*}
which we will call  the outer exterior  product and denote $\wedge$ given by

$$(\omega_{I} \otimes f_I) \wedge (\omega_J \otimes f_J) = (\omega_I \wedge \omega_J)  \otimes (f_I \otimes f_J) = (\omega_I \wedge \omega_J) \otimes f_I f_J.$$
We also have, for $f \in \mathrm{Pol}((V \otimes \C)^k)$, the usual multiplication of functions
$$f(\omega_I \otimes f_I) = \omega_I \otimes f f_I.$$

A key point in the computation of the $k$-coboundaries is a product rule for $d$ relative to the outer exterior product. To state this suppose $\psi$ is an outer exterior product 
$$\psi(\bfv) = \psi_1(v_1) \wedge \psi_2(v_2) \wedge \cdots \wedge \psi_k(v_k)$$
where $\mathrm{deg} (\psi_j) =c_j$.  Then we have
\begin{equation} \label{productrule}
 d \psi = \sum_{\alpha=1}^k (-1)^{\sum_{j=1}^{\alpha -1}c_j} \psi_1 \wedge \cdots \wedge d_\alpha \psi_\alpha \wedge \cdots \wedge \psi_k.
\end{equation}

We define the cocycle $\varphi_1 \in  C^1\big(\mathfrak{so}(n,1), \SO(n); \mathrm{Pol}(V \otimes \C) \big)$ by 
$$\varphi_1 = \sum_{i=1}^n \omega_i \otimes z_i.$$

We then define $\varphi_k \in C^k(\mathfrak{so}(n,1) ,\SO(n); \mathrm{Pol}((V \otimes \C)^k))$ by
\begin{equation} \label{vaprhikdef}
\varphi_k = \varphi_1^{(1)} \wedge \cdots  \wedge \varphi_1^{(k)}.
\end{equation}
Here a superscript $(i)$ on a term in a $k$-fold wedge as above indicates that the term belongs to the $i$-th tensor factor in $\mathrm{Pol}((V \otimes \C)^k)$.  Since $\varphi_1$ is closed and $d$ satisfies \eqref{productrule}, it follows that $\varphi_k$ is also closed.  The cocycle $\varphi_k$ and its analogues for $\SO(p,q)$ and $\mathrm{SU}(p,q)$ played a key role in the work of Kudla and Millson. 

\section{Some decomposition results}


For $1 \leq i,j \leq k$, we define the following second order partial differential operator acting on $\mathrm{Pol}((V_+ \otimes \C)^k)$
$$\Delta_{ij} = \sum_{\alpha=1}^n  \frac{\partial^2}{\partial z_{\alpha,i} \partial z_{\alpha,j}}.$$
We define $\mathcal{H}((V_+ \otimes \C)^k)$ to be the subspace of harmonic (annihilated by all $\Delta_{ij}$) polynomials.  Then we have the classical result, see \cite{KV}, Lemma 5.3, (note that we have reversed their $n$ and $k$)
\begin{thm}\label{generalcase}
\begin{enumerate}
\item The map 
$$p(r_{11},r_{12}, \ldots, r_{kk}) \otimes h(z_{11},z_{12},\ldots,z_{nk})\to p(r_{11},r_{12}, \ldots, r_{kk})h(z_{11},z_{12},\ldots,z_{nk})$$ induces a surjection
$$\mathcal{R}_k(V_+ \otimes \C) \otimes \mathcal{H}((V_+ \otimes \C)^k) \to \mathrm{Pol}((V_+ \otimes \C)^k).$$
\item In case $2k < n$ the map is an isomorphism.
\end{enumerate}
\end{thm}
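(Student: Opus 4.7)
The plan is to use the classical separation of variables argument on the Fock model. First I would endow $\mathrm{Pol}((V_+ \otimes \C)^k)$ with the Fischer inner product
$$\langle p, q \rangle \;=\; \overline{q(\partial)}\, p \,\Big|_{z=0},$$
where $q(\partial)$ denotes the constant-coefficient differential operator obtained by substituting $\partial/\partial z_{\alpha,i}$ for $z_{\alpha,i}$ in $q$. Under this product, monomials of distinct multidegrees are orthogonal, each homogeneous piece is positive definite, and multiplication by $z_{\alpha,i}$ is adjoint to $\partial/\partial z_{\alpha,i}$; consequently multiplication by $r_{ij}$ is adjoint to $\Delta_{ij}$.

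For (1), let $I \subset \mathrm{Pol}((V_+ \otimes \C)^k)$ be the ideal generated by the $r_{ij}$. Adjointness forces
$$I^\perp \;=\; \{p : \Delta_{ij} p = 0 \text{ for all } i,j\} \;=\; \mathcal{H}((V_+ \otimes \C)^k),$$
so we obtain an orthogonal decomposition $\mathrm{Pol} = \mathcal{H} \oplus I$. A degree induction then concludes: writing a homogeneous $f$ of degree $d$ as $h + \sum_{i\leq j} r_{ij}\, q_{ij}$ with $h$ harmonic and $\deg q_{ij} = d-2$, the inductive hypothesis expresses each $q_{ij}$ as a sum of products $p \cdot h'$ with $p \in \mathcal{R}_k$ and $h' \in \mathcal{H}$, hence so does $f$.

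For (2) the hypothesis $2k < n$ enters through Weyl's Second Main Theorem for $\OO(n)$: in this range the $k(k+1)/2$ quadratics $\{r_{ij} : 1 \leq i \leq j \leq k\}$ are algebraically independent, so $\mathcal{R}_k$ is a polynomial ring on these generators with Hilbert series $(1 - t^2)^{-k(k+1)/2}$. I would then conclude via Poincaré series. Because (in the stable range) $r_{11}, r_{12}, \ldots, r_{kk}$ is a regular sequence on $\mathrm{Pol}$, the quotient $\mathrm{Pol}/I = \mathcal{H}$ has Hilbert series $(1-t^2)^{k(k+1)/2}(1-t)^{-nk}$. Multiplying, the Hilbert series of $\mathcal{R}_k \otimes \mathcal{H}$ coincides with that of $\mathrm{Pol} = \C[z_{\alpha,i}]$, namely $(1-t)^{-nk}$; combined with the surjection from (1), this forces the map to be an isomorphism in every degree.

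The key obstacle is precisely the regular sequence claim, i.e.\ the algebraic independence of the $r_{ij}$ in the stable range — without it, syzygies among the $r_{ij}$ produce genuine kernel elements. Indeed when $2k \geq n$ the algebra $\mathcal{R}_k$ is no longer free, and injectivity fails, matching the bifurcation at $2k = n$ in the statement. The cleanest route to regularity is either a direct Fischer-orthogonality check (multiplication by the $r_{ij}$ is adjoint to the free polynomial algebra generated by the $\Delta_{ij}$, which acts injectively on the complement of $I$), or an appeal to $(\OO(n), \mathrm{Sp}(2k))$-Howe duality in the stable range, where the relevant multiplicity-free decomposition is known.
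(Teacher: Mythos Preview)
The paper does not supply its own proof of this theorem; it simply cites Kashiwara--Vergne \cite{KV}, Lemma~5.3. Your Fischer inner product argument for (1) is the standard one and is correct: adjointness of $r_{ij}$ and $\Delta_{ij}$ gives $\mathrm{Pol}=\mathcal{H}\oplus I$ degree by degree, and induction on degree yields surjectivity.

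For (2) your Hilbert-series strategy is sound, but you mislocate where the hypothesis $2k<n$ actually enters. Weyl's Second Main Theorem says that the relations among the $r_{ij}$ are generated by the $(n{+}1)\times(n{+}1)$ minors of the Gram matrix; hence $\mathcal{R}_k$ is already a free polynomial ring whenever $k\leq n$, not merely when $2k<n$. In particular your sentence ``when $2k\geq n$ the algebra $\mathcal{R}_k$ is no longer free'' is incorrect (take $n=3$, $k=2$). What genuinely requires the stable-range hypothesis is the \emph{regular sequence} property of the $r_{ij}$ in $\mathrm{Pol}$, equivalently that the nullcone $\{Z\in M_{n,k}(\C):Z^{T}Z=0\}$ has the expected codimension $k(k{+}1)/2$. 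When $2k$ is large relative to $n$ this variety is too big (e.g.\ for $n=4$, $k=3$ it has dimension $7>6$), which is exactly what produces kernel in the multiplication map even though $\mathcal{R}_k$ remains polynomial.

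Your two suggested routes to regularity are uneven. The ``Fischer-orthogonality'' route as you phrase it does not obviously work: algebraic independence of the $\Delta_{ij}$ as constant-coefficient operators is equivalent to algebraic independence of the $r_{ij}$ (same symbols in dual variables), so it again holds for all $k\leq n$ and cannot by itself detect the stable-range threshold. The Howe-duality route does work but is heavy, and is essentially what \cite{KV} establishes. The most direct self-contained argument is the geometric one: compute $\dim\{Z:Z^{T}Z=0\}$ by stratifying over the isotropic Grassmannians $OG(r,n)$ and check it equals $nk-k(k{+}1)/2$ in the stated range; since the $r_{ij}$ are homogeneous this is equivalent to regularity, and then your Hilbert-series count goes through verbatim.
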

\begin{rmk}
We will denote this surjection by writing 
$$\mathrm{Pol}((V_+ \otimes \C)^k) = \mathcal{R}_k(V_+ \otimes \C) \cdot \mathcal{H}((V_+ \otimes \C)^k).$$

\end{rmk}

We will need the following three decomposition results.  First, recall $V_+$ is the span of $e_1,\ldots,e_n$.  Then we have
\begin{lem}\label{Minsum}
$$\mathrm{Pol}((V \otimes \C)^k) = \mathrm{Pol}((V_+ \otimes \C)^k) \otimes \C[w_1,\ldots,w_k]$$
\end{lem}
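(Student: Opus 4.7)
The plan is to recognize this as a direct application of the standard fact that the polynomial algebra on a direct sum of finite-dimensional vector spaces factors as a tensor product.

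First, I would recall from the setup that $V = V_+ \oplus V_-$ with $V_+ = \mathrm{span}(e_1, \ldots, e_n)$ and $V_- = \mathrm{span}(e_{n+1})$. Tensoring with $\C$ and taking the $k$-fold direct sum gives the splitting
\[
(V \otimes \C)^k = (V_+ \otimes \C)^k \oplus (V_- \otimes \C)^k
\]
already noted earlier in the text.

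Next, I would invoke (or prove in one line) the general fact: for any two finite-dimensional complex vector spaces $X$ and $Y$, the tensor product map
\[
\mathrm{Pol}(X) \otimes \mathrm{Pol}(Y) \longrightarrow \mathrm{Pol}(X \oplus Y), \qquad f \otimes g \mapsto \big((x,y) \mapsto f(x)g(y)\big),
\]
is an isomorphism of algebras. This is immediate after choosing linear coordinates: every monomial in a combined system of coordinates on $X \oplus Y$ factors uniquely as a monomial in the $X$-coordinates times a monomial in the $Y$-coordinates, and these products span and are linearly independent.

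Finally, I would apply this with $X = (V_+ \otimes \C)^k$ and $Y = (V_- \otimes \C)^k$. Since $V_- \otimes \C \cong \C$ via the basis vector $e_{n+1}$, we have $(V_- \otimes \C)^k \cong \C^k$ with linear coordinates $w_1, \ldots, w_k$ (the coordinates already fixed at the start of the Fock model subsection), so $\mathrm{Pol}((V_- \otimes \C)^k) = \C[w_1, \ldots, w_k]$. Substituting yields exactly the asserted identification. There is no real obstacle; the only thing to be careful about is matching the coordinate conventions to the notation already set up.
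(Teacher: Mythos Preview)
Your proposal is correct and is exactly the standard argument one would give; the paper itself states this lemma without proof, treating it as an immediate consequence of the coordinate splitting already set up, so your write-up simply fills in what the paper leaves implicit.
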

and


\begin{lem}\label{posharm}
$$\mathrm{Pol}((V_+ \otimes \C)^k) = \C[r_{11},r_{12}, \ldots, r_{kk}] \cdot  \mathcal{H}((V_+ \otimes \C)^k).$$
\end{lem}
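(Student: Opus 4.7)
The plan is to observe that Lemma \ref{posharm} is essentially a reformulation of the surjectivity statement in Theorem \ref{generalcase}(1), so the proof is a one-line identification of subalgebras followed by a direct appeal to that theorem.

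First I would identify the subalgebra $\C[r_{11}, r_{12}, \ldots, r_{kk}]$ of $\mathrm{Pol}((V_+\otimes\C)^k)$ with the full invariant ring $\mathcal{R}_k(V_+\otimes\C) = \mathrm{Pol}((V_+\otimes\C)^k)^{\OO(n,\C)}$. This is exactly Weyl's First Main Theorem for the orthogonal group, and it has already been invoked in the discussion preceding the lemma, where the authors explicitly note that the $r_{ij}$ for $1 \leq i,j \leq k$ generate $\mathcal{R}_k$. Note that when $2k > n$ there are syzygies among the $r_{ij}$ coming from the Second Main Theorem, so $\C[r_{11}, \ldots, r_{kk}]$ is not free as a polynomial algebra in $k(k+1)/2$ variables; but this does not affect the identification of the two as subalgebras of $\mathrm{Pol}((V_+\otimes\C)^k)$, which is all we need.

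Once this identification is in hand, the surjection in Theorem \ref{generalcase}(1) reads
$$\mathrm{Pol}((V_+\otimes\C)^k) = \mathcal{R}_k(V_+\otimes\C) \cdot \mathcal{H}((V_+\otimes\C)^k) = \C[r_{11},\ldots,r_{kk}] \cdot \mathcal{H}((V_+\otimes\C)^k),$$
which is exactly the assertion of Lemma \ref{posharm}. There is no real obstacle here: the only point requiring any care is to notice that the lemma claims only a product decomposition (surjection of the multiplication map) rather than a tensor product decomposition, so no hypothesis of the form $2k < n$ is needed — we are using just part (1) of Theorem \ref{generalcase}, not part (2).
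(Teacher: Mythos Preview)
Your proposal is correct and matches the paper's approach: the paper states Lemma~\ref{posharm} without proof precisely because it is an immediate reformulation of Theorem~\ref{generalcase}(1) via the identification $\mathcal{R}_k(V_+\otimes\C)=\C[r_{11},\ldots,r_{kk}]$ as subalgebras of $\mathrm{Pol}((V_+\otimes\C)^k)$, which the paper has already recorded. Your observation that only surjectivity (part~(1)) is needed, so no restriction on $k$ is required, is also on point.
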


We make the following definition

\begin{defn}
$$\mathcal{S}_k = \mathrm{Pol}((V \otimes \C)^k)^{\OO(n, \C)} = \C[r_{11},r_{12}, \ldots, r_{kk},w_1, \ldots,w_k].$$
\end{defn}

Then we have

\begin{lem}\label{combination}
$$\mathrm{Pol}((V \otimes \C)^k) = \mathcal{S}_k \cdot \mathcal{H}((V_+ \otimes \C)^k).$$
\end{lem}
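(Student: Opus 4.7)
The plan is to derive this directly by chaining the two preceding lemmas. By Lemma \ref{Minsum}, every element of $\mathrm{Pol}((V \otimes \C)^k)$ is a sum of products $f(z)\, g(w)$ with $f \in \mathrm{Pol}((V_+ \otimes \C)^k)$ and $g \in \C[w_1,\ldots,w_k]$. Then I would apply Lemma \ref{posharm} to each such $f$, writing $f = p \cdot h$ with $p \in \C[r_{11},\ldots,r_{kk}]$ and $h \in \mathcal{H}((V_+ \otimes \C)^k)$.

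At this stage I would observe that the $w_j$ involve only coordinates in $V_-$, while the generators $r_{ij}$ of the first factor and the harmonic polynomials $h$ involve only the $z_{\alpha,i}$ coordinates in $V_+$. Consequently $g(w)$ commutes with both $p$ and $h$, and the product $p(r_{11},\ldots,r_{kk})\, g(w_1,\ldots,w_k)$ lies in the subring $\C[r_{11},\ldots,r_{kk},w_1,\ldots,w_k] = \mathcal{S}_k$. Thus $f \cdot g$ factors as an element of $\mathcal{S}_k$ times an element of $\mathcal{H}((V_+ \otimes \C)^k)$, which proves the containment $\mathrm{Pol}((V \otimes \C)^k) \subseteq \mathcal{S}_k \cdot \mathcal{H}((V_+ \otimes \C)^k)$. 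The reverse containment is automatic since both factors on the right lie in $\mathrm{Pol}((V \otimes \C)^k)$.

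There is no serious obstacle here; the entire content lies in unpacking the definition of $\mathcal{S}_k$ and assembling the two prior decompositions. One minor subtlety worth flagging is that the lemma is stated as an equality of spaces (not as a tensor product decomposition), so there is no uniqueness or linear independence claim to verify—only surjectivity of the multiplication map, which is exactly what the chain of substitutions above provides.
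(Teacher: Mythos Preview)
Your proposal is correct and is exactly the intended argument: the paper states this lemma without proof, as an immediate combination of Lemmas \ref{Minsum} and \ref{posharm}. One small notational slip: Lemma \ref{posharm} gives $f$ as a \emph{sum} $\sum_i p_i h_i$ rather than a single product $p \cdot h$, but since you are only establishing surjectivity of the multiplication map this does not affect the argument.
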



It will be very  useful to us  that as $\SO(n)$-modules we have

\begin{equation}
\mathfrak{p} \cong \mathfrak{p}^* \cong V_+ \otimes \C
\end{equation}
and hence

\begin{equation}\label{gothicp}
\wwedge{i}(\mathfrak{p}^*) \cong \wwedge{i}( V_+ \otimes \C).
\end{equation}


\section{The occurrence of the $\OO(n,\C)$-module $\wwedge{\ell}(V_+ \otimes \C)$ in $\mathcal{H} \big( (V_+ \otimes \C)^k \big)$} \label{realrep}


In this section we compute the $\wwedge{\ell}(V_+ \otimes \C)$ isotypic subspaces for $\OO(n, \C)$ acting on $\mathcal{H} \big( (V_+ \otimes \C)^k \big)$ where we identify $V_+ \otimes \C$ with $\C^n$ and hence $\OO(V_+ \otimes \C)$ with $\OO(n, \C)$ using the basis $e_1, \ldots, e_n$.

 It is standard to parametrize the irreducible representations of $\OO(n, \C)$  by Young diagrams such that the sum of the lengths of the first two columns is less than or equal to $n$, see \cite{Weyl}, Chapter 7, \S 7. 
In \cite{Howe}, Howe defines the depth of an irreducible representation to be the number of rows in the associated diagram.

\begin{lem} \label{vanext}
$$\Hom_{\OO(n, \C)} \big( \wwedge{\ell}(V_+ \otimes \C), \mathcal{H} \big( (V_+ \otimes \C)^k \big) \big) \neq 0 \text{ if and only } \ell \leq k.$$
\end{lem}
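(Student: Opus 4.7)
The plan is to treat the two implications separately; the forward direction ($\ell \leq k$) is constructive, and the reverse direction ($\ell > k$) appeals to Howe duality.

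For sufficiency, assume $\ell \leq k$. I exhibit a nonzero $\OO(n, \C)$-equivariant map
$$\Phi: \wwedge{\ell}(V_+ \otimes \C) \longrightarrow \mathcal{H}\bigl((V_+ \otimes \C)^k\bigr)$$
by setting, on decomposable elements,
$$\Phi(v_1 \wedge \cdots \wedge v_\ell)(z_1, \ldots, z_k) = \det\bigl((v_p, z_q)\bigr)_{1 \leq p, q \leq \ell},$$
where $(\cdot, \cdot)$ denotes the complex bilinear extension of the inner product on $V_+$ to $V_+ \otimes \C$. This expression is multilinear and alternating in the $v_p$, so it factors through the exterior power; it is $\OO(n, \C)$-equivariant because the bilinear form is $\OO(n, \C)$-invariant. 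Evaluating at $v_p = e_{i_p}$ with $I = \{i_1 < \cdots < i_\ell\} \subset \{1, \ldots, n\}$ yields the $\ell \times \ell$ minor $\Delta_I = \det(z_{i_p, q})_{1 \leq p, q \leq \ell}$, which is a nonzero polynomial since $\ell \leq k$. For harmonicity, I observe that $\Delta_I$ is multilinear in the columns $z_1, \ldots, z_\ell$ (and independent of $z_{\ell+1}, \ldots, z_k$). Hence $\partial_{z_{\alpha, i}} \Delta_I$ vanishes unless $\alpha \in I$ and $i \leq \ell$; and when $\alpha = i_p \in I$ the derivative is (up to sign) the $(p, i)$-cofactor, which has row $p$ of the original matrix removed and so is independent of every variable $z_{i_p, q}$. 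Therefore $\partial_{z_{\alpha, j}} \partial_{z_{\alpha, i}} \Delta_I = 0$ for all $\alpha, i, j$, whence $\Delta_{ij} \Delta_I = 0$ and $\Delta_I$ is harmonic.

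For necessity, assume $\ell > k$. I invoke Howe duality for the reductive dual pair $(\OO(n, \C), \Sp(2k, \C))$ acting on the polynomial Fock model. The resulting multiplicity-free decomposition of harmonic polynomials takes the form
$$\mathcal{H}\bigl((V_+ \otimes \C)^k\bigr) \cong \bigoplus_\lambda V_\lambda^{\OO(n, \C)} \otimes W_\lambda^{\Sp(2k, \C)},$$
where $\lambda$ runs over Young diagrams with at most $k$ rows (and with $\lambda_1^T + \lambda_2^T \leq n$). The irreducible $\OO(n, \C)$-representation $\wwedge{\ell}(V_+ \otimes \C)$ is labeled by the single column $(1^\ell)$, a Young diagram of $\ell$ rows. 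Since $\ell > k$ violates the row bound, no copy of $\wwedge{\ell}(V_+ \otimes \C)$ occurs in $\mathcal{H}$.

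The main obstacle is the necessity direction, as it rests on the full Howe duality decomposition. A more self-contained route would be to analyze highest weight vectors of weight $(\underbrace{1, \ldots, 1}_{\ell}, 0, \ldots, 0)$ for $\mathfrak{so}(n)$ inside $\mathcal{H}$ and exploit the commuting $\GL(k)$-action to derive a contradiction when $\ell > k$; this essentially reproduces the relevant portion of Howe duality.
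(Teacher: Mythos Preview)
Your proof is correct and takes essentially the same route as the paper: both directions ultimately rest on Howe's depth criterion (Proposition 3.6.3 of \cite{Howe}), and your explicit minor-based construction for sufficiency is exactly the intertwiner $\Psi$ the paper builds in the following section (and which the paper itself remarks reproves the ``if'' direction).

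One terminological slip in your necessity argument: the group commuting with $\OO(n,\C)$ on $\mathcal{H}\bigl((V_+\otimes\C)^k\bigr)$ is $\GL(k,\C)$, not $\Sp(2k,\C)$. The $\mathfrak{sp}(2k)$-action lives on the full Fock model $\mathrm{Pol}\bigl((V_+\otimes\C)^k\bigr)$; the harmonics are precisely the space annihilated by the lowering operators $\Delta_{ij}$ and so carry only the action of the Levi $\GL(k)$. Your decomposition should therefore read $\mathcal{H}\cong\bigoplus_\lambda V_\lambda^{\OO(n,\C)}\otimes W_\lambda^{\GL(k,\C)}$ with $\lambda$ a diagram of at most $k$ rows (this is the statement the paper cites from Kashiwara--Vergne and Howe). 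The conclusion you draw---that $(1^\ell)$ violates the row bound when $\ell>k$---is unaffected.
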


\begin{proof}
We will use Proposition 3.6.3 in \cite{Howe}.  $\wwedge{\ell}(V_+ \otimes \C)$ corresponds to the diagram $D$ which is a single column of length $\ell$.  Hence, $\wwedge{\ell}(V_+ \otimes \C)$ has depth $\ell$.  But, Proposition 3.6.3 states that a representation of depth $\ell$ occurs in $\mathcal{H} \big( (V_+ \otimes \C)^k \big)$ if and only if $\ell \leq k$.

\end{proof}

\begin{rmk}
The lemma also follows from Proposition 6.6 ($n$ odd), and  Proposition 6.11 ($n$ even) of Kashiwara-Vergne, \cite{KV}.
\end{rmk}

\begin{lem} \label{intertwine}
\hfill
\begin{enumerate}
\item If $U_1$ is an irreducible representation of $\OO(n, \C)$, then
$$U_2 = \Hom_{\OO(n, \C)} \big( U_1, \mathcal{H} \big( (V_+ \otimes \C)^k \big) \big)$$
is an irreducible representation of $\GL(k, \C)$.
\item Hence, given $U_1$ as above, there exists a unique representation $U_2$ of \\
$\GL(k, \C)$ such that we have an $\OO(n, \C) \times \GL(k, \C)$ equivariant embedding
$$\Psi : U_1 \otimes U_2 \to \mathcal{H} \big( (V_+ \otimes \C)^k \big).$$
\end{enumerate}
\end{lem}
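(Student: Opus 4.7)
The plan is to recognize this as an instance of classical Howe duality for the reductive dual pair $(\OO(n,\C), \GL(k,\C))$ acting on harmonic polynomials.

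First I would introduce the commuting $\GL(k,\C)$-action: identifying $(V_+\otimes\C)^k$ with $n\times k$ matrices $(z_{\alpha,j})$, the group $\GL(k,\C)$ acts by right multiplication $Z \mapsto Z g^{-1}$, while $\OO(n,\C)$ acts by left multiplication $Z \mapsto h Z$. These actions commute and induce commuting actions on $\mathrm{Pol}((V_+\otimes\C)^k)$. I would then verify that $\GL(k,\C)$ preserves the harmonic subspace $\mathcal{H}((V_+\otimes\C)^k)$: the second-order operators $\Delta_{ij}$ ($1\le i,j\le k$) transform under $\GL(k,\C)$ inside the symmetric square of $(\C^k)^*$, so the common kernel is $\GL(k,\C)$-stable.

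The heart of the proof is then the multiplicity-free joint decomposition
$$\mathcal{H}((V_+\otimes\C)^k) \;\cong\; \bigoplus_{D} V_D^{\OO(n,\C)} \otimes V_D^{\GL(k,\C)},$$
as an $\OO(n,\C)\times\GL(k,\C)$-module, where $D$ ranges over Young diagrams admissible for both groups (length at most $k$, and columns satisfying the $\OO(n)$-constraint, cf.\ Lemma \ref{vanext}). This is exactly the $(\OO(n,\C), \GL(k,\C))$ Howe duality in the polynomial/harmonic setting, and is stated in the references already invoked in the paper, namely Howe \cite{Howe} and Kashiwara--Vergne \cite{KV} (Propositions 6.6 and 6.11). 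Granting this decomposition, statement (1) follows at once: for $U_1$ irreducible, $U_2 := \Hom_{\OO(n,\C)}(U_1, \mathcal{H}((V_+\otimes\C)^k))$ is either zero (if $U_1$ does not appear in the sum) or isomorphic to the unique $V_D^{\GL(k,\C)}$ paired with $U_1$, which is irreducible.

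For statement (2), the evaluation pairing $\Psi : U_1 \otimes U_2 \to \mathcal{H}((V_+\otimes\C)^k)$, $u_1 \otimes \phi \mapsto \phi(u_1)$, is tautologically $\OO(n,\C)\times\GL(k,\C)$-equivariant, and its image is the $U_1$-isotypic component of $\mathcal{H}$. Injectivity follows from irreducibility of $U_1 \otimes U_2$ as an $\OO(n,\C)\times\GL(k,\C)$-module, together with the fact that $\Psi$ is nonzero whenever $U_2 \ne 0$. Uniqueness of $U_2$ up to isomorphism is forced by its characterization as $\Hom_{\OO(n,\C)}(U_1,\mathcal{H})$. The only real obstacle is pinning down the correct statement of Howe duality in the polynomial model so that the irreducibility of the $\GL(k,\C)$-action on each $\OO(n,\C)$-isotypic component is guaranteed; since the paper is already working within the Kashiwara--Vergne / Howe framework, this is directly quotable rather than requiring new argument.
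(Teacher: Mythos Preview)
Your proposal is correct and takes essentially the same approach as the paper: both reduce to the Howe duality / Kashiwara--Vergne framework. The paper is terser, citing Proposition~5.7 of \cite{KV} directly for (1) and saying (2) ``follows from (1)'', whereas you unpack the mechanism (the multiplicity-free decomposition and the evaluation map); your citations point to the explicit decomposition results (Propositions~6.6/6.11 of \cite{KV}) rather than the abstract irreducibility statement (Proposition~5.7), but the content is the same.
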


\begin{proof}
Statement (1) follows from Proposition 5.7 of Kashiwara-Vergne \cite{KV}.  Statement (2) follows from statement (1).

\end{proof}

\begin{lem} \label{corext}
In the set-up of the previous lemma, if $U_1$ is $\wwedge{\ell}(V_+ \otimes \C)$, then $U_2$ is $\wwedge{\ell}(\C^k)$ and hence the $\wwedge{\ell} (V_+ \otimes \C)$ isotypic subspace for $\OO(n, \C)$ in $\mathcal{H} \big( (V_+ \otimes \C)^k \big)$ is $\wwedge{\ell}(V_+ \otimes \C) \otimes \wwedge{\ell}(\C^k)$ as an $\OO(n, \C) \times \GL(k, \C)$-module.
\end{lem}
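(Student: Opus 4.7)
By Lemma \ref{intertwine}, we know a priori that the $\wwedge{\ell}(V_+ \otimes \C)$-isotypic component of $\mathcal{H}\big((V_+\otimes\C)^k\big)$ has the form $\wwedge{\ell}(V_+\otimes\C) \otimes U_2$ for a unique irreducible $\GL(k,\C)$-module $U_2$. The plan is to exhibit an explicit $\OO(n,\C)\times\GL(k,\C)$-equivariant embedding of $\wwedge{\ell}(V_+\otimes\C) \otimes \wwedge{\ell}(\C^k)$ into $\mathcal{H}\big((V_+\otimes\C)^k\big)$; since $\wwedge{\ell}(\C^k)$ is itself irreducible under $\GL(k,\C)$, such an embedding forces $U_2 \cong \wwedge{\ell}(\C^k)$.

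The embedding I would use is the one by $\ell \times \ell$ minors of the matrix of variables $Z = (z_{\alpha,j})$. Concretely, for $I = (i_1 < \cdots < i_\ell) \in \mathcal{S}_{\ell,n}$ and $J = (j_1 < \cdots < j_\ell) \in \mathcal{S}_{\ell,k}$, set
\[
D_{I,J} \;=\; \det\big(z_{i_a, j_b}\big)_{1 \le a,b \le \ell}.
\]
First I would check that the assignment $(e_I \otimes f_J) \mapsto D_{I,J}$ is $\OO(n,\C)\times\GL(k,\C)$-equivariant, where $\OO(n,\C)$ acts on $Z$ from the left via the basis $e_1,\ldots,e_n$ and $\GL(k,\C)$ acts from the right. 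This is immediate from the multilinearity and alternating property of the determinant.

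The core point is the harmonicity $\Delta_{ij} D_{I,J} = 0$ for all $1 \le i,j \le k$, where $\Delta_{ij} = \sum_\alpha \partial_{z_{\alpha,i}}\partial_{z_{\alpha,j}}$. This is a short calculation: expanding $D_{I,J} = \sum_{\sigma \in S_\ell} \sgn(\sigma)\prod_a z_{i_{\sigma(a)}, j_a}$, if $\{i,j\} \not\subset J$ one derivative already kills it; if $i = j_b = j_c = j$ with $b = c$, the second derivative vanishes because $D_{I,J}$ is linear in each $z_{\alpha,j_b}$; and if $i = j_b \neq j_c = j$ with $b \neq c$, the condition $i_{\sigma(b)} = i_{\sigma(c)} = \alpha$ forces $\sigma(b) = \sigma(c)$, which is impossible.

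With harmonicity established, the map $e_I \otimes f_J \mapsto D_{I,J}$ lands in $\mathcal{H}\big((V_+\otimes\C)^k\big)$, is nonzero, and is $\OO(n,\C)\times\GL(k,\C)$-equivariant. Its image has $\wwedge{\ell}(V_+ \otimes \C)$-isotypic component containing $\wwedge{\ell}(\C^k)$ as a $\GL(k,\C)$-subrepresentation, so $U_2$ contains $\wwedge{\ell}(\C^k)$, and by irreducibility of $U_2$ (Lemma \ref{intertwine}(1)), equality holds. The only nontrivial step is the harmonicity calculation, but even this is routine once the determinantal form of $D_{I,J}$ is exploited.
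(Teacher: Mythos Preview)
Your proof is correct. The paper's own proof of this lemma is a one-line citation to Howe, Proposition~3.6.3, or Kashiwara--Vergne, Theorems~6.9 and~6.13, which identify the $\GL(k,\C)$-partner of $\wwedge{\ell}(V_+\otimes\C)$ directly from the general tables of the $\OO(n)\times\GL(k)$ duality on harmonics. Your argument instead constructs the embedding explicitly via the $\ell\times\ell$ minors $D_{I,J}$, verifies harmonicity by hand, and then invokes the irreducibility of $U_2$ from Lemma~\ref{intertwine} to pin it down.

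This is a genuinely different route from the cited proof, but it is exactly the direct argument the paper itself supplies in the very next section (the intertwiner $\Psi$, with $f_{I,J}$ playing the role of your $D_{I,J}$), where the authors remark that this construction ``will give a direct proof of Lemma~\ref{corext}.'' So you have independently rediscovered the paper's alternative proof. The advantage of your approach is that it is self-contained and makes the isotypic subspace completely explicit; the advantage of the citation is brevity and the fact that it simultaneously handles all isotypic components, not just the exterior powers.
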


\begin{proof}
This is an immediate consequence of Howe \cite{Howe} Proposition 3.6.3 or \\
Kashiwara-Vergne \cite{KV} Theorem 6.9 ($n$ odd) and Theorem 6.13 ($n$ even).

\end{proof}

In the next section we construct an explicit $\OO(n, \C) \times \GL(k,\C)$-intertwiner
$$\Psi: \wwedge{\ell}(V_+ \otimes \C) \otimes \wwedge{\ell} (\C^k) \to \mathcal{H} \big( (V_+ \otimes \C)^k \big).$$  Thus we will give a direct proof of Lemma \ref{corext} and  the ``if'' part of  Lemma \ref{vanext}. 

\section{The intertwiner $\Psi$}
In what follows we will identify the space $\mathrm{Hom}(\C^k, V_+ \otimes \C)$ with the space of $n$ by $k$ matrices using the basis $e_1,\ldots, e_n$ for $V_+$.  That is,
$$\mathrm{Hom}(\C^k, V_+ \otimes \C) \cong (\C^k)^* \otimes (V_+ \otimes \C) \cong (V_+ \otimes \C)^k \cong M_{n,k}(\C).$$
Let $\ell \leq k$.  Let $J = (j_1,\ldots, j_\ell)$ be in $\mathcal{S}_{\ell,k}$ and $I =(i_1,i_2, \ldots,i_{\ell})$ be in $\mathcal{S}_{\ell,n}$, that is, strictly increasing $\ell$-tuples of elements of $\{1, \ldots, k\}$ and $\{1, \ldots, n\}$ respectively.  Let $\epsilon_1,\ldots, \epsilon_k$ be the standard basis for $\C^k$ and $\alpha_1,\ldots, \alpha_k$ be the dual basis.  Let $\{e_I^*\}$ be the basis of $\wwedge{\ell}(V_+ \otimes \C)^*$ dual to the basis $\{e_I\}$ of $\wwedge{\ell}(V_+ \otimes \C)$.  Now define $e_I$ and $\epsilon_{J}$ by
$$e_I = e_{i_1} \wedge \cdots \wedge e_{i_\ell} \quad \text{and} \quad \epsilon_{J} = \epsilon_{j_1} \wedge \cdots  \epsilon_{j_\ell}.$$
Then we define the intertwiner
\begin{align*}
\Psi: \wwedge{\ell}(V_+ \otimes \C) \otimes \wwedge{\ell} \C^k &\rightarrow \mathrm{Pol}\big( \Hom(\C^k, V_+ \otimes \C) \big) \\
\Psi(e_I \otimes \epsilon_J)(Z) &= e_I^* \big(\wwedge{\ell}(Z) (\epsilon_J) \big).
\end{align*}
Here $Z \in \mathrm{Hom}(\C^k, V_+ \otimes \C)$ and $\wwedge{\ell}(Z) : \wwedge{\ell}(\C^k) \to \wwedge{\ell}(V_+ \otimes \C)$ is the $\ell^{th}$ exterior power of $Z$.  Clearly $\Psi$ is nonzero.

Now define $f_{I,J}(Z)$ to be the determinant of the $\ell$ by $\ell$ minor given by choosing the rows $i_1,i_2,\ldots,i_{\ell}$ and the columns $j_1,\ldots, j_\ell$ of $Z \in \mathrm{Hom}(\C^k, V_+ \otimes \C)$ regarded as an $n$ by $k$ matrix. 
Then we have
\begin{lem}
$$\Psi(e_I \otimes \epsilon_J)(Z) = f_{I,J}(Z).$$
\end{lem}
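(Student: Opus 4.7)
The plan is to verify the claim by a direct unwinding of the definitions of $\Psi$, of $\wwedge{\ell}(Z)$, and of the standard $\ell \times \ell$ minor. No deep machinery is needed; the only real content is the usual expansion of an $\ell$-fold wedge in coordinates together with the definition of the determinant as an alternating sum over permutations.

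First I would fix the matrix entries of $Z$. Writing $Z$ as the $n \times k$ matrix with entries $z_{\alpha,\beta}$ in the basis $e_1,\dots,e_n$ for $V_+ \otimes \C$ and $\epsilon_1,\dots,\epsilon_k$ for $\C^k$, one has $Z(\epsilon_j) = \sum_{\alpha=1}^n z_{\alpha,j}\, e_\alpha$. Next, since $\wwedge{\ell}(Z)$ is by definition the $\ell$-th exterior power of the linear map $Z$, it acts on a decomposable wedge by sending $\epsilon_{j_1} \wedge \cdots \wedge \epsilon_{j_\ell}$ to $Z(\epsilon_{j_1}) \wedge \cdots \wedge Z(\epsilon_{j_\ell})$. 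Expanding each factor and multilinearity of the wedge then gives
\begin{equation*}
\wwedge{\ell}(Z)(\epsilon_J) \;=\; \sum_{\alpha_1,\dots,\alpha_\ell = 1}^{n} z_{\alpha_1,j_1}\, z_{\alpha_2,j_2} \cdots z_{\alpha_\ell,j_\ell}\; e_{\alpha_1} \wedge \cdots \wedge e_{\alpha_\ell}.
\end{equation*}

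The second step is to evaluate $e_I^*$ on this sum. Only those terms in which $\{\alpha_1,\dots,\alpha_\ell\} = \{i_1,\dots,i_\ell\}$ as sets can survive, because on every other decomposable wedge $e_{\alpha_1} \wedge \cdots \wedge e_{\alpha_\ell}$ (either having a repeated index or being indexed by a set different from $I$) the functional $e_I^*$ vanishes. The surviving terms are parametrized by permutations $\sigma$ of $\{1,\dots,\ell\}$ via $\alpha_r = i_{\sigma(r)}$, and on each such term $e_I^*$ returns $\sgn(\sigma)$. Collecting these contributions gives
\begin{equation*}
\Psi(e_I \otimes \epsilon_J)(Z) \;=\; \sum_{\sigma \in S_\ell} \sgn(\sigma)\, z_{i_{\sigma(1)}, j_1}\, z_{i_{\sigma(2)}, j_2} \cdots z_{i_{\sigma(\ell)}, j_\ell},
\end{equation*}
which is precisely the Leibniz expansion of the determinant of the $\ell \times \ell$ submatrix of $Z$ with rows $i_1 < \cdots < i_\ell$ and columns $j_1 < \cdots < j_\ell$, i.e. $f_{I,J}(Z)$.

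The argument is essentially bookkeeping, so there is no genuine obstacle; the only point that requires a moment of care is getting the sign conventions correct, namely that $e_I^*(e_{\alpha_1} \wedge \cdots \wedge e_{\alpha_\ell}) = \sgn(\sigma)$ when $(\alpha_1,\dots,\alpha_\ell)$ is the permutation $\sigma$ applied to the strictly increasing tuple $(i_1,\dots,i_\ell)$, and zero otherwise. Once this is checked, the identification with $f_{I,J}(Z)$ is automatic from the standard definition of determinant.
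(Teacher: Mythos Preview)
Your proof is correct and is exactly the direct verification from the definitions that the paper intends; in fact the paper states this lemma without proof, treating it as an immediate consequence of the definitions of $\Psi$, $\wwedge{\ell}(Z)$, and $f_{I,J}$. Your careful unwinding of the Leibniz expansion is precisely what fills in the omitted details.
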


\begin{lem}
$f_{I,J}(Z)$ is harmonic.
\end{lem}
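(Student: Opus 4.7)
The plan is a direct computation exploiting two structural properties of the determinant defining $f_{I,J}$: multilinearity in the matrix entries and alternation in the rows. Set $M = (z_{i_p, j_q})_{1 \leq p, q \leq \ell}$, so that $f_{I,J}(Z) = \det M$.

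First I would note that $f_{I,J}$ depends only on the variables $z_{\alpha, j}$ with $\alpha \in I$ and $j \in J$. Expanding
$$\Delta_{ab} f_{I,J} = \sum_{\alpha=1}^n \frac{\partial^2 f_{I,J}}{\partial z_{\alpha, a} \partial z_{\alpha, b}},$$
this observation kills every summand as soon as $a \notin J$ or $b \notin J$, reducing the problem to the case $a, b \in J$.

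In that case the summand vanishes for $\alpha \notin I$, and for $\alpha = i_p$ with $p \in \{1, \ldots, \ell\}$ the second derivative is taken with respect to two entries of row $p$ of $M$. If $a = b$, linearity of $\det M$ in each entry gives $\partial^2 \det M / \partial z_{i_p, a}^2 = 0$. If $a = j_s$ and $b = j_t$ with $s \neq t$, the first derivative $\partial \det M / \partial z_{i_p, j_s}$ is the signed $(p,s)$-cofactor $(-1)^{p+s} \det M^{ps}$, where $M^{ps}$ has row $p$ deleted; hence this cofactor no longer involves $z_{i_p, j_t}$, and the second derivative kills it. Summing over $p$ (equivalently, over $\alpha \in I$) then gives $\Delta_{ab} f_{I,J} = 0$.

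I expect the argument to be essentially bookkeeping — the alternating and multilinear character of the determinant does all the work — so the only real task is to organize the case split cleanly; there is no genuine obstacle. The statement is the expected fact that any minor of the coordinate matrix $Z$ is harmonic for every contraction Laplacian $\Delta_{ab}$, $1 \leq a, b \leq k$.
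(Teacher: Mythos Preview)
Your proof is correct and is essentially the same argument as the paper's: both exploit that $\det M$ has degree at most one in the entries of any fixed row, so $\partial^2/\partial z_{\alpha,a}\,\partial z_{\alpha,b}$ annihilates it for every $\alpha$. The paper phrases this via the monomial expansion of the determinant (each monomial contains at most one factor from a given row), while you phrase it via cofactor expansion; the content is identical.
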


\begin{proof}
Given a monomial $m$, we have $\Delta_{ij}(m) \neq 0$ if and only if $m = z_{\alpha, i} z_{\alpha, j} m^\prime$ for some $1 \leq \alpha \leq n$ and non-zero monomial $m^\prime$.  But since $f_{I,J}$ is a determinant, it is the sum of monomials each of which has at most one term from a given row.  That is,
$$\frac{\partial^2}{\partial z_{\alpha, i} \partial z_{\alpha, j}} f_{I,J}(Z) = 0$$
for all $i,j, \alpha$ and thus $\Delta_{ij}\big( f_{I,J}(Z) \big) = 0$ term by term.

\end{proof}

\begin{cor}
The intertwiner $\Psi$ maps to the harmonics, that is
$$\Psi: \wwedge{\ell}(V_+ \otimes \C) \otimes \wwedge{\ell} \C^k \rightarrow \mathcal{H}\big( (V_+ \otimes \C)^k \big).$$
\end{cor}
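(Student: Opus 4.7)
The plan is to deduce this corollary directly from the two lemmas that immediately precede it, using linearity of $\Psi$. Since the space of harmonic polynomials $\mathcal{H}\big( (V_+ \otimes \C)^k \big)$ is a $\C$-linear subspace of $\mathrm{Pol}\big( \Hom(\C^k, V_+ \otimes \C) \big)$ and $\Psi$ is a linear map, it suffices to exhibit a spanning set of $\wwedge{\ell}(V_+ \otimes \C) \otimes \wwedge{\ell}\C^k$ whose image under $\Psi$ lies in the harmonics.

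The natural spanning set is the tensor product basis $\{e_I \otimes \epsilon_J : I \in \mathcal{S}_{\ell,n}, \ J \in \mathcal{S}_{\ell,k}\}$ (and in fact one only needs strictly increasing indices; the general case then follows from antisymmetry of both sides of $\Psi$). The first of the two preceding lemmas identifies
$$\Psi(e_I \otimes \epsilon_J)(Z) = f_{I,J}(Z),$$
the determinant of the $\ell \times \ell$ minor of $Z$ indexed by rows $I$ and columns $J$. The second preceding lemma then shows $f_{I,J}(Z) \in \mathcal{H}\big( (V_+ \otimes \C)^k \big)$, since no variable $z_{\alpha, j}$ appears with multiplicity greater than one in any monomial of the determinant, and hence $\Delta_{ij} f_{I,J} = 0$ term by term. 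Stringing these two facts together and extending by linearity finishes the proof.

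There is no genuine obstacle here; the conceptual work has already been done in the two preceding lemmas. It is worth observing, however, that the corollary carries more information than appears at first glance: combining it with the manifest nonvanishing of $\Psi$ (for instance, $f_{I,J}$ is a nonzero polynomial for any allowed $I,J$) yields a concrete, explicit $\OO(n,\C) \times \GL(k,\C)$-equivariant embedding
$$\Psi : \wwedge{\ell}(V_+ \otimes \C) \otimes \wwedge{\ell}(\C^k) \hookrightarrow \mathcal{H}\big( (V_+ \otimes \C)^k \big)$$
for every $\ell \leq k$. This provides the promised direct, representation-theory-free proof of the ``if'' direction of Lemma \ref{vanext} and of Lemma \ref{corext}, bypassing the invocation of Howe and of Kashiwara--Vergne.
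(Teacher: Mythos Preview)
Your proof is correct and is exactly the argument the paper intends: the corollary is stated without proof precisely because it follows immediately from the two preceding lemmas by linearity, which is what you spell out. Your additional remarks about the equivariant embedding and the direct proof of the ``if'' part of Lemma~\ref{vanext} also match the paper's own commentary just before the section on $\Psi$.
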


We leave the following proof to the reader.

\begin{lem}
$\Psi$ is $\OO(n, \C) \times \GL(k, \C)$-equivariant.  That is, for $g \in \OO(n, \C)$ and $g^\prime \in \GL(k, \C)$,
\begin{equation}\label{equivariant} 
\Psi \big( \wwedge{\ell}(g)(e_I) \otimes \wwedge{\ell}(g^\prime)(\epsilon_J) \big)(Z) = \Psi(e_I \otimes \epsilon_J)(g^{-1} Z g^\prime).
\end{equation}
\end{lem}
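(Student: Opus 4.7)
The plan is to reformulate $\Psi$ in a coordinate-free way using the standard bilinear form on $V_+\otimes\C$; once this is done, the equivariance becomes immediate from the functoriality of $\wwedge{\ell}$ and the defining property of $\OO(n,\C)$.

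First I would eliminate the dual-basis notation. The complex-bilinear extension $(\cdot,\cdot)$ to $V_+\otimes\C$ is nondegenerate and makes $e_1,\dots,e_n$ orthonormal, so the induced bilinear form on $\wwedge{\ell}(V_+\otimes\C)$ has $\{e_I\}_{I\in\mathcal{S}_{\ell,n}}$ as an orthonormal basis. Consequently $e_I^*(x)=(e_I,x)$, and the defining formula for $\Psi$ extends bilinearly to the coordinate-free expression
$$\Psi(v\otimes w)(Z)=\bigl(v,\ \wwedge{\ell}(Z)(w)\bigr),\qquad v\in\wwedge{\ell}(V_+\otimes\C),\ w\in\wwedge{\ell}(\C^k),\ Z\in M_{n,k}(\C).$$

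Second, I would exploit two standard facts: (a) functoriality of the $\ell$-th exterior power, $\wwedge{\ell}(AB)=\wwedge{\ell}(A)\circ\wwedge{\ell}(B)$ for any composable linear maps $A,B$; and (b) $\OO(n,\C)$-invariance of $(\cdot,\cdot)$, which forces $\wwedge{\ell}(g)$ to be orthogonal for the induced form on $\wwedge{\ell}(V_+\otimes\C)$, so that the adjoint of $\wwedge{\ell}(g)$ is $\wwedge{\ell}(g^{-1})$. Applying (a) to absorb the $g'$ on the right into $Z$, and then (b) to move the $g$ on the left across the pairing, the left-hand side of \eqref{equivariant} becomes
$$\bigl(\wwedge{\ell}(g)(e_I),\ \wwedge{\ell}(Zg')(\epsilon_J)\bigr)=\bigl(e_I,\ \wwedge{\ell}(g^{-1}Zg')(\epsilon_J)\bigr)=\Psi(e_I\otimes\epsilon_J)(g^{-1}Zg'),$$
which is the right-hand side. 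There is no conceptual obstacle here; the only point that requires care is tracking where the inverse $g^{-1}$ enters, and the invariant-form reformulation in the first step handles this transparently.
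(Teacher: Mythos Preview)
Your proof is correct. The paper itself omits the argument entirely, writing only ``We leave the following proof to the reader,'' so your coordinate-free reformulation via the induced bilinear form on $\wwedge{\ell}(V_+\otimes\C)$ is precisely the kind of routine verification the authors had in mind.
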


We note that $\Psi$ is equivalent to a bilinear map $\widetilde{\Psi}$ from  $\wwedge{\ell}(V_+ \otimes \C) \times \wwedge{\ell} \C^k$ to $\mathcal{H} \big( (V_+ \otimes \C)^k \big)$ where $\widetilde{\Psi}(\eta, \tau) = \Psi(\eta \otimes \tau)$.  We define
\begin{equation*}
\widetilde{\Psi}^\prime : \wwedge{\ell}(\C^k) \to \mathrm{Hom}_{\OO(n, \C)} \big( \wwedge{\ell}(V_+ \otimes \C), \mathcal{H} \big( (V_+ \otimes \C)^k \big) \big)
\end{equation*}
by
\begin{equation}\label{formulaforpsi}
\widetilde{\Psi}^\prime (\epsilon_J) = \widetilde{\Psi} (\bullet, \epsilon_J) = \sum_{I \in \mathcal{S}_{\ell,n} } f_{I,J} e_I^*.
\end{equation}
We define
\begin{equation*}
\Psi_J = \widetilde{\Psi}^\prime (\epsilon_J) \in \Hom_{\OO(n,\C)}\big( \wwedge{\ell}(V_+ \otimes \C), \mathcal{H} \big( (V_+ \otimes \C)^k \big) \big)
\end{equation*}
and thus
$$\Psi_J(Z) = \sum_{I \in \mathcal{S}_{\ell,n}} f_{I,J}(Z) e_I^*.$$

We now compute $\Hom_{\OO(n,\C)}\big( \wwedge{\ell}(V_+ \otimes \C), \mathcal{H} \big( (V_+ \otimes \C)^k \big) \big)$. Since $\wwedge{\ell} (V_+ \otimes \C) \otimes \wwedge{\ell} (\C^k)$ is an irreducible $\OO(n, \C) \times \GL(k,\C)$-module it follows that $\Psi$ is injective. The image of $\Psi$ is contained in the $\wwedge{\ell} (V_+ \otimes \C)$ isotypic subspace of $\cal{H}(V^k_+)$.  By Lemma \ref{corext}, we know that the $\wwedge{\ell}(V_+ \otimes \C)$-isotypic subspace is isomorphic to this tensor product as an $\OO(n, \C) \times \GL(k,\C)$-module which is irreducible.  Hence $\Psi$ is a nonzero map of irreducible $\OO(n, \C) \times \GL(k, \C)$-modules and hence an isomorphism.  Thus $\widetilde{\Psi}^\prime$ is an isomorphism of $\C$-vector spaces and we have

\begin{lem} \label{psijbasisharm}
$\{\Psi_J : J \in \mathcal{S}_{m,k}\}$ is a basis for the vector space
$$\Hom_{\OO(n,\C)}\big( \wwedge{\ell}(V_+ \otimes \C), \mathcal{H} \big( (V_+ \otimes \C)^k \big) \big).$$
\end{lem}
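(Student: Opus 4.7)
The plan is to deduce the statement from the already-proved Lemmas \ref{corext} and the construction of $\Psi$, by identifying $\widetilde{\Psi}^\prime$ with an isomorphism between two vector spaces of the same dimension. Since $\Psi_J = \widetilde{\Psi}^\prime(\epsilon_J)$ and the elements $\{\epsilon_J : J \in \mathcal{S}_{\ell, k}\}$ form a basis of $\wwedge{\ell}(\C^k)$, the lemma will follow the moment $\widetilde{\Psi}^\prime$ is shown to be a linear isomorphism.

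First I would verify that $\Psi$ itself is an isomorphism onto the $\wwedge{\ell}(V_+\otimes \C)$-isotypic subspace of $\mathcal{H}\big((V_+\otimes \C)^k\big)$. By the equivariance \eqref{equivariant}, $\Psi$ is an $\OO(n,\C) \times \GL(k,\C)$-map, and the source $\wwedge{\ell}(V_+\otimes \C) \otimes \wwedge{\ell}(\C^k)$ is irreducible for this group action. By inspection $\Psi$ is nonzero (e.g.\ choosing $Z$ with $f_{I,J}(Z) \neq 0$ for some pair $(I,J)$), so by Schur's lemma $\Psi$ is injective. Its image lies in the isotypic subspace by $\OO(n,\C)$-equivariance, and by Lemma \ref{corext} the isotypic subspace is also isomorphic to $\wwedge{\ell}(V_+\otimes \C) \otimes \wwedge{\ell}(\C^k)$; hence $\Psi$ is a surjection onto this subspace by a dimension count.

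Next, I would pass from $\Psi$ to $\widetilde{\Psi}^\prime$ via the standard tensor-hom adjunction: for any $\OO(n,\C) \times \GL(k,\C)$-module $W$ there is a natural $\GL(k,\C)$-equivariant isomorphism
$$\Hom_{\OO(n,\C)}\bigl(\wwedge{\ell}(V_+ \otimes \C)\otimes \wwedge{\ell}(\C^k),\, W\bigr) \;\cong\; \Hom\bigl(\wwedge{\ell}(\C^k),\, \Hom_{\OO(n,\C)}(\wwedge{\ell}(V_+\otimes\C), W)\bigr).$$
Applied to $W = \mathcal{H}\big((V_+\otimes \C)^k\big)$, the isomorphism $\Psi$ on the left-hand side corresponds to $\widetilde{\Psi}^\prime$ on the right-hand side, so $\widetilde{\Psi}^\prime$ is an isomorphism of $\C$-vector spaces.

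Since $\{\epsilon_J : J \in \mathcal{S}_{\ell,k}\}$ is a basis of $\wwedge{\ell}(\C^k)$, applying the isomorphism $\widetilde{\Psi}^\prime$ yields the desired basis $\{\Psi_J : J \in \mathcal{S}_{\ell, k}\}$ for $\Hom_{\OO(n,\C)}\bigl(\wwedge{\ell}(V_+\otimes\C), \mathcal{H}((V_+\otimes\C)^k)\bigr)$. There is no real obstacle here: the substantive input is the Howe/Kashiwara--Vergne multiplicity-one statement embodied in Lemma \ref{corext}, and the present lemma is essentially a formal reformulation of that fact together with the nonvanishing of the concrete intertwiner $\Psi$. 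The only point to handle with care is the nonvanishing of $\Psi$, which can be made explicit by evaluating $\Psi(e_I\otimes \epsilon_J)$ at a matrix $Z$ whose specified $\ell\times \ell$ minor is the identity.
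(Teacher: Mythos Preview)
Your proposal is correct and follows essentially the same approach as the paper: show $\Psi$ is a nonzero equivariant map out of an irreducible module, hence injective, with image the (irreducible) isotypic subspace identified in Lemma~\ref{corext}, and then deduce that $\widetilde{\Psi}'$ is a vector-space isomorphism carrying the basis $\{\epsilon_J\}$ to $\{\Psi_J\}$. The only difference is cosmetic: you spell out the tensor--hom adjunction explicitly, whereas the paper simply asserts that $\widetilde{\Psi}'$ is an isomorphism once $\Psi$ is.
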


\begin{prop} \label{Psi_Jbasis}
$\{\Psi_J : J \in \mathcal{S}_{\ell,k}\}$ is a basis for the $\mathcal{S}_k$-module
$$\Hom_{\OO(n,\C)}\big( \wwedge{\ell}(V_+ \otimes \C), \mathrm{Pol} \big( (V \otimes \C)^k \big) \big).$$
\end{prop}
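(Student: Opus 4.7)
The plan is to reduce the statement to a claim about $\mathrm{Pol}((V_+\otimes\C)^k)$ over the invariant subring $\mathcal{R}_k$, and then to establish freeness by an evaluation/matrix-rank argument. Throughout I write $V':=\wwedge{\ell}(V_+\otimes\C)$, which is an irreducible $\OO(n,\C)$-module.

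First I would use Lemma \ref{Minsum} to split off the $w$-variables. Because $\OO(n,\C)$ acts trivially on $V_-\otimes\C$, the decomposition $\mathrm{Pol}((V\otimes\C)^k)=\mathrm{Pol}((V_+\otimes\C)^k)\otimes_{\C}\C[w_1,\ldots,w_k]$ is $\OO(n,\C)$-equivariant, and $\mathcal{S}_k=\mathcal{R}_k\otimes_{\C}\C[w_1,\ldots,w_k]$. Taking $\Hom_{\OO(n,\C)}(V',-)$ therefore gives an $\mathcal{S}_k$-linear identification
$$\Hom_{\OO(n,\C)}(V',\mathrm{Pol}((V\otimes\C)^k))\cong\Hom_{\OO(n,\C)}(V',\mathrm{Pol}((V_+\otimes\C)^k))\otimes_{\C}\C[w_1,\ldots,w_k],$$
reducing the proposition to the claim that $\{\Psi_J\}_{J\in\mathcal{S}_{\ell,k}}$ is a free $\mathcal{R}_k$-basis for $\Hom_{\OO(n,\C)}(V',\mathrm{Pol}((V_+\otimes\C)^k))$.

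For the spanning part, I would invoke Lemma \ref{posharm}. The multiplication map $\mathcal{R}_k\otimes_{\C}\mathcal{H}((V_+\otimes\C)^k)\twoheadrightarrow\mathrm{Pol}((V_+\otimes\C)^k)$ is $\OO(n,\C)$-equivariant with $\OO(n,\C)$ acting trivially on the invariant factor $\mathcal{R}_k$. Passing to $V'$-isotypic components (exact since $\OO(n,\C)$ is reductive) and using Lemma \ref{corext} to identify the $V'$-isotypic of $\mathcal{H}((V_+\otimes\C)^k)$ with $V'\otimes_{\C}\wwedge{\ell}(\C^k)$, one obtains a surjection $\mathcal{R}_k\otimes_{\C}(V'\otimes_{\C}\wwedge{\ell}(\C^k))\twoheadrightarrow\mathrm{Pol}((V_+\otimes\C)^k)_{V'}$. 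Applying $\Hom_{\OO(n,\C)}(V',-)$ and using Schur's lemma for the irreducible $V'$ yields a surjection
$$\mathcal{R}_k\otimes_{\C}\wwedge{\ell}(\C^k)\twoheadrightarrow\Hom_{\OO(n,\C)}(V',\mathrm{Pol}((V_+\otimes\C)^k))$$
which, by construction of the $\Psi_J$ in Lemma \ref{psijbasisharm}, sends $1\otimes\epsilon_J$ to $\Psi_J$. Hence the $\Psi_J$ generate the target as an $\mathcal{R}_k$-module.

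The hardest step is $\mathcal{R}_k$-linear independence, and my approach would be a direct evaluation argument. Suppose $\sum_{J}p_J\Psi_J=0$ with $p_J\in\mathcal{R}_k$. Applying both sides to $e_I$ and evaluating at $Z\in(V_+\otimes\C)^k$ yields
$$\sum_{J\in\mathcal{S}_{\ell,k}}p_J(Z)\,f_{I,J}(Z)=0\quad\text{for every }I\in\mathcal{S}_{\ell,n}.$$
The rectangular matrix $\bigl(f_{I,J}(Z)\bigr)_{I,J}$ is nothing but the matrix of $\wwedge{\ell}(Z):\wwedge{\ell}(\C^k)\to\wwedge{\ell}(V_+\otimes\C)$ in the standard bases. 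Since $k\leq n$, for generic $Z\in M_{n,k}(\C)$ the linear map $Z$ has full column rank $k$, so $\wwedge{\ell}(Z)$ is injective and the matrix has full column rank $\binom{k}{\ell}$. Consequently $p_J(Z)=0$ for all $J$ and all such generic $Z$. Because the Gram map $Z\mapsto(r_{ij}(Z))$ from $(V_+\otimes\C)^k$ onto $\Sym^2(\C^k)$ is surjective when $k\leq n$, the vanishing of $p_J$ as a polynomial in the $z_{\alpha,i}$ forces $p_J=0$ as an element of $\mathcal{R}_k=\C[r_{ij}]$, completing the proof. The main obstacle is the independence step, but the key observation that the exterior-power matrix $\wwedge{\ell}(Z)$ has maximal column rank for generic $Z$ of rank $k$ makes it tractable without invoking the full Howe duality machinery.
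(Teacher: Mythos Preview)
Your proof is correct and follows essentially the same approach as the paper's. The only cosmetic difference is that you first split off the $w$-variables via Lemma \ref{Minsum} and then prove the statement over $\mathcal{R}_k$, whereas the paper works directly over $\mathcal{S}_k$; the independence argument in both cases is the same observation that $\wwedge{\ell}(Z)$ is injective for injective $Z$ (generic when $k\le n$), forcing each coefficient $p_J$ to vanish as a function on $(V_+\otimes\C)^k$, hence as an element of the invariant ring since the Gram map is dominant.
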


\begin{proof}
By Lemma \ref{psijbasisharm}, we have $\{\Psi_J : J \in \mathcal{S}_{\ell,k}\}$ is a basis for the $\C$-vector space\\
$\Hom_{\OO(n, \C)} \big( \wwedge{\ell}(V_+ \otimes \C), \mathcal{H} \big( (V_+ \otimes \C)^k \big) \big)$.  Since $\Hom_{\OO(n, \C)}(\C, \cdot)$ is exact, the surjection $\mathcal{S}_k \otimes \mathcal{H} \big( (V_+ \otimes \C)^k \big) \to \mathrm{Pol}((V \otimes \C)^k)$ induces a surjection $\phi$ from the $\C$-vector space $\mathcal{S}_k \otimes \Hom_{\OO(n, \C)} \big( \wwedge{\ell}(V_+ \otimes \C), \mathcal{H} \big( (V_+ \otimes \C)^k \big) \big)$ to the $\C$-vector space $\Hom_{\OO(n, \C)} \big( \wwedge{\ell}(V_+ \otimes \C), \mathrm{Pol}((V \otimes \C)^k) \big)$, given by $\phi(f \otimes T) = fT$ and thus \\
$\{\Psi_J : J \in \mathcal{S}_{\ell,k}\}$ spans $\Hom_{\OO(n, \C)} \big( \wwedge{\ell}(V_+ \otimes \C), \mathrm{Pol}((V \otimes \C)^k) \big)$ as an $\mathcal{S}_k$-module.

We now show the elements of this set are independent over $\mathcal{S}_k$.  We claim that if $Z_0 \in \Hom^0(\C^k, V_+ \otimes \C)$, the set of injective homomorphisms, then $\{\Psi_J(Z_0)\}$ is an independent set over $\C$.  Indeed, $\Psi_J(Z_0) = \wwedge{\ell}(Z_0) \epsilon_J \in \wwedge{\ell}(V_+ \otimes \C)$.  And, since $Z_0$ is an injection, so is $\wwedge{\ell}(Z_0)$.  Thus, since $\{\epsilon_J\}$ is an independent set over $\C$ and $\wwedge{m}(Z_0)$ is an injection, $\{\wwedge{\ell}(Z_0) \epsilon_J\}$ is an independent set over $\C$.

Following Equation \eqref{r_{ij}} we have the quadratic $\OO(n, \C)$-invariants $r_{ij}(Z)$ for $Z \in \Hom(\C^k, V_+ \otimes \C)$, where $r_{ij}(Z) = \big( Z(\epsilon_i), Z(\epsilon_j) \big)$ is the inner product of columns.  We will regard $r_{ij}$ both as matrix indeterminates and functions of $X$.  Recall $\mathcal{S}_k = \C[r_{11}, r_{12}, \ldots, r_{kk}, w_1, \ldots, w_k]$.

Now suppose there is some dependence relation over $\mathcal{S}_k$ where $\mathbf{t} \in \C^k$ and we abbreviate $\mathbf{r} = (r_{11}, r_{12}, \ldots, r_{kk})$
$$\sum_J p_J(\mathbf{r}(Z), \mathbf{t}) \Psi_J(Z) = 0.$$
Then for each $(Z_0, \mathbf{t}_0) \in \Hom^0(\C^k, V_+) \times \C^k$, since $\{\Psi_J(Z_0, \mathbf{t}_0)\}$ is independent over $\C$, we have that $p_J(\mathbf{r}(Z_0), \mathbf{t}_0) = 0$ for all $J$ .  And since $k \leq n$, $\Hom^0(\C^k, V_+) \times \C^k$ is dense in $\Hom(\C^k, V_+) \times \C^k$, and thus we have $p_J = 0$ for all $J$.

\end{proof}

\section{Computation of the spaces of cochains}
Recall that $\OO(n) \subset \OO(n,1)$ is the subgroup that fixes the last basis vector $e_{n+1}$.

Recall $\mathcal{I}_{a,n}$ was defined to be the set of all ordered $a$-tuples of distinct elements from $\{1, \ldots, n\}$, equivalently the set of all injective maps from $\{1, \ldots, a \}$ to $\{1, \ldots, n\}$.  We recall $\mathcal{S}_{a, n} \subset \mathcal{I}_{a.n}$ is the set of all strictly increasing $a$-tuples.  Lastly, given $I = (i_1, \ldots, i_a) \in \mathcal{I}_{a,n}$, we define the set $\overline{I} = \{i_1, \ldots, i_a\} \subset \{1, \ldots, n\}$.  Note that this map restricted to $\mathcal{S}_{a,n}$ is a bijection to its image, the set of all subsets of size $a$ of $\{1, \ldots, n\}$.

Let $\ast: \wwedge{\ell} \mathfrak{p}_0^{*} \to \wwedge{n - \ell} \mathfrak{p}_0^{*}$ be the Hodge star operator associated to the Riemannian metric and the volume form $\vol = \omega_1 \wedge \cdots \wedge \omega_n$.  Extend $*$ to $\wwedge{\ell} \mathfrak{p}^*$ to be complex linear.  Hence
\begin{equation} \label{starg}
* \circ g = (\det g) g \circ * \text{ for } g \in \OO(n, \C).
\end{equation}

We now observe that the complex $C = C^{\bullet}(\mathfrak{so}(n,1, \C) ,\SO(n, \C); \mathrm{Pol}((V \otimes \C)^k))$ is the direct sum of two subcomplexes $C_+$ and $C_-$.  Indeed let $\iota \in \mathrm{O}(n, \C)$ be the element satisfying 
\begin{equation} \label{iotadef}
\iota(e_1) = -e_1 \ \text{and} \ \iota(e_j) = e_j, \quad 1 < j \leq n+1.
\end{equation}

Then $\iota \otimes \iota$ acts on the complex $C$ and commutes with $d$.  We define $C_+$ resp. $C_-$ to be the $+1$ resp. $-1$ eigenspace of $\iota \otimes \iota$.  Then we have
$$C = C_+ \oplus C_-.$$
Hence, $H^\bullet(C) = H^\bullet(C_+) \oplus H^\bullet(C_-)$.  By Equation \eqref{starg}, $* \otimes 1$ anticommutes with $\iota \otimes \iota$ and hence 
$$C^{n-\ell}_- = (* \otimes 1) \big(C^\ell_+\big) .$$
Since $C^\ell = C_+^\ell \oplus C_-^\ell$, to compute $C^\ell$ it suffices to compute $C_+^\ell$ and $C_+^{n-\ell}$.  Hence it suffices to compute $C_+$.  We note
\begin{align*}
C_+^\ell &= C^{\ell}\big(\mathfrak{so}(n,1, \C),\SO(n, \C);\mathrm{Pol}((V \otimes \C)^k)\big)^{\iota \otimes \iota} \\
&= C^{\ell}\big(\mathfrak{so}(n,1, \C),\OO(n, \C);\mathrm{Pol}((V \otimes \C)^k)\big).
\end{align*}

\subsection{The computation of $C_+$}

We recall $\mathcal{R}_k = \C[r_{11},r_{12},\ldots, r_{kk}]$ and \\
$\mathcal{S}_k =  \mathcal{R}_k[w_1,\ldots,w_k] =  \C[r_{11},r_{12},\ldots, r_{kk},w_1,\ldots,w_k]$.

We define an isomorphism of $\mathcal{S}_k$-modules
\begin{align*}
F_\ell : \Hom_{\OO(n, \C)}(\wwedge{\ell}(V_+ \otimes \C), \mathrm{Pol}( (V \otimes \C)^k)) &\to \Hom_{\OO(n, \C)}(\wwedge{\ell}\mathfrak{p}, \mathrm{Pol}((V \otimes \C)^k)) \\
e_I^* \otimes p &\mapsto \omega_I \otimes p
\end{align*}
and define, for $J \in \mathcal{S}_{\ell,n}$, $\varPhi_J$ by $F_\ell(\Psi_J) = \varPhi_J$.  Hence
$$\varPhi_J = \sum_{I \in \mathcal{S}_{\ell,n}} \omega_I \otimes f_{I,J}.$$
Then by Lemma \ref{vanext} we have

\begin{lem}
$C_+^\ell = 0$ for $\ell > k$.
\end{lem}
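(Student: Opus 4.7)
The statement is essentially a dimension count, so the plan is to identify $C_+^\ell$ with a Hom space that has already been analyzed in the preceding sections and then observe that the indexing set is empty.

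First, I would unpack the definition. By construction $C_+^\ell$ is the $+1$ eigenspace of $\iota\otimes\iota$ acting on $(\wwedge{\ell}\mathfrak{p}^*\otimes\mathrm{Pol}((V\otimes\C)^k))^{\SO(n,\C)}$, which (since $\iota$ together with $\SO(n,\C)$ generates $\OO(n,\C)$) coincides with the $\OO(n,\C)$-invariants
\[
\bigl(\wwedge{\ell}\mathfrak{p}^*\otimes\mathrm{Pol}((V\otimes\C)^k)\bigr)^{\OO(n,\C)}.
\]
Using the $\SO(n,\C)$-isomorphism $\mathfrak{p}^*\cong V_+\otimes\C$ recorded in \eqref{gothicp} (which is in fact $\OO(n,\C)$-equivariant under our embedding that fixes $e_{n+1}$), this becomes
\[
C_+^\ell \;\cong\; \Hom_{\OO(n,\C)}\bigl(\wwedge{\ell}(V_+\otimes\C),\,\mathrm{Pol}((V\otimes\C)^k)\bigr).
\]

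Now I would simply invoke Proposition \ref{Psi_Jbasis}, which exhibits the family $\{\Psi_J : J\in\mathcal{S}_{\ell,k}\}$ as an $\mathcal{S}_k$-module basis of this Hom space. The set $\mathcal{S}_{\ell,k}$ of strictly increasing $\ell$-tuples drawn from $\{1,\dots,k\}$ is empty as soon as $\ell>k$, so the basis is empty and the Hom space vanishes. This closes the argument in one line.

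There is no real obstacle here: all the heavy lifting has been done in Section 4 (the depth argument via Howe / Kashiwara--Vergne giving Lemma \ref{vanext}) and in Proposition \ref{Psi_Jbasis} (the extension from harmonics to all polynomials by multiplying with $\mathcal{S}_k$-invariants via Lemma \ref{combination}). The only thing I would check carefully is the identification of $C_+^\ell$ with the $\OO(n,\C)$-invariants, i.e.\ that the $-1$ eigenspace of $\iota\otimes\iota$ is exactly the $\SO(n,\C)$-invariants transforming by $\det$ under $\iota$; this is immediate from the definition \eqref{iotadef} of $\iota$ and the fact that $\iota$ acts on $\mathfrak{p}^*$ through its action on $V_+\otimes\C$. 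As an alternative one-line proof, one could skip Proposition \ref{Psi_Jbasis} and apply Lemma \ref{vanext} directly after using Lemma \ref{combination} to move $\OO(n,\C)$-invariants out of the Hom, but citing Proposition \ref{Psi_Jbasis} is cleaner since it is the structural result already in hand.
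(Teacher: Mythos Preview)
Your proof is correct and essentially matches the paper's. The paper's one-line proof cites Lemma \ref{vanext} directly (with the passage from $\mathcal{H}((V_+\otimes\C)^k)$ to $\mathrm{Pol}((V\otimes\C)^k)$ via Lemma \ref{combination} left implicit), whereas you cite the slightly more packaged Proposition \ref{Psi_Jbasis}; you even note this alternative yourself, so there is no real difference.
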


We then have the following consequence of Proposition \ref{Psi_Jbasis}
\begin{prop} \label{varphibasis}
$\{\varPhi_J\}$ is a basis for the $\mathcal{S}_k$-module $C_+^\ell$.
\end{prop}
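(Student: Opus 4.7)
The plan is to reduce Proposition \ref{varphibasis} directly to Proposition \ref{Psi_Jbasis} via the standard tensor-hom adjunction, together with the $\OO(n,\C)$-equivariant isomorphism $\mathfrak{p} \cong V_+ \otimes \C$ recorded in \eqref{gothicp}. First I would note that, because $C_+$ is by definition the $\iota \otimes \iota$-fixed subspace of the $\SO(n,\C)$-invariants (and $\OO(n,\C)$ is generated by $\SO(n,\C)$ together with $\iota$), one has
$$C_+^\ell = \bigl( \wwedge{\ell} \mathfrak{p}^* \otimes \mathrm{Pol}((V \otimes \C)^k) \bigr)^{\OO(n,\C)} \cong \Hom_{\OO(n,\C)} \bigl( \wwedge{\ell} \mathfrak{p}, \mathrm{Pol}((V \otimes \C)^k) \bigr).$$
The second isomorphism is the canonical tensor-hom adjunction $\phi \otimes p \mapsto (v \mapsto \phi(v) p)$, and is manifestly $\mathcal{S}_k$-linear because $\mathcal{S}_k \subset \mathrm{Pol}((V \otimes \C)^k)^{\OO(n,\C)}$ acts by multiplication on the polynomial factor on both sides.

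Next I would invoke the $\OO(n,\C)$-equivariant isomorphism $\mathfrak{p} \cong V_+ \otimes \C$ (induced by $e_{i,n+1} \mapsto e_i$, with $\iota$ acting compatibly on both sides through its restriction to $V_+$), which upon passing to $\wwedge{\ell}$ yields
$$\Hom_{\OO(n,\C)} \bigl( \wwedge{\ell} \mathfrak{p}, \mathrm{Pol}((V \otimes \C)^k) \bigr) \cong \Hom_{\OO(n,\C)} \bigl( \wwedge{\ell}(V_+ \otimes \C), \mathrm{Pol}((V \otimes \C)^k) \bigr).$$
Tracing through the identifications, the composite isomorphism from the right-hand Hom-space to $C_+^\ell$ sends an element $\sum_I e_I^* \otimes p_I$ of the Hom-form to $\sum_I \omega_I \otimes p_I$, i.e.\ it coincides with the map $F_\ell$ introduced just before the statement. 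In particular, the basis element $\Psi_J = \sum_I e_I^* \otimes f_{I,J}$ is sent to $F_\ell(\Psi_J) = \varPhi_J$.

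Finally, Proposition \ref{Psi_Jbasis} asserts that $\{\Psi_J : J \in \mathcal{S}_{\ell,k}\}$ is an $\mathcal{S}_k$-basis of the right-hand Hom-space. Transporting this basis across the $\mathcal{S}_k$-linear isomorphism produces the required $\mathcal{S}_k$-basis $\{\varPhi_J\}$ of $C_+^\ell$. There is no substantive obstacle: the entire content lies in Proposition \ref{Psi_Jbasis}, and the remaining task is the routine bookkeeping of verifying that each step in the chain of identifications is $\mathcal{S}_k$-linear and $\OO(n,\C)$-equivariant, which is automatic because $\mathcal{S}_k$ lies in the $\OO(n,\C)$-invariants and acts only on the polynomial factor.
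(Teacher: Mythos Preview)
Your proposal is correct and follows essentially the same approach as the paper: the paper states the result as an immediate consequence of Proposition \ref{Psi_Jbasis} via the $\mathcal{S}_k$-module isomorphism $F_\ell$, and you have simply unpacked what that entails, making explicit the identification $C_+^\ell \cong \Hom_{\OO(n,\C)}(\wwedge{\ell}\mathfrak{p}, \mathrm{Pol}((V \otimes \C)^k))$ that the paper leaves implicit.
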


We now give another description of $\varPhi_J$ as the outer exterior product of $\ell$ copies of $\varphi_1$.  That is,

\begin{equation} \label{varphiJwedgeproduct}
\varphi_{1}^{(j_1)} \wedge \cdots \wedge \varphi_{1}^{(j_\ell)} = \sum_{I \in \mathcal{S}_{\ell,n}} \omega_I  \otimes f_{I,J} = \varPhi_J.
\end{equation}

\begin{rmk}
$$C_+^k =  \mathcal{S}_k \varphi_k$$
\end{rmk}
where $\varphi_k = \varPhi_{1,2, \ldots, k}$ as before.



\subsection{The computation of $C_-$}\label{highdegreecochains}

Since $* \otimes I$ is an isomorphism of $\mathcal{S}_k$-modules from $C_+^\ell \to C_-^{n-\ell}$, we have, abbreviating $(* \otimes I)(\varPhi_J)$ to $(*\varPhi_J)$,

\begin{prop} \label{thecochains}
$$C_+^\ell = \sum_{J \in \mathcal{S}_{\ell, k}} \cal{S}_k \varPhi_J$$
$$C_-^\ell = \sum_{J \in \mathcal{S}_{n-\ell, k}} \cal{S}_k (*\varPhi_J)$$
where if $i > j$ then $\mathcal{S}_{i,j}$ is the empty set.  In particular $C_+^\ell \cong \mathcal{S}_k^{\binom{k}{\ell}}$ for $0 \leq \ell \leq k$ and zero for $\ell > k$, whereas $C_-^\ell \cong \mathcal{S}_k^{\binom{k}{n-\ell}}$ for $\ell \geq n-k$ and zero for $\ell < n-k$.
\end{prop}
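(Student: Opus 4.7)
The plan is to deduce both assertions directly from the preceding results, so only a brief assembly is needed.

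For $C_+^\ell$: I would invoke Proposition \ref{varphibasis}, which asserts that $\{\varPhi_J : J \in \mathcal{S}_{\ell,k}\}$ is a basis for the $\mathcal{S}_k$-module $C_+^\ell$. This immediately yields both the decomposition $C_+^\ell = \sum_{J \in \mathcal{S}_{\ell,k}} \mathcal{S}_k \varPhi_J$ and the isomorphism $C_+^\ell \cong \mathcal{S}_k^{\binom{k}{\ell}}$, since $|\mathcal{S}_{\ell,k}| = \binom{k}{\ell}$. The vanishing $C_+^\ell = 0$ for $\ell > k$ is the lemma stated immediately before Proposition \ref{varphibasis}, which itself rests on Lemma \ref{vanext} together with the identification $F_\ell$.

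For $C_-^\ell$: I would argue that $* \otimes I$ provides an isomorphism of $\mathcal{S}_k$-modules $C_+^{n-\ell} \xrightarrow{\sim} C_-^\ell$. First, $* : \wwedge{n-\ell}\mathfrak{p}^* \to \wwedge{\ell}\mathfrak{p}^*$ is a $\C$-linear isomorphism, so $* \otimes I$ is as well; it preserves $\SO(n,\C)$-invariants because $*$ is $\SO(n,\C)$-equivariant; and by Equation \eqref{starg} it anticommutes with $\iota \otimes \iota$, hence sends the $(+1)$-eigenspace to the $(-1)$-eigenspace. Moreover, it is $\mathcal{S}_k$-linear because the $\mathcal{S}_k$-action is by multiplication on the polynomial tensor factor, while $*$ acts on the $\wwedge{\bullet}\mathfrak{p}^*$ factor. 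Transporting the basis $\{\varPhi_J : J \in \mathcal{S}_{n-\ell,k}\}$ of $C_+^{n-\ell}$ through this isomorphism yields the basis $\{*\varPhi_J : J \in \mathcal{S}_{n-\ell,k}\}$ of $C_-^\ell$, which gives the claimed decomposition and $C_-^\ell \cong \mathcal{S}_k^{\binom{k}{n-\ell}}$ when $n-\ell \leq k$, with $C_-^\ell = 0$ otherwise.

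As for the main obstacle: there truly is none at this stage. The substantive work lies upstream, inside the $\mathcal{S}_k$-independence argument of Proposition \ref{Psi_Jbasis}, which leveraged the density of the open set of injective homomorphisms $\Hom^0(\C^k, V_+) \subset \Hom(\C^k, V_+)$ and the point evaluation $\Psi_J(Z_0) = \wwedge{\ell}(Z_0)\epsilon_J$. With Proposition \ref{varphibasis} and the Hodge star providing the $C_+ \leftrightarrow C_-$ duality already in hand, the present proposition is a purely formal bookkeeping of those inputs.
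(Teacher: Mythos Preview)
Your proposal is correct and matches the paper's approach exactly: the paper states this proposition without a separate proof, simply preceding it with Proposition \ref{varphibasis} for $C_+$ and the one-line observation that $*\otimes I$ is an $\mathcal{S}_k$-module isomorphism $C_+^\ell \to C_-^{n-\ell}$. If anything, you have supplied more justification than the paper does for why $*\otimes I$ intertwines the eigenspaces and is $\mathcal{S}_k$-linear.
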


\section{The spectral sequence associated to the Weil representation  } \label{spectralsection}
In this section we will construct a  spectral sequence associated to the  relative Lie algebra complex attached to  the action of {\it any} reductive subgroup $G$ of the symplectic group on the polynomial Fock model $\mathcal{P}$. By Proposition \ref{Steveprop1} this spectral sequence  converges to the relative Lie algebra cohomology $H^{\bullet}(\mathfrak{g}, K;\mathcal{P})$.  The Lie algebra $\mathfrak{g}$ of $G$ acts on   $\mathcal{P}$ by quadratic polynomial differential operators.  Hence the action of $\mathfrak{g}$ raises the filtration of $\mathrm{P}$ by polynomial degree at most two.  Hence the exterior differential $d$ raises the induced filtration degree, $p$, of the associated relative Lie algebra complex $C$ by at most two.  But $d$ raises the exterior degree, $\ell$, on $C$ by one. Hence if we redefine the filtration degree to be $ 2 \ell - p$ then the resulting filtration is preserved and decreasing and we obtain the required  spectral sequence.  The convergence of this spectral sequence
follows from  Proposition \ref{Steveprop1} since  the filtration is bounded above by twice the dimension of the symmetric space $G/K$.

Since this spectral sequence will have some exceptional properties, for example the filtration $\{F^{\bullet}\}$ is  bounded above and exhaustive  {\it but not bounded below}, we will first reprove some standard results about spectral sequences, Propositions \ref{Steveprop1}, \ref{grCzeroimpliesCzero} and \ref{generalspectral} in the generality required here.   Our basic reference will be Chapter 2 of \cite{Mc}, especially Theorem 2.1.  However, we warn the reader that the convergence part of \cite{Mc} Theorem 2.1 will not apply to our case, since our filtration is not bounded below.  An expanded version of this section, in particular a proof of Proposition \ref{firstandsecondiso}, may be found in the PhD thesis of the third author, see \cite{Ralston}.

In what follows we will assume the cochain complex $C$ has (cohomological) degrees between $0$ and $n$ for some fixed $n$. 

\subsection{Some general results on spectral sequences}
We first recall that a spectral sequence is a sequence of bigraded (by $\Z \times \Z$) complexes $\{ E^{\bullet,\bullet}_r\, d_r\}$ such that
\begin{equation} \label{definingequationforE}
H^{p,q}(E_r,d_r) = E^{p,q}_{r+1}, \ p,q \in \Z \times \Z, r \geq 0.
\end{equation}
\begin{rmk} Note that $d_r$ is bigraded, hence it will have a bidegree $(a,b)$.
\end{rmk}

We recall the following definitions.
\begin{defn}
A filtration $F^\bullet$ of a cochain complex $C$ is exhuastive if
$$\bigcup_{p \in \Z} F^p C = C $$
and separated if
$$\bigcap_{p \in \Z} F^p C = 0.$$
\end{defn}

Many  occurences of spectral sequences come from the following theorem, see \cite{Mc} Theorem 2.1. Note that the defining formulas for $Z^{p,q}_r$  and $B^{p,q}_r$ on page 33 of \cite{Mc} are not correct as stated but the correct formulas are used throughout pg. 33-35, in particular in the proof of Theorem 2.1.  
\begin{thm}\label{existenceofspecseq}
Suppose $C^{\bullet},d$ is a cochain complex equipped with a decreasing filtration $F^{\bullet} C, - \infty < p < \infty$.  Then there is a spectral sequence $\{ E^{\bullet,\bullet}_r\, d_r\}$ such that $E_0$ is the bigraded complex associated to the filtered complex $F^{\bullet}C$, that is
$$E^{p,q}_0 = F^pC^{p+q}/ F^{p+1}C^{p+q},  -\infty < p < \infty.$$
\end{thm}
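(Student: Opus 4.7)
The plan is to construct the spectral sequence directly following the standard recipe for filtered complexes, based on the approximations to cycles and coboundaries
$$Z_r^{p,q} = F^p C^{p+q} \cap d^{-1}\bigl(F^{p+r}C^{p+q+1}\bigr), \qquad B_r^{p,q} = F^p C^{p+q} \cap d\bigl(F^{p-r}C^{p+q-1}\bigr),$$
and setting
$$E_r^{p,q} = Z_r^{p,q} \big/ \bigl(Z_{r-1}^{p+1,q-1} + B_{r-1}^{p,q}\bigr).$$
With the conventions $Z_0^{p,q} = F^p C^{p+q}$ and $Z_{-1}^{p+1,q-1} + B_{-1}^{p,q} = F^{p+1}C^{p+q}$ (which follow from the assumption that $d$ preserves the filtration), the $E_0$-page is exactly the one demanded by the statement.

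The next step is to define $d_r : E_r^{p,q} \to E_r^{p+r, q-r+1}$ as the map induced by $d$. Well-definedness is routine: for $c \in Z_r^{p,q}$ one has $dc \in F^{p+r}C^{p+q+1}$ with $d(dc) = 0$, so $dc \in Z_s^{p+r, q-r+1}$ for every $s$; and $d$ sends the relation submodule $Z_{r-1}^{p+1,q-1} + B_{r-1}^{p,q}$ into $B_{r-1}^{p+r, q-r+1}$, which is zero in $E_r^{p+r, q-r+1}$. Then $d_r^2 = 0$ is immediate from $d^2 = 0$. The spectral-sequence identity $H(E_r, d_r) \cong E_{r+1}$ then follows by unwinding: every class in $\ker d_r$ admits a representative in $Z_{r+1}^{p,q}$ (split $dc = y + dz$ with $y \in F^{p+r+1}$, $z \in F^{p+1}$, and replace $c$ by $c - z$); the image of $d_r$ is represented by $B_r^{p,q}$ via the identity $d(Z_r^{p-r, q+r-1}) = B_r^{p,q}$; and a second-isomorphism-theorem manipulation identifies the resulting quotient with $Z_{r+1}^{p,q}/(Z_r^{p+1, q-1} + B_r^{p,q}) = E_{r+1}^{p,q}$.

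The main obstacle is purely bookkeeping in the final step, namely the set-theoretic identity
$$Z_{r+1}^{p,q} \cap \bigl(B_r^{p,q} + Z_{r-1}^{p+1, q-1}\bigr) = B_r^{p,q} + Z_r^{p+1, q-1}.$$
The inclusion $\supset$ reduces to $B_r^{p,q} \subset Z_{r+1}^{p,q}$ (coboundaries are cocycles) and $Z_r^{p+1,q-1} \subset Z_{r-1}^{p+1, q-1}$; the nontrivial inclusion $\subset$ is a one-line calculation: if $c = b + z$ with $c \in Z_{r+1}^{p,q}$, $b \in B_r^{p,q}$, $z \in Z_{r-1}^{p+1, q-1}$, then $dz = dc - db = dc \in F^{p+r+1}$, which upgrades $z$ from $Z_{r-1}^{p+1, q-1}$ to $Z_r^{p+1, q-1}$. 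Crucially, every step is local in a single pair of filtration indices and uses no assumption that the filtration is bounded below, which is exactly what the paper requires for the unbounded filtration arising from the Weil representation; the separate issue of convergence is then treated by Proposition \ref{Steveprop1}.
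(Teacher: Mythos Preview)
Your construction is correct and is exactly the standard one the paper has in mind: the paper does not prove this theorem but cites \cite{Mc}, Theorem~2.1, and your recipe via $Z_r^{p,q}$ and $B_r^{p,q}$ is precisely that proof. Your formula $E_r^{p,q}=Z_r^{p,q}/(Z_{r-1}^{p+1,q-1}+B_{r-1}^{p,q})$ coincides with the paper's Proposition~\ref{firstandsecondiso} once one accounts for a harmless index shift: the paper's $B_r^{p,q}$ is your $B_{r-1}^{p,q}$ (the paper uses $F^{p-r+1}$ where you use $F^{p-r}$). Your observation that nothing in the construction requires the filtration to be bounded below is exactly the point the paper wants to extract, leaving convergence to Proposition~\ref{Steveprop1}.
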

We now define graded subspaces $\overline{Z}^{p,q}_r$ and $\overline{B}^{p,q}_r$ of $E^{p,q}_{r-1}$ by
\begin{equation}
\begin{aligned}
\overline{Z}^{p,q}_r &= \mathrm{ker}\big(d_{r-1}:E^{p,q}_{r-1} \to E^{p+r-1,q-r+2}_{r-1} \bigskip),\\
\overline{B}^{p,q}_r &=\mathrm{im}\big(d_{r-1}:E^{p-r+1,q+r-2}_{r-1} \to E^{p,q}_{r-1} \big).
\end{aligned}
\end{equation}
Hence, by definition we have
\begin{equation*}
E^{p,q}_r = H^{p,q}(E^{p,q}_{r-1}) = \overline{Z}^{p,q}_r / \overline{B}^{p,q}_r .
\end{equation*}
Define subspaces $Z^{p,q}_r$ and $B ^{p,q}_r$ by
\begin{enumerate}
\item $Z^{p,q}_r = \mathrm{ker}\big(d: F^pC^{p+q} \to  F^p C^{p+q+1}/F^{p+r}C^{p+q+1} \big) $
\item $B ^{p,q}_r = \mathrm{im}\big(d:F^{p-r+1}C^{p+q-1} \to F^pC^{p+q}\big)$.
\end{enumerate}
We note two properties of $\{Z^{p,q}_r\}$ and $\{B^{p,q}_r\}$. 
\begin{lem} \label{exhaustsep}
We have
\begin{enumerate}
\item If $F^\bullet$ is exhaustive then $B^{p,q} = \bigcup_{r} B^{p,q}_r$.
\item If $F^\bullet$ is separated  then $Z^{p,q} = \bigcap_{r} Z^{p,q}_r$.
\end{enumerate} 
\end{lem}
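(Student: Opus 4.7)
The plan is to unwind both definitions and observe that the monotonicity of the filtration forces $\{B^{p,q}_r\}_r$ to be an increasing family and $\{Z^{p,q}_r\}_r$ to be a decreasing family, so that union and intersection commute cleanly with the image/kernel of $d$. In both cases, $Z^{p,q}$ and $B^{p,q}$ are (as standard) the cocycles and coboundaries lying in $F^p C^{p+q}$, i.e.\ $Z^{p,q} = \ker(d) \cap F^p C^{p+q}$ and $B^{p,q} = \mathrm{im}(d) \cap F^p C^{p+q}$.

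For part (1), I would rewrite
\begin{equation*}
B^{p,q}_r \;=\; d\bigl(F^{p-r+1} C^{p+q-1}\bigr) \,\cap\, F^p C^{p+q}.
\end{equation*}
Since $F^{p-r+1} C^{p+q-1} \subseteq F^{p-r} C^{p+q-1}$, the family is increasing in $r$, and taking unions commutes both with the intersection and with the image under $d$:
\begin{equation*}
\bigcup_r B^{p,q}_r \;=\; d\!\Biggl(\bigcup_r F^{p-r+1} C^{p+q-1}\Biggr) \cap F^p C^{p+q}.
\end{equation*}
Exhaustiveness gives $\bigcup_r F^{p-r+1} C^{p+q-1} = C^{p+q-1}$, so the right-hand side collapses to $d(C^{p+q-1}) \cap F^p C^{p+q} = B^{p,q}$, as desired.

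For part (2), I would note that the kernel of the projection $F^p C^{p+q+1} \to F^p C^{p+q+1}/F^{p+r} C^{p+q+1}$ is $F^{p+r} C^{p+q+1}$, hence $Z^{p,q}_r = \{c \in F^p C^{p+q} : dc \in F^{p+r} C^{p+q+1}\}$. Because the filtration is decreasing, this is a decreasing family, and taking intersections commutes with preimages:
\begin{equation*}
\bigcap_r Z^{p,q}_r \;=\; \Bigl\{ c \in F^p C^{p+q} \,:\, dc \in \bigcap_r F^{p+r} C^{p+q+1}\Bigr\}.
\end{equation*}
Separatedness gives $\bigcap_r F^{p+r} C^{p+q+1} = 0$, so this is exactly $\ker(d) \cap F^p C^{p+q} = Z^{p,q}$.

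There is no substantive obstacle: both statements follow from the interchange of $d$ with unions (for images) and with intersections of kernels, together with the exhaustive/separated hypotheses. The only thing to be careful about is the identification of $B^{p,q}$ and $Z^{p,q}$ (not spelled out in the excerpt) as the coboundaries and cocycles lying inside $F^p C^{p+q}$, which is the natural reading consistent with the finite-index definitions of $B^{p,q}_r$ and $Z^{p,q}_r$.
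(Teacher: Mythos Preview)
Your proof is correct. The paper states this lemma without proof, treating it as a routine unwinding of the definitions, and your argument supplies exactly that: you correctly identify $B^{p,q}_r = d(F^{p-r+1}C^{p+q-1})\cap F^pC^{p+q}$ and $Z^{p,q}_r = \{c\in F^pC^{p+q}: dc\in F^{p+r}C^{p+q+1}\}$, then use that union commutes with images and fixed intersections (for exhaustiveness) and that intersection commutes with preimages (for separatedness).
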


We have 

$$ Z^{p,q}_1 \supset Z^{p,q}_2\supset \cdots  \supset Z^{p,q} \supset B^{p,q} \supset \cdots \supset B^{p,q}_2 \supset B^{p,q}_1 .$$

The following proposition is then a complement to Theorem \ref{existenceofspecseq}.

\begin{prop} \label{firstandsecondiso}
We have 
\begin{enumerate}
\item $E^{p,q}_r \cong \frac{Z^{p,q}_r}{B ^{p,q}_r  + Z^{p+1,q-1}_{r-1}}, r \geq 1$. 
\item Under the above isomorphism, the  differential $d_r$  on $E^{p,q}_r$ is induced by the action of $d$ on  $Z^{p,q}_r$ and has bidegree $(r,-r+1)$. 
\end{enumerate}
\end{prop}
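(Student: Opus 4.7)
The plan is to prove both parts simultaneously by induction on $r \geq 1$, working directly from the construction of the spectral sequence in Theorem \ref{existenceofspecseq}, i.e., $E_0^{p,q} = F^p C^{p+q}/F^{p+1} C^{p+q}$ with $d_0$ induced by $d$, and $E^{p,q}_{r+1} = H^{p,q}(E_r, d_r)$. For the base case $r=1$, a direct computation from definition (1) gives $Z^{p+1,q-1}_0 = F^{p+1}C^{p+q}$, and under the quotient $F^p C^{p+q} \to E_0^{p,q}$ the $d_0$-cocycles lift to $Z^{p,q}_1 + F^{p+1}C^{p+q}$ while the $d_0$-coboundaries lift to $B^{p,q}_1 + F^{p+1}C^{p+q}$. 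Since $B^{p,q}_1 \subset Z^{p,q}_1$ (from $d^2=0$) and $F^{p+1}C^{p+q} \subset Z^{p,q}_1$ (since $d$ preserves $F^\bullet$), the second isomorphism theorem yields $E_1^{p,q} \cong Z^{p,q}_1/(B^{p,q}_1 + Z^{p+1,q-1}_0)$, and $d_1$ is visibly induced by $d$ with bidegree $(1,0)$.

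For the inductive step, assume the formula for $E^{p,q}_r$ and take a class $[x]$ with representative $x \in Z^{p,q}_r$; by definition $dx \in F^{p+r}C^{p+q+1}$ and in fact $dx \in Z^{p+r,q-r+1}_r$ since $d(dx)=0$, so the induced map $d_r$ has bidegree $(r,-r+1)$. To compute $\ker d_r$, observe that $d_r[x]=0$ means $dx \in B^{p+r,q-r+1}_r + Z^{p+r+1,q-r}_{r-1}$, i.e.\ $dx = dy + u$ with $y \in F^{p+1}C^{p+q}$ and $u \in Z^{p+r+1,q-r}_{r-1}$; then $d(x-y)=u \in F^{p+r+1}$ shows $x-y \in Z^{p,q}_{r+1}$, while $y \in Z^{p+1,q-1}_{r-1}$ because $dy = dx - u \in F^{p+r}$. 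Conversely, any element of $Z^{p,q}_{r+1} + Z^{p+1,q-1}_{r-1}$ visibly lies in the kernel, so the kernel lifts to this subspace. For the image, an element of $E^{p-r,q+r-1}_r$ is represented by some $u \in Z^{p-r,q+r-1}_r$, and $\{du : u \in Z^{p-r,q+r-1}_r\}$ is exactly $B^{p,q}_{r+1}$ by unwinding the definition.

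Combining these, $\ker d_r/\mathrm{im}\, d_r$ equals $(Z^{p,q}_{r+1} + Z^{p+1,q-1}_{r-1})/(B^{p,q}_{r+1} + Z^{p+1,q-1}_{r-1})$, where one uses the containments $B^{p,q}_r \subset B^{p,q}_{r+1}$ and $B^{p,q}_{r+1} \subset Z^{p,q}_{r+1}$. Applying the second isomorphism theorem together with the key set-theoretic identity $Z^{p,q}_{r+1} \cap Z^{p+1,q-1}_{r-1} = Z^{p+1,q-1}_r$ (immediate from comparing the two filtration conditions: $x \in F^{p+1}$ and $dx \in F^{p+r+1}\cap F^{p+r} = F^{p+r+1}$) yields $Z^{p,q}_{r+1}/(B^{p,q}_{r+1} + Z^{p+1,q-1}_r)$, closing the induction. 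The main obstacle is purely bookkeeping: one must track the various shifts in filtration level that distinguish $Z^{p,q}_r$, $Z^{p+1,q-1}_{r-1}$, $B^{p,q}_r$, and $B^{p,q}_{r+1}$, and verify the inclusions needed to apply the isomorphism theorems. Notably, nothing in this argument requires $F^\bullet$ to be bounded below, so the identification is available in the unbounded setting of this paper.
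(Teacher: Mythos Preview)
Your inductive argument is correct and is essentially the standard verification (as in McCleary, to which the paper refers). Note that the paper itself does not supply a proof of this proposition: it explicitly defers to the third author's thesis \cite{Ralston}, and the subsequent remark points out that in the usual treatment formula (1) is taken as the \emph{definition} of $E^{p,q}_r$, whereas here it must be derived from the defining property $E_{r+1} = H(E_r,d_r)$---which is exactly what your induction accomplishes.
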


\begin{rmk}
In the usual development of the spectral sequence of a filtered complex, formula (1) in Proposition \ref{firstandsecondiso} is taken as the definition of $E^{p,q}_r$, see page 34 of \cite{Mc}.  In this case, equality must be changed to isomorphism in Equation \eqref{definingequationforE}.  We have made the choice above for the sake of brevity.
\end{rmk}

The spectral sequence above converges to the graded vector space associated to the induced filtration of the cohomology. We conclude the general discussion by describing this bigraded vector space.

\subsubsection{The associated graded $\mathrm{gr}(H^\bullet)$}
The filtrations on the cocycles $Z$ and coboundaries $B$ induce a filtration on the cohomology $H$.  A priori, the vector space $\mathrm{gr}^{p,q}(H)$ is a four-fold quotient, but we have the following proposition.

\begin{prop} \label{grH}
\begin{equation*}
\mathrm{gr}^{p,q}(H) \cong \frac{Z^{p,q}}{B^{p,q} + Z^{p+1,q-1}}.
\end{equation*}
\end{prop}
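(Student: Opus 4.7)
The plan is to translate the \emph{a priori} four-fold quotient $\mathrm{gr}^{p,q}(H)$ into a single quotient of $Z^{p,q}$ by identifying each filtration step of $H^{p+q}$ explicitly in terms of cocycles and coboundaries lying in $F^\bullet C$. Throughout, I would invoke Lemma \ref{exhaustsep} to treat $Z^{p,q}$ as $\ker d \cap F^p C^{p+q}$ and $B^{p,q}$ as $d(C^{p+q-1}) \cap F^p C^{p+q}$; this is legitimate in our setting because the filtration is exhaustive and, being bounded above, also separated.

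First, I would identify $F^p H^{p+q}$. By definition of the induced filtration on cohomology, $F^p H^{p+q}$ is the image of $H^{p+q}(F^p C) \to H^{p+q}(C)$, so its classes are represented by cocycles in $F^p C^{p+q}$, i.e.\ by elements of $Z^{p,q}$, and two such cocycles give the same class in $H^{p+q}(C)$ iff their difference lies in $d(C^{p+q-1}) \cap F^p C^{p+q} = B^{p,q}$. This produces a canonical isomorphism $F^p H^{p+q} \cong Z^{p,q}/B^{p,q}$.

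Next, I would identify $F^{p+1} H^{p+q}$ as a subspace of $F^p H^{p+q}$. A class $[z]$ with $z \in Z^{p,q}$ lies in $F^{p+1} H^{p+q}$ exactly when some cohomologous representative $z - dc$ (with $c \in C^{p+q-1}$) lies in $F^{p+1} C^{p+q}$. Since $z \in F^p C^{p+q}$ this forces $dc \in F^p C^{p+q}$, hence $dc \in B^{p,q}$, and conversely any decomposition $z = z' + b$ with $z' \in Z^{p+1,q-1}$ and $b \in B^{p,q}$ is of this form. Thus $F^{p+1} H^{p+q}$ corresponds to the subspace $(Z^{p+1,q-1} + B^{p,q})/B^{p,q}$ of $Z^{p,q}/B^{p,q}$.

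Finally, the third isomorphism theorem yields
$$\mathrm{gr}^{p,q}(H) \;=\; \frac{F^p H^{p+q}}{F^{p+1} H^{p+q}} \;\cong\; \frac{Z^{p,q}/B^{p,q}}{(Z^{p+1,q-1}+B^{p,q})/B^{p,q}} \;\cong\; \frac{Z^{p,q}}{B^{p,q}+Z^{p+1,q-1}},$$
as required. There is no serious obstacle: the only subtle point is the identification of the unsubscripted $Z^{p,q}$ and $B^{p,q}$ via Lemma \ref{exhaustsep}, which uses the exhaustive and separated hypotheses verified above; the remainder is routine bookkeeping with the induced filtration on cohomology.
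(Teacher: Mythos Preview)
Your proof is correct and follows essentially the same route as the paper's: identify $F^pH^{p+q}\cong Z^{p,q}/B^{p,q}$, identify $F^{p+1}H^{p+q}$ inside it as $(Z^{p+1,q-1}+B^{p,q})/B^{p,q}$, and apply the third isomorphism theorem. One small remark: you do not need Lemma~\ref{exhaustsep} here, since $Z^{p,q}=\ker d\cap F^pC^{p+q}$ and $B^{p,q}=\operatorname{im} d\cap F^pC^{p+q}$ are the \emph{definitions} of the induced filtrations on cocycles and coboundaries (the lemma is about matching these with the limits $\bigcap_r Z^{p,q}_r$ and $\bigcup_r B^{p,q}_r$, which plays no role in this proposition).
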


\begin{proof}
By definition,
\begin{equation*}
\mathrm{gr}^{p,q}(H) = \frac{Z^{p,q}/B^{p,q}}{Z^{p+1,q-1}/B^{p+1,q-1}}.
\end{equation*}
By one of the standard isomorphism theorems,
\begin{equation*}
\frac{Z^{p,q}}{B^{p,q} + Z^{p+1,q-1}} \cong \frac{Z^{p,q}/B^{p,q}}{(B^{p,q} + Z^{p+1,q-1})/B^{p,q}}.
\end{equation*}
By another standard isomorphism theorem,
\begin{equation*}
\frac{B^{p,q} + Z^{p+1,q-1}}{B^{p,q}} \cong \frac{Z^{p+1,q-1}}{B^{p,q} \cap Z^{p+1,q-1}}.
\end{equation*}
Finally, observe that $B^{p,q} \cap Z^{p+1,q-1} = B^{p+1,q-1}$.

\end{proof}  

We now give conditions on the filtration of a filtered complex that are sufficient to imply convergence of the associated  spectral sequence.   

\begin{prop} \label{Steveprop1}
Suppose $F^\bullet, C$ is a filtered cochain complex such that the filtration is bounded above and exhaustive.  Then the spectral sequence converges.  That is, for all $p,q$, there is an isomorphism
\begin{equation}
E^{p,q}_\infty \rightarrow gr^{p,q}(H^\bullet).
\end{equation}
\end{prop}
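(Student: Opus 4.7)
The plan is to identify $E^{p,q}_\infty$ explicitly with the formula for $\mathrm{gr}^{p,q}(H^\bullet)$ already established in Proposition \ref{grH}, using the presentation of $E^{p,q}_r$ from Proposition \ref{firstandsecondiso}.

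First I would observe that ``bounded above'' implies separated: if there is a threshold $N$ with $F^N C = 0$, then $\bigcap_p F^p C \subset F^N C = 0$. Combined with the exhaustiveness hypothesis, Lemma \ref{exhaustsep} then yields both $Z^{p,q} = \bigcap_r Z^{p,q}_r$ and $B^{p,q} = \bigcup_r B^{p,q}_r$.

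Next I would show that the cocycle filtrations stabilize at a finite stage, even though the coboundary filtrations do not. Once $p+r > N$, the quotient $F^p C^{p+q+1}/F^{p+r} C^{p+q+1}$ collapses to $F^p C^{p+q+1}$, so the definition of $Z^{p,q}_r$ yields $Z^{p,q}_r = Z^{p,q}$, and likewise $Z^{p+1,q-1}_{r-1} = Z^{p+1,q-1}$. Substituting into Proposition \ref{firstandsecondiso}, for all sufficiently large $r$,
$$E^{p,q}_r \cong Z^{p,q} / \left(B^{p,q}_r + Z^{p+1,q-1}\right).$$
The inclusions $B^{p,q}_r \subset B^{p,q}_{r+1}$ induce natural surjections $E^{p,q}_r \to E^{p,q}_{r+1}$ in this stabilized range, and $E^{p,q}_\infty$ is naturally defined as the direct limit of this system. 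Since a directed union of subgroups commutes with passing to the quotient, and since $\bigcup_r (B^{p,q}_r + Z^{p+1,q-1}) = (\bigcup_r B^{p,q}_r) + Z^{p+1,q-1}$,
$$E^{p,q}_\infty \cong Z^{p,q} / \left(\bigcup_r B^{p,q}_r + Z^{p+1,q-1}\right) = Z^{p,q}/(B^{p,q} + Z^{p+1,q-1}),$$
which by Proposition \ref{grH} is precisely $\mathrm{gr}^{p,q}(H^\bullet)$.

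The main subtlety, and the reason one cannot invoke \cite{Mc} Theorem 2.1 verbatim, is that the filtration is not bounded below: the groups $B^{p,q}_r$ typically grow strictly with $r$, and the spectral sequence does not stabilize at any finite page in the usual sense. The correct interpretation of $E^{p,q}_\infty$ is therefore as the colimit of the eventually-surjective system displayed above, and the exhaustiveness of the filtration (via Lemma \ref{exhaustsep}(1)) is exactly what is needed to identify that colimit with $Z^{p,q}/(B^{p,q} + Z^{p+1,q-1})$. Everything else in the argument is bookkeeping with the formulas of Propositions \ref{firstandsecondiso} and \ref{grH}.
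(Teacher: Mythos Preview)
Your proof is correct and follows essentially the same route as the paper's: both use the upper bound on the filtration to stabilize $Z^{p,q}_r = Z^{p,q}$ and $Z^{p+1,q-1}_{r-1} = Z^{p+1,q-1}$ for large $r$, then invoke Proposition~\ref{firstandsecondiso} to write $E^{p,q}_r \cong Z^{p,q}/(B^{p,q}_r + Z^{p+1,q-1})$, and finally use exhaustiveness via Lemma~\ref{exhaustsep}(1) to identify the direct limit with $Z^{p,q}/(B^{p,q} + Z^{p+1,q-1}) \cong \mathrm{gr}^{p,q}(H)$. The only cosmetic difference is that the paper frames the last step as constructing an explicit surjection $\pi_\infty$ and then checking injectivity, whereas you compute the colimit of quotients directly; your observation that bounded-above implies separated is harmless but not actually used, since you (like the paper) rely on finite-stage stabilization of the $Z$-terms rather than on $Z^{p,q} = \bigcap_r Z^{p,q}_r$.
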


\begin{proof}
Let $p$ be given. Because the filtration is bounded above, for each $(p,q)$ there exists an $r(p)$ so that $Z_r^{p,q} = Z^{p,q}$ for all $r \geq r(p)$.  For $r > \mathrm{max}(r(p), r(p+1))$ we will construct below a surjective map 
\begin{equation} \label{pirmap}
\pi_{r} : E_r^{p,q} \cong \frac{Z_r^{p,q}}{B_r^{p,q} + Z_{r-1}^{p+1, q-1}} \to gr^{p,q}(H^\bullet) \cong \frac{Z^{p,q}}{B^{p,q} + Z^{p+1, q-1}}.
\end{equation}
Since $r > r(p)$, $Z^{p,q}_r =Z^{p,q}$ and we may first define $\widetilde{\pi}_r: Z^{p,q}_r \to Z^{p,q}$ to be the identity map.  We then define $\pi'_r$ to be the induced quotient map
$$\pi'_r:   Z^{p,q}_r  \to \frac{Z^{p,q}}{B^{p,q} + Z^{p+1, q-1}}.$$
Since $r > r(p+1)$, we have $Z^{p+1,q-1}_{r-1} =Z^{p+1,q-1}$.  Moreover, $B_r^{p,q} \subset B^{p,q}$. Hence the map $\pi'_r$ factors through the quotient by $B_r^{p,q} + Z_{r-1}^{p+1, q-1}$ to give the required surjection $\pi_r$. 

Note that there is a quotient map from $E_r^{p,q}$ to $E_{r+1}^{p,q}$ making $\{E_r^{p,q}, r> r(p)\}$ into a direct system. Moreover, the maps $\{ \pi_r: r > r(p)\}$ fit together to induce a morphism from the direct system to  $gr^{p,q}(H^\bullet)$ and hence we obtain a surjective map $\pi_{\infty}: E_{\infty} ^{p,q} \to gr^{p,q}(H^\bullet)$. We claim that $\pi_{\infty}$ is injective. Indeed suppose $x \in  E_{\infty} ^{p,q}$ satisfies $\pi_{\infty}(x) =0$.  Then for some $r$ we have $\pi_r(x) = 0$.  Hence $x \in B^{p,q} + Z^{p+1, q-1}$.  By Lemma \ref{exhaustsep}, we have $x \in B_{r'}^{p,q} + Z^{p+1, q-1}$ for some $r' \geq r$.  Furthermore, since $Z^{p+1,q-1} \subset Z_{r'}^{p+1,q-1}$ we have $x \in B_{r'}^{p,q} +Z_{r'}^{p+1,q-1}$.  Thus $x$ is zero in $E_{r'}^{p,q}$ and hence is zero in the direct limit.

\end{proof}

\subsection{Some consequences of the vanishing of $E_1^{p,q}$}

In the spectral sequence which follows, many of the terms $E_1^{p,q}$ will vanish.  To utilize this feature, we need the following two general propositions from the theory of spectral sequences.  In what follows we assume the filtration $F^{\bullet}$ is bounded above and exhaustive.

The following proposition is an immediate consequence of convergence of the spectral sequence (Proposition \ref{Steveprop1}) since $\mathrm{gr}(H)$ is obtained from $E_1$ by taking successive subquotients.

\begin{prop}\label{grCzeroimpliesCzero}
Suppose $(F^\bullet,M, d)$ is a filtered cochain complex such that $F^\bullet$ is bounded above and exhaustive.  Then $H^\ell(\mathrm{gr}(M))= 0$ implies $H^\ell(M) = 0$.
\end{prop}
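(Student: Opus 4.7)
The strategy is to read off the conclusion directly from the convergence statement in Proposition \ref{Steveprop1}, after translating the hypothesis into vanishing of $E_1$ and then propagating it to $E_\infty$ and onward to $H^\ell(M)$ itself.

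First I would observe that by Theorem \ref{existenceofspecseq}, $E_0^{p,q} = F^p M^{p+q}/F^{p+1} M^{p+q}$ is the associated graded of $M$, so $E_1^{p,q} = H^{p,q}(E_0,d_0)$ is precisely the $(p,q)$-component of $H^{\bullet}(\mathrm{gr}(M))$. Hence the hypothesis $H^\ell(\mathrm{gr}(M)) = 0$ gives $E_1^{p,q} = 0$ for all $p + q = \ell$. Since every $E_{r+1}^{p,q}$ is a subquotient of $E_r^{p,q}$, induction on $r$ yields $E_r^{p,q} = 0$ for all $r \geq 1$ with $p+q = \ell$, and the same holds in the limit: $E_\infty^{p,q} = 0$ for every $p + q = \ell$. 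Convergence (Proposition \ref{Steveprop1}) then identifies $E_\infty^{p,q} \cong \mathrm{gr}^{p,q}(H^{\bullet})$, so the associated graded of the induced filtration on $H^\ell(M)$ vanishes in every bidegree summing to $\ell$.

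It remains to deduce $H^\ell(M) = 0$ from the vanishing of its associated graded, and this is where both hypotheses on the filtration enter. Define the induced filtration $F^p H^\ell(M) = \mathrm{image}\bigl(H^\ell(F^p M) \to H^\ell(M)\bigr)$. Since $F^{\bullet} M$ is bounded above, there exists $p_0$ with $F^{p_0+1}M = 0$, hence $F^{p_0+1} H^\ell = 0$. A downward induction on $p$ using
\begin{equation*}
F^p H^\ell / F^{p+1} H^\ell \;=\; \mathrm{gr}^{p,\,\ell - p}(H^\ell) \;=\; 0
\end{equation*}
then propagates the vanishing to every $p \in \Z$. Finally, exhaustiveness of $F^{\bullet} M$ forces the induced filtration on $H^\ell$ to be exhaustive as well: any cohomology class has a cocycle representative in $M$, hence in some $F^p M$ by $\bigcup_p F^p M = M$, so the class lies in $F^p H^\ell$. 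Therefore $H^\ell(M) = \bigcup_p F^p H^\ell = 0$.

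The main obstacle, insofar as there is one, is the bookkeeping in the last paragraph: checking that the induced filtration on cohomology inherits both boundedness above and exhaustiveness from $F^{\bullet} M$, and in particular that the downward induction genuinely reaches every $p \in \Z$ despite the filtration not being bounded below. Once this is verified, the proposition is a formal consequence of the convergence of the spectral sequence established in Proposition \ref{Steveprop1}.
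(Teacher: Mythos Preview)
Your proof is correct and follows essentially the same approach as the paper, which simply notes that the proposition is an immediate consequence of convergence (Proposition \ref{Steveprop1}) since $\mathrm{gr}(H)$ is obtained from $E_1$ by successive subquotients. You have spelled out in full the step the paper leaves implicit, namely that vanishing of $\mathrm{gr}(H^\ell)$ forces $H^\ell = 0$ because the induced filtration on $H^\ell$ inherits boundedness above and exhaustiveness from $F^\bullet M$.
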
 

\begin{rmk}
In the case we are studying, $M =C$ has a canonical grading as a vector space.  Hence, in this case, there is a map of graded vector spaces $f: M \to \mathrm{gr}(M)$ which sends $\varphi \in M$ to its leading term.  However, $f$ does not commute with the differential. Hence, there is no map in general (even if $M$ is graded as a vector space)  from the cohomology of $M$ to the cohomology of $\mathrm{gr} (M) $.
\end{rmk} 

\begin{prop} \label{generalspectral}
Let $(F^\bullet,M, d)$ be a filtered cochain complex such that $F^\bullet$ is bounded above and exhaustive.
\begin{enumerate}
\item If $H^{\ell-1}(\mathrm{gr}(M)) = 0$, then there is a well defined map from $H^\ell(M)$ to $H^\ell(\mathrm{gr}(M))$ and it is an injection.
\item If $H^{\ell+1}(\mathrm{gr}(M)) = 0$, then there is a well defined map from $H^\ell(\mathrm{gr}(M))$ to $H^\ell(M)$ and it is a surjection.
\item if $H^{\ell-1}(\mathrm{gr}(M)) = 0$ and $H^{\ell+1}(\mathrm{gr}(M)) = 0$, then the map from $H^\ell(M)$ \\
to $H^\ell(\mathrm{gr}(M))$ is an isomorphism.
\end{enumerate}

\end{prop}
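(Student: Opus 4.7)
The plan is to leverage the convergence result (Proposition \ref{Steveprop1}) together with a ``leading term'' construction for (1) and a dual ``lift to a cocycle'' construction for (2), both powered by a finite recursion whose termination relies on $F^\bullet$ being bounded above.

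For (1): since $H^j(\mathrm{gr}(M)) = \bigoplus_{p+q = j} E_1^{p,q}$, the hypothesis says $E_1^{p,q} = 0$ on the diagonal $p+q = \ell - 1$. Because each $E_{r+1}$ is a subquotient of $E_r$, induction yields $E_r^{p,q} = 0$ on that diagonal for every $r \geq 1$. Consequently the incoming differentials to the $\ell$-diagonal vanish, giving $E_{r+1}^{p,q} = \ker d_r \subseteq E_r^{p,q}$ there, and by Proposition \ref{Steveprop1} canonical injections $\mathrm{gr}^{p, \ell - p}(H^\ell(M)) = E_\infty^{p, \ell - p} \hookrightarrow E_1^{p, \ell - p}$. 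I then define $\phi \colon H^\ell(M) \to H^\ell(\mathrm{gr}(M))$ by sending a class $[x]$ to the class of $x + F^{p_0 + 1} C^\ell$ in $E_1^{p_0, \ell - p_0}$, where $p_0 = \max\{p : x \in F^p C^\ell\}$; this maximum exists because $F^\bullet$ is bounded above and exhaustive, and since $dx = 0 \in F^{p_0 + 1}$, the element $x + F^{p_0+1}$ is automatically a $d_0$-cocycle.

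The crux is well-definedness and injectivity of $\phi$. If $x = dy$ with $y$ of maximal filtration degree $p' < p_0$, then $y + F^{p' + 1}$ is a $d_0$-cocycle (since $dy = x \in F^{p_0} \subseteq F^{p' + 1}$) whose class lies in $E_1^{p', \ell - 1 - p'} = 0$. Hence $y - dz \in F^{p' + 1} C^{\ell - 1}$ for some $z \in F^{p'} C^{\ell - 2}$; replacing $y$ by $y - dz$ pushes its leading filtration strictly higher without altering $dy = x$. Boundedness above makes this bootstrap terminate after finitely many iterations with $y \in F^{p_0}$, whence $x + F^{p_0 + 1} = d_0(y + F^{p_0 + 1})$ is a $d_0$-coboundary and $\phi([x]) = 0$. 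The same bootstrap, applied to $x$ itself, proves injectivity: if $\phi([x]) = 0$ then $x - dy_0 \in F^{p_0 + 1}$ for some $y_0 \in F^{p_0} C^{\ell - 1}$, and iterating eventually produces a cohomologous representative in $F^p = 0$ for $p$ large, so $[x] = 0$. This recursive argument, driven by the vanishing hypothesis and halted by boundedness above, is the main technical point.

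For (2) the argument is dual: $E_1^{p, q} = 0$ on the $(\ell + 1)$-diagonal makes the outgoing differentials from the $\ell$-diagonal vanish, producing surjections $E_1^{p, \ell - p} \twoheadrightarrow E_\infty^{p, \ell - p}$. Starting from a $d_0$-cocycle $x \in F^p C^\ell$ with $dx \in F^{p+1} C^{\ell + 1}$, the obstruction class $[dx + F^{p+2}] \in E_1^{p+1, \ell - p}$ is $d_1$ of the original class, which vanishes by hypothesis; hence $dx - d\eta \in F^{p+2}$ for some $\eta \in F^{p+1} C^\ell$, and replacing $x$ by $x - \eta$ forces $dx \in F^{p + 2}$. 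Iterating, boundedness above eventually drives $dx$ out of the filtration entirely, producing a genuine cocycle $\tilde x$. One checks via the same bootstrap that $[\tilde x] \in H^\ell(M)$ is independent of the choices, giving a well-defined surjection $\psi \colon H^\ell(\mathrm{gr}(M)) \to H^\ell(M)$. Part (3) then follows: unwinding the constructions shows $\phi \circ \psi$ and $\psi \circ \phi$ are identities on their respective sources, so under both hypotheses $\phi$ and $\psi$ are mutually inverse isomorphisms. The overall obstacle is careful bookkeeping of the finite recursion, where boundedness above compensates for the absence of the boundedness below hypothesis used in \cite{Mc}.
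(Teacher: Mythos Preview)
Your spectral-sequence observations---that the hypothesis kills the incoming (resp.\ outgoing) differentials on the $\ell$-diagonal, so that $E_\infty^{p,\ell-p} \hookrightarrow E_1^{p,\ell-p}$ (resp.\ $E_1^{p,\ell-p} \twoheadrightarrow E_\infty^{p,\ell-p}$)---are correct and are exactly the paper's approach. Together with convergence this yields a graded injection $\mathrm{gr}\,H^\ell(M) \hookrightarrow H^\ell(\mathrm{gr}\,M)$ (and the dual surjection), which is what the paper's proof actually establishes and what suffices for all the downstream applications.

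Your explicit leading-term map $\phi\colon H^\ell(M)\to H^\ell(\mathrm{gr}\,M)$, however, is not linear. If cocycles $x,x'$ have maximal filtration degrees $p_0<p_0'$, then $x+x'$ has maximal filtration $p_0$ and $\phi(x+x')=\phi(x)$ lands entirely in $E_1^{p_0,\ell-p_0}$, whereas $\phi(x)+\phi(x')$ has a nonzero component in $E_1^{p_0',\ell-p_0'}$. Your well-definedness check only verifies $\phi(dy)=0$, which would give well-definedness on cohomology \emph{if} $\phi$ were additive---but it is not. This same gap breaks your injectivity bootstrap: after one step you have a cohomologous $x_1=x-dy_0\in F^{p_0+1}$, but to iterate you need $[x_1+F^{p_1+1}]=0$ in $E_1^{p_1,\ell-p_1}$, and nothing in the hypothesis (which concerns degree $\ell-1$, not $\ell$) forces this; you would need $\phi([x_1])=\phi([x])$, which presupposes exactly the well-definedness you are trying to prove. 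A parallel issue afflicts $\psi$: two lifts $\tilde x$ of the same $d_0$-cocycle differ by a cocycle in $F^{p+1}$ that need not be exact, so $[\tilde x]\in H^\ell(M)$ is determined only modulo $F^{p+1}H^\ell(M)$. The clean fix is to regard your constructions as maps between the associated gradeds---where your bootstrap works in a single step, no iteration required---and then invoke the non-canonical splitting $H^\ell(M)\cong\mathrm{gr}\,H^\ell(M)$ available over a field; this is effectively what the paper does as well.
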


\begin{proof}
We first prove (1). We will construct an inverse system of injective maps
$$\cdots \hookrightarrow E_r^{p,q} \hookrightarrow E_{r-1}^{p,q} \hookrightarrow E_{r-2}^{p,q} \hookrightarrow \cdots \hookrightarrow E_2^{p,q} \hookrightarrow E_1^{p,q}.$$
First note by the hypothesis of (1) we have
\begin{equation} \label{coboundariesvanish}
\overline{B}^{p,q}_r = 0, r \geq 2.
\end{equation}
Next, note that for any spectral sequence $\{E_r, d_r\}$ we have an inclusion
\begin{equation} \label{inclusioncobound}
\overline{Z}^{p,q}_r \hookrightarrow E_{r-1}^{p,q}, r \geq 1.
\end{equation}
But since $E^{p,q}_r = \overline{Z}^{p,q}_r / \overline{B}^{p,q}_r$, by \eqref{coboundariesvanish} we have 
$$E_{r}^{p,q} = \overline{Z}^{p,q}_r, r \geq 2$$
and the inclusion of Equation \eqref{inclusioncobound} becomes 
$$E_{r}^{p,q}\hookrightarrow E_{r-1}^{p,q}, r \geq 2.$$
Thus $\{E_r^{p,q} \}$ is an inverse system of injections which may be identified with a decreasing (for inclusion) sequence of bigraded subspaces of the fixed bigraded vector space $E_1$.  We have constructed the required inverse system. 

The inverse limit of the above sequence is $E^{p,q}_{\infty}$.  Since the inverse limit of an inverse system maps to each member of the system, we have a map $E_\infty^{p,q} \rightarrow E_1^{p,q}$.  In this case the inverse limit is simply the intersection of all the subspaces and the map of the limit is the inclusion of the infinite intersection which is obviously an injection.  Since we have convergence (Proposition \ref{Steveprop1}), $E_\infty^{p,q} \cong \mathrm{gr}^{p,q}(H)$ and $E_1^{p,q} = H^{p,q}(\mathrm{gr}(C))$.  Hence (1) is proved.


We now prove (2).  We construct a direct system of surjective maps
$$E_1^{p,q} \twoheadrightarrow E_2^{p,q} \twoheadrightarrow \cdots \twoheadrightarrow  E_{r-1}^{p,q} \twoheadrightarrow E_r^{p,q} \twoheadrightarrow E_{r+1}^{p,q} \twoheadrightarrow  \cdots.$$ 

First note by the hypothesis of (2) we have $d_{r-1}|_{E_{r-1}^{p,q}} = 0, r \geq 2, p+q = \ell$ and hence
\begin{equation} \label{everythingisacycle}
E^{p,q}_{r-1}  = \overline{Z}^{p,q}_r = 0, r \geq 2.
\end{equation}
Next, note that for any spectral sequence $\{E_r, d_r\}$ we have a surjection
$$\overline{Z}^{p,q}_r \twoheadrightarrow E_{r}^{p,q}, r \geq 1.$$
Hence, by Equation \eqref{everythingisacycle}, the previous surjection becomes 
$$E_{r-1}^{p,q}\twoheadrightarrow E_{r}^{p,q}, r \geq 2.$$
Hence $\{E_r^{p,q} \}$ is a direct system of surjections which may be identified with a  sequence of bigraded quotient spaces of the fixed bigraded vector space $E_1$. We have constructed the required direct system. 

Since each member of a direct system maps to the direct limit, the space $E_1^{p,q}$ maps to $E_\infty^{p,q}$ and this map is clearly surjective.  As in the proof of (1), since we have convergence (Proposition \ref{Steveprop1}), $E_\infty^{p,q} \cong \mathrm{gr}^{p,q}(H)$ and $E_1^{p,q} = H^{p,q}(\mathrm{gr}(C))$.  Hence (2) is proved.

Lastly, (1) and (2) imply (3).

\end{proof}

\subsection{Construction of the spectral sequence for the case in hand}

We now study the above spectral sequence for the case in hand, $G = \SO(n,1)$, and apply the previous results to it. 

In what follows we will make a change of notation and use $\mathcal{P}_k$ to denote the ring $\mathrm{Pol}((V \otimes \C)^k) $.  The ring $\mathrm{Pol}((V \otimes \C)^k)$ is graded by polynomial degree
$$\mathcal{P}_k = \bigoplus_{i=0}^\infty \mathcal{P}_k(i).$$
This grading of $\mathcal{P}_k$ induces a grading of $C^\ell$, for each $\ell$, called the polynomial grading. We will let $C^\ell(i)$ denote the $i$-th graded summand of $C^\ell$.
The above grading of $C^\ell$ induces an increasing filtration $F_{\bullet}$  of $C^\ell$ by
$$F_p C^\ell= \bigoplus_{i=0}^p C^\ell(i).$$
The filtration $F_\bullet$ is bounded below but not bounded above and is exhaustive
$$C = \bigcup_{p=0}^\infty F_p C.$$

Note also that $C$ is bigraded by $(\ell, i)$
\begin{equation} \label{Cbigradingorig}
C = \bigoplus_{\ell = 0}^n \bigoplus_{i=0}^\infty C^\ell(i).
\end{equation}

It is clear that $d$ may be written as a direct sum $d = d_2 + d_{-2}$ where $d_2$ increases the polynomial degree by two and $d_{-2}$ lowers the polynomial degree by two, see Equation \eqref{defnofd}. From $d^2 = 0$ we obtain
\begin{lem} \label{doublecomplex}
\hfill

\begin{enumerate}
\item $d_2^2 = 0$
\item $d_{-2}^2 =0$
\item $d_2 d_{-2} + d_{-2} d_2 =0$.
\end{enumerate}
\end{lem}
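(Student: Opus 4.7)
The plan is to deduce the three identities from the single relation $d^2=0$ by a degree-counting argument, using the fact that the three terms $d_2^2$, $d_{-2}^2$, and $d_2 d_{-2}+d_{-2}d_2$ shift the polynomial grading by different amounts and hence cannot cancel one another.

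First I would expand
\begin{equation*}
0 = d^2 = (d_2+d_{-2})^2 = d_2^2 + (d_2 d_{-2} + d_{-2} d_2) + d_{-2}^2,
\end{equation*}
which is an identity of operators on $C^{\bullet}$ preserving the wedge degree $\ell$ (both $d_2$ and $d_{-2}$ raise $\ell$ by one, so each quadratic composition raises $\ell$ by two).

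Next I would track the effect on the polynomial grading $C^\ell(i)$ of Equation \eqref{Cbigradingorig}. By inspection of Equation \eqref{defnofd}, the operator $d_2$ is the sum over $\alpha,j$ of $A(\omega_\alpha)\otimes(-z_{\alpha,j}w_j)$, which is multiplication by a polynomial of degree two, while $d_{-2}$ is the sum of $A(\omega_\alpha)\otimes \tfrac{\partial^2}{\partial z_{\alpha,j}\partial w_j}$, a second-order differential operator lowering polynomial degree by two. Consequently, as maps on the bigraded pieces $C^\ell(i)$ we have
\begin{align*}
d_2^2 &: C^\ell(i) \to C^{\ell+2}(i+4), \\
d_2 d_{-2} + d_{-2} d_2 &: C^\ell(i) \to C^{\ell+2}(i), \\
d_{-2}^2 &: C^\ell(i) \to C^{\ell+2}(i-4).
\end{align*}

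Finally I would restrict the identity $d_2^2 + (d_2 d_{-2}+d_{-2}d_2) + d_{-2}^2 = 0$ to a single graded summand $C^\ell(i)$ and project onto the three distinct target summands $C^{\ell+2}(i+4)$, $C^{\ell+2}(i)$, and $C^{\ell+2}(i-4)$. Since these three targets are linearly independent inside $C^{\ell+2}$, each component of the sum must vanish separately, yielding the three claimed identities. There is no real obstacle here; the only point to be careful about is verifying the asserted bidegrees of $d_2$ and $d_{-2}$ from the explicit formula \eqref{defnofd}, which is immediate.
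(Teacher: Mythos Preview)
Your proof is correct and is exactly the argument the paper has in mind: the text simply writes ``From $d^2=0$ we obtain'' before stating the lemma, leaving the degree-counting to the reader. You have spelled out precisely that bigrading argument.
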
 

In particular we have
\begin{equation} \label{dandfiltrationlevel}
d F_p C^\ell \subset F_{p+2} C^{\ell+1}.
\end{equation}

\begin{rmk}
Relative to the bigrading of $C$ given by Equation \eqref{Cbigradingorig}, $d$ is a bigraded map with
\begin{equation}
d = d_{1,2} + d_{1,-2}.
\end{equation}
\end{rmk}

Since $d$ increases filtration degree, $F_\bullet, C$ is not a filtered cochain complex.  Also, the above filtration of $C$ is increasing whereas the general theory assumes it is decreasing.  We correct this in the next subsection.


We now regrade $C$ so that $d$ preserves the filtration and so that the filtration is decreasing.  Since the changes for $C$ will specialize to those for $C_+$ and $C_-$ we will work with $C = C_+ \oplus C_-$.

The vector space underlying the complex $(C, d)$ is bigraded by cochain degree $\ell$ and polynomial degree $p$. As is customary in the theory of spectral sequences, we regrade by complementary degree $q = \ell - p$ and $p$. We change this bigrading according to $(p,q) \rightarrow (p^\prime, q^\prime)$ where $p^\prime = p - 2 \ell$ and $q^\prime = p+q - p^\prime$.  Note that $\ell = p+q = p^\prime + q^\prime$ and $d$ preserves the filtration.  That is,
\begin{equation*}
d F_{p^\prime} C^\ell \subset F_{p^\prime} C^{\ell+1}.
\end{equation*}

The theory of the spectral sequence associated to a filtered cochain complex requires the filtration to be decreasing, however the filtration $F_{p^\prime}$ is increasing.  Accordingly, we pass to the new decreasing filtration $F^{p^{\prime \prime}}$ defined by $F^{p^{\prime \prime}} = F_{-p^{\prime \prime}}$.  As before, $q^{\prime \prime}$ is the complementary degree, so $q^{\prime \prime} = \ell - p^{\prime \prime}$.  Hence
\begin{equation*}
p^{\prime \prime} = 2\ell - p \text{ and } q^{\prime \prime} = p - \ell.
\end{equation*}

Thus, from the bigrading of Equation \eqref{Cbigradingorig} we obtain a new bigrading for $C$
\begin{equation} \label{Cbigrading}
C = \bigoplus_{p^{\prime \prime} = -\infty}^{2n} \bigoplus_{q^{\prime \prime} = -n}^{\infty} C^{p^{\prime \prime}, q^{\prime \prime}}.
\end{equation}

\begin{lem} \label{dnewbigrading}
Relative to the bigrading Equation \eqref{Cbigrading}, we have $d = d_{0,1} + d_{4,-3}$.  In particular, $d$ preserves the new filtration.
\end{lem}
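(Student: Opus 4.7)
The plan is to track the bidegree of $d$ through the two successive changes of grading by pure bookkeeping, using the fact that $d$ always raises cochain degree by $1$.

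First I would record the starting data. By Lemma \ref{doublecomplex} together with the definition of $d$ in Equation \eqref{defnofd}, the differential decomposes as $d = d_2 + d_{-2}$, where $d_2$ raises polynomial degree by $2$ and $d_{-2}$ lowers it by $2$. In the original $(p, q)$ bigrading of the paragraph preceding the lemma, $p$ is polynomial degree and $q = \ell - p$ is the complementary degree, so a piece of $d$ that changes $\ell$ by $\Delta\ell$ and $p$ by $\Delta p$ changes $q$ by $\Delta\ell - \Delta p$. Thus $d_2$ has bidegree $(2, -1)$ and $d_{-2}$ has bidegree $(-2, 3)$ in the $(p,q)$ bigrading.

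Next I would push these bidegrees forward through the two regrading steps spelled out just before the lemma. The relations $p'' = 2\ell - p$ and $q'' = p - \ell$ give
\begin{equation*}
\Delta p'' = 2\Delta\ell - \Delta p, \qquad \Delta q'' = \Delta p - \Delta\ell.
\end{equation*}
Applying this with $(\Delta\ell, \Delta p) = (1, 2)$ for the $d_2$ summand yields $(\Delta p'', \Delta q'') = (0, 1)$, while $(\Delta\ell, \Delta p) = (1, -2)$ for the $d_{-2}$ summand yields $(\Delta p'', \Delta q'') = (4, -3)$. Hence $d = d_{0,1} + d_{4, -3}$ as an operator on the $(p'', q'')$ bigrading of Equation \eqref{Cbigrading}.

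Finally I would draw the filtration consequence. Both bidegree components of $d$ have non-negative first coordinate, so each sends $C^{p'', q''}$ into $\bigoplus_{p''' \geq p''} C^{p''', q'''}$; equivalently $d\, F^{p''} C^\ell \subset F^{p''} C^{\ell + 1}$, so $d$ preserves the decreasing filtration $F^{\bullet}$. There is no serious obstacle here — the content of the lemma is entirely a change-of-variables computation — the only thing to be a bit careful about is the sign flip $p'' = -p'$ that converts the originally increasing filtration into a decreasing one, which is precisely why $d_{-2}$ ends up raising rather than lowering the first coordinate.
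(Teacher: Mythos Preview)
Your proof is correct and is exactly the bookkeeping computation the paper leaves to the reader (the lemma is stated without proof in the text). The only minor redundancy is that you compute the intermediate $(p,q)$ bidegrees $(2,-1)$ and $(-2,3)$ but then never use them, going straight from $(\Delta\ell,\Delta p)$ to $(\Delta p'',\Delta q'')$ via the composite formulas $p''=2\ell-p$, $q''=p-\ell$; you could simply drop that intermediate sentence.
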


\begin{rmk} \label{d=d_{0,1}}
The differential $d^\prime$ on $E_1$ is induced by the summand $d_{0,1}$ in Lemma \ref{dnewbigrading}.  In fact, we use the negative of $d_{0,1}$.  That is,
$$d^\prime = \sum_{i=1}^k \sum_{\alpha=1}^n \omega_{\alpha} \otimes z_{\alpha i} w_i.$$
\end{rmk}

$F^{\bullet}$ is a decreasing filtration preserved by $d$ and the associated bigraded vector space $E_0^{p'', q''} = \bigoplus_{p'',q''}F^{p''} C^{p'' + q''}$ is  supported in the quadrant $p'' \leq 2n$ and $q'' \geq -n$.  In addition, because the cohomological degree $\ell$ satisfies $ 0 \leq \ell \leq n$ and $\ell = p'' + q''$, $E_0^{p'', q''}$ is supported in the intersection of the above quadrant with the band $0 \leq p'' + q'' \leq n$.

In what follows we will abuse notation and write $p$ and $q$ instead of $p''$ and $q''$.  The following figure shows the support of the bigraded complex $C^{\bullet,\bullet}$ with the new bigrading.

\begin{figure}[H]

\begin{tikzpicture}
\draw[draw=gray!50!white,fill=gray!20!white] 
    plot[smooth,samples=100,domain=-2.5:1.5] (\x,{-\x}) -- 
    plot[smooth,samples=100,domain=3:-1] (\x,{1.5-\x});

\draw[draw=gray!50!white,fill=gray!70!white] 
    plot[smooth,samples=100,domain=-1.5:2.5] (1.2,\x) -- 
    plot[smooth,samples=100,domain=2.5:-1.5] (3,\x);

\draw[draw=gray!50!white,fill=gray!110!white] 
    plot[smooth,samples=100,domain=1.2:1.5] (\x,{-\x}) -- 
    plot[smooth,samples=100,domain=3:1.2] (\x,{1.5 - \x});

\node at (1.7,-1) {$F^p$};

\draw[domain=-2.5:2] plot (\x,{-\x});
\node at (-2.1,1.6) {\small{$\ell = 0$}};

\draw[domain=-1:3] plot (\x,{1.5-\x});
\node at (-.41,2.4) {\small{$\ell = n$}};

\draw[thick, dashed] (-2.5,-1.5) -- (3.5,-1.5);
\node at (-2,-1.7) {$q = -n$};

\draw[thick, dashed] (3,-2.5) -- (3,2.5);
\node at (3.7,2) {$p = 2n$};

\draw (-2.5,0)--(3.5,0) node[right]{$p$};
\draw (0,-2.5)--(0,2.5) node[above]{$q$};

\draw (1.2,-2.5)--(1.2,2.5);
\node at (1.8,2) {$F^p \rightarrow$};
\end{tikzpicture}
$$C^{p,q} \text{ is supported in the shaded diagonal region}.$$
\end{figure}

Because the filtration is bounded above by $p = 2n$, for each $(p,q)$ there exists an $r(p)$ so that $Z_r^{p,q} = Z^{p,q}$ for all $r \geq r(p)$. In our case it suffices to take $r(p) = 2n - p +1$.  Note that $r(p)$ is a decreasing function of $p$, so $r > r(p)$ implies $r > r(p+k)$ for $k \geq 1$.  Since the complex is bounded below by $q \geq -n$ we also obtain an analogous bound $r(q) = q+n+1$, however, we will use only $r(p)$.
\begin{rmk} For the action of a general reductive $G$ on the polynomial Fock model, the exterior differential may be decomposed as $d = d_{-2} + d_0 + d_2$ and preserves the new filtration.  The support of the resulting bigraded complex will still be contained in the band of the above figure. We leave the details to the reader.  
\end{rmk}

\section{The vanishing of the cohomology of $C$ when $k \geq n$.} \label{kgeqnsection}
In this section, we work under the assumption
\begin{equation}
k \geq n.
\end{equation}
We prove the following theorem, see Equation \eqref{qalphadef} for the definition of the quadratic elements $q_1, \ldots, q_n \in \mathcal{P}(V^k)$.
\begin{thm} \label{kgeqnvanishing}
Assume $k \geq n$.  Then we have 
\begin{equation*}
H^\ell(\mathfrak{so}(n,1), \SO(n); \mathcal{P}_k) = \begin{cases}
0 &\text{ if } \ell \neq n \\
(\mathcal{P}_k/(q_1, \ldots, q_n))^K \vol &\text{ if } \ell = n.
\end{cases}
\end{equation*}
\end{thm}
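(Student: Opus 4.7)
The plan is to feed the spectral sequence of Section~\ref{spectralsection} into the specific situation $G = \SO_0(n,1)$ and recognize the $E_1$-page as Koszul cohomology. From Remark~\ref{d=d_{0,1}}, the $E_1$-differential on $E_0 = \mathrm{gr}(C)$ is
$$d' = \sum_{i=1}^k \sum_{\alpha=1}^n A(\omega_\alpha) \otimes z_{\alpha,i} w_i = \sum_{\alpha=1}^n A(\omega_\alpha) \otimes q_\alpha,$$
where $q_\alpha := \sum_{i=1}^k z_{\alpha,i} w_i \in \mathcal{P}_k$. Via the $\SO(n)$-equivariant identification $\mathfrak{p}^* \cong (V_+ \otimes \C)^*$, the underlying (non-invariant) complex $\big(\wwedge{\bullet}\mathfrak{p}^* \otimes \mathcal{P}_k,\, d'\big)$ is precisely the Koszul cochain complex attached to the sequence $(q_1, \ldots, q_n)$ in $\mathcal{P}_k$.

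First I would establish that $q_1, \ldots, q_n$ is a regular sequence in $\mathcal{P}_k = \C[z_{\alpha,j}, w_j]$ when $k \geq n$. Since $\mathcal{P}_k$ is Cohen--Macaulay of Krull dimension $(n+1)k$, it suffices to show that the vanishing locus $V = V(q_1, \ldots, q_n)$ has codimension exactly $n$. Interpreting the $z_{\alpha,j}$ as the entries of an $n \times k$ matrix $Z$ and the $w_j$ as the entries of $w \in \C^k$, the equations read $Zw = 0$. On the open piece $w \neq 0$, for each fixed $w$ the condition cuts out a codimension-$n$ affine subspace of $M_{n,k}(\C)$, giving a component of dimension $(k-1) + nk + 1 - n = (n+1)k - n$; on the piece $w = 0$, we get a component of dimension $nk$, which is $\leq (n+1)k - n$ precisely because $k \geq n$. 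Hence $\dim V = (n+1)k - n$, as required.

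Granting regularity, the Koszul cohomology of $\big(\wwedge{\bullet}\mathfrak{p}^* \otimes \mathcal{P}_k, d'\big)$ is concentrated in top wedge degree $n$, where it equals $\wwedge{n}\mathfrak{p}^* \otimes \mathcal{P}_k/(q_1, \ldots, q_n)$. Since $K = \SO(n)$ is compact (hence reductive), the functor of $K$-invariants is exact and commutes with cohomology; and since $\wwedge{n}\mathfrak{p}^* = \C \cdot \vol$ carries the trivial $K$-action, we obtain
$$E_1^\ell \;=\; H^\ell(E_0,\, d_0) \;=\; \begin{cases} 0 & \text{if } \ell \neq n, \\[2pt] \big(\mathcal{P}_k/(q_1, \ldots, q_n)\big)^K \cdot \vol & \text{if } \ell = n. \end{cases}$$

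To finish, I would invoke the convergence machinery of Section~\ref{spectralsection}. For $\ell \neq n$, the vanishing $E_1^\ell = 0$ together with Proposition~\ref{grCzeroimpliesCzero} yields $H^\ell(C) = 0$. For $\ell = n$, the fact that $E_1^\ell$ vanishes except when $\ell = n$ forces every higher differential $d_r$ ($r \geq 1$) touching total degree $n$ to start at or land in zero, so the spectral sequence degenerates at $E_1$. Combining Proposition~\ref{Steveprop1} (which applies because the filtration is bounded above and exhaustive) with the decomposition $H^n(C) = \bigoplus_{p+q = n} \mathrm{gr}^{p,q} H^n \cong \bigoplus_{p+q = n} E_\infty^{p,q} = E_1^n$, we conclude $H^n(C) \cong \big(\mathcal{P}_k/(q_1, \ldots, q_n)\big)^K \cdot \vol$. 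The main obstacle, and the point at which the hypothesis $k \geq n$ is essential, is the regularity of $(q_1, \ldots, q_n)$: the dominance of the ``main'' component of $V(q_1, \ldots, q_n)$ over the ``$w = 0$'' component requires exactly $k \geq n$, and regularity fails in the complementary range treated by Theorem~\ref{main}(1).
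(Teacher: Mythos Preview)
Your proof is correct and follows the paper's overall strategy: identify the $E_1$-page as the $K$-invariants of the Koszul complex for $(q_1,\ldots,q_n)$, prove regularity, and then use the spectral-sequence convergence results of Section~\ref{spectralsection}. The one substantive difference is in how regularity is established. The paper argues algebraically: it extends to the sequence $(\{z_{\alpha i}\}_{\alpha\neq i},\,q_1,\ldots,q_n)$, notes that modulo the off-diagonal coordinates each $q_\alpha$ becomes $z_{\alpha\alpha}w_\alpha$, checks directly that products of distinct variables form a regular sequence, and then invokes Matsumura's permutation lemma for homogeneous regular sequences. Your argument is geometric: you use that $\mathcal{P}_k$ is Cohen--Macaulay and compute the codimension of $\{Zw=0\}$ by stratifying on $w$. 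Both make the role of the hypothesis $k\geq n$ transparent---yours as the inequality $\dim\{w=0\}\leq\dim\{w\neq 0,\,Zw=0\}$, the paper's as the existence of enough diagonal entries $z_{\alpha\alpha}$---and both are clean; your version is shorter and avoids the auxiliary lemmas. For the degree-$n$ conclusion the paper cites Proposition~\ref{generalspectral}(3) directly, whereas you argue degeneration at $E_1$ by degree reasons and then pass to the associated graded; these are equivalent here since the complex has top degree $n$.
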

Define the complex $(A, d_A)$ by
\begin{equation*}
A^\ell = \wwedge{\ell}(\mathfrak{p}^*) \otimes \mathcal{P}_k \text{ and } d_A = \sum_{\alpha = 1}^n \sum_{i=1}^k  A(\omega_\alpha) \otimes z_{\alpha i} w_i.
\end{equation*}
Then $C^\ell = (A^\ell)^K$ and by Remark \ref{d=d_{0,1}} we have, since $K$ is compact,
\begin{equation} \label{E_1CisA}
H^\ell(E_1(C)) = (H^\ell(A))^K.
\end{equation}

\subsection{$A, d_A$ is a Koszul complex.}

Define the quadratic elements $q_1, \ldots, q_n$ of $\mathcal{P}_k$ by
\begin{equation} \label{qalphadef}
q_\alpha = \sum_{i=1}^k z_{\alpha i} w_i \text{ for } 1 \leq \alpha \leq n.
\end{equation}
We note that the $q_\alpha$ are the result of the following matrix multiplication of elements of $\mathcal{P}_k$
\begin{equation*}
\begin{pmatrix}
z_{11} & z_{12} & \cdots & z_{1k} \\
\vdots & \vdots & \ddots & \vdots \\
z_{n1} & z_{n2} & \cdots & z_{nk}
\end{pmatrix}
\begin{pmatrix}
w_1 \\
\vdots \\
w_k
\end{pmatrix}
=
\begin{pmatrix}
q_1 \\
\vdots \\
q_n
\end{pmatrix}.
\end{equation*}
It is clear that
\begin{equation*}
d_A = \sum_{\alpha = 1}^n A(\omega_\alpha) \otimes q_\alpha.
\end{equation*}

We first note that $d_A$ is the differential in the Koszul complex $K(q_1,\ldots,q_n)$ associated to the sequence of the quadratic polynomials $q_1,\ldots,q_n$, see Eisenbud \cite{Eisenbud}, Section 17.2.  To see that the Koszul complex as described in \cite{Eisenbud} is the above complex $A$ we choose $\mathcal{P}_k$ as Eisenbud's ring $R$ and $\mathcal{P}_k^k$ as Eisenbud's module $N$. In our description, since $\mathfrak{p} \cong \C^n$, we are using the exterior algebra $\bigwedge^{\bullet}( (\C^n)^*) \otimes \mathcal{P}_k$.  But the operation of taking the exterior algebra of a module commutes with base change and hence we have $\bigwedge^{\bullet}( (\C^n)^*) \otimes \mathcal{P}_k \cong \bigwedge^{\bullet}(\mathcal{P}_k^n)$. Then we apply Eisenbud's construction with the sequence $q_1,\ldots, q_n$ to obtain the above complex $A$.  We recall that $f_1, \ldots, f_n$ is a regular sequence in a ring $R$ if and only if $f_\alpha$ is not a zero divisor in $R/(f_1, \ldots, f_{\alpha-1})$ for $1 \leq \alpha \leq n$.  The following proposition will be a consequence of the two subsequent lemmas.

\begin{prop} \label{Aregularsequence}
$q_1, \ldots, q_n$ is a regular sequence in $\mathcal{P}_k$.
\end{prop}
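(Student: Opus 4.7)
The plan is to reduce the regularity of $q_1, \ldots, q_n$ in $\mathcal{P}_k = \mathrm{Pol}((V \otimes \C)^k)$ to a dimension computation, exploiting that $\mathcal{P}_k$ is a polynomial ring and hence Cohen--Macaulay. For homogeneous elements $f_1, \ldots, f_r$ in a graded Cohen--Macaulay ring, the sequence $f_1, \ldots, f_r$ is regular if and only if the ideal they generate has height $r$, equivalently if and only if the affine variety $V(f_1, \ldots, f_r)$ has codimension $r$ in the ambient space. Since the $q_\alpha$ are homogeneous of degree $2$, it therefore suffices to prove that $V(q_1, \ldots, q_n) \subset (V \otimes \C)^k$ has codimension exactly $n$.

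To compute the zero locus, I would identify $(V \otimes \C)^k$ with pairs $(Z, W)$, where $Z = (z_{\alpha i})$ is an $n \times k$ matrix over $\C$ and $W = (w_1, \ldots, w_k)^T \in \C^k$ is a column vector, so that the system $q_1 = \cdots = q_n = 0$ becomes the single matrix equation $ZW = 0$. I then stratify the zero locus $X$ according to whether $W$ vanishes. On the open stratum $\{W \neq 0\} \cap X$, for each fixed nonzero $W$ the rows of $Z$ are forced to lie in the hyperplane $W^\perp \subset \C^k$ of complex dimension $k - 1$, giving a stratum of dimension $k + n(k-1) = k(n+1) - n$. On the closed stratum $\{W = 0\} \cap X$, the matrix $Z$ is arbitrary, yielding a stratum of dimension $nk$.

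Since $k \geq n$, the difference $k(n+1) - n - nk = k - n$ is nonnegative, so the open stratum dominates (with equality of dimensions in the edge case $k = n$) and $\dim X = k(n+1) - n$. Hence $X$ has codimension precisely $n$ in the ambient affine space, and the Cohen--Macaulay criterion above yields that $q_1, \ldots, q_n$ is a regular sequence in $\mathcal{P}_k$. Note that this is sharp: when $k < n$, the $W = 0$ stratum has dimension $nk > k(n+1) - n$, so the zero locus has codimension only $k$, and one indeed expects regularity to fail, which is consistent with the very different form of the cohomology in Theorem \ref{main} for $k < n$.

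The main obstacle is applying the Cohen--Macaulay criterion in the correct form for the graded setting: the equivalence between regularity and height is classical for ideals in a Noetherian local Cohen--Macaulay ring, and for homogeneous elements in a graded polynomial ring one passes to the localization at the irrelevant ideal, where Cohen--Macaulayness is preserved. A fully elementary alternative, if one prefers to avoid invoking Cohen--Macaulay machinery, would be to induct on $n$ and check directly that $q_\alpha$ is not a zero divisor modulo $(q_1, \ldots, q_{\alpha-1})$; this can be done by a careful choice of monomial order in which the leading terms of the $q_\alpha$ become disjoint products of variables, reducing the statement to the trivial fact that a sequence of distinct variables is regular.
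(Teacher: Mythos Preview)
Your proof is correct and takes a genuinely different route from the paper's.

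The paper argues elementarily: it enlarges the sequence to $\sigma = (\{z_{\alpha i}\}_{\alpha \neq i}, q_1, \ldots, q_n)$, observes that the off-diagonal coordinates are trivially regular, and that modulo them each $q_\alpha$ reduces to the monomial $z_{\alpha\alpha} w_\alpha$; a direct tensor-product argument (Lemma~\ref{easyregular}) then shows these monomials form a regular sequence. Finally Matsumura's permutation lemma (Lemma~\ref{Matsumuralemma}) lets one reorder $\sigma$ to put the $q_\alpha$ first, whence $(q_1,\ldots,q_n)$ is regular. Your approach instead invokes the Cohen--Macaulay criterion (regularity $\Leftrightarrow$ correct height) and reads off the height from the geometry of the locus $\{ZW=0\}$, stratified by whether $W$ vanishes.

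Your argument is cleaner and more conceptual, and makes the role of the hypothesis $k\geq n$ completely transparent via the comparison $k(n+1)-n \geq nk$; the paper's argument is more hands-on but avoids citing Cohen--Macaulay machinery, at the cost of needing the permutation lemma and the auxiliary Lemma~\ref{easyregular}. Interestingly, the ``fully elementary alternative'' you sketch in your last paragraph---reducing to disjoint products of variables---is in spirit exactly what the paper does, though the paper achieves it by quotienting out the off-diagonal $z_{\alpha i}$ rather than by choosing a monomial order.
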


The following lemma is Matsumura's corollary to Theorem 16.3 on page 127, \cite{Matsumura}.  It gives a condition under which we may reorder a sequence while preserving regularity.

\begin{lem} \label{Matsumuralemma}
If $R$ is Noetherian and graded and $a_1, \ldots, a_n$ is a regular sequence of homogeneous elements in $R$, then so is any permutation of $a_1, \ldots, a_n$.
\end{lem}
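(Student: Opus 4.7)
The plan is to reduce, by two standard steps, to the case of a two-element homogeneous regular sequence, and then to exploit the graded hypothesis to interchange its two elements.

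First, since every permutation is a product of adjacent transpositions $(i,\, i{+}1)$, it suffices to show that swapping $a_i$ with $a_{i+1}$ preserves regularity. Second, passing to the Noetherian graded quotient $\bar R = R/(a_1, \ldots, a_{i-1})$, the images of $a_i, a_{i+1}, a_{i+2}, \ldots, a_n$ form a regular sequence on $\bar R$ by definition; moreover, the conditions on $a_{i+2}, \ldots, a_n$ after the swap are unchanged because $\bar R/(a_i, a_{i+1}) = \bar R/(a_{i+1}, a_i)$ as rings. So the entire problem collapses to the following two-element statement: if $a, b$ are homogeneous elements of positive degree in a Noetherian graded ring $S$ with $a$ a nonzerodivisor on $S$ and $b$ a nonzerodivisor on $S/(a)$, then $b$ is a nonzerodivisor on $S$ and $a$ is a nonzerodivisor on $S/(b)$.

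The easier of the two conclusions is that $a$ is a nonzerodivisor on $S/(b)$: if $ax \in (b)$, write $ax = by$; reducing modulo $a$ gives $b\bar y = 0$ in $S/(a)$, so by hypothesis $y = az$ for some $z \in S$; substituting gives $a(x - bz) = 0$, and since $a$ is a nonzerodivisor on $S$ we conclude $x = bz \in (b)$. The harder conclusion is that $b$ is a nonzerodivisor on $S$. Suppose $bx = 0$ with $x$ homogeneous and nonzero. Reducing mod $a$ and using regularity of $b$ on $S/(a)$ produces a homogeneous $y_1$ with $x = a y_1$; substituting back and using that $a$ is a nonzerodivisor on $S$ yields $b y_1 = 0$. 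Iterating this step produces homogeneous $y_k$ with $x = a^k y_k$ for every $k \ge 1$, so $x \in \bigcap_k (a^k)$. Because $a$ is homogeneous of positive degree $e > 0$, the graded ideal $(a^k)$ has no nonzero homogeneous elements of degree less than $ke$; the fixed homogeneous $x$ of some degree $d$ must therefore vanish once $k > d/e$. This contradicts $x \neq 0$, completing the argument.

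The main obstacle, beyond the bookkeeping in the two reductions, is precisely this last Krull-intersection step: in a general Noetherian ring one needs either localness (to invoke Nakayama) or the graded structure (to use degree bounds), which is why the homogeneous hypothesis cannot be dropped. One should interpret the lemma's hypothesis as implicitly requiring the $a_i$ to have positive degree, as is standard for homogeneous regular sequences; in the intended application all the elements $q_\alpha$ are quadratic, so positivity is automatic.
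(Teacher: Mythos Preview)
Your proof is correct. The paper does not actually prove this lemma: it simply cites Matsumura's corollary to Theorem~16.3, so you have supplied an argument where the paper gives only a reference. Your route---reduce to an adjacent transposition, pass to the quotient by the first $i-1$ elements, and then handle the two-element case by the infinite-divisibility/degree argument---is essentially the standard direct proof in the graded setting and is exactly the one Matsumura gives.

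One small point worth making explicit: your degree argument for ``$b$ is a nonzerodivisor on $S$'' uses not only that $a$ has positive degree but also that $S$ is $\mathbb{N}$-graded (so that $(a^k)$ lives in degrees $\geq ke$). You tacitly assume this, and it is the intended meaning of ``graded'' both in Matsumura and in the paper's applications, where all rings are polynomial rings; but it is good to be aware that the argument would fail for a $\mathbb{Z}$-graded ring. Your closing remark about the implicit positive-degree hypothesis is exactly right: Matsumura's own statement carries this hypothesis, the paper's version should too, and it is automatic for the quadratic and cubic elements $q_\alpha$, $c_j$, $w_j$ to which the lemma is applied.
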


\begin{lem} \label{easyregular}
Let $R = \C[x_1, x_2, \ldots, x_k, y_1, y_2, \ldots, y_k]$.  Then $(x_1 y_1, x_2 y_2, \ldots, x_k y_k)$ is a regular sequence.
\end{lem}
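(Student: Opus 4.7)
The plan is to proceed by induction on $j$, establishing that $x_{j+1} y_{j+1}$ is not a zero divisor in $R/(x_1 y_1, \ldots, x_j y_j)$ for each $0 \le j \le k-1$. The base case $j = 0$ is immediate because $R$ is an integral domain. The structural observation that drives the inductive step is that $R$ decomposes as a tensor product over $\C$,
$$R \;\cong\; \bigotimes_{i=1}^k \C[x_i, y_i],$$
and since the first $j$ generators $x_1 y_1, \ldots, x_j y_j$ lie entirely in the first $j$ tensor factors, one has
$$R/(x_1 y_1, \ldots, x_j y_j) \;\cong\; \Bigl(\bigotimes_{i=1}^j \C[x_i, y_i]/(x_i y_i)\Bigr) \otimes_\C \Bigl(\bigotimes_{i=j+1}^k \C[x_i, y_i]\Bigr).$$

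To complete the induction step, I would start from the short exact sequence of $\C$-modules
$$0 \longrightarrow \C[x_{j+1}, y_{j+1}] \xrightarrow{\;\cdot\, x_{j+1} y_{j+1}\;} \C[x_{j+1}, y_{j+1}] \longrightarrow \C[x_{j+1}, y_{j+1}]/(x_{j+1} y_{j+1}) \longrightarrow 0,$$
where the injectivity on the left is simply the fact that $\C[x_{j+1}, y_{j+1}]$ is an integral domain. Because $\C$ is a field, every $\C$-module is flat, so tensoring this sequence with the remaining factors of the ring above preserves injectivity. Hence multiplication by $x_{j+1} y_{j+1}$ is injective on $R/(x_1 y_1, \ldots, x_j y_j)$, completing the induction.

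There is no substantive obstacle here: the argument is purely structural, reducing the statement to the triviality that $x_{j+1} y_{j+1}$ is nonzero in the domain $\C[x_{j+1}, y_{j+1}]$ together with exactness of $- \otimes_\C -$. An elementary alternative, avoiding the tensor-product language, would be to expand any element $f \in R$ in the standard monomial basis and observe that multiplication by $x_{j+1} y_{j+1}$ and the relations $x_i y_i \in (x_1 y_1, \ldots, x_j y_j)$ involve disjoint pairs of variables, so that the condition $x_{j+1} y_{j+1} \, f \in (x_1 y_1, \ldots, x_j y_j)$ forces each monomial appearing in $f$ to already lie in the ideal.
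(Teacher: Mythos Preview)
Your proof is correct and follows essentially the same approach as the paper: both exploit the tensor decomposition $R \cong \bigotimes_i \C[x_i, y_i]$ to reduce the question to injectivity of multiplication by $x_{j+1} y_{j+1}$ on a single factor. The paper verifies this injectivity by an explicit basis computation, whereas you invoke flatness of $\C$-modules; these are equivalent packagings of the same idea.
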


\begin{proof}
We first rewrite $R$ as the tensor product of $n$ polynomial rings
\begin{equation*}
R \cong \C[x_1, y_1] \otimes \C[x_2, y_2] \otimes \cdots \otimes \C[x_n, y_n].
\end{equation*}
Fix $\alpha$ between $1$ and $n$.  We verify that $x_\alpha y_\alpha$ is not a zero divisor in $R_\alpha = R/(x_1 y_1, \ldots, x_{\alpha-1} y_{\alpha-1})$.  Note that
\begin{equation*}
R_\alpha \cong \frac{\C[x_1, y_1]}{(x_1 y_1)} \otimes \frac{\C[x_2, y_2]}{(x_2 y_2)} \otimes \frac{\C[x_{\alpha -1}, y_{\alpha -1}]}{(x_{\alpha -1} y_{\alpha -1})} \otimes \C[x_{\alpha}, y_{\alpha}] \otimes \cdots \otimes \C[x_n, y_n].
\end{equation*}
Let $b_{e,f} = x_\alpha^e y_\alpha^f \in \C[x_\alpha, y_\alpha]$.  Then $\{b_{e,f}\}_{e,f \geq 0}$ is a basis for $\C[x_\alpha, y_\alpha]$.  Now consider the map
\begin{align*}
g : R_\alpha &\to R_\alpha \\
r &\mapsto x_\alpha y_\alpha r.
\end{align*}
We show $g$ is injective and thus $x_\alpha y_\alpha$ is not a zero divisor in $R_\alpha$.  Suppose $r$ is in the kernel of $g$.  Then $r$ has a unique representation
\begin{equation*}
r = \sum_{e,f} a_{1,e,f} \otimes a_{2,e,f} \otimes \cdots \otimes b_{e,f} \otimes a_{\alpha+1,e,f} \otimes \cdots \otimes a_{n,e,f} \text{ where } a_{\beta,e,f} \in \C[x_\beta, y_\beta].
\end{equation*}
Hence
\begin{align*}
g(r) &= \sum_{e,f} a_{1,e,f} \otimes a_{2,e,f} \otimes \cdots \otimes x_\alpha y_\alpha b_{e,f} \otimes \cdots \otimes a_{n,e,f} \\
&= \sum_{e,f} a_{1,e,f} \otimes a_{2,e,f} \otimes \cdots \otimes b_{e+1,f+1} \otimes \cdots \otimes a_{n,e,f} = 0.
\end{align*}
Since $b_{e+1, f+1}$ is a basis for $\C[x_\alpha, y_\alpha]$, we have, for all $e,f \geq 0$,
\begin{equation*}
a_{1,e,f} \otimes a_{2,e,f} \otimes \cdots \otimes a_{\alpha-1,e,f} \otimes a_{\alpha+1,e,f} \otimes \cdots \otimes a_{n,e,f} = 0.
\end{equation*}
Hence $r = 0$.  Thus $x_\alpha y_\alpha$ is not a zero divisor and the lemma is proved.

\end{proof}

We now prove Proposition \ref{Aregularsequence}.
\begin{proof}
First, we examine the sequence $\sigma = (\{z_{\alpha i}\}_{\alpha \neq i}, q_1, \ldots, q_n)$.  That is, let $\sigma$ be the sequence
\begin{equation*}
\sigma = (z_{12}, z_{13}, \ldots, z_{1k}, z_{21}, z_{23}, \ldots, z_{2k}, \ldots, z_{n 1}, z_{n 2}, \ldots, z_{n, n-1}, q_1, q_2, \ldots, q_n).
\end{equation*}
This is the sequence of ``off-diagonal" coordinates $z_{\alpha i}$ for $\alpha \neq i$ followed by the quadratics $q_\alpha$.

It is clear that the ``off-diagonal" $\{z_{\alpha i}\}_{\alpha \neq i}$ form a regular sequence since they are coordinates.  To check that the $q_\alpha$ are regular we work in $\mathcal{P}_k / (\{z_{\alpha i}\}_{\alpha \neq i}) \cong \C[z_{11}, z_{22}, \ldots, z_{nn}, w_1, \ldots, w_n, \ldots, w_k]$.  The image of $q_\alpha$ in this quotient ring is $z_{\alpha \alpha} w_\alpha$.  This a regular sequence by Lemma \ref{easyregular}.

Now we apply Lemma \ref{Matsumuralemma} to reorder $\sigma$ and note that $(q_1, \ldots, q_n, \{z_{\alpha i}\}_{\alpha \neq i})$ is a regular sequence.  Hence $(q_1, \ldots, q_n)$ is a regular seqeuence.

\end{proof}

We now compute the cohomology of $(A, d_A)$
\begin{prop} \label{Avanishing}
\begin{equation*}
H^\ell(A) = \begin{cases}
0 &\text{ if } \ell \neq n \\
\mathcal{P}_k/(q_1, \ldots, q_n) \vol &\text{ if } \ell = n.
\end{cases}
\end{equation*}
\end{prop}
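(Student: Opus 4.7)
The plan is to recognize $(A, d_A)$ as the cohomological (``exterior multiplication'') Koszul complex $K^\bullet(q_1, \ldots, q_n; \mathcal{P}_k)$ and to invoke the classical vanishing theorem for Koszul cohomology of a regular sequence, with regularity supplied by Proposition \ref{Aregularsequence}.

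First, I would observe that $(A, d_A)$ is an $n$-fold tensor product
$$K^\bullet(q_1; \mathcal{P}_k) \otimes_{\mathcal{P}_k} \cdots \otimes_{\mathcal{P}_k} K^\bullet(q_n; \mathcal{P}_k),$$
where the $\alpha$-th factor is the two-term complex $\mathcal{P}_k \xrightarrow{\ q_\alpha\ } \mathcal{P}_k$ placed in cohomological degrees $0$ and $1$, and the generator of the degree-$1$ part is identified with $\omega_\alpha$. Under the identification of $\wwedge{\bullet}(\mathfrak{p}^*)$ with the exterior algebra on $\omega_1, \ldots, \omega_n$, this tensor product is isomorphic as a complex to $(A, d_A)$. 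Equivalently, one can use the Hodge star $*: \wwedge{\ell}(\mathfrak{p}^*) \to \wwedge{n-\ell}(\mathfrak{p}^*)$ (defined by $\vol$) to intertwine $d_A$, up to sign, with the standard contraction differential of the chain Koszul complex $K_\bullet(q_1, \ldots, q_n; \mathcal{P}_k)$.

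Next, I would invoke the classical Koszul theorem: for a regular sequence $(f_1, \ldots, f_n)$ in a ring $R$,
$$H^\ell\bigl(K^\bullet(f_1, \ldots, f_n; R)\bigr) = \begin{cases} 0 & \ell \neq n, \\ R/(f_1, \ldots, f_n) & \ell = n. \end{cases}$$
This can be proved by induction on $n$ via the mapping cone sequence coming from the last tensor factor, or by self-duality of the Koszul complex, which exchanges $H^\ell(K^\bullet)$ with $H_{n-\ell}(K_\bullet)$, combined with the standard fact that the chain Koszul complex on a regular sequence is concentrated in degree $0$ with value $R/(f_1,\ldots,f_n)$. Combined with Proposition \ref{Aregularsequence}, this immediately yields $H^\ell(A) = 0$ for $\ell \neq n$.

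Finally, to pin down the generator in top degree, note that $A^n = \wwedge{n}(\mathfrak{p}^*) \otimes \mathcal{P}_k = \C\vol \otimes \mathcal{P}_k$ and any element of $A^{n-1}$ is a $\mathcal{P}_k$-linear combination of the forms $\omega_1 \wedge \cdots \wedge \widehat{\omega_\alpha} \wedge \cdots \wedge \omega_n$. A direct sign computation shows that
$$d_A\Bigl(\sum_\alpha (\omega_1 \wedge \cdots \wedge \widehat{\omega_\alpha} \wedge \cdots \wedge \omega_n) \otimes p_\alpha\Bigr) = \pm \Bigl(\sum_\alpha q_\alpha p_\alpha\Bigr)\vol,$$
so the image of $d_A$ in $A^n$ is exactly $(q_1, \ldots, q_n)\vol$. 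Combined with the vanishing this gives $H^n(A) = \bigl(\mathcal{P}_k/(q_1, \ldots, q_n)\bigr)\vol$. The only substantive input, regularity of $q_1, \ldots, q_n$ in $\mathcal{P}_k$, is already established by Proposition \ref{Aregularsequence}; the rest is sign bookkeeping and a standard application of Koszul theory.
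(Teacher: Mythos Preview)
Your proposal is correct and takes essentially the same approach as the paper: recognize $(A,d_A)$ as the Koszul complex on $q_1,\ldots,q_n$ and invoke the standard vanishing theorem for regular sequences, with regularity supplied by Proposition \ref{Aregularsequence}. The paper simply cites Eisenbud, Corollary 17.5 (with $M=R$) for both the vanishing below top degree and the identification $H^n \cong \mathcal{P}_k/(q_1,\ldots,q_n)$; your added remarks on the tensor-product description and the explicit image of $d_A$ in degree $n$ are fine but not needed beyond what the citation already provides.
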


\begin{proof}
Corollary 17.5 of \cite{Eisenbud} (with $M = R$), states that the cohomology of a Koszul complex $K(f_1,\ldots,f_n)$ below the top degree vanishes if $f_1,\ldots,f_n$ is a regular sequence and that in this case the top cohomology $H^n(K(f_1, \ldots, f_n))$ is isomorphic to $R/(f_1, \ldots, f_n)$.

\end{proof}

We now prove Theorem \ref{kgeqnvanishing}.

\begin{proof}

By Equation \eqref{E_1CisA}, we have, since $\vol$ is $K$-invariant,
\begin{equation*}
H^\ell(E_1(C)) = \begin{cases}
0 &\text{ if } \ell \neq n \\
(\mathcal{P}_k/(q_1, \ldots, q_n))^K \vol &\text{ if } \ell = n.
\end{cases}
\end{equation*}
Thus, by Proposition \ref{grCzeroimpliesCzero} and statement $(3)$ of Proposition \ref{generalspectral} we have
\begin{equation*}
H^\ell(\mathfrak{so}(n,1), \SO(n); \mathcal{P}_k) = \begin{cases}
0 &\text{ if } \ell \neq n \\
(\mathcal{P}_k/(q_1, \ldots, q_n))^K \vol &\text{ if } \ell = n.
\end{cases}
\end{equation*}

\end{proof}

We now show that $H^n(C)$ is not finitely generated as an $\mathcal{R}_k$-module.

\begin{prop}
The map from $\C[w_1, \ldots, w_k]$ to $H^n(C)$ sending $f$ to $[f \vol]$ is an injection.
\end{prop}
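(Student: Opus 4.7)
The plan is short because the hard work is already done by Theorem \ref{kgeqnvanishing}. That theorem identifies
$$H^n(C) \;\cong\; \bigl(\mathcal{P}_k/(q_1,\ldots,q_n)\bigr)^{K}\vol,$$
so what must be verified is exactly the following algebraic statement: the composite $\C[w_1,\ldots,w_k]\hookrightarrow \mathcal{P}_k\twoheadrightarrow\mathcal{P}_k/(q_1,\ldots,q_n)$ is injective (the $K$-invariance of $f\in\C[w_1,\ldots,w_k]$ being automatic, since $K=\SO(n)$ acts only on the $z_{\alpha i}$ coordinates, and $\vol$ is $K$-invariant).

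First I would write down the evaluation ring homomorphism
$$\mathrm{ev}\colon \mathcal{P}_k \;=\; \C[z_{\alpha i},w_j] \;\longrightarrow\; \C[w_1,\ldots,w_k]$$
obtained by setting every $z_{\alpha i}=0$. Second, observe that each generator $q_\alpha=\sum_{i=1}^k z_{\alpha i}w_i$ of the relevant ideal lies in $\mathrm{ker}(\mathrm{ev})$, because every monomial in $q_\alpha$ carries a factor $z_{\alpha i}$; hence $(q_1,\ldots,q_n)\subseteq \mathrm{ker}(\mathrm{ev})$ and $\mathrm{ev}$ descends to a well-defined ring map
$$\overline{\mathrm{ev}}\colon \mathcal{P}_k/(q_1,\ldots,q_n)\longrightarrow \C[w_1,\ldots,w_k].$$
Third, note that the composite
$$\C[w_1,\ldots,w_k]\hookrightarrow \mathcal{P}_k\twoheadrightarrow \mathcal{P}_k/(q_1,\ldots,q_n)\xrightarrow{\;\overline{\mathrm{ev}}\;}\C[w_1,\ldots,w_k]$$
is the identity, since $\mathrm{ev}$ fixes each $w_j$. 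This forces the first map $\C[w_1,\ldots,w_k]\to \mathcal{P}_k/(q_1,\ldots,q_n)$ to be injective, which is precisely the statement that $f\mapsto [f\vol]$ is injective into $H^n(C)$.

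There is no real obstacle here; the only thing to double-check is the clean identification $H^n(C)=\bigl(\mathcal{P}_k/(q_1,\ldots,q_n)\bigr)^K\vol$ provided by Theorem \ref{kgeqnvanishing}, and the (trivial) fact that $\C[w_1,\ldots,w_k]$ consists entirely of $K$-invariants. The argument moreover makes the corollary transparent: since $\C[w_1,\ldots,w_k]$ is infinite-dimensional as a $\C$-vector space while $\mathcal{R}_k$ acts on $H^n(C)$ only through its action on the $z$-variables (which is trivial on the image of $\C[w_1,\ldots,w_k]$), one sees that $H^n(C)$ cannot be finitely generated over $\mathcal{R}_k$.
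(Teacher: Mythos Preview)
Your proof is correct and is essentially identical to the paper's: both construct the evaluation homomorphism $\mathcal{P}_k \to \C[w_1,\ldots,w_k]$ sending each $z_{\alpha i}$ to $0$, observe that it annihilates every $q_\alpha$ and hence descends to the quotient, and conclude injectivity from the fact that the composite with the inclusion is the identity. (One small caution on your closing remark: the $\mathcal{R}_k$-action need not preserve the \emph{image} of $\C[w_1,\ldots,w_k]$ inside $H^n(C)$; the clean way to deduce non-finite-generation is to note that $\overline{\mathrm{ev}}$ exhibits $\C[w_1,\ldots,w_k]$ as an $\mathcal{R}_k$-module \emph{quotient} of $H^n(C)$ on which each $r_{ij}$ acts as zero.)
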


\begin{proof}
First, note that $f(w_1, \ldots, w_k)$ is $K$-invariant.  Now we show the map is an injection.  There is an inclusion of polynomial algebras
$$\C[w_1, \ldots, w_k] \hookrightarrow \mathcal{P}_k.$$
This map has a right inverse, $\pi$, where $\pi(w_i) = w_i$ and $\pi(z_{\alpha i})=0$.  Then since $\pi(q_\alpha) = \pi(\sum_i z_{\alpha i} w_i) = 0$, $\pi$ descends to a right inverse from $\mathcal{P}_k / (q_1, \ldots, q_\alpha) \to \C[w_1, \ldots, w_k]$.  Hence the map is injective.

\end{proof}

\section{The computation of the cohomology of $C_+$} \label{computationofcplus}
Now that we have settled the case of $k \geq n$ we return to assuming
\begin{equation*}
k < n.
\end{equation*}
We will compute the cohomology of the associated graded complex $E_0 = \mathrm{gr}(C)$.  By Remark \ref{d=d_{0,1}}, our differential is $d_{0,1}$.  We abuse notation and call this operator $d$.
We will see there is only one non-zero cohomology group and use the results of Section \ref{spectralsection} to compute the cohomology of the original complex.

Throughout this section, $J$ will denote an element of $\mathcal{S}_{\ell,n}$.  To simplify the notation in what follows, we will abbreviate $\omega \otimes \varphi$ to $\varphi \omega$ for $\omega \in \wwedge{\bullet} \mathfrak{p}^*$ and $\varphi \in \mathrm{Pol}((V \otimes \C)^k)$.  In particular,

\begin{equation} \label{defofphij}
\varPhi_J= \sum_{I \in \mathcal{S}_{\ell,n}} f_{I,J} \omega_I.
\end{equation}

Recall we have fixed $k$ with $k < n$ and we have the ring
\begin{equation}
\mathcal{S}_k = \C[r_{11},r_{12},\ldots, r_{kk},w_1,\ldots,w_k].
\end{equation}

For $1 \leq i \leq k, J \in \mathcal{S}_{\ell,k}$, if $i \notin \overline{J}$, then we denote by $\{J, i\}$ the element of $\mathcal{S}_{\ell+1, k}$ that corresponds to the set $\overline{J} \cup \{i\}$.  Now we define $\varPhi_{J, i} \in C_+^{\ell+1}$  by

\begin{equation}
\varPhi_{J,i} = \begin{cases} (-1)^{J(i)} \varPhi_{\{J, i\}} & \text{ if } i \notin \overline{J} \\ 0 & \text{ if } i \in \overline{J}.
\end{cases}
\end{equation}
where $J(i)$ is defined as follows.

\begin{defn}
$J(i) = |\{j \in \overline{J} : j < i\}|$ is the number of elements of $J$ less than $i$.
\end{defn}

We remark that the reason for the sign $(-1)^{J(i)}$ in this notation is that we have put the $i$ in the appropriate spot instead of the beginning.  In particular, we have the following lemma.

\begin{lem}
$$\varphi_1^{(i)} \wedge \varPhi_J = \varPhi_{J,i}.$$
\end{lem}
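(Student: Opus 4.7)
The plan is to derive the identity directly from Equation \eqref{varphiJwedgeproduct}, which already identifies $\varPhi_J$ with an outer exterior product of $\ell$ copies of $\varphi_1$ placed on the columns indexed by $\overline{J}$. Once this is in hand, the statement reduces to a sign-tracking exercise based on how $\varphi_1^{(i)}$ must be permuted into its correct slot in the increasing tuple $\{J,i\}$, plus an antisymmetry observation to handle the degenerate case $i \in \overline{J}$.

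For the main case $i \notin \overline{J}$, I would write $J = (j_1, \ldots, j_\ell)$ and expand using \eqref{varphiJwedgeproduct}:
$$\varphi_1^{(i)} \wedge \varPhi_J = \varphi_1^{(i)} \wedge \varphi_1^{(j_1)} \wedge \cdots \wedge \varphi_1^{(j_\ell)}.$$
Each factor $\varphi_1^{(j_s)}$ has exterior degree $1$, so moving $\varphi_1^{(i)}$ past one factor introduces a sign of $-1$. To re-order the right-hand side into the strictly increasing arrangement on $\{J, i\}$, the index $i$ must be moved from the leftmost position into position $J(i)+1$, passing exactly $J(i)$ factors of degree one. This accounts for the sign $(-1)^{J(i)}$, giving
$$\varphi_1^{(i)} \wedge \varPhi_J = (-1)^{J(i)} \varPhi_{\{J,i\}} = \varPhi_{J,i},$$
as required.

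For the degenerate case $i \in \overline{J}$, I would use antisymmetry. Write $i = j_s$ and move $\varphi_1^{(i)}$ next to $\varphi_1^{(j_s)} = \varphi_1^{(i)}$ in the wedge product. It then suffices to observe that $\varphi_1^{(i)} \wedge \varphi_1^{(i)} = 0$. Indeed, expanding,
$$\varphi_1^{(i)} \wedge \varphi_1^{(i)} = \sum_{\alpha,\beta=1}^n (\omega_\alpha \wedge \omega_\beta) \otimes z_{\alpha,i} z_{\beta,i},$$
which vanishes since $\omega_\alpha \wedge \omega_\beta$ is antisymmetric in $(\alpha,\beta)$ while $z_{\alpha,i} z_{\beta,i}$ is symmetric. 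Thus $\varphi_1^{(i)} \wedge \varPhi_J = 0 = \varPhi_{J,i}$, matching the definition in the $i \in \overline{J}$ branch.

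I do not expect any real obstacle here; the only subtlety is bookkeeping the sign $(-1)^{J(i)}$, which is entirely governed by how many odd-degree factors $\varphi_1^{(j_s)}$ stand to the left of the insertion slot for $i$ in the sorted tuple $\{J,i\}$. Once written out, both cases follow from the product rule \eqref{productrule} applied to outer exterior products together with the observation that $\varphi_1$ is of exterior degree one.
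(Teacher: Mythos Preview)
Your proof is correct and follows exactly the reasoning the paper intends: the paper does not give a formal proof of this lemma but merely remarks that the sign $(-1)^{J(i)}$ arises because ``we have put the $i$ in the appropriate spot instead of the beginning,'' which is precisely the sign-tracking you carry out using \eqref{varphiJwedgeproduct}. Your treatment of the degenerate case $i \in \overline{J}$ via the symmetry/antisymmetry argument for $\varphi_1^{(i)} \wedge \varphi_1^{(i)} = 0$ is also correct and fills in a detail the paper leaves implicit.
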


The following formula for $d$ is then immediate.

\begin{prop} \label{donbasisfirstcase}
\begin{equation}
d \varPhi_J =   \sum_{i=1}^k w_i \varphi_1^{(i)} \wedge \varPhi_J = \sum_{i=1}^k w_i \varPhi_{J,i}
\end{equation}
\end{prop}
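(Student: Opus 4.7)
The plan is to verify the formula by direct computation, observing that the associated-graded differential factors very cleanly through the wedge-with-$\varphi_1^{(i)}$ operation.

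First I would recall from Remark \ref{d=d_{0,1}} that the differential on $E_0$ (here just called $d$) is
$$d = \sum_{i=1}^k \sum_{\alpha=1}^n A(\omega_\alpha) \otimes z_{\alpha i} w_i = \sum_{i=1}^k d^{(i)},$$
where $d^{(i)} = \sum_\alpha A(\omega_\alpha)\otimes z_{\alpha i} w_i$ acts nontrivially only through the coordinates $z_{\cdot,i}$ and $w_i$ of the $i$-th tensor factor. The key observation is that $d^{(i)}$ can be rewritten as left multiplication by $w_i$ followed by outer exterior product with $\varphi_1^{(i)} = \sum_\alpha \omega_\alpha\otimes z_{\alpha,i}$: explicitly, for any cochain $\eta$,
$$d^{(i)}(\eta) \;=\; w_i \cdot \bigl( \varphi_1^{(i)} \wedge \eta \bigr).$$
This is immediate from the definition of the outer exterior product, since $\varphi_1^{(i)}$ lives in the $i$-th tensor factor and multiplication by $w_i$ is scalar.

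Next I would apply this to $\eta = \varPhi_J$. Because $\varPhi_J = \sum_{I\in\mathcal{S}_{\ell,n}} f_{I,J}\,\omega_I$ is a polynomial in the $z$-variables alone (no $w_i$ appears in the minors $f_{I,J}$), multiplication by $w_i$ commutes with everything on the $z$-side, and we obtain
$$d\,\varPhi_J \;=\; \sum_{i=1}^k d^{(i)}\varPhi_J \;=\; \sum_{i=1}^k w_i\,\bigl(\varphi_1^{(i)} \wedge \varPhi_J\bigr).$$
This gives the first equality of the proposition directly.

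For the second equality it suffices to invoke the unlabeled lemma just preceding, namely $\varphi_1^{(i)} \wedge \varPhi_J = \varPhi_{J,i}$. I would briefly remind the reader of its content: if $i\in\overline{J}$ then the outer wedge places $\varphi_1^{(i)}$ twice in the product $\varphi_1^{(j_1)}\wedge\cdots\wedge\varphi_1^{(j_\ell)}$, which vanishes by antisymmetry; if $i\notin\overline{J}$ then reordering the wedge to put $i$ in its correct sorted position within $\overline{J}\cup\{i\}$ produces exactly the sign $(-1)^{J(i)}$ (the number of elements of $\overline{J}$ less than $i$), matching the definition of $\varPhi_{J,i}$. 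There is no real obstacle; the only thing to watch is the sign in this reordering, but it is dictated by the definition of $J(i)$ so the bookkeeping is automatic.
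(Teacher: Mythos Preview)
Your proof is correct and matches the paper's approach: the paper simply declares the formula ``immediate'' without giving any argument, and what you have written is precisely the unpacking of why it is immediate---factoring $d^{(i)}$ as $w_i$ times left wedge by $\varphi_1^{(i)}$, then invoking the preceding lemma $\varphi_1^{(i)}\wedge\varPhi_J=\varPhi_{J,i}$ for the second equality.
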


\subsection{The map from $\mathrm{gr}(C_+)$ to a Koszul complex $K_+$}

We define $K_+$ to be the complex given by
\begin{equation}
K_+^\bullet = \wwedge{\bullet}((\C^k)^*) \otimes \mathcal{S}_k \text{ with the differential } d_{K_+} = \sum_i A(dw_i) \otimes w_i.
\end{equation}
Here $w_1, \ldots, w_k$ are coordinates on $\C^k$ and $dw_1, \ldots, dw_k$ are the corresponding one-forms.

We define a map $\Psi_+$ of $\mathcal{S}_k$-modules from the associated graded complex $\mathrm{gr}(C_+)$ to $K_+$ by sending $\varPhi_J$ to $dw_J$.  In particular, this sends $\varphi_1^{(i)} \mapsto dw_i$.  Recall that the degrees $\ell$ such that $C_+^\ell$ is non-zero range from $0$ to $k$.

The following lemma is an immediate consequence of Proposition \ref{donbasisfirstcase}. We leave its verification to the reader.

\begin{lem} $\Psi_+$  is an isomorphism  of cochain complexes, $\Psi_+ : \mathrm{gr}(C_+) \to K_+$.
\end{lem}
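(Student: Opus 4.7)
The plan is to verify that $\Psi_+$ is (a) well-defined as an $\mathcal{S}_k$-module isomorphism in each degree and (b) compatible with the differentials. Both parts rest on results already established.

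For (a), Proposition \ref{varphibasis} supplies an $\mathcal{S}_k$-basis $\{\varPhi_J : J \in \mathcal{S}_{\ell,k}\}$ of $C_+^\ell$, and the passage to the associated graded does not change this fact: $\mathcal{S}_k$ acts in a bigraded way and $\mathrm{gr}(C_+)^\ell$ remains free over $\mathcal{S}_k$ on the images of the $\varPhi_J$'s. On the other hand, $\{dw_J : J \in \mathcal{S}_{\ell,k}\}$ is by definition an $\mathcal{S}_k$-basis of $K_+^\ell = \wwedge{\ell}((\C^k)^*) \otimes \mathcal{S}_k$. Thus the $\mathcal{S}_k$-linear extension of $\varPhi_J \mapsto dw_J$ is a bijection in each degree, hence an isomorphism of graded $\mathcal{S}_k$-modules.

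For (b), I would compute both sides of $\Psi_+ \circ d = d_{K_+} \circ \Psi_+$ on the basis. By Proposition \ref{donbasisfirstcase},
\begin{equation*}
d \varPhi_J = \sum_{i=1}^k w_i \varPhi_{J,i} = \sum_{i \notin \overline{J}} (-1)^{J(i)} w_i \varPhi_{\{J,i\}},
\end{equation*}
so applying $\Psi_+$ yields $\sum_{i \notin \overline{J}} (-1)^{J(i)} w_i \, dw_{\{J,i\}}$. The sign combinatorics is the only nontrivial piece: moving $dw_i$ from the front of $dw_i \wedge dw_J$ to its sorted position requires $J(i)$ transpositions, so $dw_i \wedge dw_J = (-1)^{J(i)} dw_{\{J,i\}}$ when $i \notin \overline{J}$, and $dw_i \wedge dw_J = 0$ when $i \in \overline{J}$ (which matches our convention that $\varPhi_{J,i}=0$ in that case). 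Hence $\Psi_+(d\varPhi_J) = \sum_i w_i \, dw_i \wedge dw_J$. On the other hand, by the definition of $d_{K_+}$,
\begin{equation*}
d_{K_+}(\Psi_+(\varPhi_J)) = d_{K_+}(dw_J) = \sum_{i=1}^k w_i \, dw_i \wedge dw_J,
\end{equation*}
so the two sides agree. Extending by $\mathcal{S}_k$-linearity (using that $d_{K_+}$ and the induced differential on $\mathrm{gr}(C_+)$ are $\mathcal{S}_k$-linear, since multiplication by elements of $\mathcal{S}_k$ does not interact with the exterior factor) shows $\Psi_+$ is a chain map.

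The only genuine obstacle is bookkeeping: one must match the sign convention implicit in the definition of $\varPhi_{J,i}$ with the sign that appears when sorting $dw_i \wedge dw_J$, and one must confirm that the differential induced on $\mathrm{gr}(C_+)$ is precisely the $d' = \sum_{i,\alpha} w_i z_{\alpha i} A(\omega_\alpha)$ of Remark \ref{d=d_{0,1}}, which is what Proposition \ref{donbasisfirstcase} computes with. Both are routine checks, so the lemma follows.
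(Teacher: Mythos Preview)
Your proof is correct and is precisely the verification the paper leaves to the reader: the paper states the lemma as an immediate consequence of Proposition~\ref{donbasisfirstcase} and omits the details, and you have supplied exactly those details, checking the $\mathcal{S}_k$-module isomorphism via Proposition~\ref{varphibasis} and the compatibility with differentials via the sign computation in Proposition~\ref{donbasisfirstcase}.
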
 

We now compute the cohomology of the complex $K_+, d_{K_+}$. 

\begin{prop}\label{K+vanishing}
\hfill

\begin{enumerate}
\item $H^\ell(K_+) = 0,  \ell \neq k$ 
\item $H^k(K_+) = \mathcal{S}_k / (w_1, \ldots, w_k) dw_1 \wedge \cdots \wedge dw_k \cong \mathcal{R}_k dw_1 \wedge \cdots \wedge dw_k$.
\end{enumerate}
\end{prop}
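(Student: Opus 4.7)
The plan is to recognize that $K_+$ is the Koszul cochain complex attached to the sequence $w_1, \ldots, w_k$ over the base ring $\mathcal{S}_k = \mathcal{R}_k[w_1, \ldots, w_k]$, and then to invoke (or directly reprove) the standard computation of Koszul cohomology for a regular sequence, exactly as was done for the complex $A$ in Proposition \ref{Avanishing}. The sequence $w_1, \ldots, w_k$ is manifestly regular in $\mathcal{S}_k$: quotienting out $w_1, \ldots, w_{j-1}$ leaves the polynomial ring $\mathcal{R}_k[w_j, \ldots, w_k]$, in which the next coordinate $w_j$ is not a zero divisor. Eisenbud's Corollary 17.5, applied as in the proof of Proposition \ref{Avanishing}, then gives vanishing in degrees $\neq k$ and identifies the top cohomology with $\mathcal{S}_k/(w_1, \ldots, w_k)\cdot dw_1 \wedge \cdots \wedge dw_k \cong \mathcal{R}_k \cdot dw_1 \wedge \cdots \wedge dw_k$.

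If one prefers a self-contained argument, I would instead give a K\"unneth computation. Using the decomposition $\mathcal{S}_k = \mathcal{R}_k \otimes_{\C} \C[w_1, \ldots, w_k]$, the differential $d_{K_+}$ is $\mathcal{R}_k$-linear, so
$$K_+ \;=\; \mathcal{R}_k \,\otimes_{\C}\, \Big( \wwedge{\bullet}\!\big((\C^k)^*\big) \otimes \C[w_1, \ldots, w_k],\; d_{K_+}\Big).$$
The inner complex is the tensor product (as DG-algebras) of the $k$ two-term complexes
$$L^{(i)} \,:\, \C[w_i] \xrightarrow{\; w_i \cdot \;} \C[w_i]\, dw_i, \qquad i = 1, \ldots, k.$$
By the K\"unneth formula (applicable since each $L^{(i)}$ is a complex of free $\C$-modules and hence flat), the cohomology of the inner complex is the tensor product of the individual cohomologies. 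Each factor $L^{(i)}$ has $H^0 = 0$ (multiplication by $w_i$ is injective in $\C[w_i]$) and $H^1 = \C \cdot dw_i$. Hence the tensor product is concentrated in top degree $k$, where it equals $\C \cdot dw_1 \wedge \cdots \wedge dw_k$. Tensoring back with $\mathcal{R}_k$ yields $H^\ell(K_+) = 0$ for $\ell \neq k$ and $H^k(K_+) = \mathcal{R}_k \cdot dw_1 \wedge \cdots \wedge dw_k$, which is exactly the image of $\mathcal{S}_k/(w_1, \ldots, w_k) \cdot dw_1 \wedge \cdots \wedge dw_k$ under the identification $\mathcal{S}_k/(w_1,\ldots,w_k) \cong \mathcal{R}_k$.

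The verification here is essentially formal, and I do not anticipate a substantial obstacle: the only point of care is keeping track of signs when expressing the Koszul cochain differential $\sum_i A(dw_i) \otimes w_i$ as the total differential of the tensor product of the $L^{(i)}$'s (i.e., using the standard sign convention $d_{\text{tot}} = \sum_i (-1)^{\sum_{j<i} |L^{(j)}|} \mathrm{id} \otimes \cdots \otimes d_i \otimes \cdots$), but these signs do not affect the K\"unneth conclusion. Either route (citing Eisenbud or running K\"unneth) is short, and I would choose the Eisenbud citation for consistency with the proof of Proposition \ref{Avanishing} earlier in the paper.
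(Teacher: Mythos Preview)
Your first approach is exactly what the paper does: it identifies $K_+$ with the Koszul complex on $w_1,\ldots,w_k$ over $\mathcal{S}_k$, records the trivial Lemma \ref{K+regularsequence} that this sequence is regular, and then cites Eisenbud's Corollary 17.5 for both the vanishing below top degree and the identification $H^k(K_+)\cong \mathcal{S}_k/(w_1,\ldots,w_k)$. Your alternative K\"unneth argument is a correct self-contained variant, but the paper sticks with the Eisenbud citation for parallelism with Proposition \ref{Avanishing}.
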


We first prove statement $(1)$ of Proposition \ref{K+vanishing}.  We first note that 
$$ d_K = \sum_j w_j \otimes A(dw_j)$$
is the differential in the Koszul complex $K_+(w_1,\ldots,w_k)$ associated to the sequence of the linear polynomials $w_1,\ldots,w_k$, see Eisenbud \cite{Eisenbud}, Section 17.2.  To see that the Koszul complex as described in \cite{Eisenbud} is the above complex $K$ we choose $\mathcal{S}_k$ as Eisenbud's ring $R$ and $\mathcal{S}_k^k$ as Eisenbud's module $N$. In our description we are using the exterior algebra $\bigwedge^{\bullet}( (\C^k)^*) \otimes \mathcal{S}_k$.  But the operation of taking the exterior algebra of a module commutes with base change and hence we have $\bigwedge^{\bullet}( (\C^k)^*) \otimes \mathcal{S}_k \cong \bigwedge^{\bullet}(\mathcal{S}_k^k)$. Then we apply Eisenbud's construction with the sequence $w_1,\ldots, w_k$ to obtain the above complex $K_+$.  We recall that $f_1, \ldots, f_k$ is a regular sequence in a ring $R$ if and only if $f_i$ is not a zero divisor in $R/(f_1, \ldots, f_{i-1})$ for $1 \leq i \leq k$.  The following lemma is obvious.

\begin{lem} \label{K+regularsequence}
$w_1, \ldots, w_k$ is a regular sequence in $\cal{S}_k$.
\end{lem}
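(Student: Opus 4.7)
The statement is essentially a consequence of the fact that $\mathcal{S}_k$ is a polynomial algebra in the natural generators, once we exploit the standing assumption $k < n$.

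My plan is to first invoke the Second Main Theorem for the orthogonal group (cited in the paper right after Equation \eqref{k<n/2}, via \cite{Weyl} p.\ 75), which tells us that under the assumption $k < n$ the generators $r_{ij}$ (for $1 \leq i \leq j \leq k$) of $\mathcal{R}_k$ are algebraically independent. Since the $w_i$ are genuinely independent coordinates on the $V_-$ factors, algebraically independent of all the $r_{ij}$, the ring
\[
\mathcal{S}_k = \C[r_{11},r_{12},\ldots,r_{kk},w_1,\ldots,w_k]
\]
is a polynomial ring in $\binom{k+1}{2}+k$ independent variables.

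Once this is established, the regularity of $w_1,\ldots,w_k$ is immediate. For each $i$ with $1 \leq i \leq k$, the quotient
\[
\mathcal{S}_k/(w_1,\ldots,w_{i-1}) \cong \C[r_{11},\ldots,r_{kk},w_i,w_{i+1},\ldots,w_k]
\]
is again a polynomial ring, hence an integral domain, and $w_i$ is a nonzero element of this domain, so in particular it is not a zero-divisor. This is exactly the definition of regularity of the sequence $w_1,\ldots,w_k$ recalled in the paragraph preceding the lemma.

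There is no real obstacle here; the only substantive input is the Second Main Theorem, which has already been cited. The proof is therefore one or two sentences: identify $\mathcal{S}_k$ as a genuine polynomial ring (using $k<n$) and observe that any subfamily of the variables in a polynomial ring trivially forms a regular sequence.
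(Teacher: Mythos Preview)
Your proof is correct and matches the paper, which simply declares the lemma ``obvious'' without further argument. One small remark: you do not actually need the Second Main Theorem here, since $\mathcal{S}_k = \mathcal{R}_k[w_1,\ldots,w_k]$ is a polynomial ring over $\mathcal{R}_k$ in the $w$-variables regardless of whether the $r_{ij}$ are algebraically independent, and $w_i$ is automatically a non-zero-divisor in $\mathcal{R}_k[w_i,\ldots,w_k]$.
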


Statement $(1)$ of Proposition \ref{K+vanishing} then follows from Lemma \ref{K+regularsequence} above and Corollary 17.5 of \cite{Eisenbud} (with $M = R$), which states that the cohomology of a Koszul complex $K(f_1,\ldots,f_k)$ below the top degree vanishes if $f_1,\ldots,f_k$ is a regular sequence.

We next note that statement $(2)$ of Proposition \ref{K+vanishing} follows from \cite{Eisenbud}, Corollary 17.5, which states that if $f_1,\ldots,f_k$ is a regular sequence in the ring $R$ then the top cohomology $H^k(K(f_1, \ldots, f_k))$ is isomorphic to $R/(f_1, \ldots, f_k)$.

We now pass from the above results for $K_+$ to the corresponding results for $C_+$.  

\begin{thm}\label{K+vanCvan}
\hfill

\begin{enumerate}
\item $H^{\ell}(C_+) = 0$ $\ell \neq k$
\item $H^k(C_+) = \mathcal{S}_k / (w_1, \ldots, w_k) \varPhi_k \cong \mathcal{R}_k \varphi_k$.
\end{enumerate}
\end{thm}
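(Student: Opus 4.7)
The plan is to feed the Koszul computation of Proposition \ref{K+vanishing} into the spectral sequence machinery of Section \ref{spectralsection}. First I would verify that the polynomial filtration on $C$ restricts cleanly to the subcomplex $C_+$: the involution $\iota \otimes \iota$ defining the splitting $C = C_+ \oplus C_-$ preserves polynomial degree, so the filtration $F^\bullet$ (which is bounded above by $p'' = 2n$ and exhaustive) descends to a filtration on $C_+$ with the same two properties. Combining the chain isomorphism $\Psi_+ : \mathrm{gr}(C_+) \to K_+$ with Proposition \ref{K+vanishing} gives $H^\ell(\mathrm{gr}(C_+)) = 0$ for $\ell \neq k$ and $H^k(\mathrm{gr}(C_+)) \cong \mathcal{R}_k \varPhi_k$.

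Part (1) is then immediate from Proposition \ref{grCzeroimpliesCzero}. For part (2), since both $H^{k-1}(\mathrm{gr}(C_+))$ and $H^{k+1}(\mathrm{gr}(C_+))$ vanish, Proposition \ref{generalspectral}(3) produces a vector space isomorphism $H^k(C_+) \cong H^k(\mathrm{gr}(C_+))$. To upgrade this to the explicit identification with $\mathcal{R}_k \varphi_k$, I would check directly that $p \varphi_k$ is a cocycle for every $p \in \mathcal{R}_k$. Decomposing $d = d_2 + d_{-2}$ as in Lemma \ref{doublecomplex}, the piece $d_{-2}$ contains the operator $\partial/\partial w_i$, which annihilates both $p$ (since $p$ is built from the $r_{ij}$) and $\varphi_k$ (since it is built from the $z_{\alpha i}$), so $d_{-2}(p \varphi_k) = 0$. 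The piece $d_2$ is exterior multiplication by $\omega_\alpha$ tensored with multiplication by the polynomial $z_{\alpha i} w_i$, and such multiplication operators commute with multiplication by $p$, so $d_2(p \varphi_k) = p \cdot d_2(\varphi_k) = 0$ using the closure of $\varphi_k$.

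Since $p \varphi_k$ is homogeneous of polynomial degree $\deg p + k$, it represents itself in the associated graded and maps under $\Psi_+$ to $p \cdot dw_1 \wedge \cdots \wedge dw_k$. Therefore the composite
\begin{equation*}
\mathcal{R}_k \longrightarrow H^k(C_+) \longrightarrow H^k(\mathrm{gr}(C_+)) \cong \mathcal{R}_k \varPhi_k, \qquad p \longmapsto [p \varphi_k] \longmapsto p \varPhi_k,
\end{equation*}
reproduces the top Koszul isomorphism of Proposition \ref{K+vanishing}(2) and is in particular bijective; hence the first arrow $p \mapsto [p \varphi_k]$ is itself an isomorphism $\mathcal{R}_k \cong H^k(C_+)$, yielding the stated description. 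The only real obstacle in this argument is confirming that the filtration and the convergence criterion of Proposition \ref{Steveprop1} pass from $C$ to $C_+$; once that verification is recorded, the theorem is essentially a direct translation of the Koszul results through $\Psi_+^{-1}$.
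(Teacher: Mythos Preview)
Your proof is correct and follows the same route as the paper: invoke Proposition~\ref{K+vanishing} for $\mathrm{gr}(C_+)\cong K_+$, then apply Proposition~\ref{grCzeroimpliesCzero} for part~(1) and Proposition~\ref{generalspectral}(3) for part~(2), with the identification $\varPhi_k=\varphi_k$. Your extra care in checking that the filtration restricts to $C_+$ and in tracing the explicit isomorphism $p\mapsto[p\varphi_k]$ is sound; one small simplification is that your direct verification that $p\varphi_k$ is a cocycle is unnecessary, since $C_+^{k+1}=0$ by Proposition~\ref{thecochains}, so every element of $C_+^k$ is automatically closed in $C_+$.
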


\begin{proof}
Since $K_+$ is the associated graded complex of $C_+$, the statement (1) is an immediate consequence of Proposition \ref{K+vanishing} and Proposition \ref{grCzeroimpliesCzero}.

Statement $(2)$ follows by applying statement $(3)$ of Proposition \ref{generalspectral} and noting that $\varPhi_k$ is the form $\varphi_k$ of Kudla and Millson.

\end{proof}

\section{The computation of the cohomology of $C_-$} \label{computationofcminus}
We now compute the cohomology of the associated graded complex $\mathrm{gr}(C_-)$.  As in the previous section, the differential is $d_{0,1}$.  Again, we abuse notation and call this operator $d$.  We will see there is only one non-zero cohomology group and use the results of Section \ref{spectralsection} to compute the cohomology of the original complex.

In Proposition \ref{thecochains} we proved that $\{* \Phi_J : J \in \mathcal{S}_{n-\ell, k} \}$ was a basis for the $\mathcal{S}_k$-module $C_-^\ell$.  Note that in order to obtain a cochain of degree $\ell$, we assume $J \in \mathcal{S}_{n-\ell, k}$ instead of $\mathcal{S}_{\ell,k}$, since by Equation \eqref{defofphij}, for $J \in \mathcal{S}_{\ell, k}, \varPhi_J= \displaystyle \sum_{I \in \mathcal{S}_{\ell,n}} f_{I,J} \omega_I$ and hence

\begin{equation} \label{def*phij}
*\varPhi_J =   \sum_{I \in \mathcal{S}_{\ell,n}}   f_{I,J} (*\omega_I)
\end{equation}
has degree $n-\ell$.  For our later computations, we need to replace the determinant $f_{I,J}$ of \eqref{def*phij} by the monomials $z_{I,J}$ where $z_{I,J} = z_{i_1,j_1} \cdots z_{i_{n-\ell},j_{n-\ell}}$.  In order to do this, we sum over all ordered subsets $\mathcal{I}_{n-\ell, n}$, instead of just $\mathcal{S}_{n-\ell, n}$ (those which are in increasing order), to obtain

\begin{equation} \label{defof*phij}
*\varPhi_J =  \sum_{I \in \mathcal{I}_{n-\ell,n}}   z_{I,J} (*\omega_I).
\end{equation}

Using this basis we may identify $C_-^\ell$ with the direct sum of $\binom{k}{n-\ell}$ copies of $\mathcal{S}_k$.

\subsection{A formula for $d$} \label{dformula}
\hfill

Our goal is to prove Proposition \ref{donbasis}, a formula for $d$ relative to the basis $\{*\varPhi_J\}$.  Recall that for $J \in \mathcal{S}_{n-\ell,k}$, and $1 \leq i \leq k$, we have $J(i)$ is the number of elements in $J$ less than $i$.   For $1 \leq i \leq k$, $J \in \mathcal{S}_{\ell,k}$, if $i \in \overline{J}$, then we denote by $\{J - i\}$ the element of $\mathcal{S}_{\ell-1, k}$ that corresponds to the set $\overline{J} - \{i\}$.  Now we define $*\varPhi_{J - i} \in C_+^{\ell+1}$  by

\begin{equation}
\varPhi_{J-i} = \begin{cases} (-1)^{J(j)} \varPhi_{\{J - i\}} & \text{ if } i \in \overline{J} \\ 0 & \text{ if } i \notin \overline{J}.
\end{cases}
\end{equation}



\begin{prop} \label{donbasis}
Assume $|J| = n-\ell$, then we have
\begin{equation*}
d(*\varPhi_J) = (-1)^{(n-\ell-1)} \big( \sum_{j \in J}   \sum_{i =1}^k w_i r_{ij} *\varPhi_{J-j}    \big).
\end{equation*} 
\end{prop}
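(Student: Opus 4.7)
The plan is to compute $d(*\varPhi_J)$ directly from the expansion
\[
*\varPhi_J \;=\; \sum_{I \in \mathcal{I}_{n-\ell,n}} z_{I,J}\, *\omega_I
\]
of \eqref{defof*phij} together with the form $d = \sum_\alpha A(\omega_\alpha) \otimes q_\alpha$ of Remark \ref{d=d_{0,1}}, where $q_\alpha = \sum_i z_{\alpha,i} w_i$. The key ingredient is the Hodge-star identity
\[
\omega_\alpha \wedge *\omega_I \;=\; (-1)^{r+n-\ell}\, *\omega_{I_r},
\]
valid when $\alpha = i_r \in \overline{I}$ (and zero otherwise), where $I_r = (i_1,\ldots,\widehat{i_r},\ldots,i_{n-\ell})$. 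This follows from $\omega_I = (-1)^{r-1}\,\omega_\alpha \wedge \omega_{I_r}$ combined with the standard identity $\omega_\alpha \wedge *(\omega_\alpha \wedge \eta) = (-1)^{|\eta|}\, *\eta$ for $\alpha \notin \overline{\eta}$, applied to $\eta = \omega_{I_r}$.

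After applying $d$, substituting the Hodge identity, and reparameterizing the sum by $(I', \alpha, r)$ with $I' = I_r \in \mathcal{I}_{n-\ell-1,n}$, $\alpha \notin \overline{I'}$, and $r \in \{1,\ldots,n-\ell\}$, using the factorization $z_{I,J} = z_{\alpha,j_r}\, z_{I',J_r}$ (with $J_r$ the deletion of the $r$-th entry of $J$), one obtains
\[
d(*\varPhi_J) \;=\; (-1)^{n-\ell}\sum_{r=1}^{n-\ell}(-1)^r\sum_{I' \in \mathcal{I}_{n-\ell-1,n}} z_{I',J_r}\,*\omega_{I'}\sum_{\alpha \notin \overline{I'}} z_{\alpha,j_r}\, q_\alpha.
\]
I would then extend the innermost sum to all $\alpha \in \{1,\ldots,n\}$: the complete sum equals $\sum_\alpha z_{\alpha,j_r}\,q_\alpha = \sum_i w_i r_{i,j_r}$, and the middle sum becomes $*\varPhi_{J_r}$ by \eqref{defof*phij}. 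The ``extra'' terms added (those with $\alpha \in \overline{I'}$) must be shown to vanish. Writing $\alpha = i'_s$, each extra summand equals $*\omega_{I'}$ times a polynomial that is symmetric under an adjacent transposition of two entries of $I'$, whereas $*\omega_{I'}$ is antisymmetric; pairing each ordered tuple with its adjacent transpose therefore cancels these contributions in pairs.

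Finally, $*\varPhi_{J_r} = (-1)^{r-1}\, *\varPhi_{J-j_r}$ (since $J$ is strictly increasing, $J(j_r) = r-1$), and the total sign $(-1)^{n-\ell}\,(-1)^r\,(-1)^{r-1} = (-1)^{n-\ell-1}$ matches the claimed formula. The main obstacle is the vanishing of the extra terms. The symmetrization pairing above is the cleanest conceptual route, but the sign bookkeeping across permutations of $I'$ requires care; equivalently one can work with strictly increasing $I'$ and the minor form $*\varPhi_J = \sum_{I \in \mathcal{S}_{n-\ell,n}} f_{I,J}\,*\omega_I$, where the cancellation manifests as the vanishing of the Laplace expansion of a determinant with a repeated row.
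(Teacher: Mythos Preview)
Your overall strategy is sound and differs from the paper's, but the argument for the vanishing of the extra terms has a genuine gap. You assert that each extra summand carries a polynomial factor symmetric under an adjacent transposition of two entries of $I'$; this is false. When $n-\ell-1=1$ the tuple $I'$ has a single entry and admits no transposition whatsoever, yet for fixed $r$ the extra term $(-1)^{n-\ell+r}q_{i'_1}z_{i'_1,j_1}z_{i'_1,j_2}\,*\omega_{i'_1}$ is visibly nonzero. Even for $|I'|=2$, pairing $I'=(i'_1,i'_2)$ with its transpose $(i'_2,i'_1)$ at a fixed $r$ leaves the nonzero expression $*\omega_{I'}\,(z_{i'_1,(J_r)_1}z_{i'_2,(J_r)_2}-z_{i'_2,(J_r)_1}z_{i'_1,(J_r)_2})\sum_{\alpha\in\overline{I'}}z_{\alpha,j_r}q_\alpha$. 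The cancellation is not via transpositions of $I'$.

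The correct pairing is between \emph{different values of $r$}. Reparametrize an extra term $(I',\alpha,r)$ with $\alpha\in\overline{I'}$ by the tuple $\tilde I\in\{1,\ldots,n\}^{n-\ell}$ obtained by inserting $\alpha$ into $I'$ at position $r$; then $z_{\alpha,j_r}z_{I',J_r}=z_{\tilde I,J}$, and $\tilde I$ has a unique repeated value, occurring at positions $r$ and some $t\neq r$. The same $\tilde I$ also arises from the extra term with position index $t$, and since $\omega_{\tilde I_t}=(-1)^{t-r-1}\omega_{\tilde I_r}$ (they differ by a cyclic shift of length $|t-r|$) one has $(-1)^r+(-1)^t(-1)^{t-r-1}=0$, so the two contributions cancel.

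The paper avoids this bookkeeping entirely by exploiting the wedge-product structure $\varPhi_J=\varphi_1^{(j_1)}\wedge\cdots\wedge\varphi_1^{(j_{n-\ell})}$. Via the identity $\omega_\alpha\wedge{*}\eta=(-1)^{|\eta|-1}\,{*}\iota_{e_{\alpha,n+1}}\eta$ (Lemma~\ref{wedgeandstar}), the computation reduces to applying the interior product as a derivation; since $\sum_\alpha z_{\alpha,i}\,\iota_{e_{\alpha,n+1}}(\varphi_1^{(j_s)})=r_{i,j_s}$, a scalar simply replaces the $s$-th slot and no extraneous terms arise. Your determinant remark at the end is essentially this argument in coordinates and would be the cleaner way to close the proof.
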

The proposition will follow from the next two lemmas. Note first that from the defining formula we have
\begin{equation}\label{firstformulaford}
d(*\varPhi_J) = \sum_{i = 1}^k \sum_{\alpha=1}^n z_{\alpha,i} w_i \omega_\alpha \wedge (*\varPhi_J).
\end{equation}

In what follows we will need to extend the definition of $J-j$ for $J \in \mathcal{S}_{n-\ell,k}$ to elements $I \in \mathcal{I}_{n-\ell,n}$.  Given $I \in \mathcal{I}_{n-\ell,n}$, we define the symbol $I-i_s$ to be the element $(i_1, \ldots, \widehat{i_s}, \ldots, i_{n-\ell}) \in \mathcal{I}_{n-\ell-1,n}$, the symbol $\widehat{i_s}$ indicating that the term $i_s$ is omitted.

We leave the proof of the next lemma to the reader.

\begin{lem} \label{wedgeandstar}
Given $I \in \mathcal{I}_{n-\ell,n}$,
\begin{align*}
\omega_{\alpha} \wedge *(\omega_I) = &(-1)^{(n-\ell-1)}* \iota_{e_{\alpha, n+1}}(\omega_I)\\
= & (-1)^{(n-\ell-1)}\sum_{s=1}^{n-\ell} (-1)^{s-1}\delta_{\alpha, i_{s}}  *  (\omega_{I-i_s}).\\
\end{align*}

Here $\delta_{\alpha, i_{s}}$ is the Kronecker delta 
$$\delta_{\alpha, i_{s}} = \begin{cases} 1 & \ \text{if} \  \alpha = i_{s}  \\ 0 & \ \text{if} \  \alpha  \neq i_{s} 
\end{cases}$$
\end{lem}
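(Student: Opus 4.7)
The plan is to derive the lemma from the standard Hodge-theoretic identity that converts wedge product with a $1$-form into interior product via $*$. Concretely, the first equality is a special case of the general fact: for any $p$-form $\beta \in \wwedge{p}\mathfrak{p}_0^*$ and any $1$-form $\omega$ dual under the Riemannian metric to $X \in \mathfrak{p}_0$,
\begin{equation*}
\omega \wedge *\beta \;=\; (-1)^{p-1}\, *\iota_X \beta.
\end{equation*}
In our setting the Riemannian metric is chosen so that $\{\omega_\alpha\}$ is orthonormal in $\mathfrak{p}_0^*$ (so that $\omega_1 \wedge \cdots \wedge \omega_n$ is the volume form), whence $\{e_{\alpha,n+1}\}$ is the orthonormal basis of $\mathfrak{p}_0$ dual to $\{\omega_\alpha\}$. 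Taking $\beta = \omega_I$ with $|I| = n-\ell$ then produces the sign $(-1)^{n-\ell-1}$ stated in the lemma.

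First I would establish the boxed identity. The cleanest route uses the defining property $\gamma \wedge *\beta = (\gamma,\beta)\,\vol$ for forms of equal degree, combined with the standard fact that $\iota_X$ is the adjoint of left exterior multiplication by $\omega$ on $\wwedge{\bullet}\mathfrak{p}_0^*$ relative to the induced inner product. Alternatively, I would verify it directly on the basis $\{\omega_I : I \in \mathcal{S}_{p,n}\}$, splitting into two cases. If $\alpha \notin \overline{I}$ then both sides vanish: the left, because $*\omega_I$ is (up to sign) the wedge of the $\omega_\beta$ for $\beta \notin \overline{I}$ and hence already contains $\omega_\alpha$; the right, because $\iota_{e_{\alpha,n+1}}\omega_I = 0$. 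If $\alpha \in \overline{I}$, say $\alpha = i_s$, then both sides equal a nonzero scalar multiple of $*\omega_{I - i_s}$, and comparing the signs that arise from rearranging $\omega_\alpha \wedge \omega_{\overline{I}^c}$ into standard form with those from the antiderivation formula gives exactly $(-1)^{p-1}$.

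Once the intrinsic identity is in hand, the second equality is immediate from the standard antiderivation formula
\begin{equation*}
\iota_{e_{\alpha,n+1}}(\omega_{i_1}\wedge\cdots\wedge\omega_{i_{n-\ell}}) = \sum_{s=1}^{n-\ell} (-1)^{s-1}\,\omega_{i_s}(e_{\alpha,n+1})\,\omega_{i_1}\wedge\cdots\wedge\widehat{\omega_{i_s}}\wedge\cdots\wedge\omega_{i_{n-\ell}},
\end{equation*}
using $\omega_{i_s}(e_{\alpha,n+1}) = \delta_{\alpha,i_s}$ from the dual basis relation. Applying $*$ term by term and recognizing $\omega_{I-i_s}$ as the wedge with $\omega_{i_s}$ omitted yields the displayed formula.

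I do not anticipate any substantive obstacle: this is a routine calculation in exterior algebra. The only real care required is bookkeeping of the sign $(-1)^{n-\ell-1}$, which is most efficiently double-checked on a small example (e.g.~taking $\beta = \omega_1 \wedge \omega_2$ in $n=2$ and verifying $\omega_1 \wedge *\beta = \omega_1 = -\,*\iota_{e_{1,n+1}}\beta$), together with the observation that the formula is valid for $I \in \mathcal{I}_{n-\ell,n}$ rather than just $\mathcal{S}_{n-\ell,n}$, since both sides transform in the same alternating way under permutations of the indices $i_s$.
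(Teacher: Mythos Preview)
Your proposal is correct and is precisely the kind of standard exterior-algebra verification the paper has in mind; indeed, the paper explicitly leaves this lemma to the reader. The key identity $\omega_\alpha \wedge *\beta = (-1)^{p-1}*\iota_{e_{\alpha,n+1}}\beta$ together with the antiderivation formula for $\iota$ is exactly the expected route, and your sign check in the $n=2$ example confirms the bookkeeping.
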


\begin{lem} \label{mainformulaford}
\begin{equation*}
\sum_{\alpha=1}^n z_{\alpha, i} A(\omega_\alpha) *\varPhi_J = \sum_{s=1}^{n-\ell} (-1)^{s-1} r_{i j_s} *\varPhi_{J-j_s}
\end{equation*}
\end{lem}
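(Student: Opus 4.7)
The strategy is to substitute the expansion of $*\varPhi_J$ over ordered (not strictly increasing) multi-indices, as provided by Equation \eqref{defof*phij}, and then collapse the resulting triple sum using Lemma \ref{wedgeandstar}. The choice of the larger index set $\mathcal{I}_{n-\ell,n}$ over $\mathcal{S}_{n-\ell,n}$ is essential here: it turns the individual positions $i_s$ into free summation variables, which is exactly what is needed to recognize the quadratic invariants $r_{i,j_s}$ at the end.

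Concretely, the plan is first to write
\[
\sum_{\alpha=1}^n z_{\alpha,i}\, A(\omega_\alpha) * \varPhi_J \;=\; \sum_{\alpha=1}^n \sum_{I \in \mathcal{I}_{n-\ell,n}} z_{\alpha,i}\, z_{I,J}\; \omega_\alpha \wedge *\omega_I,
\]
then apply Lemma \ref{wedgeandstar} to replace $\omega_\alpha \wedge *\omega_I$ by a signed sum of terms $*\omega_{I - i_s}$ weighted by $\delta_{\alpha,i_s}$. The Kronecker delta forces $\alpha = i_s$, eliminating the sum over $\alpha$ and leaving a sum indexed by $I \in \mathcal{I}_{n-\ell,n}$ and by $s \in \{1,\dots,n-\ell\}$.

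Next, for each fixed $s$, I would factor the monomial as $z_{I,J} = z_{i_s,j_s}\, z_{I-i_s,\, J \setminus\{j_s\}}$ and sum over $i_s$ first: the identity $\sum_{i_s=1}^n z_{i_s,i}\, z_{i_s,j_s} = r_{i,j_s}$ pulls out the desired quadratic invariant, while the remaining sum over $I - i_s \in \mathcal{I}_{n-\ell-1,n}$ reassembles, via Equation \eqref{defof*phij} applied to the smaller index $J \setminus\{j_s\}$, into the expression $*\varPhi_{J\setminus\{j_s\}}$. Translating $*\varPhi_{J\setminus\{j_s\}}$ back into the signed notation $*\varPhi_{J - j_s} = (-1)^{J(j_s)} *\varPhi_{J\setminus\{j_s\}} = (-1)^{s-1} *\varPhi_{J\setminus\{j_s\}}$ (since $J$ is strictly increasing, $J(j_s) = s-1$) yields the right-hand side of the lemma.

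The only real obstacle is bookkeeping of signs: one must balance the sign $(-1)^{s-1}$ produced by Lemma \ref{wedgeandstar}, the overall $(-1)^{n-\ell-1}$ that will be absorbed into Proposition \ref{donbasis}, and the sign $(-1)^{J(j_s)}$ hidden in the convention $\varPhi_{J-j_s} = (-1)^{J(j_s)} \varPhi_{J\setminus\{j_s\}}$. Once these three are reconciled by writing out the ordering of the removed element explicitly, the formula drops out immediately. I would recommend doing the sign check on the small case $\ell = n-1$ (so $|J|=1$) as a sanity check before writing out the general argument.
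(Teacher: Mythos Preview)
Your strategy is sound and yields the correct formula, but one step needs more care. Summing over $I \in \mathcal{I}_{n-\ell,n}$ does \emph{not} make $i_s$ a fully free variable: the entries of $I$ are required to be distinct, so $i_s$ must avoid the remaining entries of $I - i_s$, and your identity $\sum_{i_s=1}^n z_{i_s,i}\, z_{i_s,j_s} = r_{i,j_s}$ does not immediately apply. The easy fix is to enlarge the summation at the outset from $\mathcal{I}_{n-\ell,n}$ to all of $\{1,\dots,n\}^{n-\ell}$; this is harmless since $*\omega_I = 0$ whenever $I$ has a repeated entry, and after that enlargement each $i_s$ genuinely ranges freely over $\{1,\dots,n\}$ and your argument goes through verbatim.

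The paper takes a slightly different and somewhat cleaner route: rather than expanding $*\varPhi_J$ via monomials as in Equation \eqref{defof*phij}, it uses the wedge product factorization $\varPhi_J = \varphi_1^{(j_1)} \wedge \cdots \wedge \varphi_1^{(j_{n-\ell})}$ from Equation \eqref{varphiJwedgeproduct}, applies the identity $\omega_\alpha \wedge {*} = (-1)^{n-\ell-1}\, {*}\, \iota_{e_{\alpha,n+1}}$ once at the level of $\varPhi_J$, and then lets the derivation property of interior multiplication distribute $\iota_{e_{\alpha,n+1}}$ across the wedge factors $\varphi_1^{(j_s)}$. The invariant $r_{i,j_s}$ then appears directly from the scalar computation $\sum_\alpha z_{\alpha,i}\, \iota_{e_{\alpha,n+1}}(\varphi_1^{(j_s)}) = r_{i,j_s}$, with no index bookkeeping on $I$ at all. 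Both approaches are equivalent in content, but the paper's factorization sidesteps the distinctness issue above entirely.
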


\begin{proof}
By Lemma \ref{wedgeandstar},
\begin{align*}
\sum_{\alpha=1}^n z_{\alpha, i} &A(\omega_\alpha) *\varPhi_J = (-1)^{n-\ell-1}* \sum_{\alpha=1}^n z_{\alpha, i} \iota_{e_{\alpha, n+1}} (\varPhi_J) \\
= &(-1)^{n-\ell-1}* \sum_{\alpha=1}^n z_{\alpha, i} \iota_{e_{\alpha, n+1}} ( \varphi_1^{(j_1)} \wedge \cdots \wedge \varphi_1^{(j_s)} \wedge \cdots \varphi_1^{(j_{n-\ell})} ) \\
= &(-1)^{n-\ell-1}* \sum_{s=1}^{n-\ell} (-1)^{s-1} \big( \varphi_1^{(j_1)} \wedge \cdots \wedge \sum_{\alpha = 1}^n z_{\alpha,i} \iota_{e_{\alpha, n+1}}(\varphi_1^{(j_s)}) \wedge \cdots \varphi_1^{(j_{n-\ell})} \big)
\end{align*}
where the second equality is by Equation \eqref{varphiJwedgeproduct}.  However,
\begin{align*}
\sum_{\alpha = 1}^n z_{\alpha,i} \iota_{e_{\alpha, n+1}}(\varphi_1^{(j_s)}) = & \sum_{\alpha = 1}^n z_{\alpha,i} \sum_{\beta=1}^n z_{\beta, j_s} \iota_{e_{\alpha, n+1}}(\omega_\beta) \\
= &\sum_{\alpha = 1}^n \sum_{\beta=1}^n z_{\alpha,i} z_{\beta, j_s}  \delta_{\alpha, \beta}\\
= &r_{i,j_s}.
\end{align*}
We see that in the $j_s^{th}$ slot we have replaced $\varphi_1$ by $(-1)^{s-1} r_{i,j_s}$ and the lemma follows.

\end{proof}

Proposition \ref{donbasis} follows by substituting the formula of Lemma \ref{mainformulaford} into Equation \eqref{firstformulaford}.

\subsection{The map from $\mathrm{gr}(C_-)$ to a Koszul complex $K_-$}

Define the cubic polynomials $c_j \in \cal{S}_k$ by
\begin{equation}\label{cubic}
c_j = \sum_{i=1}^k r_{ij} w_i, 1 \leq j \leq k.
\end{equation} 

We note that the $c_i$ are the result of the following matrix multiplication of elements of $\mathcal{S}_k$
\begin{equation*}
\begin{pmatrix}
r_{11} & \cdots & r_{1k} \\
\vdots & \ddots & \vdots \\
r_{k1} & \cdots & z_{kk}
\end{pmatrix}
\begin{pmatrix}
w_1 \\
\vdots \\
w_k
\end{pmatrix}
=
\begin{pmatrix}
c_1 \\
\vdots \\
c_k
\end{pmatrix}.
\end{equation*}
We then define $K_-$ to be the complex given by

\begin{equation} \label{newcomplexK-}
K_-^{\bullet} =  \wwedge{\bullet}(( \C^k)^*) \otimes \cal{S}_k  
\ \text{with the differential} \ d_{K_-} = \sum_{j=1}^k  A(dw_j) \otimes c_j.
\end{equation}

In order to obtain an isomorphism of complexes we need to shift degrees according to the following definition.

\begin{defn}
Let $M$ be a cochain complex, $j$ an integer, then we define the cochain complex $M[j]$ by $(M[j])^i = M^{j+i}$.
\end{defn}

We define a map $\Psi_-$ from the associated graded complex $\mathrm{gr}(C_-)[n-k]^{\ell}$ to $K_-^{\ell}$ by sending $* \varPhi_J$ to $* dw_J$.  Here by $*$ we mean the Hodge star for the standard Euclidean metric on $\R^k$ extended to be complex linear. Note that the degrees $\ell$ such that $C_-[n-k]^\ell$ is nonzero range from $0$ to $k$.

The following lemma is an immediate consequence of Proposition \ref{donbasis}. We leave its verification to the reader.

\begin{lem} $\Psi_-$  is an isomorphism  of cochain complexes 
$$\Psi_- : \mathrm{gr}(C_-)[n-k] \to K_-.$$
\end{lem}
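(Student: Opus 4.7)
The plan is to establish two things: that $\Psi_-$ is a bijective $\mathcal{S}_k$-module map in each degree, and that it intertwines the differentials. The argument runs closely parallel to the verification that $\Psi_+$ is an isomorphism, whose proof was left to the reader.

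Bijectivity follows essentially immediately from Proposition \ref{thecochains}. By that proposition, $\{*\varPhi_J : J \in \mathcal{S}_{n-\ell,k}\}$ is a free $\mathcal{S}_k$-basis of $C_-^\ell$. After applying the shift $[n-k]$, the degree-$m$ component $(\mathrm{gr}(C_-)[n-k])^m$ equals $\mathrm{gr}(C_-)^{n-k+m}$, which has basis $\{*\varPhi_J : J \in \mathcal{S}_{k-m,k}\}$. On the target, $K_-^m = \wwedge{m}((\C^k)^*) \otimes \mathcal{S}_k$ has the standard basis $\{dw_{J'} : J' \in \mathcal{S}_{m,k}\}$, and applying the complex-linear Hodge star on $\R^k$ produces the equivalent basis $\{*dw_J : J \in \mathcal{S}_{k-m,k}\}$ indexed by the same set as on the source. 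Thus $\Psi_-$ is a well-defined degree-preserving bijection of free $\mathcal{S}_k$-bases and extends uniquely to an $\mathcal{S}_k$-linear isomorphism in each degree.

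For compatibility with the differentials, suppose $|J| = n - \ell$, so $*\varPhi_J$ has cochain degree $\ell$. Proposition \ref{donbasis} gives
\begin{equation*}
d(*\varPhi_J) = (-1)^{n-\ell-1}\sum_{j \in \overline{J}}\sum_{i=1}^k w_i r_{ij}\, *\varPhi_{J-j}.
\end{equation*}
On the Koszul side, $d_{K_-}(*dw_J) = \sum_{j=1}^k (dw_j \wedge *dw_J) \otimes c_j$ with $c_j = \sum_i r_{ij} w_i$. A direct computation with the Euclidean Hodge star on $\R^k$ shows that $dw_j \wedge *dw_J$ vanishes unless $j \in \overline{J}$, and for $j \in \overline{J}$ one finds $dw_j \wedge *dw_J = (-1)^{|J|-1}\,*dw_{J-j}$, with the convention $*dw_{J-j} = (-1)^{J(j)}*dw_{\{J-j\}}$ paralleling the convention for $*\varPhi_{J-j}$. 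Since $|J| = n - \ell$, the two overall signs match, and term-by-term comparison yields $\Psi_- \circ d = d_{K_-} \circ \Psi_-$ on the basis, hence on the whole complex by $\mathcal{S}_k$-linearity.

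The principal obstacle is the sign bookkeeping. The factor $(-1)^{n-\ell-1}$ in Proposition \ref{donbasis} arises, via Lemma \ref{wedgeandstar}, from the Hodge star relation on $\mathfrak{p}^*$, while the parallel factor $(-1)^{|J|-1}$ on the Koszul side arises analogously from the Hodge star on $\R^k$. The intrinsic signs $(-1)^{J(j)}$ are built into both $*\varPhi_{J-j}$ and $*dw_{J-j}$ precisely to absorb the combinatorial sign produced when one re-sorts the tuple $\{j\} \cup \overline{J}^c$ into increasing order. Reconciling these three sources of sign is a short combinatorial exercise; once aligned, the lemma follows immediately.
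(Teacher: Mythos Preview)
Your proof is correct and follows precisely the approach the paper intends: the paper states that the lemma is an immediate consequence of Proposition \ref{donbasis} and leaves the verification to the reader, and you have supplied exactly that verification, using Proposition \ref{thecochains} for bijectivity and Proposition \ref{donbasis} together with the analogue of Lemma \ref{wedgeandstar} on $\R^k$ for compatibility with the differentials. The sign analysis you give is accurate, and the identification $|J| = n-\ell$ is what makes the two signs $(-1)^{n-\ell-1}$ and $(-1)^{|J|-1}$ coincide.
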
 

We now compute the cohomology of the complex $K_-, d_{K_-}$. 

\begin{prop}\label{Koszulvanishing}
\hfill

\begin{enumerate}
\item $H^\ell(K_-) = 0,  \ell \neq k$ 
\item $H^k(K_-) = \mathcal{S}_k / (c_1, \ldots, c_k) \vol$.
\end{enumerate}
\end{prop}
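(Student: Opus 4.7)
The plan is to reduce the computation to a standard Koszul cohomology calculation, exactly in the spirit of the authors' treatment of $K_+$ in Proposition \ref{K+vanishing}. First I would observe that, under the identification $\bigwedge^\bullet((\C^k)^*) \otimes \mathcal{S}_k \cong \bigwedge^\bullet(\mathcal{S}_k^k)$ (which uses only that exterior algebra commutes with base change), the complex $(K_-, d_{K_-})$ from \eqref{newcomplexK-} is precisely the Koszul complex of the sequence $c_1, \ldots, c_k \in \mathcal{S}_k$. Granting this, Corollary 17.5 of \cite{Eisenbud} gives at once that, provided $c_1, \ldots, c_k$ is a regular sequence in $\mathcal{S}_k$, we have $H^\ell(K_-) = 0$ for $\ell < k$ and $H^k(K_-) = \mathcal{S}_k/(c_1, \ldots, c_k)$, with top class represented by $dw_1 \wedge \cdots \wedge dw_k$ (which corresponds to $\vol$ under $\Psi_-$). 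Since $\bigwedge^\ell((\C^k)^*) = 0$ for $\ell > k$, the vanishing outside degree $k$ follows.

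The only real content is therefore the assertion that $(c_1, \ldots, c_k)$ is a regular sequence in the polynomial ring $\mathcal{S}_k = \C[r_{ij} : i \le j,\; w_1, \ldots, w_k]$. For this I would mimic the argument used by the authors for $(q_1, \ldots, q_n)$ in Proposition \ref{Aregularsequence}. Consider the augmented sequence
\[
\tau = \big( \{r_{ij}\}_{i<j},\; c_1, c_2, \ldots, c_k \big).
\]
The off-diagonal symmetric-matrix entries $\{r_{ij}\}_{i<j}$ are among the polynomial generators of $\mathcal{S}_k$, hence form a regular sequence, and
\[
\mathcal{S}_k / (\{r_{ij}\}_{i<j}) \cong \C[r_{11}, r_{22}, \ldots, r_{kk}, w_1, \ldots, w_k].
\]
Under this isomorphism, the image of $c_j = \sum_{i=1}^k r_{ij} w_i$ is $r_{jj} w_j$, since all off-diagonal $r_{ij}$ vanish. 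By Lemma \ref{easyregular}, $(r_{11} w_1, \ldots, r_{kk} w_k)$ is a regular sequence in this quotient ring. Concatenating with the regular sequence of off-diagonal variables shows that $\tau$ is a regular sequence of homogeneous elements of $\mathcal{S}_k$.

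Finally, by Matsumura's Lemma \ref{Matsumuralemma}, any permutation of a homogeneous regular sequence in a Noetherian graded ring is again regular, so the reordering $(c_1, \ldots, c_k, \{r_{ij}\}_{i<j})$ of $\tau$ is regular. In particular its initial segment $(c_1, \ldots, c_k)$ is regular, which is exactly what was needed to invoke Eisenbud's Corollary 17.5 and conclude both statements of the proposition.

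The main (only) obstacle is verifying the regularity of $(c_1, \ldots, c_k)$; everything else is a formal appeal to Koszul theory. The obstacle is not serious because the cubics $c_j$ contain the ``diagonal monomials'' $r_{jj} w_j$, so killing the off-diagonal $r_{ij}$ reduces the problem to the already-established Lemma \ref{easyregular}, and Matsumura's reordering principle takes care of the rest.
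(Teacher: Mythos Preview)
Your proposal is correct and follows essentially the same route as the paper: identify $K_-$ with the Koszul complex $K(c_1,\ldots,c_k)$, prove regularity of $(c_1,\ldots,c_k)$ by adjoining the off-diagonal $r_{ij}$, reducing modulo them to the diagonal monomials $r_{jj}w_j$ (Lemma \ref{easyregular}), and then invoking Matsumura's reordering lemma, after which Eisenbud's Corollary 17.5 finishes the job. This is exactly Lemma \ref{regularsequence} and the surrounding argument in the paper.
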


We mimic the proof of Proposition \ref{Avanishing}.  We note $d_{K_-}$ is the differential in the Koszul complex $K(c_1,\ldots,c_k)$ associated to the sequence of the cubic polynomials $c_1,\ldots,c_k$, \cite{Eisenbud}, Section 17.2.  It remains to show $c_j$ is a regular sequence.

\begin{lem} \label{regularsequence}
The sequence $(c_1, \ldots, c_k)$ is a regular sequence in $\mathcal{S}_k$.
\end{lem}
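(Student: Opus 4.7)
The proof will closely parallel that of Proposition \ref{Aregularsequence}. Under the running hypothesis $k < n$, the Second Main Theorem cited earlier in Section \ref{notation} ensures that $\mathcal{S}_k$ is a genuine polynomial algebra on the algebraically independent generators $\{r_{ij} : 1 \leq i \leq j \leq k\} \cup \{w_1, \ldots, w_k\}$ (using $r_{ij} = r_{ji}$). I will then show that the longer sequence
$$\sigma = (r_{12}, r_{13}, \ldots, r_{k-1, k}, c_1, c_2, \ldots, c_k),$$
consisting of the $\binom{k}{2}$ strictly off-diagonal $r_{ij}$ followed by the $k$ cubics $c_j$, is a regular sequence of homogeneous elements of $\mathcal{S}_k$.

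The first block is clearly regular, since its entries are part of a system of polynomial coordinates. For the second block, I pass to the quotient
$$\mathcal{S}_k / (r_{ij} : i < j) \cong \C[r_{11}, \ldots, r_{kk}, w_1, \ldots, w_k],$$
in which the image of $c_j = \sum_{i=1}^k r_{ij} w_i$ reduces to $r_{jj} w_j$. Regularity of the sequence $(r_{11} w_1, \ldots, r_{kk} w_k)$ in this polynomial ring is exactly the content of Lemma \ref{easyregular} (applied with $x_i = r_{ii}$ and $y_i = w_i$). Splicing the two steps together shows that $\sigma$ is a regular sequence in $\mathcal{S}_k$.

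Finally, I will invoke Lemma \ref{Matsumuralemma}, which is applicable because $\mathcal{S}_k$ is a Noetherian graded ring and every entry of $\sigma$ is homogeneous, to reorder $\sigma$ so that the cubics come first:
$$\sigma' = (c_1, \ldots, c_k, r_{12}, \ldots, r_{k-1, k}).$$
Since any prefix of a regular sequence is regular, this yields that $(c_1, \ldots, c_k)$ is a regular sequence in $\mathcal{S}_k$.

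There is no serious obstacle in this argument; every step is already prepared in the text. The only subtle point to flag is the legitimacy of treating the off-diagonal $r_{ij}$ as part of a coordinate system, which genuinely uses the hypothesis $k < n$ via the Second Main Theorem; without that input, the $r_{ij}$ would satisfy nontrivial relations and the initial block of $\sigma$ could fail to be regular.
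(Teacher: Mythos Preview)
Your proof is correct and follows essentially the same route as the paper: extend $(c_1,\ldots,c_k)$ by the off-diagonal $r_{ij}$, reduce modulo these to get the sequence $(r_{11}w_1,\ldots,r_{kk}w_k)$, invoke Lemma \ref{easyregular}, and then reorder via Lemma \ref{Matsumuralemma}. Your remark that the hypothesis $k<n$ is genuinely used (via the Second Main Theorem) to ensure the $r_{ij}$ are algebraically independent is a helpful clarification that the paper leaves implicit.
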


\begin{proof}
We follow the method of proof of Proposition \ref{Aregularsequence}.  By Lemma \ref{easyregular} we see that $(r_{11} w_1, \ldots, r_{kk} w_k)$ is a regular sequence in $\C[r_{11}, r_{22}, \ldots, r_{kk}, w_1, \ldots, w_k]$.  We now examine the sequence $\sigma = (\{r_{ij}\}_{i < j}, c_1, \ldots, c_k)$ of ``super-diagonal" $\{r_{ij}\}_{i<j}$ followed by $c_1, \ldots, c_k$.  That is,
\begin{equation*}
\sigma = (r_{12}, r_{13}, \ldots, r_{1k}, r_{23}, \ldots, r_{2k}, \ldots, r_{k-1,k}, c_1, \ldots, c_k).
\end{equation*}
It is clear that the ``super-diagonal" $r_{ij}$ form a regular sequence since they are coordinates.  To check if the $c_i$ are regular we work in
$$\mathcal{S}_k / (\{r_{ij}\}_{i < j}) \cong \C[r_{11}, r_{22}, \ldots, r_{kk}, c_1, \ldots, c_k].$$
The image of $c_i$ in this quotient ring is $r_{ii} w_i$ which form a regular sequence.

Now we apply Lemma \ref{Matsumuralemma} to reorder $\sigma$ and note that $(c_1, \ldots, c_k, \{r_{ij}\}_{i < j})$ is a regular sequence.  Hence $(c_1, \ldots, c_k)$ is a regular seqeuence.

\end{proof}

Proposition \ref{Koszulvanishing} then follows by the same argument that appears after Lemma \ref{K+regularsequence}.



We now pass from the above results for $K_-$ to the corresponding results for $C_-$.  

\begin{thm}\label{KvanCvan}
\hfill

\begin{enumerate}
\item $H^{\ell}(C_-) = 0$ $\ell \neq n$
\item $H^n(C_-) \cong \mathcal{S}_k / (c_1, \ldots, c_k)$.
\end{enumerate}
\end{thm}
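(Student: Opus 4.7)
The plan is to mimic the argument for Theorem \ref{K+vanCvan}, transferring the cohomology of the Koszul complex $K_-$ (computed in Proposition \ref{Koszulvanishing}) first to the associated graded $\mathrm{gr}(C_-)$ via the isomorphism $\Psi_-\colon \mathrm{gr}(C_-)[n-k] \to K_-$ of complexes established above, and then to $C_-$ itself by invoking the spectral sequence results of Section \ref{spectralsection}.

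First I would use the shifted isomorphism $\Psi_-$ to translate Proposition \ref{Koszulvanishing} into the statement
$$H^\ell(\mathrm{gr}(C_-)) = 0 \quad \text{for } \ell \neq n, \qquad H^n(\mathrm{gr}(C_-)) \cong \mathcal{S}_k/(c_1,\ldots,c_k).$$
The only mildly subtle point here is the degree shift $[n-k]$: the top cohomology $H^k(K_-)$ corresponds to degree $n$ of $\mathrm{gr}(C_-)$, and indeed the unique generator over $\mathcal{S}_k$ in $C_-^n$ is $\ast \varPhi_\emptyset = \vol$, consistent with the description of $H^k(K_-)$ as $\mathcal{S}_k/(c_1,\ldots,c_k)$ sitting in the top slot of $K_-$.

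For statement (1), I would apply Proposition \ref{grCzeroimpliesCzero} to the filtered subcomplex $C_-$. The filtration on $C$ is bounded above and exhaustive, and since $\iota \otimes \iota$ preserves the polynomial grading, the eigenspace decomposition $C = C_+\oplus C_-$ is compatible with the filtration; hence the hypothesis on $F^{\bullet}$ descends to $C_-$. The vanishing of $H^\ell(\mathrm{gr}(C_-))$ for $\ell \neq n$ then forces $H^\ell(C_-) = 0$ for $\ell \neq n$.

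For statement (2), I would apply part (3) of Proposition \ref{generalspectral}. Both $H^{n-1}(\mathrm{gr}(C_-)) = 0$ (by the previous step) and $H^{n+1}(\mathrm{gr}(C_-)) = 0$ (trivially, since $C_-^{n+1} = 0$) hold, so the induced map $H^n(C_-) \to H^n(\mathrm{gr}(C_-))$ is an isomorphism, yielding $H^n(C_-) \cong \mathcal{S}_k/(c_1,\ldots,c_k)$, represented by multiples of $\vol = \ast \varPhi_\emptyset$. All the essential work — the identification $\mathrm{gr}(C_-)[n-k] \cong K_-$, the regularity of the sequence $(c_1,\ldots,c_k)$, and the convergence of the spectral sequence — has already been carried out, so the main (and quite modest) obstacle is simply verifying that the spectral sequence machinery of Section \ref{spectralsection} restricts cleanly to the $\iota\otimes\iota$-eigenspace $C_-$ while keeping careful track of the degree shift.
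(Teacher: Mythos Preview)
Your proposal is correct and follows essentially the same approach as the paper's proof: statement (1) via Proposition \ref{Koszulvanishing} plus Proposition \ref{grCzeroimpliesCzero}, and statement (2) via Proposition \ref{generalspectral}(3). Your additional remarks about the degree shift and the compatibility of the filtration with the $\iota\otimes\iota$-eigenspace decomposition are accurate elaborations of points the paper leaves implicit.
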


\begin{proof}
Since $K_-$ is the associated graded complex of $C_-[n-k]$, statement (1) is a consequence of Proposition \ref{Koszulvanishing} and Proposition \ref{grCzeroimpliesCzero}.

Statement $(2)$ follows by applying statement $(3)$ of Proposition \ref{generalspectral}.

\end{proof}

\subsection{Infinite generation of $H^n(C)$ as an $\mathcal{R}_k$-module}

We will now demonstrate that $H^n(C)$ is not finitely generated as an $\mathcal{R}_k$-module.  In what follows, recall $\vol =\omega_1 \wedge \cdots \wedge \omega_n$.

\begin{prop}
The map from $\C[w_1, \ldots, w_k]$ to $H^n(C_-)$ sending $f$ to $[f \vol]$ is an injection.  Furthermore, $\C[w_1, \ldots, w_k]$ generates $H^n(C_-)$ over $\mathcal{R}_k$.
\end{prop}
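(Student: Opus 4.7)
The plan is to reduce both claims to elementary algebraic statements about the ring $\mathcal{S}_k/(c_1,\ldots,c_k)$ via the isomorphism $H^n(C_-) \cong \mathcal{S}_k/(c_1,\ldots,c_k)$ supplied by Theorem \ref{KvanCvan}(2). Under this isomorphism, the top-degree cocycle $f \vol$ should correspond (up to a nonzero scalar) to the class of $f$ in the quotient. The first preliminary step is therefore to unwind the identification $\Psi_-:\mathrm{gr}(C_-)[n-k] \to K_-$ at the top degree: the unique (up to scalar) strictly increasing tuple $J = (1,2,\ldots,k) \in \mathcal{S}_{k,k}$ produces $*\varPhi_{1,\ldots,k}$ in degree $n-k$ of $C_-$, and multiplication by $f \in \C[w_1,\ldots,w_k]$ in top degree corresponds on the $K_-$ side to $f$ times a nonzero top form; tracing the connecting maps in Proposition \ref{generalspectral}(3) identifies $[f\vol]$ with the coset $f + (c_1,\ldots,c_k)$.

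Once this identification is in hand, the injectivity statement reduces to: \emph{if $f \in \C[w_1,\ldots,w_k]$ lies in $(c_1,\ldots,c_k) \subset \mathcal{S}_k$, then $f = 0$.} The plan is to introduce the $\C$-algebra projection
\[
\pi : \mathcal{S}_k \longrightarrow \C[w_1,\ldots,w_k], \qquad \pi(r_{ij}) = 0, \quad \pi(w_i) = w_i.
\]
Since each $c_j = \sum_{i=1}^k r_{ij} w_i$ is a sum of terms containing some $r_{ij}$, we have $\pi(c_j) = 0$ for all $j$, so $\pi$ factors through a map $\bar\pi : \mathcal{S}_k/(c_1,\ldots,c_k) \to \C[w_1,\ldots,w_k]$. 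The composition
\[
\C[w_1,\ldots,w_k] \hookrightarrow \mathcal{S}_k \twoheadrightarrow \mathcal{S}_k/(c_1,\ldots,c_k) \xrightarrow{\bar\pi} \C[w_1,\ldots,w_k]
\]
is the identity, which forces the first map to be an injection; this gives the desired injectivity of $f \mapsto [f\vol]$.

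For the generation statement, I would simply observe that $\mathcal{S}_k = \mathcal{R}_k[w_1,\ldots,w_k]$ as $\C$-algebras, so $\mathcal{S}_k$ is spanned as an $\mathcal{R}_k$-module by the monomials $w_1^{a_1} \cdots w_k^{a_k}$, i.e.\ by $\C[w_1,\ldots,w_k]$. Passing to the quotient $\mathcal{S}_k/(c_1,\ldots,c_k)$ preserves this spanning property, so under the identification with $H^n(C_-)$, the image of $\C[w_1,\ldots,w_k]$ generates $H^n(C_-)$ over $\mathcal{R}_k$.

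There is really no hard step here; the only point that requires genuine verification is the precise form of the isomorphism $H^n(C_-) \cong \mathcal{S}_k/(c_1,\ldots,c_k)$, namely that it sends $[f\vol] \mapsto f + (c_1,\ldots,c_k)$ rather than being twisted by some nontrivial $\mathcal{R}_k$-linear action. Since $\mathcal{S}_k$ acts by multiplication on both sides and the iso $\Psi_-$ of Koszul complexes is $\mathcal{S}_k$-linear by construction, this compatibility is automatic from the normalization $*\varPhi_{1,\ldots,k} \leftrightarrow *dw_1\wedge\cdots\wedge dw_k$ in top degree.
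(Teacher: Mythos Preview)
Your core argument is correct and matches the paper's proof exactly: both use the retraction $\pi:\mathcal{S}_k \to \C[w_1,\ldots,w_k]$ killing the $r_{ij}$ to show injectivity (since $\pi(c_j)=0$), and both observe that $\mathcal{S}_k = \mathcal{R}_k[w_1,\ldots,w_k]$ to get generation. One small bookkeeping slip: the basis element of $C_-^n$ is $*\varPhi_\emptyset = \vol$ (corresponding to $J=\emptyset \in \mathcal{S}_{0,k}$), not $*\varPhi_{1,\ldots,k}$, which lies in degree $n-k$; under $\Psi_-$ the top form in $K_-^k$ is $dw_1\wedge\cdots\wedge dw_k = *dw_\emptyset$, and this is what $\vol$ maps to. This does not affect your argument, since the $\mathcal{S}_k$-linearity of $\Psi_-$ still gives $[f\vol] \leftrightarrow f + (c_1,\ldots,c_k)$ as you claim.
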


\begin{proof}
There is an inclusion of polynomial algebras
$$\C[w_1, \ldots, w_k] \hookrightarrow \mathcal{S}_k.$$
This map has a right inverse, $\pi$, where $\pi(w_i) = w_i$ and $\pi(r_{ij})=0$.  Then since $\pi(c_j) = \pi(\sum_i r_{ij} w_i) = 0$, $\pi$ descends to a right inverse from $\mathcal{S}_k / (c_1, \ldots, c_k) \to \C[w_1, \ldots, w_k]$.  Hence the map is injective.

The second statement is obvious since $\C[w_1, \ldots, w_k]$ generates $\mathcal{S}_k$ as an $\mathcal{R}_k$-module.

\end{proof}

\begin{rmk}
Note that 
\begin{enumerate}
\item If $k \neq n$, then
$$H^n(C) = H^n(C_-) = \mathcal{S}_k / (c_1, \ldots, c_k) [\vol].$$
\item If $k=n$ then
$$H^n(C) = \mathcal{S}_n / (c_1, \ldots, c_n) [\vol] \oplus \mathcal{R}_n \varphi_n.$$
\end{enumerate}
\end{rmk}

\subsection{The decomposability of $H^n(C)$ as a $\mathfrak{sp}(2k, \R)$-module} Define \\
$\iota^\prime \in O(n,1)$ by $\iota^\prime(e_j) = e_j, 1 \leq j \leq n$ and $\iota^\prime(e_{n+1}) = -e_{n+1}$.  Then since $\iota^\prime \otimes \iota^\prime$ acts on $\big( \wwedge{\ell}\mathfrak{p}^* \otimes \mathrm{Pol}(V^k) \big)^{\SO(n)}$ and commutes with $d$, it acts on $H^n(C)$.   Since $\iota^\prime \otimes \iota^\prime$ has order two, we get the eigenspace decomposition into the $-1$ and $+1$ eigenspaces

$$H^n(C) = H^n(C)_- \oplus H^n(C)_+.$$

\begin{lem}
$H^n(C)_-$ and $H^n(C)_+$ are nonzero.
\end{lem}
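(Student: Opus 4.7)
The plan is to exhibit explicit cohomology classes in each eigenspace. The injection $\C[w_1,\ldots,w_k] \hookrightarrow H^n(C_-)$ proved in the previous proposition already gives a concrete supply of nonzero classes in $H^n(C) \supset H^n(C_-)$, and $\iota'\otimes\iota'$ acts on these classes by an easy sign, so producing one class in each eigenspace will be immediate once the signs are identified.

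First I would compute the action of $\iota'\otimes\iota'$ on the generators. Since $\iota'(e_{n+1})=-e_{n+1}$ and $\iota'$ fixes $e_1,\ldots,e_n$, the identification $e_{i,n+1}=-e_i\wedge e_{n+1}$ gives $\iota'\cdot e_{i,n+1}=-e_{i,n+1}$, hence $\iota'\cdot\omega_i=-\omega_i$ and therefore
\[
\iota'\cdot\vol=(-1)^n\vol.
\]
On $\mathrm{Pol}(V^k)$, the coordinate $w_j$ is the component along $e_{n+1}$, so $\iota'\cdot w_j=-w_j$, while the $z_{\alpha,j}$ and hence the invariants $r_{ij}$ are fixed. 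Consequently, for $f\in\C[w_1,\ldots,w_k]$,
\[
(\iota'\otimes\iota')\bigl(f(w_1,\ldots,w_k)\,\vol\bigr)=(-1)^n f(-w_1,\ldots,-w_k)\,\vol.
\]

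Next I would pick two test classes of opposite $w$-parity. The proposition preceding this lemma shows that $[\vol]$ and $[w_1\vol]$ are both nonzero in $H^n(C_-)\subset H^n(C)$. By the formula above, $\iota'\otimes\iota'$ acts on $[\vol]$ by $(-1)^n$ and on $[w_1\vol]$ by $(-1)^{n+1}$. Thus one of these classes lies in $H^n(C)_+$ and the other in $H^n(C)_-$, and both eigenspaces are nonzero. (In the exceptional case $k=n$, the extra summand $\mathcal{R}_n\varphi_n$ of $H^n(C)$ does not interfere, since we only need to verify nonvanishing, and both classes still survive inside the $H^n(C_-)$ summand.)

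The only thing requiring care is confirming that $\iota'\otimes\iota'$ really does preserve the complex and descend to $H^n$. This is routine: $\iota'\in\OO(n,1)$ normalizes $\SO(n)\subset\OO(n,1)$, so it acts on the $\SO(n)$-invariants $C^\bullet$, and since the differential $d=\sum A(\omega_\alpha)\otimes\sigma(e_\alpha\wedge e_{n+1})$ is constructed from $\OO(n,1)$-equivariant data, the action commutes with $d$. No step here is a real obstacle; the argument is essentially a parity check once the previous injectivity statement is in hand.
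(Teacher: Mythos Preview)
Your proof is correct and follows essentially the same approach as the paper: compute that $\iota'\cdot\vol=(-1)^n\vol$ and that $\iota'$ negates each $w_j$, then use the previously established injection $\C[w_1,\ldots,w_k]\hookrightarrow H^n(C_-)$ to produce nonzero classes $[\vol]$ and $[w_1\vol]$ of opposite parity. The paper's version is slightly terser, stating the sign as $(-1)^{n+a}$ for a homogeneous polynomial of degree $a$, but the content is identical.
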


\begin{proof}
Note that $\iota^\prime(\vol) = (-1)^n \vol$ and if $p(w_1, \ldots, w_k)$ is homogenous of degree $a$, then $\iota^\prime(p) = (-1)^a p$.  Hence $\iota^\prime \otimes \iota^\prime ([\vol \otimes p]) = (-1)^{n+a} [\vol \otimes p]$.

\end{proof}

Since the action of $\iota^\prime$ on $\mathrm{Pol}(V^k)$ commutes with the action of $\mathfrak{sp}(2k, \R)$, the above decomposition of $H^n(C)$ is invariant under $\mathfrak{sp}(2k, \R)$.

\section{A simple proof of nonvanishing of $H^n(C)$.}\label{nonvanishingtop}
In what follows we let $G$ be a connected, noncompact, and semisimple Lie group with maximal compact $K$.  We let $n = \mathrm{dim}(G/K)$ and $\mathcal{V}$ be a  $(\mathfrak{g}, K)$-module with $\mathcal{V}^*$ the dual.
\begin{lem}
Suppose either

$(1)$ $\mathcal{V}$ is a topological vector space and $K$ acts continuously. Furthermore, assume there exists a nonzero $\mathfrak{g}$-invariant continuous linear functional $\alpha \in (\mathcal{V}^*)^{\mathfrak{g}}$\\
or

$(2)$ there exists a $\mathfrak{g}$-invariant linear functional $\alpha$ and a $K$-invariant vector $v \in \mathcal{V}$ such that $\alpha(v) \neq 0$ (no topological hypotheses needed).

\vspace{.25cm}

Then $H^n(\mathfrak{g}, K; \mathcal{V}) \neq 0$.
\end{lem}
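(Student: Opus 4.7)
My plan is to exhibit an explicit nonzero class in $H^n(\mathfrak{g},K;\mathcal{V})$ by pushing forward along $\alpha$ to the relative Lie algebra cohomology with trivial coefficients, where the top cohomology is easily seen to contain $[\vol]$.

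First I would reduce case (1) to case (2). The $\mathfrak{g}$-invariance of $\alpha$ means $\alpha(X\cdot w)=0$ for all $X\in\mathfrak{g}$ and $w\in\mathcal{V}$. Specializing to $X\in\mathfrak{k}$ and using connectedness of $K$ (which holds since $G$ is connected and $G/K$ is contractible), this integrates to $K$-invariance: $\alpha(k\cdot w)=\alpha(w)$ for all $k\in K$. Picking $w\in\mathcal{V}$ with $\alpha(w)\neq 0$, the average $v=\int_K k\cdot w\,dk$ against normalized Haar measure is a well-defined vector in $\mathcal{V}$ because $K$ is compact and acts continuously; it lies in $\mathcal{V}^K$ and still satisfies $\alpha(v)=\alpha(w)\neq 0$, recovering the data of case (2).

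Next, in case (2), I would construct the candidate class. Since $K$ is connected and acts on $\mathfrak{p}$ by the adjoint representation, which lands in a compact subgroup of $GL(\mathfrak{p})$ whose Lie algebra is traceless, it acts trivially on the line $\wwedge{n}\mathfrak{p}^*$; so $\vol$ is $K$-invariant and $\vol\otimes v\in C^n(\mathfrak{g},K;\mathcal{V})$ for any $v\in\mathcal{V}^K$. Being top-degree, it is automatically a cocycle. To show it is not a coboundary I would use $\alpha$ to build a chain map $\alpha_\ast:C^\bullet(\mathfrak{g},K;\mathcal{V})\to C^\bullet(\mathfrak{g},K;\mathbb{C})$ by $\omega\otimes w\mapsto\alpha(w)\,\omega$, well-defined on $K$-invariants thanks to the $K$-invariance of $\alpha$. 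The chain-map property holds because $\alpha_\ast\circ d=0$ (each summand of the Borel--Wallach differential $d$ acts on $\mathcal{V}$ by an element of $\mathfrak{p}\subset\mathfrak{g}$, killed by $\alpha$), while on the trivial-coefficient complex $d\equiv 0$ (the module action is trivial, and thanks to $[\mathfrak{p},\mathfrak{p}]\subset\mathfrak{k}$ together with $K$-invariance no bracket term survives). Consequently $H^\bullet(\mathfrak{g},K;\mathbb{C})=(\wwedge{\bullet}\mathfrak{p}^*)^K$, so $[\vol]\neq 0$ in $H^n$, and since $\alpha_\ast(\vol\otimes v)=\alpha(v)\vol$ with $\alpha(v)\neq 0$, we conclude $[\vol\otimes v]\neq 0$.

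The proof has no deep obstacle; the most delicate point is verifying the chain-map property, which crucially uses the symmetric-pair simplification $[\mathfrak{p},\mathfrak{p}]\subset\mathfrak{k}$ so that the Borel--Wallach differential has no surviving bracket term on $K$-invariants, and in case (1) one must justify that the Haar average exists as a bona fide vector in $\mathcal{V}$, which is standard for continuous actions of compact groups on locally convex topological vector spaces.
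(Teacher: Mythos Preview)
Your proof is correct and follows essentially the same approach as the paper: exhibit $\vol\otimes v$ as a top-degree cocycle, push forward along the $\mathfrak{g}$-invariant functional $\alpha$ to $H^n(\mathfrak{g},K;\C)=\C[\vol]$, and reduce case~(1) to case~(2) by Haar-averaging over $K$. The paper proceeds in the opposite order (case~(2) first, then the reduction) and is terser about why $\alpha_\ast$ is a chain map and why $\vol$ is $K$-invariant, but you have filled in exactly those details correctly, in particular the use of $[\mathfrak{p},\mathfrak{p}]\subset\mathfrak{k}$ to kill the bracket term.
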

\begin{proof}
We will first assume (2). We let $\vol$ be the  element in
$\wedge^n(\mathfrak{p}^*)$ which is of unit length for the metric induced by the Killing form and of the correct orientation. Then $\vol \otimes v$ is invariant under the product group $K \times K$, hence is invariant under the diagonal and hence gives rise to an $n$-cochain with values in 
$\mathcal{V}$ which is automatically a cocycle. Let $[\vol \otimes v]$ be the corresponding cohomology class. 
Now $\alpha$ induces a map on cohomology
\begin{equation*}
\alpha_* : H^n(\mathfrak{g}, K; \mathcal{V}) \to H^n(\mathfrak{g}, K; \R).
\end{equation*}
But $H^n(\mathfrak{g}, K; \R)$ is the ring of invariant differential $n$-forms on $D = G/K$.  Thus $H^n(\mathfrak{g}, K; \R) = \R [\vol]$.  Finally, we have
$$ \alpha_*[\vol \otimes v] = [\vol \otimes \alpha(v)] = \alpha(v) [\vol] \neq 0.$$
Hence, $[\vol \otimes v ] \neq 0$.

Now we reduce (1) to (2). 
 Since $\alpha \neq 0$ there is some $v \in \mathcal{V}$ such that $\alpha(v) \neq 0$. Let $dk$ be the Haar measure on $K$ normalized so that $\int\nolimits_{K} dk = 1$. We define the projection  $ p: \mathcal{V} \to \mathcal{V}^{K}$ by
\begin{equation*} \label{projection} 
 p(v) = \int_{K} k \cdot v dk.
\end{equation*}
The reader will verify since $\alpha$ is $K$ invariant and $\alpha(v)$ is continuous in $V$  that
\begin{equation*} \label{pinvariant}
 \alpha(p(v)) = \alpha(v).
\end{equation*}
Hence 
$$\alpha(p(v)) \neq 0 \ \text{and} \ p(v) \in \mathcal{V}^{K}.$$
Now the result follows from the argument of  case (2).

\end{proof}

We will apply $(2)$ for the following examples.  Let $G = \SO_0(p,q)$  (resp $\mathrm{SU}(p,q)$), $K = \SO(p) \times \SO(q)$ (resp $\mathrm{S}(\mathrm{U}(p) \times \mathrm{U}(q))$), $V = \R^{p,q}$ (resp $\C^{p,q}$) and $\mathcal{V} = \mathcal{P}(V^k)$ or $\mathcal{S}(V^k)$.  Then if $\alpha = \delta_0$, the Dirac delta distribution at the origin, $\alpha$ is a non-zero element of $(\mathcal{V}^*)^\mathfrak{g}$.  Hence, we have proved

\begin{thm} \label{topforweil}
For $\mathcal{V} = \mathcal{P}(V^k)$ or $\mathcal{S}(V^k)$
\begin{enumerate}
\item $H^{pq}(\mathfrak{so}(p,q), \SO(p) \times \SO(q); \mathcal{V}) \neq 0$
\item $H^{2pq}(\mathfrak{u}(p,q), \mathrm{U}(p) \times \mathrm{U}(q); \mathcal{V}) \neq 0$.
\end{enumerate}
\end{thm}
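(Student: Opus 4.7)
The plan is to apply case (2) of the preceding lemma, which says that it suffices to exhibit a $\mathfrak{g}$-invariant linear functional $\alpha \in (\mathcal{V}^*)^{\mathfrak{g}}$ together with a $K$-invariant vector $v \in \mathcal{V}^K$ satisfying $\alpha(v) \neq 0$. A dimension count gives $\dim \SO(p,q)/(\SO(p) \times \SO(q)) = pq$ and $\dim \mathrm{U}(p,q)/(\mathrm{U}(p) \times \mathrm{U}(q)) = 2pq$, so the cohomological degrees in the statement are precisely the top degrees $n = \dim G/K$ to which the lemma applies.

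For $\alpha$, I would take the Dirac delta $\delta_0$ at the origin of $V^k$. Both $\mathcal{P}(V^k)$ and $\mathcal{S}(V^k)$ consist of continuous functions on $V^k$, so $\delta_0$ defines an honest linear functional on each module. To verify the $\mathfrak{g}$-invariance, one unpacks the Weil-representation action: under the dual-pair embedding, $\SO(p,q)$ and $\mathrm{U}(p,q)$ act on the Schr\"odinger model through the standard linear diagonal action on $V^k$ by $(g \cdot f)(\mathbf{v}) = f(g^{-1}\mathbf{v})$, so the Lie algebra acts by first-order vector fields of the form $\sum_{i,j} A_{ij}\, x_j\, \partial_{x_i}$, whose coefficients vanish at the origin. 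It follows that every $X \in \mathfrak{g}$ annihilates $\delta_0$.

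For $v$, take the Gaussian $v = \varphi_{0,k}$ of \eqref{gaussian}. It is invariant under the maximal compact $K = \SO(p)\times \SO(q)$ (respectively $\mathrm{U}(p)\times \mathrm{U}(q)$) because $K$ preserves the positive-definite quadratic form appearing in the exponent. Then
$$\alpha(v) \;=\; \delta_0(\varphi_{0,k}) \;=\; \varphi_{0,k}(0) \;=\; 1 \;\neq\; 0,$$
so case (2) of the lemma yields both $H^{pq}(\mathfrak{so}(p,q), \SO(p)\times \SO(q); \mathcal{V}) \neq 0$ and $H^{2pq}(\mathfrak{u}(p,q), \mathrm{U}(p)\times \mathrm{U}(q); \mathcal{V}) \neq 0$. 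The only step requiring any real care is the $\mathfrak{g}$-invariance of $\delta_0$; it is not a serious obstacle, but it does depend on knowing that the dual pair $G \subset \mathrm{Sp}$ acts through the standard linear action on $V^k$ (up to a metaplectic cocycle which acts trivially at the origin), so that the resulting first-order differential operators have coefficients vanishing at $0$.
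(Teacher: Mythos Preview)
Your proposal is correct and follows essentially the same approach as the paper: apply case~(2) of the preceding lemma with $\alpha = \delta_0$ the Dirac delta at the origin and $v = \varphi_{0,k}$ the Gaussian. You are in fact more explicit than the paper, which simply asserts that $\delta_0 \in (\mathcal{V}^*)^{\mathfrak{g}}$ without spelling out the $K$-invariant vector or the verification that the Lie algebra vector fields vanish at the origin.
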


In the case of this paper we see that $[\varphi_{0,k} \vol] \in H^n\big(\mathfrak{so}(n,1) ,\SO(n); \mathcal{P}(V^k)\big)$ is non-zero.

We give one more example which uses (1).     Then (choosing $\alpha$ to be the Dirac delta function at the origin of $V$) we have
\begin{thm}\label{generaltheorem}
Let $G$ be a connected linear semisimple Lie group, $K$ a maximal compact subgroup and $n = \dim(G/K)$.  Let $V$ be a finite dimensional representation and $\mathcal{S}(V)$ be the Schwartz space of $V$.
$$H^n(\mathfrak{g}, K; \mathcal{S}(V)) \neq 0.$$
\end{thm}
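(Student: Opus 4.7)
The plan is to invoke case (1) of the preceding lemma with $\mathcal{V} = \mathcal{S}(V)$, equipped with its usual Fr\'echet topology, and with $\alpha$ the Dirac delta distribution $\delta_0$ at the origin of $V$. Everything reduces to verifying the three hypotheses of that lemma for this pair $(\mathcal{V},\alpha)$.

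First I would check that $\mathcal{S}(V)$ is a topological vector space on which $K$ acts continuously. The Fr\'echet topology on $\mathcal{S}(V)$ is defined by the seminorms $\|f\|_{P,D} = \sup_v |P(v)(Df)(v)|$ for polynomials $P$ and constant-coefficient differential operators $D$; each element of the representation of $G$ on $V$ acts on these seminorms by a bounded change of variables, so the $G$-action (and a fortiori the $K$-action) on $\mathcal{S}(V)$ is continuous. Next, $\delta_0$ is a continuous linear functional on $\mathcal{S}(V)$ because $|\delta_0(f)| = |f(0)| \leq \sup_v |f(v)| = \|f\|_{1,\mathrm{Id}}$, one of the defining seminorms.

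The key point is the $\mathfrak{g}$-invariance of $\delta_0$. Since $G$ acts linearly on $V$, the origin is fixed by every $g \in G$. With the action $(g\cdot f)(v)=f(g^{-1}v)$, we have $(g\cdot f)(0) = f(0)$ for all $g$, so $\delta_0$ is $G$-invariant. Differentiating, for $X \in \mathfrak{g}$,
\[
\delta_0(X\cdot f) \;=\; \frac{d}{dt}\bigg|_{t=0} f(\exp(-tX)\cdot 0) \;=\; \frac{d}{dt}\bigg|_{t=0} f(0) \;=\; 0,
\]
so $\alpha = \delta_0 \in (\mathcal{S}(V)^{*})^{\mathfrak{g}}$ and $\alpha \neq 0$ since, for instance, a Gaussian evaluates to a nonzero number at $0$.

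With the three hypotheses verified, case (1) of the lemma immediately yields a nonzero class in $H^n(\mathfrak{g},K;\mathcal{S}(V))$; tracing the proof, this class is (the $K$-average of) $\mathrm{vol}\otimes f$ for any Schwartz function $f$ with $f(0)\neq 0$, whose image under $\alpha_*$ is $f(0)[\mathrm{vol}] \neq 0$ in $H^n(\mathfrak{g},K;\R)$. I do not anticipate any real obstacle here: the entire work was done in proving the lemma, and the role of Theorem \ref{generaltheorem} is just to package this construction for an arbitrary finite-dimensional representation. The only mild subtlety is ensuring that the topological hypotheses of case (1) are satisfied so that the $K$-averaging argument in the lemma's proof applies, which is routine for the Schwartz space.
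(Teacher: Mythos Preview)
Your proposal is correct and follows exactly the paper's approach: the paper's entire proof is the one-line remark ``choosing $\alpha$ to be the Dirac delta function at the origin of $V$'' and an appeal to case (1) of the preceding lemma. You have simply spelled out the routine verifications (continuity of the $K$-action on $\mathcal{S}(V)$, continuity and $\mathfrak{g}$-invariance of $\delta_0$) that the paper leaves implicit.
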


We conclude with a problem.  Let $G$ and $\mathcal{V}$ be as above Theorem \ref{generaltheorem}.  
Then the action of $\C$ on $\mathcal{V}$ by vector space multiplication induces an action of the $\C$-algebra $H^{\bullet}(\mathfrak{g}, K;\C)$ on $H^{\bullet}(\mathfrak{g}, K; \mathcal{V})$. 
\smallskip

\par{\bf Problem.}
Describe the structure of $H^{\bullet}(\mathfrak{g}, K; \mathcal{V})$  as a  $H^{\bullet}(\mathfrak{g}, K;\C)$-module.

\section{The extension of the theorem to the two-fold cover of $\mathrm{O}(n,1)$ }

It is important to give the analogues of the above results when we replace the connected group $\SO_0(n,1)$ by the covering group $\widetilde{\mathrm{O}(n,1)}$ (with four components) of $\OO(n,1)$ and hence the maximal compact $\SO(n)$ in $\SO_0(n,1)$ by the maximal compact subgroup $\widetilde{K} = \widetilde{\mathrm{O}(n) \times \mathrm{O}(1)}$ of $\widetilde{\mathrm{O}(n,1)}$.  Here  $\widetilde{\mathrm{O}(n,1)}$ denotes the total space of the restriction to $\mathrm{O}(n,1)$ of the pull-back of the metaplectic cover of $\Sp( 2k(n+1),\R )$ under the inclusion of the dual pair $\mathrm{O}(n,1) \times \Sp(2k,\R)$ into $\Sp( 2k(n+1),\R)$. Let $\varpi_k$ be the restriction of the Weil representation of $\mathrm{Mp}( 2k(n+1),\R)$ to $\widetilde{\mathrm{O}(n,1)}$ under the embedding $\widetilde{\mathrm{O}(n,1)} \to \mathrm{Mp} (2k(n+1),\R)$.  The following lemma is a consequence of the result of Section 4 of \cite{BMM2}). We believe it is more enlightening to state the following lemma in terms of a general orthogonal group.  Note that the required results for $\mathrm{O}(p,q)$ follow from those of \cite{BMM2} for $\mathrm{U}(p,q)$ by restriction. 

\begin{lem}\label{Weildescends}
\hfill

\begin{enumerate}
\item
The central extension $\widetilde{\mathrm{O}(p,q)} \to \mathrm{O}(p,q)$ is the pull-back under $\det_{\OO(p,q)}^k: \OO(p,q) \to \C^{\ast}$ of the twofold extension $\C^{\ast} \to \C^{\ast}$ given by taking the square. Hence, the group $\widetilde{\mathrm{O}(p,q)}$, has the character  $\det_{\mathrm{O}(p,q)}^{k/2}$, the square-root of $\det_{\mathrm{O}(p,q)}^k$. 
\item
The character ${\det}_{\mathrm{O}(p,q)}^{k/2}$  is ``genuine'' (does not descend to the base of the cover)  if and  only if $k$ is odd.
\item For both even and odd $k$, the twisted Weil representation $\varpi_k \otimes  {\det}_{\mathrm{O}(p,q)}^{k/2}$ descends to $\mathrm{O}(p,q)$.
\item The induced action of $K = \OO(p) \times \OO(q)$ by $\varpi_k \otimes {\det}_{\mathrm{O}(p,q)}^{k/2}$ on the vaccuum vector $\psi_0$ (the constant polynomial $1$) in the Fock model $\mathrm{Pol}((V \otimes \C)^k)$ is given by
\begin{equation*}
\big(\varpi_k \otimes {\det}_{\mathrm{O}(p,q)}^k\big)(k_+, k_-) \big(\psi_0) = {\det}_{\mathrm{O}(q)}(k_-)^k \psi_0.
\end{equation*}
\end{enumerate}
\end{lem}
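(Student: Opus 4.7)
The plan is to treat part (1) as the core structural statement and deduce parts (2) and (3) as essentially formal consequences, then handle part (4) by a direct computation in the Fock model. Throughout, the argument parallels the one given for the unitary dual pair in \cite{BMM2} Section 4, so the main work is verifying that the orthogonal version follows by restriction along $\OO(p,q) \hookrightarrow \mathrm{U}(p,q)$ (or equivalently by rerunning the same cocycle computation with the standard embedding of the orthogonal dual pair into the symplectic group).

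For part (1), I would write down the explicit embedding $\iota : \OO(p,q) \to \Sp(2k(p+q),\R)$ coming from the dual pair with $\Sp(2k,\R)$: an element $g \in \OO(p,q)$ acts block-diagonally on $V^k$ preserving the symplectic form. I would then restrict the Rao $2$-cocycle on $\Sp(2k(p+q),\R)$ along $\iota$ and compute that the resulting cocycle class on $\OO(p,q)$ is the pullback, under $g \mapsto \det(g)^k$, of the sign cocycle $(z_1,z_2) \mapsto (z_1,z_2)_2$ on $\C^*$ defining the square-root extension $\C^* \xrightarrow{\ z \mapsto z^2\ } \C^*$. The key reduction step identifies the relevant Maslov/Leray invariant with (the sign of) $\det(g_1)^k \det(g_2)^k \det(g_1g_2)^{-k}$; this is exactly the unitary case of \cite{BMM2} Section 4 restricted to orthogonal elements. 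Once this identification is made, $\widetilde{\OO(p,q)}$ is canonically the fiber product $\{(g,z) : z^2 = \det(g)^k\}$, and the character ${\det}_{\OO(p,q)}^{k/2}$ is defined tautologically as projection to the second factor.

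Parts (2) and (3) are then formal. For (2), the character ${\det}_{\OO(p,q)}^{k/2}$ descends if and only if it takes the value $+1$ on the kernel $\{(e,\pm 1)\}$; by construction of the fiber product, this happens if and only if $\det^k$ already admits a global square root on $\OO(p,q)$, i.e.\ if and only if $k$ is even. For (3), the Weil representation $\varpi_k$ on $\widetilde{\OO(p,q)}$ is genuine exactly when $k$ is odd (since the nontrivial central element of the metaplectic cover acts by $(-1)^k$ times the identity when restricted through the relevant embedding), and the two genuineness phenomena cancel in the tensor product $\varpi_k \otimes {\det}^{k/2}$, producing a representation that descends to $\OO(p,q)$ for every $k$. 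For part (4), I would compute directly: the Fock vacuum $\psi_0 = 1$ is a lowest weight vector for the natural maximal compact $\mathrm{U}(k(p+q))$ acting on $\mathrm{Pol}((V\otimes\C)^k)$, so $\psi_0$ is killed by the lowering operators and scales by the appropriate character under the torus. Restricting this character along the embedding $K = \OO(p)\times\OO(q) \hookrightarrow \mathrm{U}(k(p+q))$ (where the $\OO(p)$ factor acts through the positive-definite block and $\OO(q)$ through the negative-definite block, introducing the sign asymmetry) and combining with the twist by $\det^{k/2}_{\OO(p,q)} = \det^{k/2}_{\OO(p)} \otimes \det^{k/2}_{\OO(q)}$ gives the stated formula after the determinant contributions from $\OO(p)$ cancel.

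The main obstacle is the cocycle computation in part (1): one must keep careful track of signs, orientations, and the Maslov/Leray invariant in the restricted Rao cocycle. The rest of the argument is bookkeeping once the shape of the central extension has been pinned down. Since \cite{BMM2} already carries out essentially the same calculation in the unitary setting, the cleanest strategy is to cite that result and verify that the restriction to the orthogonal subgroup preserves the structural form, rather than redoing the cocycle computation from scratch.
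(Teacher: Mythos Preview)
Your proposal is correct and matches the paper's approach exactly: the paper does not give an independent proof but simply states that the lemma ``is a consequence of the result of Section 4 of \cite{BMM2}'' and that ``the required results for $\mathrm{O}(p,q)$ follow from those of \cite{BMM2} for $\mathrm{U}(p,q)$ by restriction.'' You have in fact supplied more detail than the paper does, sketching how the restriction and the formal deductions for (2)--(4) go.
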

Applying  items (1),(2),(3) and (4) to the  case in hand, we obtain 
\begin{prop} \label{actionofK}
The action of $K = \OO(n) \times \OO(1)$ on $\mathrm{Pol}((V \otimes \C)^k)$ under  the restriction of the Weil representation twisted by ${\det}_{\mathrm{O}(n,1)}^{k/2}$ is given by 
\begin{equation*}
\big(\varpi_k \otimes {\det}_{\OO(n,1)}^k\big)(k_+, k_-) \big(\varphi \big)(\mathbf{v})  = {\det}_{\OO(1)}(k_-)^k \  \varphi(k_+^{-1} k_-^{-1} \mathbf{v}).
\end{equation*}
\end{prop}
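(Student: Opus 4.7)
The plan is to deduce the formula from Lemma \ref{Weildescends} by combining its item (4) with the observation that $K = \OO(n) \times \OO(1)$ preserves the Gaussian in the Schr\"odinger model. Since $K$ stabilizes the positive-definite form $x_1^2 + \cdots + x_n^2 + t^2$, it fixes $\varphi_{0,k}$, and hence the untwisted Weil action on $\mathcal{P}(V^k) = \mathrm{Pol}(V^k)\cdot \varphi_{0,k}$ restricts, on $K$, to the geometric action on the polynomial factor multiplied by some character $\chi \colon K \to \C^\times$ that records the chosen splitting of the metaplectic cover. Transporting through the Bargmann transform to the Fock realization $\mathrm{Pol}((V\otimes\C)^k)$ preserves this structure, so that
\[
\varpi_k(k_+, k_-)\varphi(\mathbf{v}) \;=\; \chi(k_+, k_-)\,\varphi(k_+^{-1} k_-^{-1} \mathbf{v})
\]
for every $\varphi \in \mathrm{Pol}((V\otimes\C)^k)$.

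Given this reduction, the next step is to pin down $\chi$. Since $\det_{\OO(n,1)}(k_+, k_-) = \det_{\OO(n)}(k_+)\,\det_{\OO(1)}(k_-)$, twisting the above by $\det_{\OO(n,1)}^k$ gives
\[
\bigl(\varpi_k \otimes \det_{\OO(n,1)}^k\bigr)(k_+, k_-)\varphi(\mathbf{v}) \;=\; \chi(k_+, k_-)\,\det_{\OO(n)}(k_+)^k\,\det_{\OO(1)}(k_-)^k\,\varphi(k_+^{-1} k_-^{-1} \mathbf{v}).
\]
Setting $\varphi = \psi_0 \equiv 1$ and comparing with item (4) of Lemma \ref{Weildescends} yields
\[
\chi(k_+, k_-)\,\det_{\OO(n)}(k_+)^k\,\det_{\OO(1)}(k_-)^k \;=\; \det_{\OO(1)}(k_-)^k,
\]
which forces $\chi(k_+, k_-) = \det_{\OO(n)}(k_+)^{-k}$. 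Substituting back into the twisted formula cancels the $\det_{\OO(n)}(k_+)^k$ factor against $\chi$ and produces exactly the claimed identity.

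The principal technical point to verify is the factorization in the first display, namely that the untwisted $K$-action on the Fock model is geometric up to a single scalar character independent of $\varphi$. This can be established either by direct computation using the explicit formulas for the Bargmann transform (see \cite{Igusa}), or more abstractly by noting that the $K$-action on $\mathcal{P}(V^k)$ sends the $K$-eigenvector $\varphi_{0,k}$ to a multiple of itself and commutes (up to a change of variables) with multiplication by polynomial symbols, so that the character $\chi$ is necessarily independent of $\varphi$. Once this factorization is in hand, the remainder of the argument is purely formal, and items (3) and (4) of Lemma \ref{Weildescends} reduce the whole identification to the single evaluation on $\psi_0$.
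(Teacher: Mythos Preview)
Your proof is correct and is essentially a detailed expansion of what the paper does: the paper simply states that Proposition \ref{actionofK} follows by applying items (1)--(4) of Lemma \ref{Weildescends} to the case $(p,q)=(n,1)$, without further elaboration. Your factorization argument (the $K$-action is geometric up to a character, since $K$ fixes the Gaussian) together with the evaluation on $\psi_0$ via item (4) is exactly how one fills in this step; the only cosmetic point is that the twist should strictly be by $\det^{k/2}$ on the cover rather than $\det^k$, but the paper itself uses this notation inconsistently, and your computation goes through unchanged once one reads $\chi\cdot\det^{k/2}$ as the descended character of $K$.
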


The cohomology groups of interest to us now are the groups
$$H^\ell\big(\mathfrak{so}(n,1) ,\widetilde{K}; \mathrm{Pol}((V \otimes \C)^k) \otimes \det^{k/2}\big).$$

The goal in this subsection is to prove
\begin{thm}\label{maintwo}
\hfill
\begin{enumerate}
\item If $k < n$,
\begin{equation*}
H^\ell \big(\mathfrak{so}(n,1) , \widetilde{\mathrm{O}(n) \times \mathrm{O}(1)}; \rm{Pol}((V \otimes \C)^k)\otimes {\det}^{\frac{k}{2}} \big) = \begin{cases}
\mathcal{R}_k \varphi_k &\text{ if } \ell = k \\
0 &\text{ otherwise.}
\end{cases}
\end{equation*}
\item  If $k=n$,
\begin{equation*}
H^{\ell} \big(\mathfrak{so}(n,1) , \widetilde{\mathrm{O}(n) \times \mathrm{O}(1)}; \mathrm{Pol}((V \otimes \C)^k)\otimes {\det}^{\frac{k}{2}} \big) =
\begin{cases}
\mathcal{R}_n \varphi_n &\text{ if } \ell = n \\
0 &\text{ otherwise.}
\end{cases}
\end{equation*}
\item If $k > n$,
\begin{equation*}
H^{\ell} \big(\mathfrak{so}(n,1) , \widetilde{\mathrm{O}(n) \times \mathrm{O}(1)}; \mathrm{Pol}((V \otimes \C)^k)\otimes {\det}^{\frac{k}{2}} \big) =
\begin{cases}
\text{nonzero} &\text{ if } \ell = n \\
0 &\text{ otherwise.}
\end{cases}
\end{equation*}
\end{enumerate}
\end{thm}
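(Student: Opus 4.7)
The strategy is to reduce to the cohomology already computed in Part~1 by taking additional invariants under the finite quotient $\widetilde K/\SO(n)$. Since this quotient is finite and the coefficients form a complex vector space, the invariance functor is exact, giving
\[
H^\ell\bigl(\mathfrak{so}(n,1), \widetilde K; \mathrm{Pol}((V\otimes\C)^k) \otimes {\det}^{k/2}\bigr)
= H^\ell\bigl(\mathfrak{so}(n,1), \SO(n); \mathrm{Pol}((V\otimes\C)^k)\bigr)^{\widetilde K/\SO(n)}.
\]
The quotient $\widetilde K/\SO(n)$ is generated by the reflection $\iota\in\OO(n)\setminus\SO(n)$ of \eqref{iotadef} and the nontrivial element $\iota'\in\OO(1)$. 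By Proposition~\ref{actionofK} the twist contributes the factor $\det_{\OO(1)}(k_-)^k$, so the twisted action of $\iota$ on a cochain $\omega\otimes p$ is $\iota(\omega)\otimes\iota(p)$, while that of $\iota'$ is $(-1)^k \iota'(\omega)\otimes\iota'(p)$.

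Next I identify how the two involutions act on cohomology. By definition $C_\pm$ are the $\pm 1$-eigenspaces of $\iota\otimes\iota$, so imposing $\iota$-invariance simply extracts the $C_+$ contribution. For $\iota'$, one checks directly that it acts as $-I$ on $\mathfrak p^*$ (since $\iota'(e_i\wedge e_{n+1}) = -e_i\wedge e_{n+1}$), and on $\mathrm{Pol}$ it fixes every $z_{\alpha,j}$ while sending $w_j\mapsto -w_j$; consequently, on a cochain of wedge degree $\ell$ and $w$-degree $a$ the twisted $\iota'$-eigenvalue is $(-1)^{k+\ell+a}$. For parts~(1) and~(2), Theorem~\ref{main} identifies the nontrivial cohomology as $\mathcal R_k\varphi_k$ (lying in $C_+$, as each summand $\omega_I\otimes f_{I,J}$ in $\varphi_k$ is manifestly $\iota$-invariant) together with $(\mathcal S_k/(c_1,\ldots,c_k))\vol$ (lying in $C_-$, since $\iota(\vol)=-\vol$ while $\mathcal S_k$ is $\iota$-invariant). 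The $C_-$ summand is thus killed by $\iota$-invariance, while on $\mathcal R_k\varphi_k$ the twisted $\iota'$-eigenvalue is $(-1)^{2k}=1$ and the summand survives. This establishes~(1) and~(2).

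For part~(3), Theorem~\ref{kgeqnvanishing} already gives vanishing for $\ell\neq n$, which persists after taking invariants. To exhibit a nonzero class in degree~$n$ I consider the determinantal polynomial
\[
p_0 = \det(z_{\alpha,\beta})_{1\leq\alpha,\beta\leq n} \in \mathrm{Pol}((V_+\otimes\C)^k),
\]
which is $\SO(n)$-invariant and transforms by the sign character under $\iota$ (since $\iota$ flips row $1$). When $n+k$ is even the cochain $[\vol\otimes p_0]$ is $\widetilde K$-invariant; when $n+k$ is odd I use $[\vol\otimes w_{n+1}p_0]$ instead, exploiting the fact that the column $n+1\leq k$ exists in this regime to flip the $\iota'$-parity. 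Non-triviality of $p_0$ modulo $(q_1,\ldots,q_n)$ is immediate from the projection $\pi:\mathcal P_k\to\C[z_{\alpha,j}]$ sending each $w_j\mapsto 0$, which descends to $\mathcal P_k/(q_1,\ldots,q_n)$ and preserves $p_0$. For $w_{n+1}p_0$, assuming $w_{n+1}p_0=\sum q_\alpha h_\alpha$, differentiating in $w_{n+1}$ and then applying $\pi$ yields the identity $p_0 = \sum z_{\alpha,n+1}\pi(h_\alpha)$, which contradicts the independence of $p_0$ from the variables $z_{\alpha,n+1}$ upon setting the latter to zero. The main obstacle is precisely this parity-split non-vanishing check: producing a cochain of the correct $(-1)^{k+n+a}$-parity that still survives the quadratic ideal $(q_1,\ldots,q_n)$ requires the above ad hoc differentiation-and-evaluation argument rather than following directly from the general-position non-vanishing results of Section~\ref{nonvanishingtop}.
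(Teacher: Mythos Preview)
Your proof is correct and follows essentially the same route as the paper: both reduce to invariants under the finite quotient $K/\SO(n)\cong\Z/2\times\Z/2$, use $\iota$-invariance to kill the $C_-$ summand, verify that $\mathcal R_k\varphi_k$ has the correct $\iota'$-character, and for $k>n$ exhibit nonzero invariant classes built from the determinant $\det_+$ (your $p_0$) adjusted by a factor $w_{n+1}$ to fix parity. The only noticeable difference is in the non-triviality check modulo $(q_1,\ldots,q_n)$: the paper passes to the quotient by the off-diagonal $z_{\alpha i}$ (reducing $\det_+$ to $z_{11}\cdots z_{nn}$ and $q_\alpha$ to $z_{\alpha\alpha}w_\alpha$), whereas you use the cleaner projection $\pi$ killing all $w_j$, supplemented by differentiation in $w_{n+1}$ for the odd-parity case; both arguments are elementary and of comparable length.
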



We now prove Theorem \ref{maintwo}. 
Put $K = \OO(n) \times \OO(1)$.  Then from (3) of Lemma \ref{Weildescends} the restriction of the twisted Weil representation of $\widetilde{K}$ descends to $K$ and we have

\begin{equation} \label{twistedactioncohomologygroups}
C^\ell\big(\mathfrak{so}(n,1) ,\widetilde{K}; \mathrm{Pol}((V \otimes \C)^k) \otimes {\det}^{k/2}\big) = C^\ell\big(\mathfrak{so}(n,1) ,K; \mathrm{Pol}((V \otimes \C)^k) \otimes {\det}^{k/2}\big).
\end{equation}

Recall the notation $C^\ell(\mathrm{Pol}((V \otimes \C)^k)) = C^\ell(\mathfrak{so}(n,1), \SO(n); \mathrm{Pol}((V \otimes \C)^k) )$.  Note $(\mathrm{O}(n) \times \mathrm{O}(1))/ \SO(n)\cong \Z/2 \times \Z/2$ and apply Proposition \ref{actionofK} to the right-hand side of  Equation \eqref{twistedactioncohomologygroups}  to obtain 
\begin{equation*}
C^\ell(\mathfrak{so}(n,1), K; \mathrm{Pol}((V \otimes \C)^k) \otimes {\det}^{k/2} ) = \big( C^\ell( \mathrm{Pol}((V \otimes \C)^k) ) \otimes {\det}_{\OO(1)}^k \big)^{\Z/2 \times \Z/2}.
\end{equation*}
On the right-hand side of the above equation, we have extended the action of $\SO(n)$ on $(V \otimes \C)^k$ to the action of $\OO(n)$ given by
$$k \varphi(\mathbf{v}) = \varphi(k^{-1} \mathbf{v}).$$

Now recall that $C^\ell( \mathrm{Pol}((V \otimes \C)^k) = C_+^\ell \oplus C_-^\ell$ and hence
$$C^\ell( \mathrm{Pol}((V \otimes \C)^k \otimes {\det}_{\OO(1)}^k)^{\Z/2 \times \Z/2} = (C_+^\ell \otimes {\det}_{\OO(1)}^k)^{\Z/2 \times \Z/2} \oplus (C_-^\ell \otimes {\det}_{\OO(1)}^k)^{\Z/2 \times \Z/2}.$$
Since the element $(1,0) \in \Z/2 \times \Z/2$ acts by the element $\iota \otimes \iota$ (see Equation \eqref{iotadef}) and $C_-$ is defined to be the $-1$ eigenspace of the action of $\iota \otimes \iota$, we have $C_-^{\Z/2 \times \Z/2} = 0$ and hence $(C_- \otimes {\det}_{\OO(1)}^k)^{\Z/2 \times \Z/2} = 0$.  Hence
\begin{equation*}
C^\ell(\mathfrak{so}(n,1), K; \mathrm{Pol}((V \otimes \C)^k) \otimes {\det}^{k/2} ) = (C_+^\ell \otimes {\det}_{\OO(1)}^k)^{\Z/2 \times \Z/2}.
\end{equation*}
Hence we have
\begin{equation*}
H^\ell\big(\mathfrak{so}(n,1) ,\widetilde{K}; \mathrm{Pol}((V \otimes \C)^k) \otimes {\det}^{k/2}\big) = (H^\ell(C_+) \otimes {\det}_{\OO(1)}^k)^{\Z/2 \times \Z/2}.
\end{equation*}

\begin{rmk} \label{phiko(1)}
Note that $\varphi_1$ is invariant under $\OO(n)$ and transforms under $\OO(1)$ by ${\det}_{\OO(1)}$. Since $\varphi_k$ is the $k$-fold exterior wedge of $\varphi_1$ with itself, it follows that  $\varphi_k$ is invariant under $\OO(n)$ and transforms under $\OO(1)$ by ${\det}_{\OO(1)}^k$.  This is the reason for twisting the Fock model by ${\det}^{k/2}$.
\end{rmk}

\begin{lem}
$$H^\ell(C_+) = (H^\ell(C_+) \otimes {\det}_{\OO(1)}^k)^{\Z/2 \times \Z/2}.$$
\end{lem}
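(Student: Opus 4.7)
The plan is to show that the $\Z/2 \times \Z/2 \cong (\OO(n) \times \OO(1))/\SO(n)$-action on $H^\ell(C_+) \otimes {\det}_{\OO(1)}^k$ is trivial, since then the invariants fill up the whole space and the claimed equality follows. I will handle the two generators separately.

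The first generator is the involution $\iota \in \OO(n)$ of Equation \eqref{iotadef}, representing the non-trivial coset of $\OO(n)/\SO(n)$. By the definition of $C_+$ as the $+1$-eigenspace of $\iota \otimes \iota$, this generator acts as the identity on the cochain complex $C_+$, and hence on its cohomology. Moreover, ${\det}_{\OO(1)}^k$ is manifestly trivial on the $\OO(n)$ factor. Thus $\iota$ acts trivially on $H^\ell(C_+) \otimes {\det}_{\OO(1)}^k$ with essentially no work.

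The substantive computation concerns the second generator $\iota' \in \OO(1)$, which negates $e_{n+1}$ and fixes $V_+$ pointwise. The first step is to compute how $\iota'$ acts on $\mathfrak{p}^*$. Under the isomorphism $\phi:\bigwedge^2 V \to \mathfrak{so}(n,1)$, one has $\iota'(e_i \wedge e_{n+1}) = e_i \wedge (-e_{n+1}) = -(e_i \wedge e_{n+1})$, so $\iota'$ sends $e_{i,n+1}$ to $-e_{i,n+1}$; hence $\iota'$ acts by $-1$ on $\mathfrak{p}^*$ and by $(-1)^\ell$ on $\wwedge{\ell}\mathfrak{p}^*$. On the polynomial side, $\iota'$ preserves all $V_+$-coordinates $z_{\alpha,i}$ and negates only the $w_j$'s. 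Combined with the character value ${\det}_{\OO(1)}^k(\iota') = (-1)^k$, the claim reduces to showing that $\iota'$ acts on $H^\ell(C_+)$ by the scalar $(-1)^k$.

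By Theorem \ref{K+vanCvan}, $H^\ell(C_+)$ vanishes for $\ell \neq k$, and $H^k(C_+) = \mathcal{R}_k \varphi_k$; only this top case requires argument. The algebra $\mathcal{R}_k$ is generated by the $r_{ij}$'s, which involve only $V_+$-variables, so $\iota'$ acts trivially on $\mathcal{R}_k$. The cocycle $\varphi_k = \varphi_1^{(1)} \wedge \cdots \wedge \varphi_1^{(k)}$, with $\varphi_1 = \sum_i \omega_i \otimes z_i$, involves no $w$-variables, so the polynomial part is fixed by $\iota'$; the only contribution is the sign $(-1)^k$ from the $k$-fold wedge of $\omega$'s. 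Hence $\iota'$ acts on all of $\mathcal{R}_k \varphi_k$ by $(-1)^k$, and combining with the twist yields $(-1)^k \cdot (-1)^k = 1$, as required. (This is also consistent with Remark \ref{phiko(1)}, which asserts precisely that $\varphi_k$ transforms under $\OO(1)$ by ${\det}_{\OO(1)}^k$.)

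The place where it is easiest to slip up is the recognition that $\iota' \in \OO(1)$ acts nontrivially on $\mathfrak{p}^*$ (by $-1$) even though it fixes $V_+$ pointwise; this sign, arising from the $\OO(n,1)$-action on $\bigwedge^2 V \cong \mathfrak{so}(n,1)$, is precisely what matches the ${\det}_{\OO(1)}^k$ twist that one introduces when passing from $\SO(n)$-invariants to $\widetilde{K}$-invariants. Overlooking it would produce the wrong parity and make the lemma appear to fail for odd $k$.
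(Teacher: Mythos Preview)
Your proof is correct and follows essentially the same approach as the paper's. The paper's argument is terser: it cites Theorem \ref{K+vanCvan} to identify $H^\ell(C_+)$ as $\mathcal{R}_k\varphi_k$ (or zero), then invokes Remark \ref{phiko(1)} to assert that $\varphi_k$ transforms under $\OO(1)$ by ${\det}_{\OO(1)}^k$, whereas you unpack that remark by explicitly computing the action of $\iota'$ on $\mathfrak{p}^*$ and on the polynomial coefficients.
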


\begin{proof}
By Theorem \ref{K+vanCvan},
$$H^\ell(C_+) = \begin{cases} \mathcal{R}_k \varphi_k &\text{ if }\ell = k \\
0 &\text{ otherwise. } \end{cases}$$
By Remark \ref{phiko(1)}, $\varphi_k$ transforms by ${\det}_{\OO(1)}^k$ and hence $H^k(C_+)$ transforms by ${\det}_{\OO(1)}^k$ and the lemma follows.

\end{proof}

As an immediate consequence of the previous lemma we have 

\begin{equation*}
H^\ell \big(\mathfrak{so}(n,1) ,\mathrm{O}(n) \times \mathrm{O}(1); \rm{Pol}((V \otimes \C)^k)\otimes {\det}^{\frac{k}{2}} \big) =
\begin{cases}
\mathcal{R}_k \varphi_k \text{ if } \ell = k\\
0 \text{ otherwise.}
\end{cases}
\end{equation*}
and 
statements (1) and (2) of Theorem \ref{maintwo} follow.  

We now prove statement (3).  Thus, we have $k > n$. The vanishing part (for $\ell <n$) of statement (3) follows from the vanishing statement in Theorem \ref{kgeqnvanishing}.  It remains to prove nonvanishing in degree $n$. To this end, we must exhibit classes in $H^n(A)$ which, once twisted by $\det^\frac{k}{2}$, are $\OO(n) \times \OO(1)$-invariant.  By Proposition \ref{Avanishing} we have (this is before we take invariance),
\begin{equation*}
H^\ell(A) = \begin{cases}
0 &\text{ if } \ell \neq n \\
\mathcal{P}_k/(q_1, \ldots, q_n) \vol &\text{ if } \ell = n.
\end{cases}
\end{equation*}
It remains to find a nonzero element which is invariant.  First, a definition and a lemma.  Define ${\det}_+$ to be the determinant of the upper-left $(n \times n)$-block of coordinates.  That is, ${\det_+}$ is the determinant of the $n \times n$-matrix $(z_{\alpha i})_{1 \leq \alpha, i \leq n}$.

\begin{lem} \label{injecttoA}
The map from $\C[w_{n+1}, \ldots, w_k]$ to $\mathcal{P}_k/(q_1, \ldots, q_n)$ sending $f$ to $f {\det}_+$ is an injection.
\end{lem}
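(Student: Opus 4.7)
The plan is to build a ring homomorphism $\pi : \mathcal{P}_k \to R$ that annihilates every $q_\alpha$ but sends $f\,{\det}_+$ to a nonzero element of $R$ whenever $f \neq 0$; this immediately forces the map in the lemma to be injective. The target $R$ will be obtained by inverting ${\det}_+$ and using the resulting invertibility of the block $Z_+ = (z_{\alpha i})_{1 \leq \alpha, i \leq n}$ to solve each equation $q_\alpha = 0$ for one of the variables $w_1, \ldots, w_n$.

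Concretely, first I would split $q_\alpha = \sum_{i=1}^n z_{\alpha i} w_i + L_\alpha$, where $L_\alpha = \sum_{j=n+1}^k z_{\alpha j} w_j$, and set $h_i = \sum_{\alpha=1}^n (Z_+^{-1})_{i\alpha} L_\alpha \in \mathcal{P}_k[{\det}_+^{-1}]$. A one-line matrix check gives $q_\alpha = \sum_{\beta=1}^n z_{\alpha \beta}(w_\beta + h_\beta)$, so in the localization $\mathcal{P}_k[{\det}_+^{-1}]$ we have the equality of ideals $(q_1, \ldots, q_n) = (w_1 + h_1, \ldots, w_n + h_n)$. The substitution $w_i \mapsto w_i + h_i$ ($1 \leq i \leq n$) is an invertible change of coordinates on $\mathcal{P}_k[{\det}_+^{-1}]$, so quotienting by $(q_1, \ldots, q_n)$ produces an isomorphism
\begin{equation*}
\mathcal{P}_k[{\det}_+^{-1}]/(q_1, \ldots, q_n) \;\cong\; \C[z_{\alpha i}, w_{n+1}, \ldots, w_k][{\det}_+^{-1}] =: R.
\end{equation*}

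Finally, under this isomorphism the image of $f(w_{n+1}, \ldots, w_k)\,{\det}_+$ is literally the element $f\,{\det}_+$ of the polynomial ring with ${\det}_+$ inverted, which vanishes only when $f = 0$ since ${\det}_+$ is a unit in $R$. Composing $\mathcal{P}_k \hookrightarrow \mathcal{P}_k[{\det}_+^{-1}] \twoheadrightarrow R$ gives the required $\pi$, and any relation $f\,{\det}_+ = \sum_\alpha g_\alpha q_\alpha$ in $\mathcal{P}_k$ persists after localization, forcing $f = 0$. The only nontrivial step is the identification of $(q_1, \ldots, q_n)$ with $(w_1 + h_1, \ldots, w_n + h_n)$ after localization; this is immediate from the invertibility of $Z_+$ over $\mathcal{P}_k[{\det}_+^{-1}]$, so no real obstacle appears in the argument.
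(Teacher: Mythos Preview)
Your proof is correct. The paper, however, takes a different and somewhat more elementary route: instead of localizing at ${\det}_+$, it \emph{specializes} by passing to the quotient of $\mathcal{P}_k$ by all the ``off-diagonal'' variables $z_{\alpha i}$ with $\alpha \neq i$. In that quotient ring, ${\det}_+$ becomes the product $z_{11}\cdots z_{nn}$ and each $q_\alpha$ becomes $z_{\alpha\alpha}w_\alpha$, so a relation $f\,{\det}_+ \in (q_1,\ldots,q_n)$ in $\mathcal{P}_k$ forces $f(w_{n+1},\ldots,w_k)\prod_\alpha z_{\alpha\alpha} \in (w_1,\ldots,w_n)$, which is impossible for $f\neq 0$ since neither $f$ nor $\prod_\alpha z_{\alpha\alpha}$ involves $w_1,\ldots,w_n$.

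Both arguments exploit the same underlying fact that the square block $Z_+$ is generically invertible, so the equations $q_\alpha=0$ can be solved for $w_1,\ldots,w_n$. The paper's specialization is quicker and stays inside polynomial rings; your localization argument is a bit more structural in that it actually identifies $\mathcal{P}_k[{\det}_+^{-1}]/(q_1,\ldots,q_n)$ with a localized polynomial ring in the remaining variables, which is more information than is strictly needed here but makes the injectivity transparent.
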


\begin{proof}
We will show that this map has kernel zero.  Suppose $f(w_{n+1}, \ldots, w_k) {\det}_+ \in (q_1, \ldots, q_n)$.   Now pass to the quotient where we have divided by the ``off-diagonal" $z_{\alpha i}$.  Then
\begin{align*}
f(w_{n+1}, \ldots, w_k) {\det}_+ \mapsto f(w_{n+1}, \ldots, w_k) z_{11} z_{22} \cdots z_{nn}
q_\alpha \mapsto w_\alpha z_{\alpha \alpha}.
\end{align*}
By assumption, $f \prod_\alpha z_{\alpha \alpha} \in (w_1 z_{11}, w_2 z_{22}, \ldots, w_n z_{nn}) \subset (w_1, \ldots, w_n)$.
Hence $f(w_{n+1}, \ldots, w_k) \prod_\alpha z_{\alpha \alpha} \in (w_1, \ldots, w_n)$ and thus $f = 0$.

\end{proof}

By Lemma \ref{injecttoA} the cohomology classes $f(w_{n+1}, \ldots, w_k) {\det}_+ \vol \in H^n(A)$ are nonzero.  We now check if they are invariant.

Let $f$ be homogenous of degree $a$.  Then for $(k_+, k_-) \in \OO(n) \times \OO(1)$ we have, by Lemma \ref{Weildescends},
$$(k_+ k_-) f {\det}_+ \vol = \det(k_-)^{a+k+n} f {\det}_+ \vol.$$
Thus, if $a+k+n$ is even this element is invariant and Statement (3) is proved.  In fact, we have shown

\begin{prop}
For $k > n$, if $k+n$ is even (resp. odd), then the even (resp. odd) degree polynomials $f \in \C[w_{n+1}, \ldots, w_k]$ inject into\\
$H^n \big(\mathfrak{so}(n,1) , \widetilde{\mathrm{O}(n) \times \mathrm{O}(1)}; \mathrm{Pol}((V \otimes \C)^k)\otimes {\det}^{\frac{k}{2}} \big)$ by the map $f \mapsto f det_+ \vol.$
\end{prop}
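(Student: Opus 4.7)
The plan is short because the essential technical steps are already in place in the preceding text. The proposition packages two facts: (a) the cochain $f \det_+ \vol$ represents a class in the $\det^{k/2}$-twisted cohomology precisely when $a + k + n$ is even (where $a = \deg f$), and (b) the resulting map from parity-restricted polynomials in $\C[w_{n+1}, \ldots, w_k]$ into this cohomology group is injective.

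For (a), the invariance computation $(k_+, k_-) \cdot (f \det_+ \vol) = \det(k_-)^{a+k+n} f \det_+ \vol$ has already been carried out in the paragraph directly above the proposition, using Proposition \ref{actionofK} together with the separate transformation rules for $\vol \in \wwedge{n} \mathfrak{p}_0^*$ (a factor $\det(k_+) \det(k_-)^n$, since $k_-$ acts as $-\mathrm{id}$ on $\mathfrak{p}_0$), for the minor $\det_+$ (a factor $\det(k_+)$, independent of $k_-$), for $f(w_{n+1}, \ldots, w_k)$ of degree $a$ (a factor $\det(k_-)^a$), and the additional factor $\det(k_-)^k$ coming from the twist. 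Since $\wwedge{n+1} \mathfrak{p}_0^* = 0$, every $n$-cochain is automatically a cocycle, so no closedness check is required.

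For (b), I would argue in two steps. First, by Theorem \ref{kgeqnvanishing} the untwisted cohomology $H^n(\mathfrak{so}(n,1), \SO(n); \mathcal{P}_k)$ equals $(\mathcal{P}_k / (q_1, \ldots, q_n))^{\SO(n)} \vol$, and Lemma \ref{injecttoA} shows that $f \mapsto f \det_+$ injects $\C[w_{n+1}, \ldots, w_k]$ into $\mathcal{P}_k / (q_1, \ldots, q_n)$; composing, $f \mapsto [f \det_+ \vol]$ is injective into the untwisted cohomology. Second, by Lemma \ref{Weildescends}(3) the twisted Weil representation $\varpi_k \otimes \det^{k/2}$ descends to $\mathrm{O}(n,1)$, so $\widetilde{K}$-invariance becomes $\OO(n) \times \OO(1)$-invariance for the twisted action; since $\OO(n) \times \OO(1) / \SO(n) \cong \Z/2 \times \Z/2$ is finite and we are in characteristic zero, averaging over this quotient is exact and the twisted cohomology group embeds into the untwisted one as the appropriate isotypic summand. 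By step (a), $[f \det_+ \vol]$ lies in this summand precisely for $f$ of the prescribed parity. Composing this embedding with the injection from the first step yields the claimed injection. No substantive obstacle remains beyond the bookkeeping of the twist, which is already encoded in Lemma \ref{Weildescends}.
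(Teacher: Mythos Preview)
Your proposal is correct and follows essentially the same route as the paper: both arguments rest on Lemma \ref{injecttoA} for the injectivity of $f \mapsto f\det_+$ into $\mathcal{P}_k/(q_1,\ldots,q_n)$ and on the transformation computation $(k_+,k_-)\cdot(f\det_+\vol)=\det(k_-)^{a+k+n}f\det_+\vol$ for the invariance condition. Your explicit framing of the twisted cohomology as a summand of the $\SO(n)$-cohomology (via exactness of averaging over the finite quotient $\Z/2 \times \Z/2$) makes overt what the paper leaves implicit when it passes from nonvanishing in $H^n(A)$ to nonvanishing in the invariant subcomplex, but the content is the same.
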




\newpage

\part{The computation of $H^{\bullet}\big(\mathfrak{so}(n,1), \SO(n); L^2(V^k) \big)$}

The purpose of Part 2 of this paper is to prove the following theorem

\begin{thm} \label{L^2mainNicolas}
\hfill

\begin{enumerate} 
\item Assume that $k >n/2$ and $\ell \neq \frac{n}{2}, \frac{n+1}{2}$. Then we have:
$$H^\ell (\mathfrak{so}(n,1) , \mathrm{SO} (n) ; L^2 (V^k)) = \overline{H}^\ell (\mathfrak{so}(n,1) , \mathrm{SO} (n) ; L^2 (V^k)) = 0.$$

\item Suppose now $k\leq \frac{n}{2}$.  Then for $\ell \neq \frac{n+1}{2}$ we have $H^{\ell} (\mathfrak{so}(n,1) , \mathrm{SO} (n) ; L^2 (V^k))$ is reduced.  Furthermore, $H^{\ell} (\mathfrak{so}(n,1) , \mathrm{SO} (n) ; L^2 (V^k))$ is non-zero if and only if $\ell = k$ or $\ell = n-k$.

\item Suppose $n = 2m$.  Then $H^m (\mathfrak{so}(n,1) , \mathrm{SO} (n) ; L^2 (V^k))$ is reduced.  Furthermore, $H^m (\mathfrak{so}(n,1) , \mathrm{SO} (n) ; L^2 (V^k))$ is non-zero if and only if $k \geq m$.


\item Suppose $n = 2m+1$.  Then $H^m(\mathfrak{so}(n,1), \SO(n); L^2(V^k))$ is reduced and $\overline{H}^m(\mathfrak{so}(n,1), \SO(n); L^2(V^k))$ is non-zero if and only if $k=m$.  Furthermore, $H^{m+1} (\mathfrak{so}(n,1), \SO(n); L^2(V^k))$ is reduced if and only if $k \leq m$ and
\begin{enumerate}
\item $H^{m+1} (\mathfrak{so}(n,1), \SO(n); L^2(V^k)) \neq 0$ if and only if $k \geq m$
\item $\overline{H}^{m+1} (\mathfrak{so}(n,1), \SO(n); L^2(V^k)) \neq 0$ if and only if $k = m.$
\end{enumerate}

\end{enumerate}
\end{thm}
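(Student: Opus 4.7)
The approach is to combine the spectral decomposition of $L^2(V^k)$ coming from the dual pair $(\mathrm{O}(n,1), \Sp(2k,\R))$ with the Vogan-Zuckerman classification of cohomological representations of $\SO_0(n,1)$. First I would invoke Li's explicit description \cite{Li} of the joint spectrum of $\widetilde{\mathrm{O}(n,1)} \times \Mp(2k,\R)$ acting on $L^2(V^k)^\infty$, which expresses it as a direct integral of irreducible unitary $(\mathfrak{so}(n,1), \SO(n))$-modules. Second, I would use the classical fact that the irreducible unitary modules with nonzero $(\mathfrak{so}(n,1), \SO(n))$-cohomology form a short list indexed by an integer $0 \leq \ell \leq [n/2]$, the module of index $\ell$ contributing one-dimensionally in each of the degrees $\ell$ and $n - \ell$ (with the trivial module as $\ell = 0$ and the discrete series, existing only when $n$ is even, as $\ell = n/2$). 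Pairing these two classifications tells us that $H^\bullet(L^2(V^k))$ can only be supported in degrees of the form $\{\ell, n - \ell\}$ where the corresponding cohomological module actually occurs in the spectrum produced in the first step.

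For $k \leq n/2$ the holomorphic discrete series of $\Mp(2k,\R)$ with lowest $K^\prime$-type $(\frac{n+1}{2}, \ldots, \frac{n+1}{2})$ occurs discretely in $L^2(V^k)$, and its theta lift to $\widetilde{\mathrm{O}(n,1)}$ is precisely the cohomological $A_\mathfrak{q}$-module of index $\ell = k$; this produces the non-vanishing in degrees $k$ and $n-k$ claimed in part (2). For $n = 2m$ even the theta lift of the appropriate metaplectic discrete series is the $\SO(2m,1)$-discrete series exactly when $k \geq m$, producing part (3). For $k > n/2$, the low-depth cohomological modules with $\ell < [n/2]$ drop out of Li's correspondence, so only representations whose cohomological index lies at $[n/2]$ can contribute; this gives the vanishing in part (1) and accounts for the middle-degree behaviour in part (4).

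The reducedness statements hinge on whether the cohomological representation producing the class is isolated in the unitary dual. Discrete series and the $A_\mathfrak{q}$-modules of index $\ell < [n/2]$ are isolated (Vogan's theorem), and therefore their contribution is a closed $(\mathfrak{g}, K)$-submodule of $C^\bullet(L^2(V^k))$ and the corresponding cohomology is automatically reduced; this covers the reducedness claims outside the middle degrees and all of the $n$ even case. The remaining delicate case is $n = 2m+1$ odd with $\ell \in \{m, m+1\}$: the representations producing cohomology sit at the endpoint of the spherical complementary series and are \emph{not} isolated. Here I would analyze $d$ and its formal adjoint directly on the relevant $K^\prime$-finite components, using the explicit Plancherel decomposition of $L^2(V^k)$ near the spherical endpoint, in order to pin down when a class in $H^{m+1}$ is represented by a cocycle which is a limit of coboundaries but not itself a coboundary. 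I expect this non-isolated middle-degree analysis for odd $n$ to be the main obstacle, requiring the Fourier-theoretic techniques attributed to Machedon in the introduction; once it is in hand, the rest of the theorem is a formal consequence of Li's decomposition and the Vogan-Zuckerman classification.
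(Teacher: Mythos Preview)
Your approach is genuinely different from the paper's and has a real gap in the reducedness argument.

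\textbf{The paper's route is geometric, not representation-theoretic.} It decomposes $V^k$ into the two generic cones $\Omega_{k,0}$ and $\Omega_{k-1,1}$ according to the signature of the Gram matrix, and builds \emph{transfer maps} identifying $C^\bullet(L^2(G\mathbf{x}))$ with $G_\mathbf{x}$-invariant $L^2$-forms on hyperbolic space $D$ (for $\Omega_{k-1,1}$, where $G_\mathbf{x}$ is compact) or on a tube $E_\mathbf{x} = \Gamma_\mathbf{x}\backslash D$ (for $\Omega_{k,0}$, where $G_\mathbf{x}\cong\SO(n-k,1)$). Vanishing and reducedness then come from explicit spectral information: Pedon's computation of the form Laplacian spectrum on $D$, and the Mazzeo--Phillips Hodge theory on the tubes (their Theorem~3.13 and the essential-spectrum Lemmas~5.6, 5.19), combined with the Thom isomorphism for the normal bundle of the totally geodesic cycle $C_U\subset E_\mathbf{x}$. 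Li's theta correspondence appears only at the end, to name the resulting $\Mp(2k)$-module and to run one isolation argument in the odd middle degree.

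\textbf{The gap in your argument.} Your reducedness claim rests on the assertion that the $A_\mathfrak{q}$-modules of index $\ell < [n/2]$ are isolated in the unitary dual of $\SO_0(n,1)$ by ``Vogan's theorem.'' For real rank one this is false: each such $J_\ell$ is the endpoint $s\to 1$ of a complementary series $J(\sigma_{\ell-1},s)$ (see e.g.\ the Baldoni-Silva--Barbasch classification or the discussion in the paper's final section). So isolation in $\widehat{G}$ cannot be invoked, and you need a different mechanism to show $d$ has closed range. The paper supplies one: a uniform spectral gap for the Casimir on $C^\ell(L^2(G\mathbf{x}))$ for $\ell\notin\{\frac{n-1}{2},\frac{n+1}{2}\}$, obtained geometrically from Pedon and Mazzeo--Phillips, independent of any isolation statement in $\widehat{G}$. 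For the odd middle degree with $k=m$ the paper does argue isolation, but isolation \emph{within the image of the theta correspondence}, using Li's explicit infinitesimal-character formula to exclude the nearby principal series $J(\sigma_m,it)$ from that image; this is a more delicate statement than isolation in $\widehat{G}$.

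A secondary issue: your opening move of ``pairing the two classifications'' presumes that $H^\bullet(\mathfrak{g},K;\cdot)$ commutes with the direct integral decomposition of $L^2(V^k)$. It does not, and this is exactly where non-reduced cohomology arises --- in the paper's treatment, the nonvanishing of $H^{m+1}$ for $n=2m+1$, $k>m$ comes from a Borel-type computation on a continuous family of principal series none of which is individually cohomological. You flag this as the hard case, which is right, but the fix is the orbit-by-orbit Plancherel analysis on $G/H$ the paper carries out, not a $K'$-finite analysis on $L^2(V^k)$.
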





\section{Smooth vectors in $L^2(V)$}
In this section we will derive some properties of smooth vectors in $L^2(V)$ for the action of a Lie group $G$ induced by a linear, measure-preserving action on $V$.

We begin with some definitions.  Suppose a semi-simple Lie group $G$, with Lie algebra $\mathfrak{g}$ and maximal compact group $K$, acts continously on a Hilbert space $\mathcal{V}$.  Then we use the symbols $C^\ell(\mathfrak{g}, K; \mathcal{V})$ (resp. $H^\ell(\mathfrak{g}, K; \mathcal{V})$) to be $C^\ell(\mathfrak{g}, K; \mathcal{V}^\infty)$ (resp. $H^\ell(\mathfrak{g}, K; \mathcal{V}^\infty)$) where $\mathcal{V}^\infty \subset \mathcal{V}$ is the subspace of smooth vectors, see Bump \cite{Bump}.  To justify this definition, see Theorem 2.7 of Borel \cite{Borel2}.  In this theorem, Borel shows that if $\overline{C}$ is the complex obtained by taking the closure of $d$ on $C^\bullet(\mathfrak{g}, K; \mathcal{V}^\infty)$, then the inclusion $C^\bullet(\mathfrak{g}, K; \mathcal{V}^\infty) \rightarrow \overline{C}$ induces an isomorphism of cohomology.    In this paper, $\mathcal{V}$ will be $L^2(X)$ where $X$ is either $V^k$, an open cone in $V^k$, or an orbit of $G$ in $V^k$.  Then the smooth vectors have the property that their restriction to almost every orbit is a smooth square integrable function by Lemma \ref{Fubini}. 


\begin{lem}\label{Fubini}
Suppose $\alpha \in C^{\ell}(L^2(V))$.  Then for almost every $x \in V$ the cochain $\alpha$ admits a measurable and square integrable restriction to $Gx$  giving rise to an element again denoted $\alpha \in  C^{\ell}( L^2(Gx))$.
\end{lem}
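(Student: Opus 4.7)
The plan proceeds in three stages: reduce to scalar smooth vectors, introduce a local slice decomposition to separate orbit and transverse directions, then combine Sobolev embedding with Fubini.

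First, since the finite-dimensional factor $\wwedge{\ell}\mathfrak{p}^{\ast}$ plays no role, it suffices to treat a single scalar smooth vector $\alpha \in L^{2}(V)^{\infty}$. By the standard theory of smooth vectors for unitary representations, this is equivalent to $X_{1}\cdots X_{r}\alpha \in L^{2}(V)$ for every $r\geq 0$ and every $X_{1},\ldots,X_{r}\in\mathfrak{g}$, where each $X_{i}$ acts as the first-order differential operator on $V$ arising from the linear, measure-preserving infinitesimal $G$-action.

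Next I would discard the (measure-zero) locus of non-principal orbits and cover the remaining $G$-invariant open dense set by countably many tubes $U=G\cdot S$, where $S$ is a smooth transverse slice and the map $G/H\times S\to U$ (with $H$ the common stabilizer on principal orbits) is a diffeomorphism. In these coordinates the Lebesgue measure decomposes as $d\mu_{V}|_{U}=\rho(g,s)\,dg\,ds$ for a smooth positive Jacobian $\rho$ and a left-invariant measure $dg$ on $G/H$. The vector fields coming from $\mathfrak{g}$ span the tangent bundle of each orbit at every point of $U$, so the hypothesis of the previous paragraph gives that all tangential distributional derivatives of $\alpha$ of arbitrary order lie in $L^{2}(U)$.

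The hard step is upgrading this $L^{2}$-valued regularity to pointwise smoothness along almost every orbit. Applying Fubini to the tangential Sobolev norms, one gets $\alpha(\cdot,s)\in W^{r,2}_{\mathrm{loc}}(G/H)$ for every fixed $r$ and almost every $s\in S$. Taking a countable intersection over $r$ and invoking Sobolev embedding on $G/H$, one obtains a representative $\tilde\alpha$ of $\alpha$ such that $g\mapsto\tilde\alpha(g,s)$ is smooth on $G/H$ for almost every $s$. A second application of Fubini to the identity $\int_{U}|\alpha|^{2}\,d\mu_{V}=\int_{S}\int_{Gs}|\tilde\alpha(g,s)|^{2}\rho(g,s)\,dg\,ds$ shows that the inner integral is finite for almost every $s$, so $\tilde\alpha|_{Gs}\in L^{2}(Gs)$. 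Running the same argument with each $X_{i_{1}}\cdots X_{i_{r}}\alpha$ in place of $\alpha$ and intersecting the resulting full-measure sets in $s$, we conclude $\tilde\alpha|_{Gs}\in L^{2}(Gs)^{\infty}$ for almost every $s$, and hence for almost every $x\in V$ after $G$-saturation.

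The main obstacle is precisely this passage from smoothness of $g\mapsto\pi(g)\alpha$ as an $L^{2}(V)$-valued function to pointwise smoothness of $\alpha$ along individual orbits after modification on a null set. It is the only step where the abstract notion of smooth vector must be converted into concrete pointwise regularity, and it requires a careful Sobolev-Fubini argument, paying attention to the fact that principal orbits themselves have measure zero in $V$ and that the transverse slice direction must be handled separately from the orbit direction.
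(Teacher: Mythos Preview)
Your argument is essentially correct, but you are proving considerably more than the lemma asserts and more than the paper proves. Read the statement again: it only claims that the restriction of $\alpha$ to almost every orbit is \emph{measurable and square integrable}. The paper's proof is accordingly elementary: write $\alpha=\sum_I f_I\,\omega_I$ with $f_I\in L^2(V)$, note that the measure on $V$ is (locally) a product of the orbit measure and a transverse measure, invoke the Tonelli/Fubini theorem on $|f_I|^2$ to see that the fiberwise integral is finite almost everywhere, and intersect over the finitely many $I$. No slice theory, no Sobolev spaces, no smooth-vector input is needed.

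The step you single out as the ``main obstacle''---passing from smoothness of $g\mapsto\pi(g)\alpha$ in $L^2(V)$ to pointwise smoothness of $\alpha$ along almost every orbit---is not part of this lemma at all. The paper defers exactly that issue to the \emph{next} lemma, where it is handled by the Dixmier--Malliavin theorem: a smooth vector is a finite sum of convolutions $\varphi*h$ with $\varphi\in C_c^\infty(G)$, and differentiating the convolution kernel gives orbitwise smoothness directly, without Sobolev embedding. Your Sobolev--Fubini route would also work for that subsequent lemma, and is a reasonable alternative to Dixmier--Malliavin, but for the lemma at hand it is substantial overkill.
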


\begin{proof}
The set $\{ \omega_I \in \wedge^{\ell}(\mathfrak{p}^*): \ I \in \mathcal{S}_{\ell,n} \}$ is a basis of $\wedge^{\ell}(\mathfrak{p}^*)$ where $\{ \omega_i: 1 \leq i \leq n \}$ is a basis for $\mathfrak{p}^*$.  We may write
$$\alpha = \sum\limits_{I} f_I \omega_I.$$
The $L^2$-norm of $\alpha$ is the integral over $V$ of $\sum\limits_{I} f^2_I$.  Hence we have $f_I \in L^2(V)$, for all $I$.  Fix $I$.  Since $f_I$ is measurable and the measure on $V$ is a product measure, by \cite{Rudin}, Theorem 7.12, pg. 145, the restriction to almost every orbit is  measurable (see the discussion following Lemma 2).  We may then  apply Fubini's Theorem to $f_I^2$ to conclude that for almost every orbit the restriction of $f_I$ to that orbit is square-integrable. The lemma follows by intersecting the domains of the $f_I$'s.

\end{proof}

Suppose now $V$ is a finite dimensional vector space, $f$ a smooth vector in $L^2(V)$ and $x \in \mathfrak{g}$.  Let $X$ be the associated vector field on $V$, that is, $X$ acts on a vector $v \in V$ by
\begin{equation*}
X(v) = \frac{d}{dt}|_{t=0} exp(-tx) v.
\end{equation*}

\begin{lem}
Suppose $f \in L^2(V)$ is a smooth vector and fix $x \in \mathfrak{g}$.  Define a smooth vector $p$ by
\begin{equation*}
p = \frac{d}{dt}|_{t=0} exp(-tx)f
\end{equation*}
where this limit is taken using the $L^2$ norm on $V$.  Then
\begin{enumerate}
\item for almost every $v \in V$, $f(exp(-tx)v)$ is a smooth function in $t$, hence $Xf$ is a well defined smooth vector in $L^2(V)$
\item $Xf = p$
\item let $v \in V$ and suppose $f$ is well defined on the orbit $Gv$.  Then $f \in C^\infty(Gv)$ and consequently we have
\begin{equation*}
(Xf)|_{Gv} = X (f|_{Gv}).
\end{equation*}
\end{enumerate}
\end{lem}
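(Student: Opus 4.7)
Plan. The key idea is to bridge the $L^2$-derivative (given by hypothesis) to a pointwise almost-everywhere derivative. The bridge is the Bochner Fundamental Theorem of Calculus. Since $f$ is a smooth vector, the curve $s \mapsto \exp(-sx) f$ is smooth in $L^2(V)$ with derivative $s \mapsto \exp(-sx) p$, so
\[
\exp(-tx) f - f = \int_0^t \exp(-sx) p \, ds
\]
as a Bochner integral in $L^2(V)$. Choosing measurable representatives and unwinding the action, for each fixed $t$ this identity holds pointwise in $v$ outside a null set $N_t$:
\[
f(\exp(-tx) v) - f(v) = \int_0^t p(\exp(-sx) v) \, ds.
\]
The first technical step is to remove the $t$-dependence of $N_t$.

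For this I would apply Fubini to the function $(s,v) \mapsto p(\exp(-sx)v)$ on $[-T,T] \times V$ (measure-preservation of the action gives $\|p(\exp(-sx)\cdot)\|_2 = \|p\|_2$). Picking a countable dense $D \subset \R$ and setting $E = V \setminus \bigcup_{t \in D} N_t$, the identity holds on $E$ for every $t \in D$; both sides are continuous in $t$ (the left by continuity of $t \mapsto \exp(-tx) f$ in $L^2$ after choosing a suitable representative, the right by absolute continuity of the integral), so the identity extends to every $t \in \R$. This shows that for $v \in E$ the function $t \mapsto f(\exp(-tx) v)$ is absolutely continuous with a.e. derivative $p(\exp(-tx) v)$; evaluating at $t = 0$ gives $(Xf)(v) = p(v)$ for $v \in E$, yielding (2) and half of (1).

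To upgrade absolute continuity to smoothness, I would bootstrap. Since $f$ is a smooth vector, every iterated $L^2$-derivative $X^k f$ is itself a smooth vector, so applying the previous step to each $X^k f$ produces a full measure set $E_k$ on which
\[
(X^{k} f)(\exp(-tx) v) - (X^{k} f)(v) = \int_0^t (X^{k+1} f)(\exp(-sx) v) \, ds.
\]
Intersecting over $k$ gives a full measure $E_\infty = \bigcap_k E_k$ on which Taylor's formula with integral remainder holds for every $N$ and every $t$. Hence $t \mapsto f(\exp(-tx) v)$ is smooth on $E_\infty$, completing (1). For part (3), assume $v$ is such that $f|_{Gv}$ is defined, which holds for a.e. orbit by Lemma~\ref{Fubini}. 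Running Steps 1--2 for each vector in a basis $x_1, \ldots, x_{\dim \mathfrak{g}}$ of $\mathfrak{g}$ and intersecting the resulting full measure sets yields joint smoothness of $f$ along all orbit directions, so $f|_{Gv} \in C^\infty(Gv)$; the identity $(Xf)|_{Gv} = X(f|_{Gv})$ is then immediate from the pointwise definition of the Lie derivative.

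The main obstacle is the null-set manipulation at the start: the Bochner FTC gives the identity in $L^2$ for each $t$ only outside an a priori $t$-dependent null set, and a Fubini argument combined with continuity in $t$ is what removes that dependence uniformly. Once this pointwise identity is in hand, the rest (iteration via $X^k f$, restriction to orbits, and the chain rule) is essentially formal.
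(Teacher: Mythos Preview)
Your route via the Bochner fundamental theorem of calculus is genuinely different from the paper's, which invokes the Dixmier--Malliavin theorem to write $f = \sum_j \varphi_j * h_j$ with $\varphi_j \in C_c^{\infty}(G)$ and $h_j \in L^2(V)$. That representation singles out a specific pointwise representative of $f$; since $f(g_0^{-1}v) = \int_G \varphi(g_0^{-1}g')\,h(g'^{-1}v)\,dg'$, differentiation in $g_0$ (hence in $t$ along any one-parameter subgroup) passes to the smooth compactly supported factor $\varphi$, and (1)--(3) follow at once.

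Your argument has a real gap at the step ``both sides are continuous in $t$ (the left by continuity of $t \mapsto \exp(-tx)f$ in $L^2$ after choosing a suitable representative).'' Continuity of the curve in $L^2(V)$ gives you nothing pointwise: for a fixed representative of $f$, the Bochner identity plus Fubini only yields that for a.e.\ $v$ the function $t \mapsto f(\exp(-tx)v)$ agrees \emph{almost everywhere in $t$} with the absolutely continuous function $G(t,v) = f(v) + \int_0^t p(\exp(-sx)v)\,ds$. It need not agree for every $t$, so you cannot pass from the countable dense set $D$ to all of $\R$ by continuity of the left side---that continuity is exactly what is at stake. The phrase ``choosing a suitable representative'' hides the whole difficulty: you must produce a single representative of $f$ on which the identity holds for all $t$ (and, for part (3), for all directions in $\mathfrak{g}$ simultaneously), and the Bochner argument by itself does not hand you one. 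Dixmier--Malliavin does, which is why the paper appeals to it.

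There is a second, smaller gap in your sketch of (3): establishing that $t \mapsto f(\exp(-tx_i)v)$ is smooth for each basis vector $x_i$ separately does not imply joint smoothness of $f$ on the orbit $Gv$; smoothness along every coordinate line through every point is well known not to force even $C^1$. You would need to control all mixed iterates $X_{i_1}\cdots X_{i_r} f$ on a common full-measure set and show they assemble into a $C^\infty$ function on the orbit, and that again comes down to having the kind of uniform representative that the convolution formula from Dixmier--Malliavin provides directly.
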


\begin{proof}
Since $f$ is a smooth vector, by the theorem of Dixmier Malliavin, \cite{DM}, \cite{Ca}, we have $f$ is a finite sum of convolutions with compactly supported smooth functions $\varphi_j$, that is
\begin{equation*}
f = \sum_j \varphi_j* h_j
\end{equation*}
where
\begin{equation*}
(\varphi_j* h_j)(v) = \int_G \varphi_j(g) h_j(g^{-1}v) dg.
\end{equation*}

In what follows it suffices to consider to consider the case
\begin{equation*}
f = \varphi* h
\end{equation*}
where $\varphi$ is a compactly supported smooth function on $G$ and $h$ a smooth vector in $L^2(V)$.  Then
\begin{equation*}
Xf = (X \varphi)* h
\end{equation*}
and (1) is proved.

We will prove the analogue of (2) on the group $G$ with $L^2$ replaced with $L^1$.  Define $\tau_t$ by
\begin{equation*}
\tau_t(\varphi)(g) = \varphi(exp(-tx)g).
\end{equation*}
Let $\psi = X \varphi$ and define $\varphi_t \in L^1(G)$ by
\begin{equation*}
\varphi_t = \frac{\tau_t \varphi - \varphi}{t} - \psi.
\end{equation*}
We want to show that $\lim_{t \rightarrow 0} \varphi_t = 0$ in $L^1(G)$.  Since $\varphi$ is smooth on $G$, the pointwise limits of the one-paramter family of functions are zero everywhere, that is
\begin{equation*}
\lim_{t \rightarrow 0} \varphi_t(g) = \lim_{t \rightarrow 0} [\frac{\tau_t \varphi(g) - \varphi(g)}{t} - \psi(g)] = 0 \text{ for all $g$}.
\end{equation*}

Since $\varphi$ has compact support, $Y$, we may assume the family $\varphi_t$ is supported in the compact set $C = \{exp(tx)y : t \in [-1, 1], y \in Y \}$.  If $M$ is an upper bound for the values of $\psi$, then by the mean value theorem we have
$$|\frac{\tau_t \varphi - \varphi}{t} - \psi| \leq 2M.$$
Hence the family $\varphi_t$ is bounded by $2M$ on the compact set $C$ and is zero elsewhere.  Then the above limit converges in $L^1$ by the Lebesgue Dominated Convergence Theorem.

Now we want to prove
$$Xf = \lim_{t \rightarrow 0} \frac{\tau_t(f) - f}{t} = p$$
where the limit is taken in $L^2$.  Note
$$||\frac{\tau_t(f) - f}{t}||_{L^2(V)} = ||\varphi_t* h||_{L^2(V)}.$$
By Minkowski's Inequality we have
\begin{align*}
||\varphi_t* h||_{L^2(V)} &= \sqrt{ \int_{V} \bigg( \int_G \varphi_t(g) h(g^{-1}v) dg \bigg)^2 dv} \\
&\leq \int_G \sqrt{ \int_V \bigg( \varphi_t(g) h(g^{-1}v) \bigg)^2 dv} dg \\
&= \int_G |\varphi_t(g)| dg \sqrt{ \int_V h^2(w) dw} = ||\varphi_t||_{L^1(G)} ||h||_{L^2(V)}
\end{align*}
where the last line is obtained by making the change of variable $w = g^{-1} v$ and noting that the action of $G$ preserves measure.  Since $\varphi_t \rightarrow 0$ in $L^1(G)$, we have the convolution $\varphi_t * h \rightarrow 0$ in $L^2(V)$.

\end{proof}

As a consequence of the two previous lemmas, we obtain

\begin{prop} \label{restriction}
Let $\alpha$ be a smooth vector in $C^\ell(L^2(V^k))$.  Then for $\mathbf{x} \in V^k$ with $\alpha|_{G\mathbf{x}}$ smooth we have
\begin{equation*}
(d \alpha)|_{G \mathbf{x}} = d (\alpha|_{G\mathbf{x}}).
\end{equation*}
\end{prop}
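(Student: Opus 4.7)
The plan is to reduce the proposition to the previous lemma applied coordinate-wise. First I would write $\alpha$ in the standard basis of $\wwedge{\ell}\mathfrak{p}^*$, namely
\[
\alpha = \sum_{I \in \mathcal{S}_{\ell,n}} f_I \, \omega_I,
\]
where each $f_I$ is a smooth vector in $L^2(V^k)$. Since the $\omega_I$ are linearly independent, the hypothesis that $\alpha|_{G\mathbf{x}}$ is smooth on $G\mathbf{x}$ is equivalent to the statement that each coefficient $f_I|_{G\mathbf{x}}$ is smooth on the orbit $G\mathbf{x}$.

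Next I would apply the formula $d = \sum_{\alpha=1}^n A(\omega_\alpha)\otimes \sigma(e_{\alpha,n+1})$ defining the differential on $C^\bullet(L^2(V^k))$ (see the discussion following Equation \eqref{defnofd}). Since each $f_I$ is a smooth vector, part (2) of the previous lemma (applied to $L^2(V^k)$ in place of $L^2(V)$, with the same Dixmier--Malliavin/convolution argument) identifies the action of $\sigma(e_{\alpha,n+1})$ with differentiation by the vector field $E_{\alpha,n+1}$ on $V^k$ associated to $e_{\alpha,n+1} \in \mathfrak{p}_0$. Thus
\[
d\alpha = \sum_{\alpha=1}^n \sum_{I} (E_{\alpha,n+1} f_I) \, \omega_\alpha \wedge \omega_I.
\]

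The crux is to commute restriction to $G\mathbf{x}$ with the application of $E_{\alpha,n+1}$ on each coefficient. This is precisely the content of part (3) of the previous lemma: since $f_I$ is a smooth vector whose restriction to $G\mathbf{x}$ is a smooth function, one has $(E_{\alpha,n+1}f_I)|_{G\mathbf{x}} = E_{\alpha,n+1}(f_I|_{G\mathbf{x}})$. Combining this identity for each pair $(\alpha,I)$ yields
\[
(d\alpha)|_{G\mathbf{x}} = \sum_{\alpha,I} E_{\alpha,n+1}(f_I|_{G\mathbf{x}}) \, \omega_\alpha \wedge \omega_I = d(\alpha|_{G\mathbf{x}}),
\]
which is the desired equality.

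The only real obstacle is confirming that the previous lemma, stated for $L^2(V)$, carries over verbatim to $L^2(V^k)$: the action of $G$ on $V^k$ is still linear and measure-preserving on each factor, and the Dixmier--Malliavin decomposition $f_I = \sum_j \varphi_j \ast h_j$ together with the Minkowski inequality estimate used there depend only on the measure-preserving character of the action, so nothing is special about $k=1$. Once this is noted, the proof reduces to an application of the lemma on each coefficient followed by reassembly of the basis expansion, with no additional analytic input required.
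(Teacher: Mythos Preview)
Your proof is correct and is essentially the same approach as the paper's: the paper simply states that the proposition is ``a consequence of the two previous lemmas,'' and your argument spells out exactly how---expanding $\alpha$ in the basis $\{\omega_I\}$, applying part (3) of the lemma coefficient-by-coefficient, and noting that the lemma's proof for $L^2(V)$ goes through unchanged for $L^2(V^k)$.
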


Note that by Lemma \ref{Fubini}, almost every $\mathbf{x} \in V^k$ satisfies the conditions for Proposition \ref{restriction}.

\section{Notation}

We define the Gram matrix of $\mathbf{x}$ to be the $k$ by $k$ matrix defined by
$$(\mathbf{x}, \mathbf{x}) = ( (x_i, x_j) ).$$
When $\mathbf{x}$ is clear, we will sometimes abbreviate $(\mathbf{x}, \mathbf{x})$ by $\beta$.

If $k \leq n$ we define the (open) cones $\Omega_{k-1,1}$ and  $\Omega_{k,0}$ in $V^k$ by 
\begin{align*}
\Omega_{k-1,1} =& \{\mathbf{x} \in V^k : (\mathbf{x}, \mathbf{x}) \text{ has signature } (k-1, 1) \} \\
\Omega_{k,0} =& \{\mathbf{x} \in V^k : (\mathbf{x}, \mathbf{x}) \text{ is positive definite}\}.
\end{align*}
Since $V^k - (\Omega_{k-1,1} \cup \Omega_{k,0})$ has measure zero, we have the following Hilbert sum decomposition:
\begin{equation}
L^2(V^k) = L^2(\Omega_{k,0}) \oplus L^2(\Omega_{k-1,1}).
\end{equation}
If the value of $k$ is clear we will abbreviate $\Omega_{k,0}$ to $\Omega_+$ and $\Omega_{k-1,1}$ to $\Omega_-$.

For the rest of this section we will use $\Omega$ to denote either of the two cones $\Omega_+$ or $\Omega_-$.  It follows from the transitivity of the action of $G \times \GL(k,\R)$ on $\Omega$ that the invariant densities $\vol_{\Omega}$ on the cones are of the form function of $\beta$ times the tensor product of the two invariant densities on the two factors.  By Proposition \ref{restriction} we may restrict cocycles on $\Omega$ to cocycles on orbits $G \mathbf{x}$ for almost every $\mathbf{x} \in \Omega$.  We use this throughout the rest of the paper without further comment.

Throughout Part 2, it will be useful to have notation for the space and time coordinates of an element $\mathbf{x} \in V^k$.  First, given $x \in V$ we define $(x)_0$ to be the projection of $x$ onto $V_+$
$$(x)_0 = x + (x, e_{n+1}) e_{n+1}.$$
We extend this definition to $V^k$ by defining $\mathbf{x}_0$ to be the projection onto $V_+^k$.  That is, given $\mathbf{x} = (x_1, \ldots, x_k)$, we define
$$\mathbf{x}_0 = ((x_1)_0, \ldots, (x_k)_0).$$

More generally, for $x,z \in V$ with $(z,z)<0$, we define $(x)_z$, the projection onto $z^\perp$, by
$$(x)_z = x + (x, z)z.$$
We extend this definition to $V^k$ by defining $\mathbf{x}_z$ to be the projection onto $(z^\perp)^k$.  That is, given $\mathbf{x} = (x_1, \ldots, x_k)$, we define
$$\mathbf{x}_z = ((x_1)_z, \ldots, (x_k)_z).$$

We define the vector $\mathbf{t} \in \R^k$ by
$$\mathbf{t} = (t_1, \ldots, t_k) \text{ where } t_i = -(x_i, e_{n+1}), 1 \leq i \leq k.$$

If $k > n$ there is only one generic cone $\Omega_{n,1}$ in the space $V^k$. It is the open set where the $k$-tuple $\mathbf{x}=  (x_1, \ldots ,x_k)$ has maximal rank namely $n+1$.  
The Gram matrix of $\mathbf{x}$ will be similar to the diagonal matrix with entries $1,...,1, -1, 0,...,0$  ($n$ ones and $k-(n+1)$ zeroes). Then 
\begin{equation}
L^2 (V^k ) = L^2 (\Omega_{n,1} ).
\end{equation}

We will explain how to deal with the case $k>n$ later. For the time being and until further notice we will suppose that $k \leq n$. 

Given a symmetric $k$ by $k$ matrix $\beta \in \mathrm{Sym}_k$ we define the sphere $\mathbb{S}_{k, \beta}(V)$ of radius $\beta$ by 
$$\mathbb{S}_{k, \beta}(V) = \{\mathbf{x} \in V^k : (\mathbf{x}, \mathbf{x}) = \beta \}.$$
In what follows we will consider only the case in which $\beta$ is nondegenerate, hence has signature either $(k,0)$ or $(k-1,1)$.   We abbreviate $\mathbb{S}_{k, I_k}(V)$ by $\mathbb{S}_{k}(V)$ and $\mathbb{S}_{k, I_{k-1,1}}(V)$ by $\mathbb{S}_{k-1,1}(V)$.

Choose $\mathbf{x} \in \mathbb{S}_{k, \beta}(V)$.  Let $G_{\mathbf{x}}$ be the subgroup of $G$ that fixes $\mathbf{x}$.   Hence if $(\mathbf{x}, \mathbf{x})$ is definite we have 
$$G_{\mathbf{x}} \cong \SO(n-k,1)$$
and if $(\mathbf{x}, \mathbf{x})$ is indefinite we have 
$$G_{\mathbf{x}} \cong \SO(n-k+1).$$
The map $f: G \to \mathbb{S}_{k, \beta}(V)$ given by 
$$f(g) = g^{-1} \mathbf{x}$$
induces a $G$-equivariant isomorphism also denoted $f$ 
$$f: G_{\mathbf{x}}\backslash G \to \mathbb{S}_{k, \beta}(V).$$
In case $(\mathbf{x}, \mathbf{x})$ has signature $(k,0)$ choose a cocompact torsion-free discrete subgroup $\Gamma_{\mathbf{x}}$ of $G_{\mathbf{x}}$.  In case $(\mathbf{x}, \mathbf{x})$ has signature $(k-1,1)$  define $\Gamma_{\mathbf{x}}$ to be the trivial group. For the rest of this section we will often abbreviate $G_{\mathbf{x}}$ to $H$ and $\Gamma_{\mathbf{x}}$ to $\Gamma_H$.  

We emphasize that we are abusing notation here and $H$ depends on $\mathbf{x}$ as do the transfer maps $T_{k-1,1}$ and $T_{k,0}$ and the tube $E$ that follow.

Let $\mathbb{P}_k$ be the space of positive definite symmetric $k$ by $k$ matrices. We note that the space $\mathbb{P}_{k-1,1}$ of  symmetric $k$ by $k$ matrices of signature $(k-1,1)$ is diffeomorphic to the product $\mathbb{P}_{k-1} \times D_k$ where $D_k$ is hyperbolic $k$-space.  We have fiber bundle maps
$$p_+: \Omega_+ \to \mathbb{P}_k$$
and 
$$p_-: \Omega_- \to \mathbb{P}_{k-1,1} $$
with fibers the orbits of $G$ on the respective cones.

Both fiber bundles have contractible bases so they are trivial. We give an explicit trivialization $F_+: \mathbb{S}_k \times  \mathbb{P}_k \to \Omega_+$ by
\begin{equation} \label{postriv}
F_+( \mathbf{x}, \beta) = \mathbf{x} \sqrt{\beta}.
\end{equation}

We can similarly give an explicit trivialization $F_-: \mathbb{S}_{k,k-1} \times \mathbb{P}_{k-1,1}  \to \Omega_-$.

\section{The transfer maps from cochains with values in $L^2(H \backslash G)$ to $H$-invariant forms on hyperbolic space} \label{transfersection}

In this section we will  construct $G$-equivariant  isomorphisms which will preserve the $L^2$ inner products (up to positive scalar multiples)
\begin{equation}
T_{k,0}: C^\bullet \big( L^2(G_{\mathbf{x}}\backslash G) \big) 
    \rightarrow A^\bullet_{(2)}(\Gamma_{\mathbf{x}} \backslash D)^{G_{\mathbf{x}}}
\end{equation}
for the case in which $\beta= ( (x_i,x_j))$ is positive definite and
\begin{equation}
T_{k-1,1}: C^\bullet \big( L^2(G _{\mathbf{x}}\backslash G)) \big)  \rightarrow A^\bullet_{(2)}(D)^{G_{\mathbf{x}}}
\end{equation}
for the case in which $\beta= ( (x_i,x_j))$ has signature $(k-1,1)$.  The maps $T_{k,0}$ and $T_{k-1,1}$ will be referred to as transfer maps.

We will first construct $T_{k,0}$.  Define the ``tube" $E$ by
$$E = \Gamma_H  \backslash D.$$
Then the above map $f$ induces a map $q : \Gamma_H \backslash G \to H \backslash G$.  Let $p$ be the projection $\Gamma_H \backslash G \rightarrow E$. Note that $p$ and $q$ are fibrations with compact fibers, hence proper maps.  Then we have the following diagram

\begin{equation} \label{diamond}
\xymatrix{
  & \Gamma_H \backslash G \ar[dl]_p \ar[dr]^q \\
  E = \Gamma_H \backslash D \ar[dr] && H \backslash G \ar[dl]\\
  & H \backslash G / K
}\hspace{6em}
\end{equation}
Recall that if $X, Y$ are topological spaces, $f: X \to Y$ is a proper map and $\mu$ is a measure on $X$, then $f_*(\mu)$ is the measure on $Y$ so that $f_*\mu(\varphi) = \mu(f^*\varphi)$ for all $\varphi \in C^0_c(Y)$.  Let $\vol_X$ be a smooth $G$-invariant density for $X$ equal to one of $\Gamma_H \backslash G$, $\Gamma_H \backslash D$, or $H \backslash G$.  Note that $\vol_X$ is unique up to a constant multiple.  Since $p_*$ (resp. $q_*$) commutes with the action of $G$, $p_* \vol_{\Gamma_H \backslash G}$ (resp. $q_* \vol_{\Gamma_H \backslash G}$) is a $G$-invariant measure.  We obtain

\begin{lem}
There exist constants $c_1, c_2$ so that
$$ p_* \vol_{\Gamma_H \backslash G} = c_1 \vol_{\Gamma_H \backslash D} \text{ and } q_* \vol_{\Gamma_H \backslash G} = c_2 \vol_{H \backslash G}.$$
\end{lem}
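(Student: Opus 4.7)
The plan is to use the uniqueness (up to positive scalar) of $G$-invariant Radon measures on the homogeneous spaces appearing as targets of $p$ and $q$, combined with the fact that pushforward along a $G$-equivariant proper map preserves $G$-invariance and positivity.

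First I would verify that $p$ and $q$ are indeed proper $G$-equivariant maps, so that $p_*$ and $q_*$ send Radon measures to Radon measures. Properness of $q : \Gamma_H \backslash G \to H \backslash G$ follows because its fibers are $\Gamma_H \backslash H$, which is compact by our choice of $\Gamma_H$ (in the definite case) and $G$-equivariance is immediate from the construction. Similarly, $p : \Gamma_H \backslash G \to \Gamma_H \backslash D = \Gamma_H \backslash G / K$ has fibers homeomorphic to $K$, which is compact, and is $G$-equivariant for the right $G$-action on both spaces. Thus $p_* \vol_{\Gamma_H \backslash G}$ is a positive Radon measure on $\Gamma_H \backslash D$, and similarly for $q_*$.

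Next, I would observe that $p_* \vol_{\Gamma_H \backslash G}$ is $G$-invariant: for any $\varphi \in C^0_c(\Gamma_H \backslash D)$ and $g \in G$,
\begin{equation*}
(g \cdot p_* \vol_{\Gamma_H \backslash G})(\varphi) = p_* \vol_{\Gamma_H \backslash G}(g^{-1} \cdot \varphi) = \vol_{\Gamma_H \backslash G}(p^*(g^{-1} \cdot \varphi)) = \vol_{\Gamma_H \backslash G}(g^{-1} \cdot p^*\varphi),
\end{equation*}
where the last equality uses $G$-equivariance of $p$; then $G$-invariance of $\vol_{\Gamma_H \backslash G}$ gives the result. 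The same argument works for $q_*$. Now the key input is the classical fact that on each of $\Gamma_H \backslash D$ and $H \backslash G$ there is a unique (up to positive scalar) $G$-invariant positive Radon measure, because $G$ acts transitively on $D$ and on $H \backslash G$ and the stabilizers are unimodular (both $K$ and $H$ are reductive, hence unimodular). Applied to $p_* \vol_{\Gamma_H \backslash G}$ and $q_* \vol_{\Gamma_H \backslash G}$, this uniqueness yields constants $c_1, c_2 > 0$ as claimed.

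The only subtle point is checking that the pushforwards are genuinely nonzero (so $c_1, c_2$ are positive rather than merely nonnegative), but this is immediate because $\vol_{\Gamma_H \backslash G}$ has full support and $p, q$ are surjective, so the pushforwards have full support as well. No step poses a serious obstacle; the main care needed is the bookkeeping of left vs.\ right actions and the unimodularity argument for the existence and uniqueness of the invariant measures.
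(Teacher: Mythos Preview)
Your proposal is correct and follows essentially the same approach as the paper, which gives only the one-line justification that $p_*$ and $q_*$ commute with the $G$-action so the pushforwards are $G$-invariant and uniqueness of invariant measures applies. One minor imprecision (present in the paper as well): there is no genuine right $G$-action on $\Gamma_H \backslash D = \Gamma_H \backslash G / K$, so for $p$ the invariance argument is more cleanly phrased by lifting both measures to $D = G/K$ and invoking uniqueness of the left-$G$-invariant measure there.
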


Let $\sigma_\ell : K \rightarrow \mathrm{Aut}(\wwedge{\ell} \mathfrak{p}^*)$ be given by $\sigma_\ell(k) = \wwedge{\ell} \mathrm{Ad}^*(k^{-1})|\wwedge{\ell}\mathfrak{p}^*$.  Let $F \in \Hom_K \big( \wwedge{\ell} \mathfrak{p}, C_c^\infty(H \backslash G) \big) = \bigg( \wwedge{\ell} \mathfrak{p}^* \otimes C_c^\infty(H \backslash G) \bigg)^K$.  Then we can write
$$ F = \sum_{I \in \mathcal{S}_{\ell,n}} \omega_I \otimes f_I, \quad f_I \in C_c^\infty(H \backslash G).$$
Hence we may identify $F$ with a map
$$F : H \backslash G \rightarrow \wwedge{\ell}\mathfrak{p}^*$$
satisfying
\begin{equation} \label{Fequivariance}
F(Hgk) = \sigma_\ell(k) F(Hg).
\end{equation}

We will use the symbol $(,)$ for the Riemannian metric on $\wwedge{\ell}\mathfrak{p}^*$ induced by the Riemannian metric on $\mathfrak{p}$.  Then $\{\omega_I : I \in \mathcal{S}_{\ell,n} \}$ is an orthonormal basis.  We define the pointwise squared norm $||F||^2$ of $F$ by
$$||F||^2(Hg) = ( F(Hg), F(Hg) ).$$

Then $||F||^2$ is right-$K$ invariant and descends to a function $F^\prime$ on $H \backslash G / K$.  We note that for $F = \sum_{I \in \mathcal{S}_{\ell,n}} \omega_I \otimes f_I$, we have
$$F^\prime(Hgk) = \sum_{I \in \mathcal{S}_{\ell,n}} |f_I|^2.$$

Now let $\widetilde{F} = q^* F$.  Then $\widetilde{F}$ satisfies
$$\widetilde{F}(\gamma g k) = \sigma_\ell(k) \widetilde{F}(g), \gamma \in \Gamma_H, k \in K.$$
Hence $\widetilde{F}$ descends to an $H$-invariant $\ell$-form $\overline{F}$ on $E = \Gamma_H \backslash D$.

We note that the pointwise squared norm $||\overline{F}||^2$ satisfies
$$||\overline{F}||^2 = \sum_{I \in \mathcal{S}_{\ell,n}} |f_I|^2.$$

Again $||\overline{F}||^2$ descends to a function denoted $\overline{F}^\prime$ on $H \backslash G / K$ and we have
\begin{equation} \label{equalondoublecoset}
\overline{F}^\prime = F^\prime.
\end{equation}

From $\eqref{equalondoublecoset}$ we obtain
\begin{equation} \label{pullbacksofnorms}
p^*||\overline{F}||^2 = q^*||F||^2.
\end{equation}

We define the transfer map $T_{k,0}$ by
$$T_{k,0}(F) = \overline{F}.$$

\begin{prop} \label{comparisonofnorms}
There exists a positive constant $c$ such that
$$||F||_{L^2(H \backslash G)}^2 = c||T_{k,0}(F)||^2_{L^2(E)} = c||\overline{F}||_{L^2(E)}^2.$$
\end{prop}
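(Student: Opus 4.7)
The plan is to express both $L^2$ norms as integrals on the common upstairs space $\Gamma_H \backslash G$ at the apex of diagram \eqref{diamond}, and then to compare the resulting integrands using the pointwise identity \eqref{pullbacksofnorms} established just above the proposition.

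First I would unwind the two norms using the preceding lemma on pushforward measures. Since $q$ and $p$ are proper with $q_* \vol_{\Gamma_H \backslash G} = c_2 \vol_{H \backslash G}$ and $p_* \vol_{\Gamma_H \backslash G} = c_1 \vol_E$, the standard change-of-variables formula for a pushforward measure (i.e.\ $\int_Y \varphi\, d(f_* \mu) = \int_X f^* \varphi\, d\mu$) yields
\begin{equation*}
\|F\|^2_{L^2(H \backslash G)} = \int_{H \backslash G} \|F\|^2 \, \vol_{H \backslash G} = \frac{1}{c_2} \int_{\Gamma_H \backslash G} q^* \|F\|^2 \, \vol_{\Gamma_H \backslash G}
\end{equation*}
and
\begin{equation*}
\|\overline{F}\|^2_{L^2(E)} = \int_{E} \|\overline{F}\|^2 \, \vol_E = \frac{1}{c_1} \int_{\Gamma_H \backslash G} p^* \|\overline{F}\|^2 \, \vol_{\Gamma_H \backslash G}.
\end{equation*}

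Next I would invoke \eqref{pullbacksofnorms}, which says precisely that $p^* \|\overline{F}\|^2 = q^* \|F\|^2$ as functions on $\Gamma_H \backslash G$. Hence the two integrals on the right-hand sides are equal, and dividing gives the asserted equality with $c = c_1/c_2 > 0$, a positive constant independent of $F$.

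This is essentially a bookkeeping exercise and I do not expect any serious obstacle; the substantive inputs -- namely the two pushforward relations, and the identification $p^*\|\overline{F}\|^2 = q^*\|F\|^2$ -- have already been established in the text. The only sanity check is that the integrals be finite so that the change-of-variables step applies; this is immediate from $F \in L^2(H \backslash G)$ together with the fact that $p$ and $q$ have compact fibers, which guarantees that pullbacks of $L^1$ functions are $L^1$ up to the finite normalizing constants $c_1, c_2$.
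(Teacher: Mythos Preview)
Your proof is correct and follows essentially the same approach as the paper's: both arguments lift the two $L^2$ norms to integrals on $\Gamma_H \backslash G$ via the pushforward relations and then identify the integrands using \eqref{pullbacksofnorms}. The only cosmetic difference is that the paper uses a single generic constant $c'$ throughout rather than tracking $c_1$ and $c_2$ separately, arriving at $c = 1/c'$ where you get $c = c_1/c_2$.
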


\begin{proof}
In what follows, $c^\prime$ will denote a generic constant.
\begin{align*}
||\overline{F}||_{L^2(E)}^2 &= c^\prime \int_E ||\overline{F}||^2 p_*( \vol_{\Gamma_H \backslash G}) \\
&= c^\prime \int_{\Gamma_H \backslash G} p^* ||\overline{F}||^2 \vol_{\Gamma_H \backslash G} \\
&= c^\prime \int_{\Gamma_H \backslash G} q^* ||F||^2 \vol_{\Gamma_H \backslash G} \\
&= c^\prime \int_{H \backslash G} ||F||^2 q_*(\vol_{\Gamma_H \backslash G}) = c^\prime ||F||_{L^2(H \backslash G)}^2.
\end{align*}
Put $c = \frac{1}{c^\prime}$.

\end{proof}

Now we will construct $T_{k-1,1}$.  Let $\mathbf{e} = (e_1, \ldots, e_{k-1}, e_{n+1})$, so $(\mathbf{e}, \mathbf{e}) = I_{k-1,1}$.  Now $G_\mathbf{e}$, the isotropy of $\mathbf{e}$, is isomorphic to $\SO(n-k+1)$ embedded in the lower right block of $\SO(n)$.  We will now abbreviate $\SO(n-k+1)$ to $H$.  Now we have the following diagram

\begin{equation} \label{diamond2}
\xymatrix{
  & G \ar[dl]_p \ar[dr]^q \\
  D=G/K \ar[dr] && H \backslash G \ar[dl]\\
  & H \backslash G / K
}\hspace{6em}
\end{equation}

Let $F \in \Hom_K \big( \wwedge{\ell} \mathfrak{p}, C_c^\infty(H \backslash G) \big) = \bigg( \wwedge{\ell} \mathfrak{p}^* \otimes C_c^\infty(H \backslash G) \bigg)^K$.  Then we can write
$$ F = \sum_{I \in \mathcal{S}_{\ell,n}} \omega_I \otimes f_I, f_I \in C_c^\infty(H \backslash G).$$
Again we may identify $F$ with a map
$$F : H \backslash G \rightarrow \wwedge{\ell}\mathfrak{p}^*$$
satisfying
\begin{equation} \label{Fequivariance2}
F(Hgk) = \sigma_\ell(k) F(Hg).
\end{equation}

Again, we let $\widetilde{F} = q^* F$.  Then $\widetilde{F}$ satisfies
$$\widetilde{F}(h g k) = \sigma_\ell(k) \widetilde{F}(g), h \in H, k \in K.$$
Hence $\widetilde{F}$ descends to an $H$-invariant $\ell$-form $\overline{F}$ on $D$.

We note that the pointwise squared norm $||\overline{F}||^2$ satisfies
$$||\overline{F}||^2 = \sum_{I \in \mathcal{S}_{\ell,n}} |f_I|^2.$$

Again $||\overline{F}||^2$ descends to a function denoted $\overline{F}^\prime$ on $H \backslash G / K$ and we have
\begin{equation} \label{equalondoublecoset2}
\overline{F}^\prime = F^\prime.
\end{equation}

From $\eqref{equalondoublecoset}$ we obtain
\begin{equation} \label{pullbacksofnorms2}
p^*||\overline{F}||^2 = q^*||F||^2.
\end{equation}

We define the transfer map $T_{k-1,1}$ by
\begin{equation}
T_{k-1,1}(F) = \overline{F}.
\end{equation}

The proof of the following proposition is analogous to the proof of Proposition \ref{comparisonofnorms}
\begin{prop}
There exists a positive constant $c$ such that
$$||F||_{L^2(H \backslash G)}^2 = c||T_{k-1,1}(F)||^2_{L^2(D)} = c||\overline{F}||_{L^2(D)}^2$$
\end{prop}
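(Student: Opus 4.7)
The plan is to mimic the proof of Proposition \ref{comparisonofnorms} essentially verbatim, using the diamond \eqref{diamond2} in place of \eqref{diamond}. The key simplification is that, since we are now in the indefinite-signature case, $H = G_{\mathbf{e}} \cong \SO(n-k+1)$ is compact and $\Gamma_H$ is trivial, so the tube $E$ collapses to the symmetric space $D = G/K$ itself and no lattice needs to be introduced.

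First I would check that both maps in \eqref{diamond2} are proper: $p: G \to D = G/K$ has fiber $K$, and $q: G \to H\backslash G$ has fiber $H$. Both fibers are compact (this is where we use that $H$ is compact, which is the new feature of the indefinite case), so pushforward of measures along $p$ and $q$ is well-defined on compactly supported continuous functions. Since $p_*$ and $q_*$ intertwine the right $G$-action on $G$ with the right $G$-actions on $D$ and $H\backslash G$ respectively, the pushforwards $p_* \vol_G$ and $q_* \vol_G$ are $G$-invariant measures. By uniqueness of the $G$-invariant density on each of the unimodular homogeneous spaces $D$ and $H\backslash G$ (up to positive scalar), there exist positive constants $c_1, c_2$ with
\begin{equation*}
p_* \vol_G = c_1 \vol_D \quad \text{and} \quad q_* \vol_G = c_2 \vol_{H\backslash G}.
\end{equation*}

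Second, I would string together the same chain of equalities as in the proof of Proposition \ref{comparisonofnorms}, with $c'$ a generic positive constant whose value changes from line to line:
\begin{align*}
\|\overline{F}\|_{L^2(D)}^2 &= c' \int_D \|\overline{F}\|^2 \, p_*(\vol_G) = c' \int_G p^*\|\overline{F}\|^2 \, \vol_G \\
 &= c' \int_G q^*\|F\|^2 \, \vol_G = c' \int_{H\backslash G} \|F\|^2 \, q_*(\vol_G) = c' \|F\|_{L^2(H\backslash G)}^2,
\end{align*}
where the crucial middle equality is exactly \eqref{pullbacksofnorms2}, which was already established in the indefinite setting immediately above the proposition. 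Setting $c = 1/c'$ yields the stated identity.

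There is essentially no serious obstacle here: the argument is formal once properness of $p,q$ and the uniqueness of the invariant densities are in hand, and both of these are automatic from the compactness of $H$ and $K$. The only point that even deserves a sentence of care is the $G$-equivariance of the pushforwards, which forces the scalar relations $p_*\vol_G = c_1\vol_D$ and $q_*\vol_G = c_2\vol_{H\backslash G}$ in the first place; everything else is a change-of-variables computation that is identical to the proof just given for $T_{k,0}$.
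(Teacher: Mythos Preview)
Your proposal is correct and takes essentially the same approach as the paper, which simply states that the proof is analogous to that of Proposition~\ref{comparisonofnorms}. You have correctly identified the key point that distinguishes this case: $H \cong \SO(n-k+1)$ is compact, so both $p$ and $q$ have compact fibers and the same pushforward argument goes through verbatim with $\Gamma_H$ trivial and $E = D$.
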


\section{Spectral gaps and the vanishing of $L^2$-cohomology} \label{spectralgapsection}
\hfill

In this section we relate how uniform (over points in the cone) lower bounds  on the  spectrum of the Casimir operator $C$ on the square integrable functions on almost every orbit $G \mathbf{x}$ implies vanishing results for the relative Lie algebra cohomology with values in $L^2(V^k)$.

Our goal in this section is to prove vanishing theorems for relative Lie algebra cohomology with values in $L^2(\Omega _+ \cup \Omega_-)$ and hence with values in $L^2(V^k)$ since $V^k - (\Omega _+ \cup \Omega_-)$ has measure zero.   We will use the transfer map to reduce problems on cochains with values in $L^2$ of an orbit to problems on square-integrable forms on hyperbolic space.  To illustrate the use of the transfer map  we will  begin with a simple lemma. 
\begin{lem}
The spectrum of the Casimir on the relative  $\ell$-cochains \\
$\Hom_K(\wwedge{\ell}\mathfrak{p}, L^2(\Omega _+ \cup \Omega_-))$ is contained in $[0,\infty), 0 \leq \ell \leq n$.
\end{lem}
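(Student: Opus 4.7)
The plan is to reduce the spectral question to that of the Hodge--de Rham Laplacian on $L^2$ forms over a complete Riemannian manifold, by combining the transfer maps of Section \ref{transfersection} with Kuga's formula. First I would decompose $L^2(\Omega_+ \cup \Omega_-) = L^2(\Omega_+) \oplus L^2(\Omega_-)$ and use the explicit $G$-equivariant trivializations $F_+$ and $F_-$ given by \eqref{postriv} to realize each summand as a Hilbert direct integral over $\mathbb{P}_k$ (resp.\ $\mathbb{P}_{k-1,1}$) with fibers $L^2(G\mathbf{x}_\beta)$. Applying $\Hom_K(\wwedge{\ell}\mathfrak{p}, -)$ produces a direct-integral decomposition of the cochain space with fibers $\Hom_K(\wwedge{\ell}\mathfrak{p}, L^2(G\mathbf{x}_\beta))$. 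Since the Casimir lies in the centre of $U(\mathfrak{g})$ and $G$ acts trivially on the base of each bundle $p_{\pm}$, it acts fiber-wise, and hence it suffices to bound its spectrum on each fiber uniformly in $\beta$.

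Second, I would apply the transfer maps $T_{k,0}$ and $T_{k-1,1}$ fiber by fiber to identify each fiber, isometrically up to a positive constant (Proposition \ref{comparisonofnorms} and its analogue), with the space $A_{(2)}^\ell(E)^{G_{\mathbf{x}_\beta}}$ of square-integrable $G_{\mathbf{x}_\beta}$-invariant $\ell$-forms on $E$. Here $E = \Gamma_{\mathbf{x}_\beta}\backslash D$ is a compact hyperbolic manifold when $(\mathbf{x}_\beta, \mathbf{x}_\beta)$ is positive definite and $E = D$ is hyperbolic $n$-space in the indefinite case. Because the transfer maps are $G$-equivariant by construction, they intertwine the Casimir acting on cochains with the Casimir acting on smooth $L^2$ forms on $E$.

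Third and finally, I would invoke Kuga's formula (Matsushima--Murakami; Borel--Wallach, \S II.2): on the space of smooth $\ell$-forms on $E$, the Casimir of $\mathfrak{g}$ coincides, up to sign (with our convention) and an irrelevant shift by the Casimir of $\mathfrak{k}$, with the Hodge--de Rham Laplacian $\Delta = dd^* + d^*d$. Since $E$ is a complete Riemannian manifold, $\Delta$ is a non-negative essentially self-adjoint operator on the $L^2$ forms, so its spectrum lies in $[0,\infty)$; the fiber-wise bound then passes to the direct integral and proves the lemma. The main technical point will be the careful bookkeeping of signs and of the $\mathfrak{k}$-Casimir contribution in Kuga's formula, so that the Casimir on $L^2(\Omega_+ \cup \Omega_-)$ genuinely corresponds to the non-negative $+\Delta$ rather than to $-\Delta$ or to a shifted operator; once this is settled, uniformity across fibers is automatic from the $G$-equivariance of all the constructions.
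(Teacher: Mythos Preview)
Your approach is essentially identical to the paper's: reduce to orbits via the fiber bundle / direct integral structure, apply the transfer maps $T_{k,0}$ and $T_{k-1,1}$ to identify the cochain spaces with $L^2$ forms on $E$ or $D$, then invoke Kuga's lemma to convert the Casimir into the Hodge Laplacian, which is non-negative on any complete Riemannian manifold. One small correction: when $(\mathbf{x},\mathbf{x})$ is positive definite, the tube $E = \Gamma_{\mathbf{x}} \backslash D$ is \emph{not} compact (only the totally geodesic core $C_U = \Gamma_{\mathbf{x}} \backslash D_U$ is compact; see Section~\ref{tubessection}); your argument, however, only uses that $E$ is complete, so this slip does not affect the conclusion.
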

\begin{proof}
It suffices to prove that for all $\mathbf{x} \in \Omega$ the spectrum of the Casimir on $\Hom_K(\wwedge{\ell}\mathfrak{p}, L^2(G \mathbf{x}))$ is contained in $[0,\infty), 0 \leq \ell \leq n$.  Under the maps $T_k$ (resp. $T_{k-1,1}$), $\Hom_K (\wwedge{\ell}\mathfrak{p}, L^2(G \mathbf{x}))$ maps isometrically into $\ell$-forms on hyperbolic space $D$, resp. the tube $E = \Gamma_{\mathbf{x}} \backslash D$, and the Casimir maps to the Hodge Laplacian by Kuga's Lemma.  But the spectrum of the Laplacian on forms on $D$,  respectively $E$, is non-negative.

\end{proof}

 Let $\Omega$ represent either or the two cones $\Omega_+$ or $\Omega_-$.  We then have 

\begin{prop} \label{Mateisprop1}
Suppose the spectrum of the Casimir on $C^{\ell}(L^2(G \mathbf{x}))$ is contained in $[\epsilon, \infty)$ with $\epsilon > 0$ independent of $\mathbf{x}$ for one and hence every $\mathbf{x}$.  Then there is a bounded  operator $\delta \mathcal{G}$ from $C^{\ell}( L^2(\Omega))$ to $C^{\ell-1}(L^2(\Omega))$ such that  for any cocyle  $\alpha \in C^{\ell}(L^2(\Omega))$ we have
$$ d (\delta \mathcal{G} \alpha) = \alpha.$$

Hence
$$H^{\ell}(\mathfrak{so}(n,1), \SO (n) ; L^2(\Omega)) =0.$$
\end{prop}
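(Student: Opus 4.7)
The plan is to build $\delta\mathcal{G}$ fiberwise on each $G$-orbit and then assemble these operators using the fibration of $\Omega$ by orbits. Fix $\mathbf{x}\in\Omega$ and consider the cochain complex $C^{\bullet}(L^2(G\mathbf{x}))$. By the transfer map $T_{k,0}$ or $T_{k-1,1}$ from Section \ref{transfersection}, this complex is isometrically (up to a positive scalar) isomorphic to the space of $H$-invariant square-integrable forms on $E=\Gamma_{\mathbf{x}}\backslash D$ or on $D$. Under this identification, Kuga's lemma says the Casimir operator on cochains corresponds to the Hodge Laplacian $\Delta$ on forms, so the spectral gap hypothesis translates to a lower bound $\Delta\geq \epsilon>0$ on each fiber.

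Because the Casimir is central in $U(\mathfrak{g})$, it commutes with the differential $d$ and its formal adjoint $\delta$ on the cochain complex; via the transfer map this becomes the usual fact that $\Delta$ commutes with $d$ and $\delta$. Invertibility of $\Delta$ with bounded inverse $\mathcal{G}=\Delta^{-1}$ of norm at most $1/\epsilon$ then gives a fiberwise operator $\delta\mathcal{G}$ of norm at most $1/\epsilon$ that commutes with $d$. For any closed $\alpha$ on the fiber, the Hodge identity $\Delta=d\delta+\delta d$ gives
\begin{equation*}
\alpha \;=\; \Delta\mathcal{G}\alpha \;=\; d(\delta\mathcal{G}\alpha)+\delta\mathcal{G}(d\alpha) \;=\; d(\delta\mathcal{G}\alpha).
\end{equation*}

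To globalize, I would use the trivializations $F_{\pm}$ from \eqref{postriv} to identify $\Omega$ with $\mathbb{S}\times\mathbb{P}$ and write the invariant density as a product. This yields a direct integral decomposition
\begin{equation*}
C^{\ell}(L^2(\Omega)) \;\cong\; \int^{\oplus}_{\mathbb{P}} C^{\ell}(L^2(G\mathbf{x}_{\beta}))\,d\mu(\beta),
\end{equation*}
and the fiberwise operators $\delta\mathcal{G}$, being uniformly bounded by $1/\epsilon$, assemble into a bounded operator $\delta\mathcal{G}$ on $C^{\ell}(L^2(\Omega))$. By Proposition \ref{restriction}, for a cocycle $\alpha\in C^{\ell}(L^2(\Omega))$ the restriction $\alpha|_{G\mathbf{x}}$ is closed for almost every $\mathbf{x}$, and $d(\delta\mathcal{G}\alpha)|_{G\mathbf{x}}=d\bigl((\delta\mathcal{G}\alpha)|_{G\mathbf{x}}\bigr)$; combined with the fiberwise identity this gives $d(\delta\mathcal{G}\alpha)=\alpha$ on all of $\Omega$, and then the vanishing of $H^{\ell}$ follows.

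The main obstacle I anticipate is the assembly step: verifying that the fiberwise Green's operator varies measurably in $\beta$ so that the direct-integral definition of $\delta\mathcal{G}$ really produces an element of $C^{\ell-1}(L^2(\Omega))$ (and, in particular, a smooth vector in the sense used throughout Part 2). This should follow from $G$-equivariance of the construction together with Dixmier--Malliavin smoothing, but making the regularity precise across fibers using only the uniform spectral gap is the delicate point. A secondary issue is choosing the measurable family of points $\mathbf{x}_{\beta}\in\mathbb{S}_{k,\beta}$ compatibly with the trivializations $F_{\pm}$, which is handled by taking $\mathbf{x}_{\beta}=F_{\pm}(\mathbf{x}_{0},\beta)$ for a fixed base point.
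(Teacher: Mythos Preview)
Your proposal is correct and follows essentially the same approach as the paper: build the Green's operator fiberwise via the transfer map and Kuga's lemma, use the uniform spectral gap to obtain a bounded inverse, apply the Hodge identity $\alpha=\Delta\mathcal{G}\alpha=d\delta\mathcal{G}\alpha$ on closed forms, and then integrate the resulting uniform estimate over the orbit space. The paper carries this out by first working with compactly supported smooth cochains and extending by continuity, whereas you phrase the assembly as a direct integral; your stated measurability concern is in fact resolved by Remark~\ref{independenceofC}, which shows that under the trivialization $F_\pm$ the Casimir (and hence $\mathcal{G}$) is independent of the base parameter $\beta$, so the fiberwise operators are constant in $\beta$ and glue trivially.
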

\begin{proof} We will treat the case of $\Omega = \Omega_-$ and leave the case $\Omega= \Omega_+$ to the reader. 

Choose $\mathbf{x} \in \Omega_-$.  
By assumption, the spectrum of the Casimir operator $\mathrm{C}$ on  $C^{\ell}(L^2(G \mathbf{x}))$ is contained in $[\epsilon, \infty)$ with $\epsilon > 0$ independent of $\mathbf{x}$. Let $\mathcal{G}|_{\mathbf{x}}$ be the inverse of the Casimir $\mathrm{C}$ on $C^{\ell}(L^2(G \mathbf{x}))$. Then $\mathcal{G}|_{\mathbf{x}}$  commutes with the  differential $d$ and satisfies
\begin{equation} \label{firstGreenestimate}
||\mathcal{G}|_{\mathbf{x}}(\alpha) ||_{L^2(G\mathbf{x})} \leq \frac{1}{\epsilon} ||\alpha||_{L^2(G \mathbf{x})} \text{ and } \mathrm{C} \mathcal{G}|_{\mathbf{x}} \alpha = \alpha  .
\end{equation}
The critical point is that the constant  $\frac{1}{\epsilon}$ in the  above estimate is  independent of $\mathbf{x}$.

Let  $\alpha   \in C^{\ell}(C^{\infty}_c(\Omega_-))$ be a cocycle. By Lemma \ref{Fubini}, the cochain $\alpha$ is defined, measurable and square-integrable on almost every orbit $G \mathbf{x}$.  We will refer to the set of $\mathbf{x}$ such that $\alpha$ has the above three properties as the domain of $\alpha$.  Choose $\mathbf{x} \in \Omega_-$ in the domain of $\alpha$  and consider $\alpha(\mathbf{x}) \in C^{\ell}(C^{\infty}_c(\Omega_-))$.  Put $\beta = \mathcal{G}|_{\mathbf{x}}(\alpha) \in C^{\ell}(C^{\infty}_c(\Omega_-))$.  Then letting $\mathbf{x}$ vary we may extend $\beta$ to  every orbit  of $\Omega_{k-1,1}$ by the formula
$$\beta(\mathbf{x}) =  \mathcal{G}|_{\mathbf{x}}(\alpha)(\mathbf{x}).$$
From the inequality \eqref{firstGreenestimate} for every $\mathbf{x}$ and every $\alpha \in C^{\ell}(C^{\infty}_c(\Omega_-))$ we have 
\begin{equation} \label{secondGreenestimate}  
||\mathcal{G} \alpha ||_{L^2(G\mathbf{x})} \leq \frac{1}{\epsilon} ||\alpha||_{L^2(G \mathbf{x})} \text{ and } \mathrm{C} \mathcal{G}|_{\mathbf{x}} \alpha = \alpha  .
\end{equation}
Each side of the inequality \eqref{secondGreenestimate} may be considered as a function on the space of $G$ orbits $G \backslash \Omega_- \cong \mathbb{P}_{k-1} \times D_k$.  The $L^2(\Omega_-)$-norm of each side is obtained by integrating each side with respect to the push-forward measure on this quotient space.  Since $\frac{1}{\epsilon}$ is a constant function on this quotient space, after performing these integrations we obtain  
 \begin{equation}\label{firstextension}
||\mathcal{G}(\alpha)||_{L^2(\Omega_-)} \leq \frac{1}{\epsilon} ||\alpha||_{L^2(\Omega_-)} .
\end{equation}
Hence the extension $\beta$ satisfies $\beta \in C^{\ell}(L^2 (\Omega_{k-1,1})).$

Now for each $\mathbf{x}$ use the transfer $T_{k-1,1}$ to map $\alpha$ and $\beta$ to square integrable forms on hyperbolic space depending on the parameter $\mathbf{x}$.  We again use     $\mathcal{G}$, the Green operator, to denote the right-inverse of the Hodge Laplacian $\Delta$. We note
$$ T_{k-1,1} \circ \mathcal{G}|_{\mathbf{x}} \circ T^{-1}_{k-1,1} =\mathcal{G}$$
and we have the estimate inherited from Equation \eqref{firstGreenestimate}
\begin{equation}\label{ellipticestimate} 
||\mathcal{G}\eta||_{L^2(D)} \leq \frac{1}{ \epsilon}  || \eta ||_{L^2(D)}.
\end{equation}

For $\mathbf{x}$ and $\alpha$  as above we let
$$\omega  = T_{k-1,1}( \alpha ).$$
Then $\omega $ is a smooth compactly supported  closed $\ell$-form depending on $\mathbf{x} \in \Omega_-$.  Define a smooth $\ell$-form $\eta $ depending on $\mathbf{x} \in \Omega_-$ by 
$$\eta = T_{k-1,1}( \beta) = T_{k-1,1}(\mathcal{G}|_{\mathbf{x}}(\omega)) =
\mathcal{G}(\omega).$$
Hence  $\eta$ satisfies the equation
$$ \Delta (\eta ) = \omega.$$
Define an $(\ell-1)$-form $\nu$ by 
$$\nu = d^* \eta.$$
We claim that 
\begin{enumerate}
\item $d \nu = \omega$
\item $||\nu||_{L^2(D)} = ||d^* \mathcal{G}(\omega) ||_{L^2(D)}\leq \frac{1}{\epsilon} ||\omega||_{L^2(D)}.$
\end{enumerate}
We first prove (1).  Note first that since $d$ commutes with $\mathcal{G}$ we have $ d \mathcal{G} \omega = 0$.  We then have 
\begin{equation*}
\omega = \Delta(\eta) = (d d^* + d^* d)( \mathcal{G}(\omega)) = d d^*( \mathcal{G}(\omega)) = d( d^*( \mathcal{G}(\omega))) = d \nu
\end{equation*}
We now prove (2). We use $ ((\ , \ ))$ to denote the inner product on $A^{\ell-1}_{(2)}(D)$ and $ || \ ||$ to denote the associated norm . Then we have 
\begin{align*}
|| \nu||^2 = &||d^*( \mathcal{G}(\omega)) ||^2 
= (( d^*( \mathcal{G}(\omega)),d^*( \mathcal{G}(\omega)))) \\
= & ((d d^* ( \mathcal{G}(\omega)),  \mathcal{G}(\omega))
= (( \omega,  \mathcal{G}(\omega))) \\
\leq &|| \omega||^2||\mathcal{G}(\omega) ||^2
\leq \frac{1}{\epsilon} || \omega||^2.
\end{align*}
Finally we will need to know that the image of closed $L^2$ $\ell$-forms under $d^* \mathcal{G}$ is contained in the domain of $d$.  But we have for $\omega$ closed
\begin{equation}\label{domainestimate}
|| d d^*  \mathcal{G} \omega||^2  = || \Delta   \mathcal{G} \omega||^2 
=  || \omega ||^2 . 
\end{equation}
In case $\omega$ depends on $\mathbf{x}$ the above equation holds for all $\mathbf{x}$. 

Now define 
$$\delta: C^{\ell}(C^{\infty}_c(G \mathbf{x}) ) \to C^{\ell}(C^{\infty}_c(G \mathbf{x}))$$
by
$$\delta = T^{-1} _{k-1,1} \circ d^* \circ T_{k-1,1}.$$
Also define $\gamma(\mathbf{x})$ by
$$ \gamma = T^{-1} _{k-1,1}(\nu) = \delta \mathcal{G}(\alpha).$$
We have 
\begin{enumerate}
\item $d \gamma = \alpha $
\item $\delta \mathcal{G} \alpha||_{L^2(G \mathbf{x})} \leq \frac{1}{\epsilon}||\alpha||_{L^2(G \mathbf{x})} .$
\end{enumerate}
The previous estimate then becomes
\begin{equation} \label{nexttolastestimate}
|| \gamma||^2 _{L^2(G\mathbf{x})} = || \delta \mathcal{G} \alpha||^2 _{L^2(G\mathbf{x})}  \leq  \frac{1}{\epsilon} || \alpha||^2 _{L^2(G\mathbf{x})}. 
\end{equation}

We integrate both sides of Equation \eqref{lastestimate} over the space of orbits $G \backslash \Omega_-$ using the argument proving the inequality \eqref{firstextension}  to obtain
\begin{equation}\label{lastestimate}
||  \delta \mathcal{G} \alpha||^2 _{L^2(\Omega_{k-1,1})} \leq \frac{1}{\epsilon} || \alpha||^2 _{L^2(\Omega_{k-1,1})}. 
\end{equation}
Hence $\delta \mathcal{G}$ is a bounded operator in the $L^2$-norm from cocycles in $C^{\ell}(C^{\infty}_c(\Omega_-))$ to $C^{\ell-1}( C^{\infty}_c(\Omega_-))$. Hence it extends to a bounded operator from $L^2(\Omega_-)$-valued $(\ell-1)$-cocycles to $L^2(\Omega_-)$-valued-cochains. 

It remains to prove that $\delta \mathcal{G}$ maps cocycles to the domain of $d$.  But  the equation $d (\delta \mathcal{G} \alpha)=\alpha$ for smooth forms extends by continuity to square integrable forms. Hence $\delta \mathcal{G} \alpha$ is in the domain of $d$. 

\end{proof}

We now have the  following extension of  Proposition \ref{Mateisprop1}. Again let $\Omega$ represent either or the two cones $\Omega_+$ or $\Omega_-$. Then we have the following

\begin{prop} \label{Mateisprop2}
Suppose for almost all  $\mathbf{x} \in \Omega$ there exists $\epsilon > 0$ independent of $\mathbf{x}$ such that the spectrum of the Casimir on  $C^{\ell}(L^2(G \mathbf{x})))$, is contained in  $\{0\} \cup [\epsilon, \infty)$.  Let $\mathcal{H}$ be the projection on the zero eigenspace of $C$ in $L^2(\Omega)$.  Then there is a bounded operator $\delta \mathcal{G}$ from $C^{\ell}( L^2(\Omega))$ to $C^{\ell-1}( L^2(\Omega))$ such that for any cocycle $\alpha$ we have
$$d \delta \mathcal{G} \alpha = \alpha - \mathcal{H}(\alpha)$$
for any cocyle  $\alpha \in C^{\ell}(L^2(\Omega))$.  Hence any class $[\alpha]$ in $H^{\ell}(L^2(\Omega))$ has a harmonic representative. Such a representative is smooth along the orbits where it is defined.  Hence
$$H^{\ell}(L^2(\Omega)) = 0 \iff H^{\ell}(L^2(G \mathbf{x}))=0$$
for some and hence every $\mathbf{x} \in \Omega$. 
\end{prop}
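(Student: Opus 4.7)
The strategy is to mimic Proposition \ref{Mateisprop1}, this time inverting the Casimir $C$ only on the orthogonal complement of its zero eigenspace. For almost every $\mathbf{x} \in \Omega$ the spectral hypothesis gives an orthogonal decomposition of $C^\ell(L^2(G\mathbf{x}))$ into the harmonic subspace $\ker(C|_{G\mathbf{x}})$ and its complement, on which $C|_{G\mathbf{x}}$ is invertible with inverse of norm at most $1/\epsilon$. I would let $\mathcal{H}|_{\mathbf{x}}$ be orthogonal projection onto the harmonic subspace and $\mathcal{G}|_{\mathbf{x}}$ be the Green's operator --- zero on the harmonic subspace and $(C|_{G\mathbf{x}})^{-1}$ on its complement --- so that
\begin{equation*}
C \mathcal{G}|_{\mathbf{x}} = \mathcal{G}|_{\mathbf{x}} C = I - \mathcal{H}|_{\mathbf{x}}, \qquad \|\mathcal{G}|_{\mathbf{x}}\| \le 1/\epsilon, \qquad \|\mathcal{H}|_{\mathbf{x}}\| \le 1,
\end{equation*}
with both operators commuting with $d$.

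Given a smooth compactly supported cocycle $\alpha$, I would restrict to each orbit $G\mathbf{x}$ and transfer to $L^2$-forms on $D$ or $E = \Gamma_{\mathbf{x}} \backslash D$ via $T_{k,0}$ or $T_{k-1,1}$, with Kuga's Lemma identifying $C$ with the Hodge Laplacian $\Delta$. Setting $\omega = T(\alpha(\mathbf{x}))$, $\eta = \mathcal{G}\omega$, and $\nu = d^*\eta$, the fact that $d$ commutes with $\mathcal{G}$ and $\omega$ is closed gives $d\eta = \mathcal{G}(d\omega) = 0$, hence
\begin{equation*}
d\nu = dd^*\eta = \Delta\eta = \omega - \mathcal{H}\omega.
\end{equation*}
The Hodge-theoretic computation $\|\nu\|^2 = ((dd^*\mathcal{G}\omega, \mathcal{G}\omega)) = ((\omega - \mathcal{H}\omega, \mathcal{G}\omega)) \le \|\omega\|^2/\epsilon$ gives a uniform $L^2$ bound on $\nu$, while the analogue of \eqref{domainestimate} reads $\|d\nu\|^2 = \|\omega - \mathcal{H}\omega\|^2 \le \|\omega\|^2$, ensuring $\nu$ lies in the domain of $d$. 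Transferring back, $\gamma(\mathbf{x}) := T^{-1}(\nu) = \delta\mathcal{G}\alpha(\mathbf{x})$ satisfies $d\gamma = \alpha - \mathcal{H}\alpha$ on each orbit.

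Uniformity of $\epsilon$ in $\mathbf{x}$ lets these fiberwise bounds integrate over the orbit space $G \backslash \Omega$ exactly as in the argument producing inequality \eqref{firstextension}, yielding bounded operators $\delta\mathcal{G}$ and $\mathcal{H}$ on $C^\ell(L^2(\Omega))$ with $d\delta\mathcal{G}\alpha = \alpha - \mathcal{H}\alpha$ for smooth compactly supported cocycles; continuity extends the identity to all of $C^\ell(L^2(\Omega))$, and the bound $\|d\delta\mathcal{G}\alpha\| \le \|\alpha\|$ keeps the output in the domain of $d$. A closed $\alpha$ is then cohomologous to $\mathcal{H}\alpha$, a harmonic representative; elliptic regularity for $\Delta$ on $D$ (respectively $E$) makes $\mathcal{H}\alpha$ smooth along each orbit. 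Finally, since all orbits within a given cone are $G$-equivariantly isomorphic, a direct integral decomposition of the global harmonic space over $G\backslash\Omega$ shows that it vanishes iff the harmonic space on one (hence every) orbit does, giving the stated equivalence.

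The main obstacle is the measurable assembly of the fiberwise operators $\mathcal{G}|_{\mathbf{x}}$ and $\mathcal{H}|_{\mathbf{x}}$ into genuinely bounded operators on $C^\ell(L^2(\Omega))$: this relies crucially on uniformity of the spectral gap $\epsilon$ to push the pointwise estimates through a Fubini argument over the orbit space, and on measurability of the spectral projections as $\mathbf{x}$ varies, which follows from the $G \times \GL(k,\R)$-equivariant trivialization of each cone as in \eqref{postriv}.
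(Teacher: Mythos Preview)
Your proposal is correct and follows essentially the same approach as the paper: define the fiberwise Green's operator $\mathcal{G}|_{\mathbf{x}}$ on the complement of the harmonic subspace, transfer to Hodge theory on $D$ or $E$ to obtain the operator $d^*\mathcal{G}$ with the uniform bound $\|d^*\mathcal{G}\omega\|^2 \le \frac{1}{\epsilon}\|\omega\|^2$, transfer back, and integrate the orbitwise estimates over $G\backslash\Omega$ exactly as in Proposition~\ref{Mateisprop1}. If anything, your write-up is more explicit than the paper's (which is quite terse here): you spell out the identity $d\nu = \omega - \mathcal{H}\omega$, the domain estimate, the direct-integral argument for the final equivalence, and the measurability issue resolved via the $\GL(k,\R)$-equivariant trivialization.
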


\begin{proof}
Let $\mathcal{G}$ be the right inverse to the restriction of the Casimir $C$  to the orthogonal complement of the zero eigenspace of $C$ in $\Hom_K(\wedge^{\ell}(\mathfrak{p}),  L^2(G\mathbf{x})$.   Extend $\mathcal{G}$ to all of $C^{\ell}(L^2(\Omega))$ by defining it to be zero on zero eigenspace of $C$. Let $\mathcal{H}_{\ell}$ be the projection on the zero eigenspace of  $C$.  Then we have
$$C \circ \mathcal{G} = I - \mathcal{H}_{\ell}.$$ 
Then for every $\alpha \in C^{\ell}(C^{\infty}_c(\Omega))$ we have
\begin{equation} \label{secondpropGreenestimate}
||\mathcal{G}|_{\mathbf{x}}(\alpha) ||_{L^2(G\mathbf{x})} \leq \frac{1}{\epsilon} ||\alpha||_{L^2(G \mathbf{x})} \text{ and } \mathrm{C} \mathcal{G}|_{\mathbf{x}} \alpha = \alpha  .
\end{equation}

Now use the transfer $T_{k-1,1}$ resp. $T_{k,0}$ to transfer $\mathcal{G}$ and $\mathcal{H}_{\ell}$ to $D$ resp. $E$ where they become the familiar objects from Hodge theory.  We then have the operator $d^* \mathcal{G}$ such that on the orthogonal complement of the harmonic forms we have the estimate (independent of $\mathbf{x}$)
\begin{equation}
||d^*( \mathcal{G}(\omega)) ||^2  \leq \frac{1}{\epsilon} || \omega||^2.
\end{equation}

We transfer back to the cone with $\delta$ the transfer of $d^*$ and obtain the operator 
$\delta \mathcal{G}$ satisfying for all cocycles $\alpha$ and all $\mathbf{x}$
\begin{equation} \label{newnexttolastestimate}
|| \delta \mathcal{G} \alpha||^2 _{L^2(G\mathbf{x})}  \leq  \frac{1}{\epsilon} || \alpha||^2 _{L^2(G\mathbf{x})}. 
\end{equation}

As before we integrate both sides of Equation \eqref{newnexttolastestimate} over the space of orbits $G \backslash \Omega_-$ using the argument proving the inequality \eqref{firstextension}  to obtain
\begin{equation}\label{finalestimate}
||  \delta \mathcal{G} \alpha||^2 _{L^2(\Omega_{k-1,1})} 
\leq \frac{1}{\epsilon} || \alpha||^2 _{L^2(\Omega_{k-1,1})}. 
\end{equation}

\end{proof}

Finally we have one more application  of the technique which we will apply in the cases $\ell = \frac{n-1}{2}$ and $\ell = \frac{n+1}{2}$. 

\begin{prop} \label{Mateisprop3}
Suppose the  discrete spectrum of $\Delta$ on ${A}_{(2)}(D)$ is empty.  Then the reduced cohomology $\overline{H}^\ell(L^2(\Omega_-)) = 0$.
\end{prop}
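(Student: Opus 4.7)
The plan is to imitate the strategy of Propositions \ref{Mateisprop1} and \ref{Mateisprop2}, but replace the bounded Green operator $\mathcal{G}$ by an $\epsilon$-dependent family $\mathcal{G}_\epsilon$ coming from spectral projection. First, I would fix a cocycle $\alpha \in C^{\ell}(L^2(\Omega_-))$ and, using Lemma \ref{Fubini}, restrict $\alpha$ to almost every orbit $G\mathbf{x}$ to obtain cocycles $\alpha|_{G\mathbf{x}} \in C^{\ell}(L^2(G\mathbf{x}))$. Since $\mathbf{x} \in \Omega_-$, the stabilizer $G_{\mathbf{x}}$ is compact, so the transfer map $T_{k-1,1}$ carries $\alpha|_{G\mathbf{x}}$ to a closed $L^2$-form $\omega_{\mathbf{x}}$ on $D$, and Kuga's lemma transports the Casimir to the Hodge Laplacian $\Delta$ acting on $A^{\ell}_{(2)}(D)$.

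For each $\epsilon > 0$ let $P_\epsilon$ be the spectral projection of $\Delta$ onto $[\epsilon,\infty)$. Because $\Delta$ commutes with $d$ and $d^*$, so does $P_\epsilon$, and closedness of $\omega_{\mathbf{x}}$ is preserved. On the range of $P_\epsilon$ the Green operator $\mathcal{G}_\epsilon := \Delta^{-1} P_\epsilon$ is bounded with norm $\leq 1/\epsilon$, and the computation
\begin{equation*}
\| d^{*}\mathcal{G}_\epsilon \omega_{\mathbf{x}} \|^{2}
= \langle dd^{*}\mathcal{G}_\epsilon \omega_{\mathbf{x}}, \mathcal{G}_\epsilon \omega_{\mathbf{x}}\rangle
= \langle P_\epsilon \omega_{\mathbf{x}}, \mathcal{G}_\epsilon \omega_{\mathbf{x}}\rangle
\leq \frac{1}{\epsilon}\|\omega_{\mathbf{x}}\|^{2},
\end{equation*}
together with $dd^{*}\mathcal{G}_\epsilon \omega_{\mathbf{x}} = P_\epsilon \omega_{\mathbf{x}}$ (using $d\omega_{\mathbf{x}}=0$), show that $P_\epsilon\omega_{\mathbf{x}}$ is an exact coboundary with a primitive controlled by $\epsilon^{-1/2}\|\omega_{\mathbf{x}}\|_{L^2(D)}$. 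The hypothesis that the discrete spectrum of $\Delta$ on $A_{(2)}(D)$ is empty means $\Delta$ has no $L^2$-kernel, so by the spectral theorem $P_\epsilon \omega_{\mathbf{x}} \to \omega_{\mathbf{x}}$ in $L^{2}(D)$ as $\epsilon \to 0$.

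Now transfer back orbit-by-orbit via $T_{k-1,1}^{-1}$ to obtain, for each $\epsilon$, a cochain $\gamma_\epsilon \in C^{\ell-1}(L^2(\Omega_-))$ satisfying $d\gamma_\epsilon|_{G\mathbf{x}} = $ (the pullback of) $P_\epsilon \omega_{\mathbf{x}}$. The operator norms being independent of $\mathbf{x}$ (the spectral gap $\epsilon$ is extrinsic to the orbit), the fiberwise bound
$\|d\gamma_\epsilon|_{G\mathbf{x}} - \alpha|_{G\mathbf{x}}\|_{L^2(G\mathbf{x})}^{2} \leq \|\alpha|_{G\mathbf{x}}\|_{L^2(G\mathbf{x})}^{2}$
provides a dominating function whose integral over the orbit space $G\backslash\Omega_{-} \cong \mathbb{P}_{k-1,1}$ is $\|\alpha\|^{2}_{L^2(\Omega_-)} < \infty$. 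Fiberwise convergence $P_\epsilon \omega_{\mathbf{x}} \to \omega_{\mathbf{x}}$ combined with dominated convergence over the orbit space gives $\|d\gamma_\epsilon - \alpha\|_{L^{2}(\Omega_-)} \to 0$. Hence $\alpha$ lies in the closure of the image of $d$, proving $\overline{H}^{\ell}(L^2(\Omega_-)) = 0$.

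The main obstacle will be the measurability and $L^{2}$-integrability of the orbit-by-orbit construction, i.e.\ verifying that $\gamma_\epsilon$ assembled from the $d^{*}\mathcal{G}_\epsilon \omega_{\mathbf{x}}$ really defines an element of $C^{\ell-1}(L^{2}(\Omega_-))$ rather than merely a measurable family of $L^{2}$-cochains on orbits. This is the analogue of the step in Proposition \ref{Mateisprop2} that required the uniformity of the spectral gap to commute $\mathcal{G}$ with the orbital integration; here one must additionally verify that the spectral projections $P_\epsilon$, when transported back through $T_{k-1,1}$, assemble into a bounded operator on $L^{2}(\Omega_-)$. The needed $G$-equivariance of the construction, together with the trivialization $F_-$ of the cone $\Omega_-$ over $\mathbb{P}_{k-1,1}$, should make this routine once framed correctly.
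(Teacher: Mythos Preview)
Your proposal is correct and follows essentially the same approach as the paper: use the spectral projection $P_{[\epsilon,\infty)}$ of $\Delta$ to produce exact approximants $P_\epsilon\omega = dd^*\mathcal{G}_\epsilon\omega$ of a closed $\omega$, and invoke the emptiness of the discrete spectrum to get $P_\epsilon\omega \to \omega$. The paper's proof is terser and does not spell out the orbit-by-orbit assembly or the dominated-convergence step that carries fiberwise $L^2$-convergence on $G\mathbf{x}$ up to $L^2(\Omega_-)$; your version makes this explicit, and your domination bound $\|P_{[0,\epsilon)}\omega_{\mathbf{x}}\|^2 \le \|\omega_{\mathbf{x}}\|^2$ is exactly what is needed.
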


\begin{proof}
We may assume the spectrum is $[0, \infty)$ otherwise the nonreduced cohomology is zero by Propostion \ref{Mateisprop1}.  Let $\epsilon > 0$ and let $P_{[\epsilon, \infty)}$ be the projection coming from spectral theory.  Let $\mathcal{G}_\epsilon$ be the right inverse of the restriction of $\Delta$ to the image of $P_{[\epsilon, \infty)}$.  Now let $\omega$ be closed and
$$\omega_\epsilon = P_{[\epsilon, \infty)} \omega.$$
Since the discrete spectrum is the empty set, the family  $\omega_{\epsilon}$ converges to $\omega$ as $\epsilon$ goes to zero.
Put $\eta_\epsilon = d^* \mathcal{G}\omega_\epsilon$.  Then $d \eta_\epsilon = \omega_\epsilon$ and hence $\omega_\epsilon$ is a family of exact forms converging to $\omega$.  

\end{proof}

\begin{prop} \label{Mateisprop4}
Suppose the spectrum of $\Delta$ on $C^\ell(L^2(G\mathbf{x}))$ is contained in $\{0\} \cup [\epsilon, \infty)$ for some $\epsilon > 0$.  Then $d(C^\ell(L^2(G\mathbf{x})))$ is closed in $C^{\ell+1}(L^2(G\mathbf{x}))$ and hence
\begin{equation*}
H^{\ell+1}(L^2(G\mathbf{x})) = \overline{H}^{\ell+1}(L^2(G\mathbf{x})).
\end{equation*}

\end{prop}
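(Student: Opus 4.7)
The plan is to follow the same Hodge-theoretic template used in the proofs of Propositions \ref{Mateisprop1} and \ref{Mateisprop2}, now specialized to a single orbit $G\mathbf{x}$ and aimed at producing a bounded right inverse for $d$ on closed $(\ell+1)$-cochains orthogonal to the harmonic subspace. First I would use the appropriate transfer map ($T_{k-1,1}$ if $(\mathbf{x},\mathbf{x})$ has signature $(k-1,1)$ and $T_{k,0}$ if $(\mathbf{x},\mathbf{x})$ is positive definite) to convert the complex $C^{\bullet}(L^{2}(G\mathbf{x}))$ into the complex of square-integrable $G_{\mathbf{x}}$-invariant (respectively $\Gamma_{\mathbf{x}}$-invariant) forms on hyperbolic space $D$ (respectively on the tube $E$). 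Since the transfer is a $G$-equivariant isometry intertwining the two differentials, and since by Kuga's Lemma the Casimir transfers to the Hodge Laplacian $\Delta$, the hypothesis becomes: the spectrum of $\Delta$ on this space of $L^{2}$ $\ell$-forms lies in $\{0\} \cup [\epsilon,\infty)$.

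Next I would introduce the orthogonal projection $\mathcal{H}$ onto $\ker\Delta$ and the Green operator $\mathcal{G}$ defined as the bounded right inverse of $\Delta$ on $(\ker\Delta)^{\perp}$ and zero on $\ker\Delta$; then $\|\mathcal{G}\| \leq 1/\epsilon$ and $\Delta\mathcal{G} = I - \mathcal{H}$. Because $d$ commutes with $\Delta$ on a dense domain and $\mathcal{G}$ is defined by spectral calculus, $d$ commutes with $\mathcal{G}$ where both sides make sense. For a closed $(\ell+1)$-form $\omega$ one then has $d\mathcal{G}\omega = \mathcal{G}d\omega = 0$, so that
\[
\omega - \mathcal{H}\omega = dd^{*}\mathcal{G}\omega + d^{*}d\mathcal{G}\omega = d(d^{*}\mathcal{G}\omega),
\]
and the computation
\[
\|d^{*}\mathcal{G}\omega\|^{2} = \langle dd^{*}\mathcal{G}\omega,\mathcal{G}\omega\rangle = \langle \omega - \mathcal{H}\omega,\mathcal{G}\omega\rangle = \langle \omega,\mathcal{G}\omega\rangle \leq \|\omega\|^{2}/\epsilon
\]
shows that $d^{*}\mathcal{G}$ extends to a bounded operator on the space of closed $(\ell+1)$-forms. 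Transferring back via $T^{-1}$, I obtain a bounded operator $\delta\mathcal{G}$ from closed $(\ell+1)$-cocycles to $\ell$-cochains satisfying $d(\delta\mathcal{G}\alpha) = \alpha - \mathcal{H}\alpha$.

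To deduce closed range of $d$, suppose $\alpha_{n} = d\beta_{n} \to \alpha$ in $C^{\ell+1}(L^{2}(G\mathbf{x}))$. Each $\alpha_{n}$ is exact, hence closed and orthogonal to harmonics, so $d\alpha_{n} = 0$ and $\mathcal{H}\alpha_{n} = 0$. Passing to the limit using the fact that $d$ is a closed operator and that $\mathcal{H}$ is an orthogonal projection gives $d\alpha = 0$ and $\mathcal{H}\alpha = 0$, whence $\alpha = d(\delta\mathcal{G}\alpha)$ is exact. Thus the image of $d$ is closed in $C^{\ell+1}(L^{2}(G\mathbf{x}))$, which is exactly the statement that $H^{\ell+1}(L^{2}(G\mathbf{x})) = \overline{H}^{\ell+1}(L^{2}(G\mathbf{x}))$.

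The main technical obstacle is the careful justification of the commutation $d\mathcal{G} = \mathcal{G}d$ and of the assertion that $d^{*}\mathcal{G}\omega$ lies in the domain of $d$ for every closed $L^{2}$ form $\omega$; both statements are standard consequences of the spectral theorem applied to the commuting self-adjoint operator $\Delta$, after one first works on smooth compactly supported forms (as in the proof of Proposition \ref{Mateisprop2}) and then extends by density using the above norm estimates.
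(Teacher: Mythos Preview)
Your overall strategy---produce a bounded operator $\delta\mathcal{G}$ with $d(\delta\mathcal{G}\alpha)=\alpha$ for every closed $\alpha$ in the closure of $d(C^\ell)$---is the paper's strategy, and your limit argument for closed range is fine. There is, however, a genuine gap in the degree bookkeeping. The hypothesis gives a spectral gap $\{0\}\cup[\epsilon,\infty)$ for $\Delta$ acting on $C^\ell$, not on $C^{\ell+1}$. You introduce $\mathcal{G}$ with $\|\mathcal{G}\|\le 1/\epsilon$ and immediately apply it to a closed $(\ell+1)$-form $\omega$; but nothing you have said guarantees that $\Delta_{\ell+1}$ has a spectral gap on $C^{\ell+1}$, or even on the closed $(\ell+1)$-forms orthogonal to harmonics, so neither the boundedness of $\mathcal{G}$ in that degree nor the estimate $\langle\omega,\mathcal{G}\omega\rangle\le\|\omega\|^2/\epsilon$ is justified as written.

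The paper closes exactly this gap, and more directly than via transfer. It uses that $d$ commutes with the spectral projections of $\Delta$ (from $\Delta_{\ell+1}d=d\Delta_\ell$) and that $d$ annihilates $P_{\{0\}}C^\ell=\ker\Delta_\ell$, so that
\[
d(C^\ell)=d\bigl(P_{[\epsilon,\infty)}C^\ell\bigr)=P_{[\epsilon,\infty)}\,d(C^\ell)\subset P_{[\epsilon,\infty)}C^{\ell+1},
\]
a closed subspace on which $\Delta_{\ell+1}\ge\epsilon$ by definition of the spectral projection. Any $\beta$ in the closure of $d(C^\ell)$ therefore lies in $P_{[\epsilon,\infty)}C^{\ell+1}$, the Green operator is well-defined and bounded by $1/\epsilon$ there, and since $\beta$ is closed one obtains $d\delta\mathcal{G}\beta=\beta$. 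The point you are missing is that the spectral gap in degree $\ell$ must be transported to the \emph{range of $d$} inside $C^{\ell+1}$ via the intertwining relation; it is not assumed globally on $C^{\ell+1}$. Once you insert this observation your argument works, and the transfer map is not actually needed for this proposition.
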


\begin{proof}
We note that since $d$ commutes with the spectral projection $P_{[\epsilon, \infty)}$, we have
\begin{equation*}
d(C^\ell(L^2(G\mathbf{x}))) = d(P_{[\epsilon, \infty)}C^\ell(L^2(G\mathbf{x}))) = P_{[\epsilon, \infty)} d(C^\ell(L^2(G\mathbf{x}))).
\end{equation*}
But $P_{[\epsilon, \infty)} d(C^\ell(L^2(G\mathbf{x}))) \subset P_{[\epsilon, \infty)} C^{\ell+1}(L^2(G\mathbf{x}))$ which is a closed subspace.  Now suppose $\beta$ is closed and in the closure of the image of $d$.  Then $\beta$ is in \\ 
$P_{[\epsilon, \infty)} C^{\ell+1}(L^2(G\mathbf{x}))$ and hence $\mathcal{G}\beta$ is defined.  Since $\beta$ is closed we have
$$d \delta \mathcal{G} \beta = \beta.$$

\end{proof}

\section{The geometry of the tubes $E_\mathbf{x}$} \label{tubessection}

We will review certain facts about the geometry of these tubes. Let $\mathbf{x} \in \Omega_{k,0}$ and $U = \mathrm{span} (\mathbf{x})$.  First by \cite{KM1}, pg. 216-217,  the intersection of the orthogonal complement of the span $U$ of $\mathbf{x}$ with $D$ is a totally-geodesic hyperbolic $(n-k)$-space $D_U$ and $C_U = \Gamma_{\mathbf{x}} \backslash D_U$ is a compact totally-geodesic hyperbolic $(n-k)$-manifold in $E_\mathbf{x}$. Moreover there is a topologically-trivial fiber bundle map $\pi:E_\mathbf{x} \to C_U$ with fibers the exponential of the fibers of the normal bundle to $C_U$ in $E_\mathbf{x}$.  The fibers of $\pi$ are totally-geodesic embeddings of hyperbolic space $D^k$.  Thus we may think of $E_\mathbf{x}$ as a trivial $k$-dimensional vector bundle over $C_U$. 

In order to apply \cite{MP} we need to compactify the tube.  Let $S^{n-1}_{\infty}$ be the sphere at infinity in the standard compactification $\overline{D}^n$ of hyperbolic space as a closed $n$-ball and let $S^{n-k-1}_{\infty}$ be the sphere at infinity in the standard compactification as an $n-k$-ball of the totally-geodesically embedded hyperbolic $n-k$-space $\overline{D}_{\mathbf{x}}$.  Then $\Gamma_{\mathbf{x}}$ acts on $\overline{D}^n$ with limit set $S^{n-k-1}_{\infty}$.  Then $\Gamma_{\mathbf{x}}$ acts properly discontinuously on $\overline{D}^n - S^{n-k-1}_{\infty}$ and the quotient is a compact manifold $\overline{E}_\mathbf{x}$ with boundary the product $ (\Gamma_{\mathbf{x}} \backslash D_{\mathbf{x}}) \times S^{n-k-1}_{\infty}$. 
In  this compactification we have compactified each fiber of $\pi$ by adding a $k-1$-sphere at infinity.

Since $\Gamma_\mathbf{x}$ is a cocompact discrete subgroup of $G_\mathbf{x}$, it has no parabolic elements.  Hence, in the notation of \cite{MP}, we have $M = E_\mathbf{x}$ and $\partial_c M = \emptyset$.  Since by \cite{MP}, page 519, $\partial M = \partial_r M \cup \partial_c M$, we have
\begin{equation*}
\partial M = \partial_r M = \partial_r^k M, \text{ all } k.
\end{equation*}
Also,
\begin{equation*}
\partial_r M = \overline{\partial_r M}.
\end{equation*}
For each $1 \leq k \leq n$ we choose a cocompact discrete torsion-free subgroup $\Gamma_k \subset \SO(n-k,1)$ and put $E_k = \Gamma_k \backslash D$.

\section{A spectral gap for square integrable forms on $E_\mathbf{x}$.}

For $\mathbf{x} \in V^k$ we define the space of harmonic cochains $\mathcal{H}^\ell(L^2(G \mathbf{x}))$ to be the subspace of cochains in $C^\ell(L^2(G \mathbf{x}))$ which are annihilated by the Casimir.  We make an analogous definition for cochains with values in $L^2(\Omega_{k-1,1})$ and $L^2(\Omega_{k,0})$.

 The following remark will be important in what follows. 
 \begin{rmk}\label{independenceofC}
 Recall, see Equation \eqref{postriv}, there is  an explicit trivialization by
\begin{align*}
F_+: \mathbb{S}_k \times  \mathbb{P}_k &\to \Omega_+ \\
( \mathbf{x}, \beta) &\mapsto \mathbf{x} \sqrt{\beta}.
\end{align*}
It is important to observe that under this trivialization the Casimir operator $C$ does not depend on the coordinate $\beta$ in the second factor. This is because $C$ commutes with the action of $\GL(k,\R)$. In particular the spectrum of the Casimir in $C^\ell(L^2(G \mathbf{x}))$ coincides with the spectrum of the Casimir in \\
$C^\ell(L^2(\SO (n-k, 1) \backslash \SO (n,1)))$. Note that this implies in particular that it is independent of $\mathbf{x}$.
\end{rmk}

We now have 
\begin{prop} \label{Nicolaslowerbound1}
Let $\ell$ be an integer such that $\ell \neq \frac{n-1}{2}, \frac{n+1}{2}$ and let $\mathbf{x} \in \Omega_{k,0}$. Then the spectrum of the Casimir in $C^\ell(L^2(G \mathbf{x}))$ is contained in $\{0 \} \cup [\epsilon, \infty)$ with $\epsilon >0$ and independent of $\mathbf{x}$.
\end{prop}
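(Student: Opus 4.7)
The plan is as follows. By Remark \ref{independenceofC}, the spectrum of the Casimir on $C^\ell(L^2(G\mathbf{x}))$ does not depend on $\mathbf{x} \in \Omega_{k,0}$, so any spectral gap will automatically be independent of $\mathbf{x}$, and it suffices to establish it for one fixed $\mathbf{x}$.

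Fix such an $\mathbf{x}$ and apply the transfer map $T_{k,0}$ of Section \ref{transfersection} to embed $C^\ell(L^2(G\mathbf{x}))$ isometrically (up to a positive scalar, by Proposition \ref{comparisonofnorms}) into the space $A^\ell_{(2)}(E_\mathbf{x})$ of $L^2$ differential $\ell$-forms on the tube $E_\mathbf{x} = \Gamma_\mathbf{x}\backslash D$. Kuga's lemma identifies the Casimir with the Hodge--de Rham Laplacian $\Delta$ under this correspondence. Since the spectrum on a closed invariant subspace is contained in the spectrum on the ambient Hilbert space, it suffices to prove that $\Delta$ acting on $A^\ell_{(2)}(E_\mathbf{x})$ has spectrum contained in $\{0\} \cup [\epsilon, \infty)$ for some $\epsilon > 0$.

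Now $E_\mathbf{x}$ is a convex cocompact hyperbolic $n$-manifold: its convex core is the compact totally geodesic submanifold $C_U$ described in Section \ref{tubessection}, and because $\Gamma_\mathbf{x}$ is cocompact in $\SO(n-k,1)$ it has no parabolic elements. For such manifolds the $L^2$ spectral theory of $\Delta$ on forms was carried out by Mazzeo and Phillips \cite{MP}. Their results imply that the essential spectrum of $\Delta$ on $L^2$ $\ell$-forms coincides with the essential spectrum on hyperbolic $n$-space itself, whose bottom $\mu_\ell$ is zero exactly when $\ell \in \{(n-1)/2,(n+1)/2\}$; outside these two degrees one has $\mu_\ell > 0$. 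Moreover, the discrete spectrum of $\Delta$ below $\mu_\ell$ consists of finitely many eigenvalues of finite multiplicity, with $0$ occurring as the eigenvalue attached to harmonic $L^2$ forms. Under the hypothesis $\ell \neq (n\pm 1)/2$, one may therefore take $\epsilon$ to be the minimum of $\mu_\ell$ and of the strictly positive eigenvalues appearing in this finite discrete spectrum.

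The main obstacle is the appeal to \cite{MP}: one must verify that the compactification $\overline{E}_\mathbf{x}$ constructed in Section \ref{tubessection}, whose boundary is the smooth manifold $C_U \times S^{n-k-1}_\infty$ and which carries no cusps, fits squarely into the geometrically finite framework of Mazzeo--Phillips, and that the bottom of the continuous spectrum for $\ell$-forms has the claimed value. Since $\Gamma_\mathbf{x}$ is purely hyperbolic and convex cocompact, this amounts to a direct citation of their theorem; no parabolic analysis is required, which is what makes the argument significantly simpler than in the geometrically finite case with cusps.
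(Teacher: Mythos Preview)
Your proposal is correct and follows essentially the same route as the paper: reduce to a single orbit via Remark \ref{independenceofC}, transfer to the tube $E_\mathbf{x}$ so that the Casimir becomes the Hodge Laplacian, split the spectrum into its essential and discrete parts, and invoke Mazzeo--Phillips \cite{MP} to bound the essential spectrum away from zero when $\ell \neq (n\pm 1)/2$. The paper cites Lemmas 5.6 and 5.19 of \cite{MP} specifically; your assertion that the essential spectrum of $\Delta$ on $E_\mathbf{x}$ \emph{coincides} with that on hyperbolic space is slightly stronger than what you actually use (and stronger than what the paper claims), but the weaker conclusion that its bottom is positive in the relevant degrees is exactly what \cite{MP} provides and is all that is needed.
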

\begin{proof} By the remark immediately above, it remains to find a spectral gap for the  spectrum of $C$ on $C^\ell(L^2(G \mathbf{e}))$ where $\mathbf{e} = (e_1,\cdots, e_k)$.  The latter is the union of the essential spectrum and the discrete spectrum.  Being discrete, the discrete spectrum is certainly contained in $\{0 \} \cup [\epsilon, \infty)$ for some positive $\epsilon$. Hence it remains to prove the existence of a nonzero lower bound for the essential spectrum. 

We will use again the transfer map $T_{k,0}$. To conclude the proof we will in fact prove that the essential spectrum of the Laplacian on $A_{(2)}^\ell (E)$ is bounded away from $0$. Here $E$ is the  tube $E_{\mathbf{e}} = \Gamma_{\mathbf{e}} \backslash D$; the spectral gap will be independent of the choice of $\Gamma_{\mathbf{e}}$.  Now the proposition follows by  \cite{MP}, Lemma 5.6 and Lemma 5.19. 

\end{proof}

\begin{rmk}
\hfill

\begin{enumerate}
\item The Proposition \ref{Nicolaslowerbound1} is false when $\ell = \frac{n -1}{2}, \frac{n +1}{2}$. In that case the spectrum of the Laplacian on $E$ is $[0,\infty)$ by Lemma 5.23 of \cite{MP}. 

\item In Proposition \ref{Nicolaslowerbound1} one can in fact take $\varepsilon = n-2\ell-2$ as long as $\ell \leq (n-4)/2$. This can be deduced from \cite{inventiones} using \cite{BLS}. 
\end{enumerate}
\end{rmk}

Now by applying Proposition \ref{Mateisprop2}, we obtain

\begin{prop} \label{tubevanishing}
Let $\ell \neq \frac{n-1}{2}, \frac{n +1}{2}$. 
\begin{enumerate}
\item Suppose for some $\mathbf{x} \in \Omega_{k,0}$ the group $H^{\ell}(\mathfrak{so}(n,1), \SO (n);L^2(G \mathbf{x})) = 0$.  Then $H^{\ell}(\mathfrak{so}(n,1), \SO (n);L^2(\Omega_{k,0})) = 0$.
\item Suppose for some $\mathbf{x} \in \Omega_{k,0}$ the group $H^{\ell}(\mathfrak{so}(n,1), \SO (n);L^2(G \mathbf{x})) \neq 0$.  Then $H^{\ell}(\mathfrak{so}(n,1), \SO (n);L^2(\Omega_{k,0})) \neq 0$. 
\end{enumerate}
\end{prop}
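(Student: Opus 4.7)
The plan is to apply Proposition \ref{Mateisprop2} directly with $\Omega = \Omega_{k,0}$, which yields both statements simultaneously: the biconditional
\[
H^\ell(L^2(\Omega_{k,0})) = 0 \Longleftrightarrow H^\ell(L^2(G\mathbf{x})) = 0
\]
(for some, equivalently every, $\mathbf{x}$) gives (1) in the forward direction and, by contrapositive, gives (2). So the proof reduces to checking that the hypothesis of Proposition \ref{Mateisprop2} is satisfied in our setting.

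To verify that hypothesis, I need a uniform spectral gap: the spectrum of the Casimir on $C^\ell(L^2(G\mathbf{x}))$ must lie in $\{0\} \cup [\varepsilon,\infty)$ with $\varepsilon > 0$ independent of $\mathbf{x} \in \Omega_{k,0}$. First I would invoke Remark \ref{independenceofC}: under the explicit trivialization $F_+:\mathbb{S}_k \times \mathbb{P}_k \to \Omega_{k,0}$, the Casimir is independent of the $\mathbb{P}_k$-parameter, since it commutes with the $\GL(k,\R)$-action that sweeps out this factor. Consequently, the spectrum of the Casimir on $C^\ell(L^2(G\mathbf{x}))$ does not depend on $\mathbf{x}$, and a gap at one base point automatically gives a uniform gap. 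Second, this one-point gap is furnished by Proposition \ref{Nicolaslowerbound1} precisely because our hypothesis excludes the two middle degrees $\ell = (n-1)/2, (n+1)/2$.

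With the spectral hypothesis verified, Proposition \ref{Mateisprop2} produces a bounded Green-type operator $\delta\mathcal{G}$ on $C^\bullet(L^2(\Omega_{k,0}))$ and a projection $\mathcal{H}$ onto the pointwise Casimir-harmonic cochains, satisfying $d(\delta\mathcal{G}\alpha) = \alpha - \mathcal{H}(\alpha)$ for every cocycle $\alpha$. Part (1) then follows immediately: if $H^\ell(L^2(G\mathbf{x}_0)) = 0$ for one $\mathbf{x}_0$, then by the previous paragraph the harmonic subspace on every orbit vanishes, so $\mathcal{H}(\alpha) = 0$ for every cocycle, which exhibits every cocycle as a coboundary. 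For part (2), if $H^\ell(L^2(G\mathbf{x})) \neq 0$ for some $\mathbf{x}$, then the same biconditional in Proposition \ref{Mateisprop2} forces $H^\ell(L^2(\Omega_{k,0})) \neq 0$.

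There is no real obstacle in this proof: it is a bookkeeping combination of Propositions \ref{Nicolaslowerbound1} and \ref{Mateisprop2}, together with the homogeneity observation in Remark \ref{independenceofC}. The genuine work was already done in establishing the Mazzeo--Phillips spectral gap on the tube $E_\mathbf{x}$ and in constructing the orbitwise Green operator that assembles into a bounded operator on the whole cone.
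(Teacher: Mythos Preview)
Your proof is correct and follows essentially the same approach as the paper: both verify the uniform spectral-gap hypothesis of Proposition \ref{Mateisprop2} via Proposition \ref{Nicolaslowerbound1} (with $\mathbf{x}$-independence coming from the $\GL(k,\R)$-action as in Remark \ref{independenceofC}), then read off the biconditional $H^\ell(L^2(\Omega_{k,0}))=0 \iff H^\ell(L^2(G\mathbf{x}))=0$. The only cosmetic difference is that for (2) the paper spells out the nonvanishing direction by extending an orbitwise harmonic form to one supported near that orbit, whereas you simply cite the biconditional already stated in Proposition \ref{Mateisprop2}.
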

\begin{proof}
By Proposition \ref{Mateisprop2} and Proposition \ref{Nicolaslowerbound1}, $H^\ell(L^2(G \mathbf{x})) \cong \mathcal{H}^\ell(L^2(G \mathbf{x}))$ and $H^\ell(L^2(\Omega_{k,0})) = \mathcal{H}^\ell(L^2(\Omega_{k,0}))$. To prove (1), note that since $\GL(k,\R)$ acts transitively on the orbits, the group $H^{\ell}(L^2(G \mathbf{x}))$ is independent of $\mathbf{x}$.  To prove (2), note that any square integrable harmonic form on an orbit can be extended to a square integrable harmonic form on $\Omega_{k,0}$ supported in a neighborhood of that orbit.  The proposition follows.

\end{proof}

\section{The vanishing of the $H^{\ell}( L^2(V^k))$ away from the middle dimensions for $k \geq \frac{n+1}{2}$}

\begin{thm} \label{T:klarge}
Suppose $k \geq \frac{n+1}{2}$, then
$$H^\ell(\mathfrak{so}(n,1), \SO (n); L^2(V^k)) = 0 \text{ for } \ell \notin [\frac{n-1}{2}, \frac{n+1}{2}].$$
\end{thm}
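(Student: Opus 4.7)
The strategy is to reduce the vanishing of $H^\ell(L^2(V^k))$ to a vanishing statement on a single $G$-orbit, and then to combine the spectral-gap results of Section~\ref{spectralgapsection} with known vanishing theorems for $L^2$ harmonic forms on hyperbolic space and on the tubes $E_\mathbf{x}$.

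First I would split $L^2(V^k)$ as a Hilbert sum indexed by the open $G$-invariant cones: when $k\leq n$ this gives $L^2(V^k)=L^2(\Omega_{k,0})\oplus L^2(\Omega_{k-1,1})$, while when $k>n$ the cone $\Omega_{n,1}$ of $k$-tuples of maximal rank $n+1$ has full measure, so $L^2(V^k)=L^2(\Omega_{n,1})$. Each cone is $G$-invariant, so this is a decomposition of $(\mathfrak{so}(n,1),\SO(n))$-cochain complexes, and it suffices to handle each summand separately.

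For the indefinite cones $\Omega_{k-1,1}$ and $\Omega_{n,1}$ the argument is short. The isotropy $G_\mathbf{x}$ is compact---isomorphic to $\SO(n-k+1)$, or trivial when $k>n$---and the transfer map of Section~\ref{transfersection} identifies $C^\bullet(L^2(G\mathbf{x}))$ (up to an overall scale on norms) with the complex of $G_\mathbf{x}$-invariant $L^2$ forms on hyperbolic $n$-space $D$. The spectrum of the Hodge Laplacian on $A^\ell_{(2)}(D)$ is classically known to be contained in $[\varepsilon_\ell,\infty)$ with $\varepsilon_\ell>0$ and free of discrete spectrum whenever $\ell\notin[\frac{n-1}{2},\frac{n+1}{2}]$. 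This uniform-in-$\mathbf{x}$ spectral gap (which passes to $\Omega$ by the Fubini-type argument used in the proof of Proposition~\ref{Mateisprop1}) together with Proposition~\ref{Mateisprop1} itself yield $H^\ell(L^2(\Omega))=\overline{H}^\ell(L^2(\Omega))=0$.

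For the definite cone $\Omega_{k,0}$ Proposition~\ref{tubevanishing} reduces us to showing $H^\ell(L^2(G\mathbf{x}))=0$ for one (equivalently any) $\mathbf{x}\in\Omega_{k,0}$. By Proposition~\ref{Nicolaslowerbound1} combined with Proposition~\ref{Mateisprop2}, this cohomology equals the space $\mathcal{H}^\ell(L^2(G\mathbf{x}))$ of harmonic cochains, which the transfer $T_{k,0}$ identifies with the space of $L^2$ harmonic $\ell$-forms on the tube $E_\mathbf{x}=\Gamma_\mathbf{x}\backslash D$. At this stage the hypothesis $k\geq\frac{n+1}{2}$ enters decisively: it forces the compact totally geodesic core $C_U$ to have dimension $n-k\leq\frac{n-1}{2}$. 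The results of Mazzeo--Phillips \cite{MP}, in the compactification set up in Section~\ref{tubessection}, express the reduced $L^2$ cohomology of $E_\mathbf{x}$ in terms of $H^\bullet(C_U)$ in low degrees, and in terms of $H^\bullet(C_U,\partial)$ in the complementary high degrees via Poincar\'e duality on the tube. Both groups vanish in degrees greater than $n-k$, hence in all of $[0,n]\setminus[\frac{n-1}{2},\frac{n+1}{2}]$.

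The main obstacle I anticipate is this last step: Proposition~\ref{Nicolaslowerbound1} only excludes eigenvalues in $(0,\varepsilon)$, so the zero eigenspace of the Casimir must be killed by geometric means. This is precisely what the Mazzeo--Phillips machinery provides, but one must keep careful track of the passage between reduced and unreduced $L^2$ cohomology, and combine Poincar\'e duality on the tube with the dimension bound $\dim C_U\leq\frac{n-1}{2}$ in order to cover both halves $\ell<\frac{n-1}{2}$ and $\ell>\frac{n+1}{2}$ of the complement of the critical band in a single stroke.
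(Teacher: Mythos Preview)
Your overall strategy matches the paper's: split $L^2(V^k)$ into cones, kill the indefinite cone(s) with Pedon's spectral gap and Proposition~\ref{Mateisprop1}, and reduce the definite cone to a single orbit via Proposition~\ref{tubevanishing}. Your treatment of the indefinite cones, including the case $k>n$ where only $\Omega_{n,1}$ survives, is correct and in fact slightly more explicit than the paper's proof.

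The final step on $\Omega_{k,0}$, however, is where your write-up gets imprecise, and the paper's argument is sharper. You invoke Mazzeo--Phillips to express $\overline{H}^\ell_{(2)}(E_\mathbf{x})$ ``in terms of $H^\bullet(C_U)$ in low degrees and $H^\bullet(C_U,\partial)$ in high degrees,'' and then appeal to the dimension bound $\dim C_U=n-k\le\frac{n-1}{2}$. Two problems: $C_U$ is a closed manifold, so $H^\bullet(C_U,\partial)$ is not the right object; and ``vanishing in degrees $>n-k$'' is not the inequality that actually does the work. What the paper does is: assume $\ell<\frac{n}{2}$ (the case $\ell>\frac{n+1}{2}$ following by Hodge star), quote \cite{MP}, Theorem~3.13, to get $\overline{H}^\ell_{(2)}(E_\mathbf{x})\cong H^\ell_c(E_\mathbf{x})$, and then apply the Thom isomorphism for the rank-$k$ bundle $E_\mathbf{x}\to C_U$ to obtain $H^\ell_c(E_\mathbf{x})\cong H^{\ell-k}(C_U)$. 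The hypothesis $k\ge\frac{n+1}{2}$ now enters as the single inequality
\[
\ell-k\;\le\;\tfrac{n}{2}-\tfrac{n+1}{2}\;<\;0,
\]
so $H^{\ell-k}(C_U)=0$ for trivial reasons. No Poincar\'e duality on the tube and no boundary cohomology are needed; the Thom shift by $k$ is the whole point, and the dimension of $C_U$ never enters. Once you see this, the anticipated ``obstacle'' in your last paragraph dissolves.
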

\begin{proof}
We first show
$$H^{\ell}( \mathfrak{so}(n,1), \SO (n); L^2(\Omega_{k-1,1})) = 0 \text{ for } \ell \notin [\frac{n-1}{2}, \frac{n+1}{2}].$$

We use the transfer map.  By \cite{Pedon}, page 68, the spectrum of the Laplacian on square integrable $\ell$-forms on $D$ is contained in $[(\frac{n-1}{2} - \ell)^2, \infty)$.  Hence the spectrum of $C$ on cochains $C^\ell(L^2(G \mathbf{x}))$ for any $\mathbf{x} \in \Omega_{k-1,1}$ is contained in $[(\frac{n-1}{2} - \ell)^2, \infty)$.  The proposition follows by Proposition \ref{Mateisprop1}.

Now we show 
$$H^{\ell}(\mathfrak{so}(n,1), \SO (n); L^2(\Omega_{k,0})) = 0 \text{ for } \ell \notin [\frac{n-1}{2}, \frac{n+1}{2}].$$
We may assume $\ell < \frac{n}{2}$.  Choose $\mathbf{x} \in \Omega_{k,0}$.  By Proposition \ref{tubevanishing} it suffices to prove $H^\ell(L^2(G \mathbf{x})) = 0$.  By using the transfer map it suffices to show $H_{(2)}^\ell(E_\mathbf{x})^{G_\mathbf{x}} = 0$.  By \cite{MP}, Theorem 3.13,
$$H_{(2)}^\ell(E_\mathbf{x})^{G_\mathbf{x}} = H_c^\ell(E_\mathbf{x})^{G_\mathbf{x}}.$$
By the Thom Isomorphism Theorem,
$$H_c^\ell(E_\mathbf{x})^{G_\mathbf{x}} = H^{\ell-k}(C_U)^{G_\mathbf{x}}.$$
But, $\ell-k \leq \frac{n}{2} - \frac{n+1}{2} < 0$.  Hence $H_c^\ell(E_\mathbf{x}) = 0$ and so $H_c^\ell(E_\mathbf{x})^{G_\mathbf{x}} = 0$.

\end{proof}

\section{The computation of $H^{\ell}( L^2(V^k ))$ away from the middle dimensions for $k \leq \frac{n}{2}$  }

\begin{thm} \label{T:ksmall}
Suppose $k \leq \frac{n}{2}$. Then for $\ell \notin [\frac{n-1}{2}, \frac{n+1}{2}]$ we have:
$$H^{\ell} (\mathfrak{so}(n,1) , \mathrm{SO} (n) ; L^2 (V^k)) = \overline{H}^{\ell} (\mathfrak{so}(n,1) , \mathrm{SO} (n) ; L^2 (V^k)).$$
Furthermore, $H^{\ell} (\mathfrak{so}(n,1) , \mathrm{SO} (n) ; L^2 (V^k))$ is non-zero if and only if $\ell = k$ or $\ell = n-k$. Finally, $H^k(\mathfrak{so}(n,1), \SO(n); L^2(V^k))$ is the discrete series representation of $\mathrm{Mp}(2k, \R)$ with parameter $(\frac{n+1}{2}, \cdots, \frac{n+1}{2})$.
\end{thm}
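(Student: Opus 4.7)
The strategy is to decompose
$$L^2(V^k) = L^2(\Omega_{k,0}) \oplus L^2(\Omega_{k-1,1}),$$
handle each cone separately via its transfer map from Section~\ref{transfersection}, and invoke the explicit theta-correspondence results of~\cite{Li} for the final identification.

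On $\Omega_{k-1,1}$ the generic stabilizer $\SO(n-k+1)$ is compact, so $T_{k-1,1}$ isometrically identifies $C^\bullet(L^2(G\mathbf{x}))$ with $\SO(n-k+1)$-invariant $L^2$ $\ell$-forms on hyperbolic space $D$, carrying the Casimir to the Hodge Laplacian. By Pedon, the Hodge Laplacian on $A^\ell_{(2)}(D)$ has essential spectrum $[(\tfrac{n-1}{2}-\ell)^2, \infty)$ and no eigenvalues except possibly $0$ when $n$ is even and $\ell=n/2$. For $\ell$ outside the excluded interval $[\tfrac{n-1}{2}, \tfrac{n+1}{2}]$ the spectrum is therefore uniformly bounded away from $0$. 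Proposition~\ref{Mateisprop1} applied orbit-by-orbit and integrated over the base $\mathbb{P}_{k-1,1}$ of the orbit fibration (exactly as in its proof) then yields $H^\ell = \overline{H}^\ell = 0$ on $L^2(\Omega_{k-1,1})$ in this range.

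On $\Omega_{k,0}$, Proposition~\ref{Nicolaslowerbound1} supplies a uniform spectral gap $\{0\} \cup [\epsilon,\infty)$ for the Casimir, and Proposition~\ref{Mateisprop2} upgrades this to $H^\ell(L^2(\Omega_{k,0})) = \overline{H}^\ell(L^2(\Omega_{k,0}))$, with non-vanishing equivalent to non-vanishing of $H^\ell(L^2(G\mathbf{x}))$ for a single $\mathbf{x}$. Writing $H = G_\mathbf{x} \cong \SO(n-k,1)$, one must therefore compute the $(\mathfrak{g}, K)$-cohomology of $L^2(H\backslash G)$. By Howe duality for the reductive dual pair $(\OO(n,1), \Sp(2k,\R))$ together with the explicit computations of~\cite{Li}, only the holomorphic discrete series of $\SO(n,1)$ with cohomology concentrated in degrees $\{k, n-k\}$ contributes; the degree-$k$ class is realized by the Kudla--Millson harmonic cocycle $\phi_k$, and the degree-$(n-k)$ class by its Hodge dual $*\phi_k$, which remains closed because $*$ commutes with the Casimir and $\phi_k$ is harmonic. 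All other degrees vanish. For the final identification, $\phi_k$ is an $\mathfrak{sp}(2k,\R)$-lowest-weight vector of weight $(\tfrac{n+1}{2}, \ldots, \tfrac{n+1}{2})$ by~\cite{KM2}, so it generates a copy of the claimed holomorphic discrete series of $\Mp(2k,\R)$ inside $H^k(L^2(V^k))$; Li's results then show this copy exhausts $H^k(L^2(V^k))^{(K^\prime)}$.

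The main obstacle is the representation-theoretic step: the spectral-gap machinery reduces everything cleanly to the Plancherel structure of $L^2(H\backslash G)$, but identifying precisely the cohomological summands---in particular locating non-vanishing at $\{k, n-k\}$ rather than at, say, $\{k, n\}$ as a naive Thom-isomorphism on the tube $E_\mathbf{x}$ combined with~\cite{MP} Theorem~3.13 would suggest, and pinning down the correct Harish-Chandra parameter for $\Mp(2k,\R)$---relies essentially on~\cite{Li} rather than on the tube geometry alone.
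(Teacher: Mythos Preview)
Your treatment of $\Omega_{k-1,1}$ is correct and matches the paper. The gap is in your treatment of $\Omega_{k,0}$, and your final paragraph actually diagnoses it backwards.

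After reducing via Proposition~\ref{Mateisprop2} to computing $H^\ell(L^2(H\backslash G))$ with $H=\SO(n-k,1)$, you invoke Howe duality and \cite{Li} to decide which cohomological representations of $\SO(n,1)$ occur. But $L^2(H\backslash G)$ carries no $\Mp(2k,\R)$-action; the theta correspondence governs $L^2(V^k)$, not a single orbit. So \cite{Li} cannot tell you which $J_\ell$ embed in $L^2(H\backslash G)$, and your vanishing claim for degrees other than $\{k,n-k\}$ is unsupported. (Also, $\SO(n,1)$ has no holomorphic discrete series for $n>2$; the $J_\ell$ are Vogan--Zuckerman modules.)

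In fact the tube geometry you dismiss is exactly what the paper uses, and it gives the right answer. The point you are missing is that \cite{MP}, Theorem~3.13, identifies $\overline{H}^\ell_{(2)}(E_\mathbf{x})$ with $H^\ell_c(E_\mathbf{x})$ \emph{only for $\ell<n/2$}. In that range the Thom isomorphism gives $H^\ell_c(E_\mathbf{x})^{G_\mathbf{x}}\cong H^{\ell-k}(C_U)^{G_\mathbf{x}}$, and since the only $G_\mathbf{x}$-invariant forms on the hyperbolic $(n-k)$-manifold $C_U$ are in degrees $0$ and $n-k$, one gets $\ell=k$ or $\ell=n$; the hypothesis $\ell<n/2$ forces $\ell=k$. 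The range $\ell>n/2$ is then handled by Poincar\'e duality, which produces the companion degree $n-k$, not $n$. Thus the tube argument yields $\{k,n-k\}$ directly, and \cite{Li} is needed only at the very end to identify the $\Mp(2k,\R)$-module structure on $H^k$---precisely the opposite of what your last paragraph asserts.
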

\begin{proof}
We first show 
$$H^{\ell}( \mathfrak{so}(n,1), \SO (n); L^2(\Omega_{k-1,1})) = 0 \text{ for } \ell \notin [\frac{n-1}{2}, \frac{n+1}{2}].$$

The proof is similar to that of Theorem \ref{T:klarge}:  here again the spectrum of the Casimir operator on $C^{\ell}(L^2(G \mathbf{x}))$ for $\ell \notin [\frac{n-1}{2}, \frac{n+1}{2}]$ is contained in $[\epsilon, \infty)$ with $\epsilon> 0$ and independent of $\mathbf{x}$. To see that we can again use the transfer map $T_{k-1,1}$ and prove the corresponding statement for the square integrable forms on hyperbolic space.  By \cite{Pedon}, Corollary 4.4, the spectrum of $\Delta$ on square integrable $\ell$-forms is bounded away from zero unless $n$ is even and $\ell = \frac{n}{2}$ or $n$ is odd and $\ell = \frac{n-1}{2}$ or $\ell= \frac{n+1}{2}$.  We conclude using Proposition \ref{Mateisprop1}.


Now we consider the cone $\Omega_{k,0}$.  In this case we use the transfer map $T_{k,0}$ and Proposition \ref{tubevanishing} to reduce the problem to the study of the $L^2$-cohomology of the tube $E = \Gamma_{\mathbf{x}} \backslash D^n$.  Suppose $\ell < \frac{n}{2}$. Then we have
$$H^{\ell}( \mathfrak{so}(n,1), \SO (n); L^2(\Omega_{k,0})) \neq 0 \iff \ell =k .$$

To see this we first apply Proposition \ref{Nicolaslowerbound1} and Proposition \ref{Mateisprop2} to obtain that $H^{\ell}(L^2(\Omega_{k,0}))$ is nonzero if and only if  $H^{\ell}(L^2(G \mathbf{x}))$ is nonzero. The transfer map $T_{k,0}$ gives an isomorphism between $H^{\ell}(L^2(G \mathbf{x}))$ and the $G_\mathbf{x}$-invariant elements of the $L^2$-cohomology of the tube $E$.

By \cite{MP}, Theorem 3.13, the reduced $L^2$-cohomology of the tube $E$ in degrees less than $\frac{n}{2}$ is isomorphic to the compactly-supported cohomology $H_c^\bullet(E)$.  We may apply the Thom Isomorphism Theorem to obtain
$$H^\ell_c(E) \cong H^{\ell-k}(C_U).$$ 
Since the Thom isomorphism is $G_\mathbf{x}$-equivariant we have an isomorphism
$$H^\ell_c(E)^{G_\mathbf{x}} \cong H^{\ell-k}(C_U)^{G_\mathbf{x}}.$$
But the only invariant forms on hyperbolic $n-k$ space are in degrees $0$ and $n-k$.  Hence either $\ell - k = 0$ or $\ell - k = n-k$.  Hence either $\ell = k$ or $\ell = n$.  But by hypothesis  $\ell < \frac{n}{2}$ and hence $\ell =k$.

Finally using Poincar\'e duality we conclude that $H^{\ell} (L^2 (V^k))$ is non-zero if and only if $\ell = k$ or $\ell = n-k$. The fact that 
$$H^{\ell} (\mathfrak{so}(n,1) , \mathrm{SO} (n) ; L^2 (V^k)) = \overline{H}^{\ell} (\mathfrak{so}(n,1) , \mathrm{SO} (n) ; L^2 (V^k))$$
follows from the fact that the $0$ eigenvalue of the Casimir is isolated. 

It remains to identify $H^k(L^2(V^k)) $ as a $\frak{sp}(2k,\C)$-module. First recall that, by Borel Wallach \cite{BW} Prop 3.1, if $\ell  < \frac{n-1}{2}$, then there exists a unique irreducible unitary representation $J_\ell$ of $\SO(n,1)$ such that $H^\ell (J_\ell ) \neq 0$. And we have:
$$H^i(\mathfrak{so}(n,1) , \mathrm{SO} (n); J_\ell) = C^i(\mathfrak{so}(n,1) , \mathrm{SO} (n); J_\ell) = \begin{cases} 0 \text{ if } i \neq \ell, n-\ell \\ \C \text{ if } i = \ell, n-\ell. \end{cases}$$
It follows from (the proof of) Proposition \ref{Nicolaslowerbound1} that $J_\ell$ occurs (with finite multiplicity) in $L^2 (\Omega_{k,0})$ --- and hence in $L^2 (V^k)$ --- if and only if $k=\ell$.  In the latter case let $\mathcal{H}$ be the largest closed subspace of $L^2 (V^k)$ on which $\SO(n,1)$ acts by a multiple of $J_k$. Then $\mathcal{H}$ is stable under $\SO(n,1)$ and $\mathrm{Mp} (2k)$. By \cite[\S 7]{HoweTrans}, the joint action of $\SO(n,1) \times \mathrm{Mp} (2k)$ on $\mathcal{H}$ is a tensor product $J_k \otimes \sigma$, where $\sigma$ is an irreducible unitary representation of $\mathrm{Mp} (2k)$. Now since $k \leq \frac{n}{2}$ it follows from Li, \cite{Li}, Lemma 3.3 b, that the restriction of the Weil representation $\omega$ in $L^2 (V^k)$ to $\mathrm{Mp} (2k)$ is strongly $L^2$. It then follows from Li, \cite{Li}, Lemma 3.5, that the irreducible sub representation $\sigma \subset \omega |_{\mathrm{Mp} (2k)}$ is square integrable. By \cite{LiDuke} $\S 2$ we conclude that $J_k$ and $\sigma$ are the Howe duals of each other in the sense defined in \cite{HoweCorvalis}. The explicit determination of the Howe correspondence in that case is written down in \cite{LiDuke}. We conclude that $\sigma$ is the discrete series representation with parameter $(\frac{n+1}{2}, \cdots, \frac{n+1}{2})$. Since 
$$H^{k} (L^2 (V^k)) = H^{k} (\mathcal{H}) = \sigma \otimes \mathrm{Hom}_{\SO (n)} (\wwedge{k} \mathfrak{p} , J_k) \cong \sigma,$$
the theorem follows.

\end{proof}

\begin{rmk}
There is a very simple description of the invariant harmonic square-integrable form on $E$ of degree $k$.  By \cite{KM2} the Hodge star of the pullback $\pi^* \vol_{C_U}$ of the volume form $\vol_C$  of  $C$  is harmonic of degree $\ell=k$ and is square integrable if $k < \frac{n}{2}$. This form coincides with  the form $\phi_k$ of  Part 3.
\end{rmk}

In the next section we deal with the middle dimension cohomology, i.e. the case $\ell \in [\frac{n-1}{2}, \frac{n+1}{2}]$.

\section{The middle dimensional cohomology}

We distinguish two cases:
\begin{enumerate}
\item when $n=2m$ is even, then it only remains to deal with the case $\ell =m$;
\item when $n=2m+1$ is odd, then it remains to deal with the two dual cases $\ell = m$ and $m+1$.
\end{enumerate}

\subsection{The middle  dimensional cohomology  for general $k$}
Assume $n = 2m$ and let $\mathcal{H}_{(2)}^m(D)$ be the space of square integrable harmonic $m$-forms on $D$.  By \cite{MP}, Theorem 3.13, $\mathcal{H}_{(2)}^m(D)$ is infinite dimensional.  We will use the convention that if $k > n$ then $\SO(n-k,1)$ and $\SO(n-k+1)$ are the trivial groups.  By \cite{MP}, Theorem 3.13, $\mathcal{H}_{(2)}^m(E_k)$ is infinite dimensional.

By \cite{Pedon}, Proposition 3.1, we have an equality of unitary $\SO(n,1)$-representations
$$\mathcal{H}_{(2)}^m(D) = \pi_+ \oplus \pi_-$$
where $\pi_+$ and $\pi_-$ are the discrete series with trivial infinitesimal characters and lowest $K$-types $\wwedge{m}(V_+ \otimes \C)^+$ and $\wwedge{m}(V_+ \otimes \C)^-$ respectively.  Here the superscripts denote the $\pm i$ (respectively $\pm 1$) eigenspaces of the Hodge star operator for $m$ odd (respectively $m$ even).

We will describe two $\theta$-stable parabolics $\mathfrak{q}_+$ and $\mathfrak{q}_-$ such that in the notation of Vogan-Zuckerman, \cite{VZ},
\begin{equation*}
\pi_+ = A_{\mathfrak{q}_+} \text{ and } \pi_- = A_{\mathfrak{q}_-}.
\end{equation*}

We define a new ordered basis $\{u_1, \ldots, u_m, v_m, \ldots, v_1 \}$ consisting of isotropic vectors for $V_+ \otimes \C$ where $u_j = \frac{e_j + i e_{n+1-j}}{\sqrt{2}}, 1 \leq j \leq m$ and $v_j = \frac{e_j - i e_{n+1-j}}{\sqrt{2}}, 1 \leq j \leq m$.  Then note $(u_j, v_k) = \delta_{jk}$.  Let $E^\prime = \mathrm{span}(u_1, \ldots, u_m), F^\prime = \mathrm{span}(v_m, \ldots, v_1)$ and $E^{\prime \prime} = \mathrm{span}(u_1, \ldots, u_{m-1}, v_m), F^{\prime \prime} = \mathrm{span}(u_m, v_{m-1}, \ldots, v_1)$.  Then we define $\mathfrak{q}_+$ to be the stabilizer of $E^\prime$ and $\mathfrak{q}_-$ to be the stabilizer of $E^{\prime \prime}$.  We recall the identification of $\wwedge{2} V$ with $\mathfrak{so}(n,1)$ given by $(u \wedge v)(w) = (u,w)v - (v,w)u$.  Then $\mathfrak{q}_+ = (E^\prime \otimes F^\prime) \oplus (\wwedge{2} E^\prime) \oplus (E^\prime \otimes V_-)$ and $\mathfrak{q}_- = (E^{\prime \prime} \otimes F^{\prime \prime}) \oplus (\wwedge{2} E^{\prime \prime}) \oplus (E^{\prime \prime} \otimes V_-)$.  Then $\mathfrak{l}_+ = E^\prime \otimes F^\prime$ and $\mathfrak{u}_+ = \wwedge{2}E^\prime \oplus (E^\prime \otimes V_-)$ with $\mathfrak{k} \cap \frak{u}^\prime = \wwedge{2}E^\prime$ and $\mathfrak{p} \cap \frak{u}^\prime = E^\prime \otimes V_-$.  There are similar formulas for $\mathfrak{l}_-$ and $\mathfrak{u}_-$. Put $\mathbf{t} = (t_1, \ldots, t_m)$ with $t_j \in \R, 1 \leq j \leq m$.  We choose a Cartan subalgebra $\mathfrak{t} \subset \mathfrak{so}(n, \C)$ to be
$$\mathfrak{t} = \{ h(\mathbf{t}) = \sum_{j=1}^m t_j v_j \wedge u_j \}.$$
Then relative to the above ordered basis, the element $h(\mathbf{t})$ is the diagonal matrix with diagonal elements $(t_1, \ldots, t_m, -t_m, \ldots, -t_1)$.

We define an ordered basis $\{\epsilon_1, \ldots, \epsilon_m \}$ of $\mathfrak{t}^*$ where $\epsilon_j(\mathbf{t}) = t_j$.  Then in the notation of \cite{VZ}, we have $\Delta(\mathfrak{u}_+ \cap \mathfrak{p}) = \{ \epsilon_1, \ldots, \epsilon_{m-1}, \epsilon_m \}$ and $2 \rho( \mathfrak{u}_+ \cap \mathfrak{p}) = \epsilon_1 + \cdots + \epsilon_{m-1} + \epsilon_m$.  Also, $\Delta(\mathfrak{u}_- \cap \mathfrak{p}) = \{ \epsilon_1, \ldots, \epsilon_{m-1}, -\epsilon_m \}$ and $2 \rho( \mathfrak{u}_- \cap \mathfrak{p}) = \epsilon_1 + \cdots + \epsilon_{m-1} - \epsilon_m$.  We now recall Theorem 2.5 (c) of  \cite{VZ}, which states that the highest weights $\delta_+$ of the irreducible constituents of $A_{\mathfrak{q}_+}$ are of the form $\delta_+ = 2 \rho( \mathfrak{u}_+ \cap \mathfrak{p}) + \displaystyle \sum_{i=1}^m n_i \epsilon_i$.  Hence
\begin{equation} \label{deltaweight}
\delta_+ = \sum_{j=1}^m a_j \epsilon_j \text{ where } a_j > 0, 1 \leq j \leq m.
\end{equation}
The corresponding statement for $A_{\mathfrak{q}_-}$ is
$$\delta_- = \sum_{j =1}^m a_j \epsilon_j \text{ where } a_j > 0, \ 1 \leq j \leq m-1 \text{ and } a_m < 0.$$

Note that $\wwedge{m}\C^{2m} = \delta_+ + \delta_-$ is contained in the restriction of $\pi_+ \oplus \pi_-$ to $\SO(2m)$.

\begin{lem} \label{restrictioninvariance}
\hfill

\begin{enumerate}
\item $\mathcal{H}_{(2)}(D)^{\SO(p)} \neq 0$ if $p \leq m $.
\item $\mathcal{H}_{(2)}(D)^{\SO(p)} = 0$ if $p > m $.
\item $\pi_+^{\SO(p)} = \pi_-^{\SO(p)}0$ if $p > m $.
\end{enumerate}
\end{lem}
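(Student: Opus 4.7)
The plan is to prove (3) first, deduce (2) immediately from the Hilbert sum $\mathcal{H}_{(2)}^m(D)=\pi_+\oplus\pi_-$ so that $\mathcal{H}_{(2)}^m(D)^{\SO(p)}=\pi_+^{\SO(p)}\oplus\pi_-^{\SO(p)}$, and then separately handle (1) by exhibiting a concrete invariant vector in the lowest $K$-type of $\pi_+$ or $\pi_-$.

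For (3), I will show that no $K$-type of $\pi_+$ or $\pi_-$ contains a non-zero $\SO(p)$-invariant vector when $p>m$. By \eqref{deltaweight} and its analogue for $\pi_-$, every such $K$-type, viewed as an irreducible $\SO(2m)$-module, has highest weight $\delta=(a_1,\ldots,a_{m-1},\pm a_m)$ with all $a_j\geq 1$, so all $m$ entries are non-zero; let me call this property \emph{full depth}. I then invoke the following standard consequence of the classical interlacing branching rules: if an irreducible $\SO(n)$-module has highest weight with $d$ non-zero entries, then every irreducible $\SO(n-1)$-constituent of its restriction has highest weight with at least $d-1$ non-zero entries. The key observation in both parities of the rule ($\SO(2j)\downarrow \SO(2j-1)$ and $\SO(2j-1)\downarrow \SO(2j-2)$) is that the new coordinates $\mu_i$ satisfy $\mu_i\geq \lambda_{i+1}$ for all $i$ up to the rank minus one, so strict positivity is preserved on the first $d-1$ coordinates. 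Iterating the claim $2m-p$ times forces every irreducible $\SO(p)$-constituent of a full-depth $K$-type of $\pi_\pm$ to have at least $m-(2m-p)=p-m$ non-zero coordinates in its highest weight. For $p>m$ this is strictly positive, ruling out the trivial $\SO(p)$-representation, which proves (3) and hence (2).

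For (1), I will exhibit an $\SO(p)$-invariant vector inside the lowest $K$-type $\wwedge{m}(V_+\otimes\C)^\pm$ of $\pi_+$ or $\pi_-$. Since $\SO(p)\subset\SO(n)$ fixes $e_{p+1},\ldots,e_n$, the ambient space splits orthogonally as $V_+\otimes\C=W\oplus W'$, with $W=\mathrm{span}_\C(e_1,\ldots,e_p)$ and $W'=\mathrm{span}_\C(e_{p+1},\ldots,e_{2m})$, and $\SO(p)$ acts standardly on $W$ and trivially on $W'$. Accordingly,
\[
\wwedge{m}(V_+\otimes\C)=\bigoplus_{a+b=m}\wwedge{a}W\otimes\wwedge{b}W',
\]
and $\wwedge{a}W$ carries non-zero $\SO(p)$-invariants exactly when $a=0$ or $a=p$. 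For $p\leq m$ the inequality $m\leq 2m-p=\dim W'$ holds, so the summand with $a=0$, $b=m$ is non-zero and entirely $\SO(p)$-invariant. Because the Hodge $*$-splitting $\wwedge{m}(V_+\otimes\C)=\wwedge{m}(V_+\otimes\C)^+\oplus\wwedge{m}(V_+\otimes\C)^-$ is $\SO(2m)$-equivariant, hence $\SO(p)$-equivariant, at least one of the two summands contains a non-zero $\SO(p)$-invariant vector. Since the lowest $K$-type embeds in $\pi_+$ or $\pi_-$, this invariant yields a non-zero element of $\pi_\pm^{\SO(p)}\subseteq\mathcal{H}_{(2)}^m(D)^{\SO(p)}$, as required.

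The main obstacle will be the branching-rule bookkeeping in (3): the two parity cases $\SO(2j)\downarrow\SO(2j-1)$ and $\SO(2j-1)\downarrow\SO(2j-2)$ have different interlacing constraints on the last coordinate (the former bounds $|\lambda_m|$ from below by $\mu_{m-1}$, the latter allows a sign flip on the last entry), and one must carefully check in each case that the number of non-zero entries in the highest weight drops by at most one. Once this is verified, the telescoping count $m-(2m-p)=p-m$ does the rest of the work, and (1) is essentially formal given the block decomposition above.
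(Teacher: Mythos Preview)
Your proof is correct and follows essentially the same approach as the paper's: for (1) you exhibit an $\SO(p)$-invariant vector inside the lowest $K$-type $\wwedge{m}(V_+\otimes\C)\subset\mathcal{H}_{(2)}^m(D)$ (the paper simply writes down $e_1\wedge\cdots\wedge e_m$), and for (2)--(3) you use the interlacing branching rules $\SO(2m)\downarrow\SO(2m-1)\downarrow\cdots\downarrow\SO(p)$ to bound from below the number of non-zero entries in any highest weight that can occur. Your treatment of (3) is in fact more complete than the paper's, which asserts that $m$ is the minimal number of branchings needed to reach the zero weight but leaves the verification to the reader; your ``drops by at most one non-zero entry per step'' argument supplies exactly that detail.
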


\begin{proof}
We first prove (1).  We have seen $\wwedge{m}\C^{2m} \subset \mathcal{H}_{(2)}(D)$.  But if we embed $\SO(m)$ into $\SO(2m)$ as the subgroup that fixes $e_1, \ldots, e_m$, then $\SO(m)$ fixes $e_1 \wedge \cdots \wedge e_m \in \wwedge{m}\C^{2m}$.  Hence $(\wwedge{m}\C^{2m})^{\SO(m)} \neq 0$ and the same holds for $\SO(p), p < m$.

We now prove (2).  It suffices to prove that the trivial representation of $\SO(m+1)$ does not occur in $\pi_+ \oplus \pi_-$.  We first show it does not occur in $\pi_+$.  Then it suffices to show $V_\delta^{\SO(m+1)} = 0$ for any irreducible constituent $V_\delta$ of $\pi_+|_K$.  Suppose $\delta = (a_1, \ldots, a_m)$.  Note that $a_m > 0$.

We apply the branching formulas from $\SO(2m)$ to $\SO(2m-1)$, see Boerner, \cite{Boerner}, page 269, Theorem 12.1a, to find that $V_\delta|_{\SO(2m-1)}$ contains $V_\tau$ if and only if the entries $b_1, \ldots, b_{m-1}$ of $\tau$ interlace the entries of $\delta$.  By this, we mean $a_1 \geq b_1 \geq a_2 \geq b_2 \geq \cdots \geq a_{m-1} \geq b_{m-1} \geq |a_m|$.  Note that $\tau$ has $m-1$ non-zero entries.  We apply branching again to compute the restriction of $V_\tau$ to $\SO(2m-2)$.  We find that the irreducible representation $V_\gamma$ with highest weight $\gamma = (c_1, \ldots, c_{m-1})$ of $\SO(2m-2)$ occurs in $V_\tau|_{\SO(2m-2)}$ if and only if the entries of $\gamma$ interlace the entries of $\tau$.  Thus we find the irreducible representation with highest weight $(a_1, a_2, \ldots, a_{m-2}, 0)$ occurs in $V_\delta |_{\SO(2m-2)}$.  Note that there are now only $m-2$ non-zero entries. Applying the branching rule 2 more times, we find the representation with highest weight $(a_1, \ldots, a_{m-4}, 0 ,0)$ occurs in $V_\delta |_{\SO(2m-4)}$.  Note that this weight has $m-4$ non-zero entries.  We successively apply the branching rule
\begin{equation*}
\SO(2m) \downarrow \SO(2m-1) \downarrow \SO(2m-2) \downarrow \cdots \downarrow \SO(m)
\end{equation*}
to the highest weight $\delta$ to find the highest weight $(\underbrace{0, 0, \ldots, 0}_{\lfloor \frac{m}{2} \rfloor})$ after branching $m$ times.  The reader will verify that this is the smallest number of branchings needed to obtain the zero weight.

A similar argument proves the statement for $\pi_-$.

\end{proof}

\begin{thm}
\hfill

\begin{enumerate}
\item $H^m(  \mathfrak{so}(n,1), \SO (n); L^2(\Omega_{k-1,1}))$ is non-zero if and only if $k > m$.
\item $H^m(  \mathfrak{so}(n,1), \SO (n); L^2(\Omega_{k,0}))$ is non-zero if and only if $m \leq k \leq n$.
\item $H^m(  \mathfrak{so}(n,1), \SO (n); L^2(V^k)) \neq 0$ if and only if $k \geq m$. In this case, as a $\mathrm{Mp}(2k)$-module, it decomposes as the sum of two irreducible unitary representations $M_{k, m}^+$ and $M_{k, m}^-$ of $\mathrm{Mp}(2k)$.  In case $k=m$, these representations are discrete series representations.
\end{enumerate}
\end{thm}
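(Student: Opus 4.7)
The plan is to run the strategy of Theorems~\ref{T:klarge} and~\ref{T:ksmall} adapted to the middle degree $\ell = m = n/2$, splitting $L^2(V^k) = L^2(\Omega_{k,0}) \oplus L^2(\Omega_{k-1,1})$ (for $k \leq n$; the case $k > n$, with its single cone $\Omega_{n,1}$, is analogous) and analyzing each summand via the transfer maps of Section~\ref{transfersection}. Since $n = 2m$ is even, the forbidden non-integer values $\frac{n\pm 1}{2}$ in Propositions~\ref{Nicolaslowerbound1} and~\ref{tubevanishing} are not themselves integers, so both apply at $\ell = m$ on the tube $E_\mathbf{x}$: part (2) thus reduces to computing $G_\mathbf{x}$-invariant harmonic $L^2$-cochains on a single orbit in $\Omega_{k,0}$. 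On the indefinite cone however Pedon's spectral gap on $A^m_{(2)}(D)$ fails at the middle degree, so part (1) needs an extra Plancherel argument.

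For (1), the transfer $T_{k-1,1}$ identifies $C^\bullet(L^2(G\mathbf{x}))$ isometrically with $A^\bullet_{(2)}(D)^{\SO(n-k+1)}$; by Pedon's theorem $\mathcal{H}^m_{(2)}(D) = \pi_+ \oplus \pi_-$, and Lemma~\ref{restrictioninvariance} gives
\[
(\pi_+ \oplus \pi_-)^{\SO(n-k+1)} \neq 0 \iff n - k + 1 \leq m \iff k > m.
\]
To pass from the harmonic space to $H^m$ and from a single orbit to the whole cone, one verifies that the restriction of the continuous part of the Plancherel decomposition of $\Delta$ on $A^m_{(2)}(D)$ to the $\SO(n-k+1)$-invariant subspace has a positive lower bound (the only contributions to the kernel come from $\pi_\pm$); a variant of Proposition~\ref{Mateisprop2} then yields the required equivalence on $H^m(L^2(\Omega_{k-1,1}))$.

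For (2), the transfer $T_{k,0}$ identifies the orbit cochains with $A^\bullet_{(2)}(E_\mathbf{x})^{G_\mathbf{x}}$ on $E_\mathbf{x} = \Gamma_\mathbf{x}\backslash D$, and the Mazzeo--Phillips gap on the tube makes $H^m$ equal to the $G_\mathbf{x}$-invariant harmonic space. For $k = m$ the Kudla--Millson class $[\phi_k]$ from \cite{KM1} descends to a non-zero $\SO(m,1)$-invariant $L^2$-harmonic class. For $m < k \leq n$ one realizes $\SO(n-k,1)$-invariant harmonic $L^2$-forms inside $\pi_+ \oplus \pi_-$ via their lowest $K$-types $\wwedge{m}(V_+ \otimes \C)^\pm$, and verifies square-integrability after quotienting by the cocompact $\Gamma_\mathbf{x} \subset \SO(n-k,1)$ using the description of middle-degree $L^2$-cohomology of the tube in~\cite{MP}. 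For $k < m$, $\SO(n-k+1)$ (and \emph{a fortiori} $\SO(n-k,1)$) has no invariants in $\pi_\pm$ by Lemma~\ref{restrictioninvariance}, and the same Mazzeo--Phillips machinery forces $H^m(L^2(\Omega_{k,0})) = 0$.

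For (3), summing (1) and (2) gives $H^m(L^2(V^k)) \neq 0 \iff k \geq m$. The $\mathrm{Mp}(2k)$-module structure comes via Howe duality exactly as at the end of the proof of Theorem~\ref{T:ksmall}: the largest closed subspaces $\mathcal{H}^\pm \subset L^2(V^k)$ on which $\SO(n,1)$ acts by a multiple of $\pi_\pm$ are $\mathrm{Mp}(2k)$-stable, and by \cite{HoweTrans} decompose as $\pi_\pm \otimes M^\pm_{k,m}$ with $M^\pm_{k,m}$ irreducible unitary; for $k = m$, \cite[Lemmas~3.3(b) and~3.5]{Li} combined with \cite{LiDuke} identify $M^\pm_{m,m}$ as the discrete series of $\mathrm{Mp}(2m)$ with parameter $(\tfrac{n+1}{2},\ldots,\tfrac{n+1}{2})$. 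The hardest step will be the middle-degree analysis itself: the refined Plancherel estimate in (1), and in (2) the explicit construction of $\SO(n-k,1)$-invariant $L^2$-harmonic representatives for $m < k \leq n$, since the Thom--Euler argument used at other degrees does not produce these classes.
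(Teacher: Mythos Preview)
Your overall architecture is right, but there is one unnecessary complication and one genuine gap.

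For part (1) you write that ``Pedon's spectral gap on $A^m_{(2)}(D)$ fails at the middle degree'' and propose a refined Plancherel estimate on the $\SO(n-k+1)$-invariant subspace. In fact Pedon (Corollary 4.4) gives that for $n=2m$ the spectrum of $\Delta$ on $A^m_{(2)}(D)$ is exactly $\{0\}\cup[\tfrac14,\infty)$: zero is already isolated in the full spectrum. So Proposition~\ref{Mateisprop2} applies directly with $\epsilon=\tfrac14$, giving $H^m(L^2(G\mathbf{x}))\cong\mathcal{H}^m_{(2)}(D)^{G_\mathbf{x}}$ for every $\mathbf{x}\in\Omega_{k-1,1}$, and then Lemma~\ref{restrictioninvariance} finishes. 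No refined estimate is needed.

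The real problem is the $k<m$ case of part (2). You write that ``$\SO(n-k+1)$ (and \emph{a fortiori} $\SO(n-k,1)$) has no invariants in $\pi_\pm$''. This conflates two different objects. The decomposition $\mathcal{H}^m_{(2)}(D)=\pi_+\oplus\pi_-$ lives on hyperbolic space $D$; but on the definite cone the transfer lands in $A^m_{(2)}(E_\mathbf{x})^{G_\mathbf{x}}$ with $E_\mathbf{x}=\Gamma_\mathbf{x}\backslash D$, and $\mathcal{H}^m_{(2)}(E_\mathbf{x})$ is an infinite-dimensional space built from $L^2(\Gamma_\mathbf{x}\backslash G)$, not from $\pi_\pm$ directly. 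There is no ``\emph{a fortiori}'' passage from compact $\SO(n-k+1)$-invariants in $\pi_\pm$ to $\SO(n-k,1)$-invariants in $\mathcal{H}^m_{(2)}(E_\mathbf{x})$. The paper's argument here is more delicate: one uses Matsushima's formula to realize $\mathcal{H}^m_{(2)}(E_\mathbf{x})$ via maps $T_\pi$ from $\mathrm{Hom}_K(\wwedge{m}\mathfrak{p},\pi)\otimes\mathrm{Hom}_{\mathfrak{g},K}(\pi,L^2(\Gamma_\mathbf{x}\backslash G))$, takes a hypothetical nonzero $\SO(n-k,1)$-invariant harmonic form $\omega$, pulls it back to a smooth function $F$ on $\Gamma_\mathbf{x}\backslash G$ generating a copy of $\pi$, and then observes that \emph{evaluation at the identity} yields a nonzero $\SO(n-k,1)$-invariant linear functional on the smooth vectors of $\pi$. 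This functional is in particular $\SO(n-k)$-invariant with $n-k>m$, so Lemma~\ref{restrictioninvariance}(3) forces it to vanish on $K$-finite vectors, a contradiction. Your sketch is missing this evaluation-functional step, which is the actual bridge from the tube back to invariants in $\pi_\pm$.

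Part (3) is fine; note that the paper takes the slightly different route of using that $\pi_\pm$ are themselves discrete series of $\SO(n,1)$, so \cite[\S 2]{LiDuke} applies directly without invoking the strongly-$L^2$ lemmas.
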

\begin{proof}

Let $\mathbf{x} \in \Omega_{k-1,1}$.  Then the transfer map gives an isomorphism
$$H^m( \mathfrak{so}(n,1), \SO (n); L^2(G \mathbf{x})) = H_{(2)}^m(D)^{G_\mathbf{x}}.$$

By \cite{Pedon}, Corollary 4.4, the spectrum of the Laplacian on $A^m_{(2)}(D)$ is $\{0\} \cup [\frac{1}{4}, \infty)$.  Then we apply Proposition \ref{Mateisprop2} and $H^m_{(2)}(D) \cong \mathcal{H}_{(2)}^m(D)$.  Hence $H^m_{(2)}(D)^{G_\mathbf{x}} \cong \mathcal{H}_{(2)}^m(D)^{G_\mathbf{x}}$, but $G_\mathbf{x} \cong \SO(n-k+1)$.  Hence (1) follows from Lemma \ref{restrictioninvariance}.

Now let $\mathbf{x} \in \Omega_{k,0}$ and let $E_\mathbf{x}$ be the associated tube.  Then the transfer map gives an isomorphism
$$H^m( \mathfrak{so}(n,1), \SO (n); L^2(G \mathbf{x})) \cong \mathcal{H}_{(2)}^m(E_\mathbf{x})^{G_\mathbf{x}}.$$
But by \cite{MP}, $\mathcal{H}_{(2)}^m(E_\mathbf{x})$ is infinite dimensional.  By Proposition \ref{Nicolaslowerbound1} there is a non-zero lower bound on the non-zero spectrum of the Laplacian on the tubes $E_\mathbf{x}$ which is uniform in $\mathbf{x}$.  We apply Proposition \ref{tubevanishing} and this proves that $H^m(L^2(\Omega_{k,0}))$ is non zero iff $\mathcal{H}_{(2)}^m(E)^{\SO(n-k,1)}$ is non zero.  If $m \leq k \leq n$ then $\mathcal{H}_{(2)}^m(E)^{\SO(n-k,1)}$ contains  $\mathcal{H}_{(2)}^m(E)^{\SO(m,1)}$.  But in \cite{KM1}, page 219, it is shown that the pullback of the volume form of the cycle $C_U$ is harmonic, square integrable and $\SO(m,1)$-invariant.  Hence
$$\mathcal{H}_{(2)}^m(E)^{\SO(m,1)} \neq 0.$$
In case $k>n$, $\Omega_{k,0} = \emptyset$, see below.

Suppose now $k < m$ and $\mathcal{H}_{(2)}^m(E)^{\SO(n-k,1)} \neq 0$. Let $\pi = \pi_+$ or $\pi_-$. Using the identification 
$$A^m_{(2)} (E)= \mathrm{Hom}_{\SO(n)} \left( \wwedge{m} \mathfrak{p} , L^2 (\Gamma_{\mathbf{x}} \backslash G) \right),$$ 
we define a linear map 
$$T_\pi : \mathrm{Hom}_K \left( \wwedge{*} \mathfrak{p} , \pi \right) \otimes \mathrm{Hom}_{\mathfrak{so}(n,1) , \SO (n)} \left( \pi , L^2 (\Gamma_{\mathbf{x}} \backslash G) \right) \to  A^m_{(2)} (E)$$
by $\psi \otimes \varphi \mapsto \varphi \circ \psi$. Since $\pi_+$ and $\pi_-$ are the only cohomological representation in degree $m$, Matsushima's formula states that 
$$\mathcal{H}_{(2)}^m(E) \cong \mathrm{Image} \ T_{\pi_+} \oplus \mathrm{Image} \ T_{\pi_-}.$$
Now let $\omega$ be a nonzero  $\SO(n-k,1)$-invariant vector in $\mathrm{Image} \ T_{\pi} \subset \mathcal{H}_{(2)}^m(E)$ with $\pi = \pi_+$ or $\pi_-$. 
Pulling $\omega$ back to $\Gamma_{\mathbf{x}} \backslash G$ by $p :\Gamma_{\mathbf{x}} \backslash G$ we obtain a left $\SO(n-k,1)$ -invariant right $K$-equivariant $\wwedge{m} \mathfrak{p}^*$-valued  function 
$F$ on $\Gamma_{\mathbf{x}} \backslash G$ which is square integrable. Then under the right action of $G$,  $F$ generates a unitary representation $\mathcal{V}$ isomorphic to  $\pi$. 
Note that, being harmonic, the form $\omega$ is smooth. But then evaluation of  the smooth vectors of $\mathcal{V}$ at the identity gives rise to  an  $\SO(n-k,1)$-invariant linear functional $\alpha$  on the smooth vectors of $\mathcal{V}$.  Note that the smooth vectors of $\mathcal{V} \subset L^2(G) \otimes \wwedge{m} \mathfrak{p}^*$ are realized as smooth functions by Dixmier and Malliavin \cite{DM} \cite{KM1}. Then $\alpha$ is nonzero because we have
$$ \alpha(F) \neq 0. $$ 
But $\alpha$ is $\SO(p)$-invariant with $p > m$ and by Lemma \ref{restrictioninvariance}, $(\pi_+ + \pi_-)^{\SO(p)} = 0$.  Hence $\alpha$ annihilates the $K$-finite vectors in $\pi_+ + \pi_-$ and hence $\alpha$ is identically zero which gives a contradiction.

The first part of (3) is an immediate consequence of (1) and (2). Now $\pi_+$ and $\pi_-$ are the only irreducible representations of $\SO (n,1)$ that contribute to cohomology in degree $m$.  Let $\mathcal{H}_\pm$ be the largest closed subspace of $L^2 (V^k)$ on which $\SO(n,1)$ acts by a multiple of $\pi_\pm$. Then $\mathcal{H}_\pm$ is stable under $\SO(n,1)$ and $\mathrm{Mp} (2k)$. By \cite[\S 7]{HoweTrans}, the joint action of $\SO(n,1) \times \mathrm{Mp} (2k)$ on $\mathcal{H}_\pm$ is a tensor product $\pi_\pm \otimes \sigma_\pm$, where $\sigma_\pm$ is an irreducible unitary representation of $\mathrm{Mp} (2k)$. Now since $\pi_\pm$ is square integrable, by \cite[\S 2]{LiDuke} we conclude that $\pi_\pm$ and $\sigma_\pm$ are the Howe duals of each other in the sense defined in \cite{HoweCorvalis}. The explicit determination of the Howe correspondence in that case is written down in 
\cite{LiDuke}. Since 
$$H^{k} (\mathfrak{so}(n,1) , \mathrm{SO} (n) ; L^2 (V^k)) = H^{k} (\mathfrak{so}(n,1) , \mathrm{SO} (n) ; \mathcal{H}_+ \oplus \mathcal{H}_-),$$
the theorem follows.

\end{proof}

\begin{rmk}
The representations $M_{k, m}^+$ and $M_{k, m}^-$ are the Howe duals of resp. $\pi^+$ and $\pi^-$. 
If $k=m$ both $M_{k, m}^+$ and $M_{k, m}^-$ are discrete series of $\mathrm{Mp}(2k)$.
\end{rmk}

\subsection{The odd dimensional case for general $k$}
Now assume $n=2m+1$ and $k$ arbitrary.  Then $\overline{H}^m( L^2(V^k)) \cong \overline{H}^{m+1}( L^2(V^k))$

\begin{thm} \label{n=2m+1cases}
\hfill

\begin{enumerate}
\item If $k \neq m$ then
$$H^m( L^2(V^k)) = \overline{H}^m( L^2(V^k)) = 0.$$

\item If $k = m$ then
$$H^m( L^2(V^k)) = \overline{H}^m( L^2(V^k)) \neq 0.$$
Furthermore,
$$H^m( L^2(V^m))^{(\mathrm{MU}(m))} = \mathcal{R}_m(V) \varphi_m.$$

\item If $k < m$ then
$$H^{m+1}(L^2(V^k)) = 0.$$

\item If $k \geq m$ then
$$H^{m+1}(L^2(V^k)) \neq 0.$$

\item If $k=m$ then
$$H^{m+1}(L^2(V^k)) = \overline{H}^{m+1}( L^2(V^k)) \cong \overline{H}^m( L^2(V^k)) \neq 0.$$
\end{enumerate}
\end{thm}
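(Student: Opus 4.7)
The plan is to split $L^2(V^k) = L^2(\Omega_{k,0}) \oplus L^2(\Omega_{k-1,1})$ (valid up to measure zero, with $\Omega_{k,0} = \emptyset$ when $k > n$) and to analyze each summand via the transfer maps $T_{k,0}$ and $T_{k-1,1}$ of Section \ref{transfersection}, reducing to $L^2$-harmonic analysis on $D = D^{2m+1}$ itself or on the tube $E_\mathbf{x} = \Gamma_\mathbf{x} \backslash D^n$ of Section \ref{tubessection}. The essential new feature, compared both to the even case and to the away-from-middle situation, is that by \cite{Pedon} the Laplacian on $A^m_{(2)}(D^{2m+1})$ and on $A^{m+1}_{(2)}(D^{2m+1})$ has purely continuous spectrum $[0, \infty)$ with empty discrete part. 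Hence the uniform spectral-gap arguments of Propositions \ref{Mateisprop1} and \ref{Mateisprop2} are unavailable in both degrees $m$ and $m+1$, and we instead rely on Proposition \ref{Mateisprop3} for reduced vanishing together with the Mazzeo--Phillips topological description of $\overline{H}^{\bullet}_{(2)}(E_\mathbf{x})$. The Hodge $*$ on $D^n$ intertwines harmonic $L^2$ $m$-forms and harmonic $L^2$ $(m+1)$-forms, yielding the identification $\overline{H}^m \cong \overline{H}^{m+1}$ asserted just before the theorem.

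The indefinite cone is the easier half. For $\mathbf{x} \in \Omega_{k-1,1}$ the stabilizer $G_\mathbf{x} \cong \SO(n-k+1)$ is compact, and $T_{k-1,1}$ together with Kuga's lemma identifies the Casimir on $C^\ell(L^2(G\mathbf{x}))$ with $\Delta$ on $G_\mathbf{x}$-invariant forms in $A^\ell_{(2)}(D)$. Applying Proposition \ref{Mateisprop3} orbit-by-orbit and integrating over the base $\mathbb{P}_{k-1,1}$ in the style of the proof of Proposition \ref{Mateisprop1} gives $\overline{H}^m(L^2(\Omega_{k-1,1})) = \overline{H}^{m+1}(L^2(\Omega_{k-1,1})) = 0$. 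On the definite cone $\Omega_{k,0}$ (so $1 \leq k \leq n$), $T_{k,0}$ yields an isomorphism of $\overline{H}^\ell(L^2(G\mathbf{x}))$ with the $G_\mathbf{x}$-invariant reduced $L^2$-cohomology of $E_\mathbf{x}$; by \cite{MP} Theorem 3.13 this coincides with $H^m_c(E_\mathbf{x})^{G_\mathbf{x}}$ in degree $m$ and with $H^{m+1}(E_\mathbf{x})^{G_\mathbf{x}}$ in degree $m+1$. The Thom isomorphism for the rank-$k$ vector bundle $E_\mathbf{x} \to C_U$ gives $H^m_c(E_\mathbf{x}) \cong H^{m-k}(C_U)$ and $H^{m+1}(E_\mathbf{x}) \cong H^{m+1}(C_U)$, where $C_U = \Gamma_\mathbf{x} \backslash D^{n-k}$ is compact hyperbolic of dimension $n-k$. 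After lifting to $D^{n-k}$, the $G_\mathbf{x} = \SO(n-k,1)$-invariant cohomology is supported only in degrees $0$ and $n-k$, so the conditions $m-k \in \{0, n-k\}$, respectively $m+1 \in \{0, n-k\}$, each force $k = m$. Combined with the indefinite-cone vanishing this proves (1), (3), and the reducedness and nonvanishing portions of (5) for $k = m$; at that value, the surviving class is represented by the Hodge $*$ of the pullback of the volume form of $C_U$, the $\SO(m+1,1)$-invariant Kudla--Millson harmonic form, and the $*$-map on $D^n$ matches the $m$- and $(m+1)$-classes.

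The two remaining assertions are more delicate: the unreduced nonvanishing $H^{m+1}(L^2(V^k)) \neq 0$ for $k > m$ in (4), and the $\mathrm{MU}(m)$-finite identification in (2). For (4) with $k > m$, $\overline{H}^{m+1}$ already vanishes because $m+1 > n-k = \dim C_U$ forces $H^{m+1}(C_U) = 0$; the nonvanishing of the unreduced $H^{m+1}$ comes instead from the fact that, without a spectral gap above $0$, the image of $d$ in the middle degree is not closed, and one exhibits a cocycle lying in the closure but not in the literal image of $d$ by a Koszul-type construction on the tube paralleling the arguments of Theorem \ref{kgeqnvanishing}. For the identification in (2) one follows the Howe-duality argument from the proof of Theorem \ref{T:ksmall}: letting $J_m$ denote the unique cohomological irreducible of $\SO(n,1)$ contributing to degree-$m$ cohomology, and $\mathcal{H} \subset L^2(V^m)$ the largest closed subspace on which $\SO(n,1)$ acts by multiples of $J_m$, by \cite{HoweTrans} $\mathcal{H}$ decomposes under $\SO(n,1) \times \Mp(2m, \mathbb{R})$ as $J_m \otimes \sigma$ with $\sigma$ irreducible unitary; since $k = m \leq n/2$, \cite{Li} Lemmas 3.3 and 3.5 show $\sigma$ is square-integrable, and \cite{LiDuke} identifies its parameter as $(\tfrac{n+1}{2}, \ldots, \tfrac{n+1}{2})$. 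The $\mathrm{MU}(m)$-finite vectors of $\sigma$ then match $\mathcal{R}_m(V) \varphi_m$ via Theorem \ref{main} (1) together with the Kudla--Millson description of $\varphi_m$ as a lowest-weight vector. I expect the hardest point to be the unreducedness analysis for $k > m$ in (4), where the absence of a spectral gap in the middle dimension and the infinite volume in the $\mathbb{P}_k$ direction interact delicately and require a careful production of non-closed image classes on the tubes.
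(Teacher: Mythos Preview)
Your tube/orbit analysis via Mazzeo--Phillips correctly handles the \emph{reduced} cohomology, and your treatment of (2) via Howe duality matches the paper. However, several of the unreduced assertions require arguments you have not supplied. For (1) you never explain why $H^m = \overline{H}^m$; the paper obtains this (Lemma \ref{L196}) from Proposition \ref{Mateisprop4} applied in degree $m-1$, where there \emph{is} a spectral gap on both cones. For (3), your method only yields $\overline{H}^{m+1} = 0$ when $k < m$; the paper instead shows the cochain groups $C^m = C^{m+1}$ themselves vanish, using Li's explicit $K$-type correspondence (\cite{LiDuke}, \S 5) to see that neither $\wedge^m(V_+)$ nor $\wedge^m(V_+)\otimes\det$ occurs in $L^2(V^k)$ as an $\OO(n)$-type for $k < m$. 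For the reducedness in (5) there is no spectral gap on $C^m$ to invoke Proposition \ref{Mateisprop4}; the paper argues that the cohomological representation $J(\sigma_m, 0)$ is isolated \emph{within the image of the theta correspondence from $\mathrm{Mp}(2m)$}, excluding the nearby principal series $J(\sigma_m, it)$ via Li's infinitesimal-character formula and the complementary series $J(\sigma_{m-1}, s)$ via the already-established degree-$(m-1)$ gap.

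For (4) with $k > m$, your proposed ``Koszul-type construction on the tube'' does not match the paper and is not convincing in the $L^2$ setting; the Koszul arguments of Section \ref{kgeqnsection} are purely algebraic on the polynomial Fock model. The paper works instead on the indefinite cone $\Omega_{k-1,1}$: it writes the Plancherel decomposition $L^2(G/H) \cong \int^\oplus d\nu(\pi)\, \mathcal{H}_\pi \widehat\otimes (\mathcal{H}_\pi^*)^H$ with $H = \SO(n-k+1)$, uses branching (Lemma \ref{homnonzerobranch}) to produce an $H$-fixed vector in $\wedge^m \C^n$ precisely when $k > m$, and then invokes Borel's computation \cite{Borel1} that $H^{m+1}$ of the direct integral $\int^\oplus_{\mathfrak{a}^*} \mathcal{H}_{\sigma_m,\lambda}\, d\nu_\sigma$ is infinite-dimensional. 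This is a genuinely representation-theoretic step, and it is indeed the point you correctly flagged as hardest.
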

The theorem is a consequence of the following lemmas and propositions

\begin{lem} \label{L196}
$$H^m( L^2(V^k)) = \overline{H}^m( L^2(V^k)).$$
\end{lem}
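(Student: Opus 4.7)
The plan is to prove the stronger assertion that $d\bigl(C^{m-1}(L^2(V^k))\bigr)$ is already closed in $C^{m}(L^2(V^k))$; this immediately implies $H^m = \overline{H}^m$. The strategy is to upgrade the orbit-wise closedness furnished by Proposition \ref{Mateisprop4} to the level of the cones $\Omega_{k,0}$ and $\Omega_{k-1,1}$, and then to combine the two pieces via the orthogonal decomposition $L^2(V^k) = L^2(\Omega_{k,0}) \oplus L^2(\Omega_{k-1,1})$.

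First I would verify the key spectral estimate in cochain degree $\ell = m-1$: for every $\mathbf{x}$ in either cone, the spectrum of the Casimir acting on $C^{m-1}(L^2(G\mathbf{x}))$ is contained in $\{0\}\cup[\varepsilon,\infty)$ for some $\varepsilon>0$ independent of $\mathbf{x}$. Since $n=2m+1$, we have $m-1 \notin \{(n-1)/2,(n+1)/2\}$, so Proposition \ref{Nicolaslowerbound1} supplies the uniform gap on the positive definite cone $\Omega_{k,0}$. For the Lorentzian cone $\Omega_{k-1,1}$, the transfer map $T_{k-1,1}$ together with Kuga's lemma identifies the Casimir with the Hodge Laplacian on $A^{m-1}_{(2)}(D)$, and Corollary 4.4 of \cite{Pedon} supplies the desired gap in that setting (the forbidden middle degrees being $m$ and $m+1$, not $m-1$).

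Next I would lift this orbit-wise gap to a genuine spectral gap for the Casimir on $C^{m-1}(L^2(\Omega_\pm))$ by repeating the pasting argument in the proof of Proposition \ref{Mateisprop2}: construct the Green operator $\mathcal{G}$ orbit-by-orbit as the right inverse of the Casimir on the orthogonal complement of its kernel, and use the uniformity of $\varepsilon$ in $\mathbf{x}$ together with the product structure of the $G$-invariant measure on $\Omega_\pm$ to conclude that $\mathcal{G}$ extends to a bounded operator on $C^{m-1}(L^2(\Omega_\pm))$. Once this is in place, I would repeat the argument of Proposition \ref{Mateisprop4} verbatim in this setting: because $d$ commutes with the spectral projection $P_{[\varepsilon,\infty)}$, the image $d(C^{m-1}(L^2(\Omega_\pm)))$ lies in the closed subspace $P_{[\varepsilon,\infty)}\,C^m(L^2(\Omega_\pm))$; and any closed $\omega$ in the closure of this image satisfies $\omega = d(\delta \mathcal{G}\omega)$, where $\delta$ denotes the transferred codifferential. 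Hence $d(C^{m-1})$ is closed in $C^m$ on each cone, and the orthogonal decomposition of $L^2(V^k)$ finishes the proof.

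The only delicate point, and hence the main obstacle, will be the pasting step from orbits to cones: one must verify that the orbit-by-orbit construction of $\mathcal{G}$ produces a measurable field of operators whose global boundedness follows from Fubini applied to the uniform orbit-wise bound, and that the transferred codifferential $\delta$ intertwines coherently across orbits. This is precisely the type of argument already executed in Propositions \ref{Mateisprop1} and \ref{Mateisprop2}, so it should go through with only notational changes; the rest amounts to bookkeeping which spectral-gap theorem applies in which cohomological degree.
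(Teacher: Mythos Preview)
Your proposal is correct and follows essentially the same route as the paper: reduce to a spectral gap in degree $m-1$ on each cone, citing Proposition \ref{Nicolaslowerbound1} for $\Omega_{k,0}$ and \cite{Pedon}, Corollary 4.4 for $\Omega_{k-1,1}$, then invoke Proposition \ref{Mateisprop4}. The only difference is that you spell out explicitly the lifting from orbit-wise gaps to global gaps on the cones (via the arguments of Propositions \ref{Mateisprop1}--\ref{Mateisprop2}), whereas the paper invokes Proposition \ref{Mateisprop4} directly as if it were already stated for $L^2(\Omega_\pm)$; your caution here is well placed but not a genuine departure from the paper's argument.
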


\begin{proof}
By Proposition \ref{Mateisprop4}, it suffices to show that there is a spectral gap on each of $C^{m-1}(L^2(\Omega_{k-1,1}))$ and $C^{m-1}(L^2(\Omega_{k,0}))$.  The first follows from \cite{Pedon}, Corollary 4.4, and the second follows from Proposition \ref{Nicolaslowerbound1}.

\end{proof}

\begin{prop}
\hfill

\begin{enumerate}
\item $\overline{H}^m( L^2(\Omega_{k-1,1})) = 0$.
\item If $k \neq m$ then $\overline{H}^m( L^2(\Omega_{k,0})) = 0$.
\item If $k = m$ then $\overline{H}^m( L^2(\Omega_{k,0})) \neq 0$.
\end{enumerate}

\end{prop}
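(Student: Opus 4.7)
The plan is to treat the two cones $\Omega_{k-1,1}$ and $\Omega_{k,0}$ separately. In both cases the strategy is the same: first use the transfer maps of Section \ref{transfersection} to identify, orbit-by-orbit, the cochain complex $C^\bullet(L^2(G\mathbf{x}))$ with the $G_\mathbf{x}$-invariant square-integrable forms on either hyperbolic $n$-space $D$ or on a tube $E_\mathbf{x}$; second, run the relevant reduced $L^2$-cohomology computation on the target using Pedon's spectral analysis and the Mazzeo--Phillips tube theorem; third, globalize from orbits to the full cone via the $\mathrm{GL}(k,\R)$-equivariant trivialization of Equation \eqref{postriv} and the invariance of the Casimir in the transverse direction recorded in Remark \ref{independenceofC}, integrating fiberwise bounds against the product measure on the orbit moduli space exactly as in the proof of Proposition \ref{Mateisprop1}.

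For part (1), Pedon \cite{Pedon}, Corollary 4.4, tells us that for $n=2m+1$ the Hodge Laplacian on $A^m_{(2)}(D)$ has spectrum $[0,\infty)$ with empty discrete part. Proposition \ref{Mateisprop3} then yields $\overline{H}^m_{(2)}(D)=0$, and via the transfer $T_{k-1,1}$ the same vanishing holds for $\overline{H}^m(L^2(G\mathbf{x}))$ for each $\mathbf{x}\in\Omega_{k-1,1}$. To pass to the cone, I would mimic the Fubini-type argument of Proposition \ref{Mateisprop1}: given a closed $\alpha\in C^m(L^2(\Omega_{k-1,1}))$, write $\alpha=\lim_{\varepsilon\to0}P_{[\varepsilon,\infty)}\alpha$, construct fiberwise primitives $\delta\mathcal{G}_\varepsilon P_{[\varepsilon,\infty)}\alpha$ whose $L^2(G\mathbf{x})$-norm is bounded by $\varepsilon^{-1}\|P_{[\varepsilon,\infty)}\alpha\|_{L^2(G\mathbf{x})}$ uniformly in $\mathbf{x}$ (by Remark \ref{independenceofC}), and integrate the resulting bound against the product measure on $\mathbb{P}_{k-1}\times D_k$ to obtain an element of $C^{m-1}(L^2(\Omega_{k-1,1}))$ whose boundary approximates $\alpha$ in $L^2(\Omega_{k-1,1})$.

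For parts (2) and (3), pick $\mathbf{x}\in\Omega_{k,0}$. The transfer $T_{k,0}$ identifies $C^\bullet(L^2(G\mathbf{x}))$ with $A^\bullet_{(2)}(E_\mathbf{x})^{G_\mathbf{x}}$, where $E_\mathbf{x}$ is the tube of Section \ref{tubessection} over the compact totally-geodesic hyperbolic $(n-k)$-manifold $C_U$. Since $\ell=m<n/2=m+\tfrac12$, Mazzeo--Phillips \cite{MP}, Theorem 3.13, identifies $\overline{H}^m_{(2)}(E_\mathbf{x})$ with $H^m_c(E_\mathbf{x})$, and the Thom isomorphism along the rank-$k$ normal bundle of $C_U$ in $E_\mathbf{x}$ yields $H^m_c(E_\mathbf{x})^{G_\mathbf{x}}\cong H^{m-k}(C_U)^{G_\mathbf{x}}$. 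The only $G_\mathbf{x}$-invariant de Rham classes on $C_U$ sit in degrees $0$ and $n-k$, and $m-k=n-k$ would force the excluded equality $m=n$, so this group is non-zero precisely when $m-k=0$, proving the dichotomy asserted in (2)--(3). In the case $k=m$ an explicit non-zero reduced class is the pullback of the volume form of $C_U$ under the fibration $E_\mathbf{x}\to C_U$, which is harmonic, square-integrable and $G_\mathbf{x}$-invariant by \cite{KM1} pg.~219. The globalization step exploits the uniform spectral gap for the Casimir on $C^{m-1}(L^2(G\mathbf{x}))$ provided by Proposition \ref{Nicolaslowerbound1} (valid since $m-1\notin\{m,m+1\}$); the cone analogue of Proposition \ref{Mateisprop4} then forces the image of $d$ in degree $m$ to be closed, so $H^m(L^2(\Omega_{k,0}))=\overline{H}^m(L^2(\Omega_{k,0}))$, and the argument of Proposition \ref{tubevanishing} reduces the cone-level question to the orbit-level one just settled.

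The main obstacle will be the globalization in (1): at the middle degree $\ell=m$ the Casimir has no spectral gap, so neither Proposition \ref{Mateisprop2} nor the cone form of Proposition \ref{Mateisprop4} is available to rigidly reduce the cone computation to a single fiber. One is forced to run Proposition \ref{Mateisprop3} directly on the cone, and the delicate technical point is to verify that the spectral truncations $P_{[\varepsilon,\infty)}$, which commute with the orbit-transverse $\mathrm{GL}(k,\R)$-action by Remark \ref{independenceofC}, produce primitives whose $L^2$-norms integrate to a finite bound on $\Omega_{k-1,1}$---a Fubini-style step that must be handled carefully because the bound $\varepsilon^{-1}$ is not uniform as $\varepsilon\to 0$, so one only obtains $\alpha$ as a norm limit of exact cochains, which is exactly what reduced vanishing requires.
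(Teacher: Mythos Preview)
Your proposal is correct and follows essentially the same route as the paper. For part (1) both arguments combine Pedon's identification of the degree-$m$ spectrum on $D$ as $[0,\infty)$ with empty discrete part and Proposition~\ref{Mateisprop3}; for parts (2)--(3) both use the transfer $T_{k,0}$, Mazzeo--Phillips Theorem~3.13, the Thom isomorphism, and the observation that the only $G_{\mathbf{x}}$-invariant de~Rham classes on $C_U$ live in degrees $0$ and $n-k$.

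Two remarks on presentation. First, your ``main obstacle'' paragraph re-derives exactly what the proof of Proposition~\ref{Mateisprop3} already establishes: that proposition is stated with hypothesis on $D$ and conclusion on the cone $\Omega_-$, so the globalization for (1) is already packaged there and no separate Fubini step is required. Second, for (2)--(3) the paper simply invokes Proposition~\ref{tubevanishing} to pass from orbit to cone, even though its literal hypothesis $\ell\notin\{\frac{n-1}{2},\frac{n+1}{2}\}$ fails at $\ell=m$; your treatment is in fact slightly more careful than the paper's, since you first use the spectral gap in degree $m-1$ (Proposition~\ref{Nicolaslowerbound1} plus Proposition~\ref{Mateisprop4}) to force $H^m=\overline{H}^m$ on the cone, after which the harmonic-form-restriction argument underlying the \emph{proof} of Proposition~\ref{tubevanishing} (namely $\overline{H}^m\cong\mathcal{H}^m$ and harmonic forms decompose fiberwise) goes through without needing a gap in degree $m$ itself.
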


\begin{proof}

We prove $\overline{H}^m( L^2(\Omega_{k-1,1})) = 0$.  We use the transfer.  By \cite{Pedon}, Corollary 4.4, the spectrum of the Laplacian on square integrable $m$-forms is $[0, \infty)$ and by \cite{Pedon} Theorem 3.2 there is no discrete spectrum.  Hence we may apply Proposition \ref{Mateisprop3} to conclude that
$$\overline{H}^m( L^2(\Omega_{k-1,1})) = 0.$$

We now prove (2).  By the transfer map, we have
\begin{equation*}
\overline{H}^m(L^2(G \mathbf{x})) \cong \overline{H}_{(2)}^m(E_\mathbf{x})^{G_\mathbf{x}}.
\end{equation*}
But by \cite{MP}, Theorem 3.13,
\begin{equation*}
\overline{H}_{(2)}^m(E_\mathbf{x})^{G_\mathbf{x}} \cong H_c^m(E_\mathbf{x})^{G_\mathbf{x}}.
\end{equation*}
Since $E_\mathbf{x}$ may be thought of as a vector bundle of fiber dimension $k$ over $C_U$, we may apply the Thom Isomorphism Theorem and obtain
\begin{equation*}
H_c^m(E_\mathbf{x})^{G_\mathbf{x}} \cong H^{m-k}(C_\mathbf{x})^{G_\mathbf{x}}
\end{equation*}
But $H^{m-k}(C_\mathbf{x})^{G_\mathbf{x}} \neq 0$ if and only if $k = m$ or $m-k = n-k$, and the second of these cases is impossible.  We find
\begin{equation*}
\overline{H}^m(L^2(G \mathbf{x})) = \begin{cases} 0 \text{ if } k \neq m \\ \text{non-zero} \text{ if } k = m \end{cases}
\end{equation*}
Then we obtain (2) and (3) by Proposition \ref{tubevanishing}. 


\end{proof}


\begin{lem}
$H^m(L^2(V^m))^{\mathrm{MU}(m)}$ is isomorphic to the space of $\mathrm{MU}(m)$-finite vectors in the discrete series representatino of $\mathrm{Mp}(m)$ with parameter $(\frac{n+1}{2}, \ldots, \frac{n+1}{2})$.
\end{lem}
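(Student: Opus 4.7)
The plan is to follow the strategy used in the proof of Theorem \ref{T:ksmall} for the case $k < n/2$, adapted to the present odd-dimensional boundary situation $n = 2m+1$, $k = m$. First, invoking Vogan--Zuckerman \cite{VZ}, one singles out the irreducible unitary representation $J_m$ of $\SO(n,1)$ whose $(\mathfrak{so}(n,1), \SO(n))$-cohomology in degree $m$ is nonzero; this is the cohomological representation $A_\mathfrak{q}$ attached to the appropriate $\theta$-stable parabolic, and is the only irreducible unitary representation of $\SO(n,1)$ that can contribute to $H^m$. Part (2) of Theorem \ref{n=2m+1cases} together with the transfer computation on the tube already shows that $J_m$ occurs in $L^2(V^m)$, so we may let $\mathcal{H} \subset L^2(V^m)$ denote the largest closed subspace on which $\SO(n,1)$ acts as a multiple of $J_m$.

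Next, $\mathcal{H}$ is preserved by the commuting action of $\mathrm{Mp}(2m)$, and by \cite[\S 7]{HoweTrans} the joint action of $\SO(n,1) \times \mathrm{Mp}(2m)$ on $\mathcal{H}$ factors as $J_m \otimes \sigma$ for some irreducible unitary representation $\sigma$ of $\mathrm{Mp}(2m)$. Since $n+1 = 2m+2 > 2m$, Lemma 3.3(b) of \cite{Li} asserts that the restriction of the Weil representation to $\mathrm{Mp}(2m)$ is strongly $L^2$, and Lemma 3.5 of \cite{Li} then forces $\sigma$ to be square integrable. In particular $J_m$ and $\sigma$ are Howe duals in the discrete spectrum in the sense of \cite{HoweCorvalis}, and by the explicit Howe correspondence tables written down in \cite[\S 2]{LiDuke} one reads off $\sigma$ as the discrete series representation of $\mathrm{Mp}(2m)$ with Harish-Chandra parameter $(\frac{n+1}{2}, \ldots, \frac{n+1}{2})$.

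Finally, Matsushima's formula gives
$$H^m(\mathfrak{so}(n,1), \SO(n); \mathcal{H}) \cong \sigma \otimes \mathrm{Hom}_{\SO(n)}\!\left( \wwedge{m} \mathfrak{p},\, J_m \right),$$
and the $\mathrm{Hom}$ factor is one-dimensional because $J_m$ has a single $\SO(n)$-type of the right highest weight contributing to degree $m$ (the same input to Vogan--Zuckerman that produced $J_m$). Passing to $\mathrm{MU}(m)$-finite vectors on both sides yields the desired isomorphism with the $\mathrm{MU}(m)$-finite vectors of $\sigma$.

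The main obstacle will be the boundary value $k = (n-1)/2$: this is precisely the edge of the range where Li's strong $L^2$ hypothesis and the explicit Howe correspondence tables in \cite{LiDuke} apply, so the most delicate part of the argument is checking that the inequality $n+1 > 2k$ is enough (it is, but strictly), and confirming that only the single cohomological representation $J_m$ enters (rather than a continuous spectrum component also contributing), so that $\mathcal{H}$ is a single $J_m$-isotypic block and the identification with the discrete series $\sigma$ is clean.
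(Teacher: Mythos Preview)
Your proposal is correct and follows essentially the same approach as the paper: the paper's proof simply says the argument is analogous to the last paragraph of the proof of Theorem~\ref{T:ksmall}, with the additional remark that Li's strong $L^2$ condition still holds at this boundary value of $k$. Your write-up is in fact more careful than the paper's at the delicate point, correctly verifying the strict inequality $n+1 = 2m+2 > 2m = 2k$ needed for \cite{Li}, Lemma~3.3(b), and flagging that $k = m = \tfrac{n-1}{2}$ sits exactly at the edge of the range (the paper's own remark here appears to contain a typo, writing $k = \tfrac{n+1}{2}$).
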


\begin{proof}
The proof is analogous to the last paragraph of the proof of Theorem \ref{T:ksmall}.  Note that if $k = \frac{n+1}{2}$, the action of $\mathrm{Mp}(m)$ on $L^2(V^m)$ is strongly $L^2$ by \cite{Li}, Lemma 3.3 b.

\end{proof}

This completes the proof of (1) and (2) of Theorem \ref{n=2m+1cases}.  (3) and (4) are consequences of the following lemma and proposition

\begin{lem} \label{homnonzerobranch}
We have:
$$ (\wwedge{m} \C^n)^H \neq 0$$
if and only if $k>m$.
\end{lem}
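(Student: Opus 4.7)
The plan is to reduce this to a purely invariant theoretic computation by explicitly identifying the action of $H$ on $\C^n = V_+\otimes\C$. The group $H$ is the stabilizer in $G$ of a generic point $\mathbf{x}$ in the indefinite cone $\Omega_{k-1,1}$; a convenient representative is $\mathbf{x}=(e_1,\ldots,e_{k-1},e_{n+1})$, as in Section~\ref{transfersection}. For this representative, $H$ is the full rotation group of the positive-definite $(n-k+1)$-dimensional orthogonal complement $\mathrm{span}(e_k,\ldots,e_n)$ of $\mathbf{x}$ in $V$, acting trivially on $\mathrm{span}(e_1,\ldots,e_{k-1})$. Hence as $H \cong \SO(n-k+1)$-modules,
\[
\C^n \;\cong\; \C^{k-1}\oplus \C^{n-k+1},
\]
where the first summand is trivial and the second is the standard representation.

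Next I would decompose
\[
\wwedge{m}\C^n \;\cong\; \bigoplus_{a+b=m}\wwedge{a}\C^{k-1}\otimes \wwedge{b}\C^{n-k+1}
\]
and pass to $H$-invariants. Since $H$ acts trivially on the first tensor factor, and since a standard fact in the invariant theory of $\SO(d)$ states that $(\wwedge{b}\C^d)^{\SO(d)}$ is one-dimensional when $b\in\{0,d\}$ and vanishes otherwise, only the summands with $b\in\{0,\,n-k+1\}$ survive.

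Finally I would analyze the two resulting cases. The $b=0$ summand reads $\wwedge{m}\C^{k-1}$, which is nonzero if and only if $m\le k-1$, i.e.\ $k>m$. The $b=n-k+1$ summand contributes $\wwedge{a}\C^{k-1}$ with $a=k-m-2$, which requires $k\ge m+2$ for the wedge to be nonzero---a strictly stronger condition. The union of the two nonvanishing ranges is therefore exactly $k>m$, proving the lemma.

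There is no serious obstacle here: the only conceptual point is to correctly identify $H$ and its action on $V_+\otimes\C$ through the chosen representative $\mathbf{x}$. I would stress, though, that in this cone $H$ is the compact group $\SO(n-k+1)$ (and not $\SO(n-k,1)$), so that it genuinely preserves the positive-definite subspace $V_+\subset V$ and the decomposition above makes sense as a decomposition of $H$-modules. This is what makes the lemma so simple for the indefinite cone and also accounts for the asymmetry between the threshold $k>m$ appearing here and the threshold $k\ge m$ appearing in parts~(3)--(5) of Theorem~\ref{n=2m+1cases}, the endpoint case $k=m$ being handled separately via the positive-definite cone.
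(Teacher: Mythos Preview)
Your proof is correct and takes a genuinely different route from the paper's. The paper argues by iterated branching: it restricts the $\SO(2m+1)$-module of highest weight $(1,\ldots,1)$ step by step along the chain $\SO(2m+1)\supset\SO(2m)\supset\cdots\supset\SO(2m+2-k)$ and tracks when the trivial weight can first appear, referring back to the interlacing argument of Lemma~\ref{restrictioninvariance}. You instead bypass the chain entirely by using the decomposition $\wwedge{m}(\C^{k-1}\oplus\C^{n-k+1})\cong\bigoplus_{a+b=m}\wwedge{a}\C^{k-1}\otimes\wwedge{b}\C^{n-k+1}$ together with the elementary fact that $(\wwedge{b}\C^d)^{\SO(d)}$ is nonzero only for $b\in\{0,d\}$. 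Your approach is shorter and more transparent for this particular lemma, since it reduces the question to a single combinatorial check on the indices $a$ and $b$; the paper's branching argument, on the other hand, is the same machinery already set up for the even case in Lemma~\ref{restrictioninvariance}, so it costs the authors nothing extra. Both are perfectly valid here.
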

\begin{proof} We successively apply the branching rule
\begin{equation*}
\SO(2m+1) \downarrow \SO(2m) \downarrow \SO(2m-2) \downarrow \cdots \downarrow \SO(2m+2-k)
\end{equation*}
to the highest weight $(1, \ldots ,1)$. We get the trivial weight if and only if there are at least $m$ steps (see Lemma \ref{restrictioninvariance}). 

\end{proof}

\begin{prop}
\hfill

\begin{enumerate}
\item $H^{m+1} (L^2 (V^k)) = \{ 0 \}$ if $k < m$.
\item $H^{m+1} (L^2 (V^k) )$ is nonzero if $k > m$.
\item $H^{m+1} (L^2 (V^m)) = \overline{H}^{m+1} (L^2 (V^m))$ is nonzero. 
\end{enumerate}
\item
\end{prop}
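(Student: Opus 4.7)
The plan is to exploit two features specific to $n=2m+1$. First, the orthogonal Hilbert sum decomposition
$$L^2(V^k) = L^2(\Omega_{k,0}) \oplus L^2(\Omega_{k-1,1})$$
(or, when $k>n$, the single generic cone $\Omega_{n,1}$), which allows reduction via the transfer maps $T_{k,0}$ and $T_{k-1,1}$ of Section \ref{transfersection} to $G_\mathbf{x}$-invariant $L^2$-forms on either hyperbolic $n$-space $D$ or a tube $E_\mathbf{x}$. Second, since $n=2m+1$, the Hodge star is an isometric isomorphism $\mathcal{H}^m_{(2)} \cong \mathcal{H}^{m+1}_{(2)}$ of harmonic $L^2$-forms on any complete Riemannian $n$-manifold; because reduced $L^2$-cohomology of a Hilbert complex coincides with $\ker \Delta$, this yields an isomorphism $\overline{H}^m(L^2(V^k)) \cong \overline{H}^{m+1}(L^2(V^k))$, which dualizes our earlier computation of $H^m$ in Theorem \ref{n=2m+1cases} to a computation of $\overline{H}^{m+1}$.

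For (1), $k<m$: Theorem \ref{n=2m+1cases}(1) gives $\overline{H}^m(L^2(V^k))=0$, hence $\overline{H}^{m+1}(L^2(V^k))=0$ by Hodge duality. To upgrade this to the full cohomology $H^{m+1}(L^2(V^k))=0$ I would invoke Proposition \ref{Mateisprop4}: it is enough to establish a uniform spectral gap on $m$-cochains $C^m(L^2(G\mathbf{x}))$ for almost every $\mathbf{x}\in V^k$. Although Proposition \ref{Nicolaslowerbound1} fails in the middle degree $\ell=m=(n-1)/2$, the required gap should be extracted representation-theoretically, as at the end of the proof of Theorem \ref{T:ksmall}: the only $\SO(n,1)$-representations contributing to cohomology in degree $m$ are the two cohomological discrete series, whose Howe duals under Li \cite{Li,LiDuke} correspond to $\mathrm{Mp}(2k,\R)$-representations that do not occur in $L^2(V^k)$ when $k<m$; this isolates $0$ from the rest of the Casimir spectrum on $m$-cochains and forces closedness of $d:C^m\to C^{m+1}$.

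For (3), $k=m$: The non-vanishing $\overline{H}^{m+1}(L^2(V^m))\neq 0$ is immediate from Hodge duality applied to Theorem \ref{n=2m+1cases}(2). The equality $H^{m+1}(L^2(V^m))=\overline{H}^{m+1}(L^2(V^m))$ again follows from Proposition \ref{Mateisprop4} together with the same spectral-gap argument on $m$-cochains, now using that the contributing discrete series of $\SO(n,1)$ is square-integrable and so supplies an isolated eigenvalue $0$. For (2), $k>m$: since $\overline{H}^{m+1}(L^2(V^k))=0$ in this range (Hodge duality plus Theorem \ref{n=2m+1cases}(1)), non-vanishing of $H^{m+1}$ is equivalent to the failure of $d:C^m(L^2(V^k))\to C^{m+1}(L^2(V^k))$ to have closed image. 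I would prove this by producing an explicit closed cochain $\alpha\in C^{m+1}$ which lies in the closure of the image of $d$ but not in its image. For $m<k\le n$ one uses the Hodge $*$-image of a $G_\mathbf{x}$-invariant closed $L^2$ $m$-cochain on the tube, obtained by combining the Kudla--Millson construction of \cite{KM1} p.~219 with a branching statement analogous to Lemma \ref{homnonzerobranch} ensuring $(\wwedge{m+1}\C^n)^H\neq 0$ when $k>m$; for $k>n$ the analogous construction runs on $\Omega_{n,1}$. The failure of $d(C^m)$ to be closed in this range reflects the presence of $0$ in the essential (non-isolated) spectrum of the Casimir on $m$-cochains, consistent with Plancherel on the now-free orbits.

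The hard part is the spectral gap on $m$-cochains needed in the reducedness claims of (1) and (3). Because both $\ell=m=(n-1)/2$ and $\ell=m+1=(n+1)/2$ are middle dimensions for $n=2m+1$, Pedon's spectral analysis yields no gap and Proposition \ref{Nicolaslowerbound1} is silent. The remedy is to identify the full contribution to $H^m(L^2(G\mathbf{x}))$ with the cohomology of a specific square-integrable cohomological representation of $\SO(n,1)$ whose occurrence in $L^2(V^k)$ is pinned down by Li's theta-lift computation; only then can one appeal to Proposition \ref{Mateisprop4} to close the image of $d$ in the adjacent degree and conclude $H^{m+1}=\overline{H}^{m+1}$.
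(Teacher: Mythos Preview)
Your proposal has a recurring conceptual error that undermines parts (1) and (3): for $n=2m+1$ odd, $\SO(n,1)$ has \emph{no discrete series}. The unique cohomological representation in degree $m$ is the Langlands quotient $J(\sigma_m,0)$ of a principal series, not a square-integrable representation, and there is only one of it, not two. Your spectral-gap strategy for (1) and (3) therefore cannot appeal to isolation of a discrete-series eigenvalue. The paper's treatment of (3) is correspondingly more delicate: $J(\sigma_m,0)$ is \emph{not} isolated in the unitary dual of $G$---it is approached both by the unitary principal series $J(\sigma_m,it)$ and by the complementary series $J(\sigma_{m-1},s)$---so the required gap on $C^m$ must be established inside the image of the theta correspondence. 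The paper does this by using Li's explicit formula for infinitesimal characters under theta lifting to exclude $J(\sigma_m,it)$ for $t\neq 0$ when $k\le m$, and by invoking the already-proved gap in degree $m-1$ (Lemma \ref{L196}) to exclude the nearby complementary series. Your sketch does not engage with this.

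For (1) you are working much harder than necessary. The paper shows that the cochain groups themselves vanish: $C^m(L^2(V^k))=C^{m+1}(L^2(V^k))=0$ when $k<m$, because the $\OO(n)$-types $\wedge^m(V_+)$ and $\wedge^m(V_+)\otimes\det$ do not occur in $L^2(V^k)$ by Li's description of $K$-types in the theta correspondence. This makes the vanishing of $H^{m+1}$ immediate, with no spectral argument needed. For (2) your approach---constructing an explicit cocycle in the closure of the image but not in the image---is only a wish, and the Kudla--Millson forms you invoke live on $\Omega_{k,0}$, not on $\Omega_{k-1,1}$ where the relevant continuous spectrum sits. The paper instead works on a single orbit $G/H$ with $H=\SO(n-k+1)$ compact, uses the Plancherel decomposition of $L^2(G/H)$ to exhibit a direct integral of principal series $\mathcal{H}_{\sigma_m,\lambda}$ as a summand (the $H$-invariants being nonzero by the branching Lemma \ref{homnonzerobranch}), and then applies Borel's computation \cite{Borel1} showing $H^{m+1}$ of such a direct integral is infinite-dimensional.
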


\begin{proof}
We first prove (1).  We claim that we have
$$C^m(\mathfrak{so} (n,1) , \mathrm{SO} (n) ; L^2 (V^k)) = 
C^{m+1}(\mathfrak{so} (n,1) , \mathrm{SO} (n) ; L^2 (V^k)) =0.  $$
Note that the Hodge star gives an isomorphism between the two cochain groups. 
It suffices to prove
$$\Hom_{\SO(n)}( \wedge^m(V_+ \otimes \C), L^2(V^k)) = 0.$$
To prove this it suffices to prove 
$$\Hom_{\mathrm{O}(n)}( \wedge^m(V_+), L^2(V^k)) = \Hom_{\mathrm{O}(n)}( \wedge^m(V_+) \otimes \det, L^2(V^k)) =0.$$
Now Li, in Section 5 of \cite{LiDuke}, describes explicitly the induced correspondence between $K$-types and infinitesimal characters. It first follows from his computations (see especially case (A) on page 927) that the $\mathrm{O}(n)$-type $\wedge^m(V_+)$ can only occur in the theta correspondence from $\mathrm{Mp} (2k)$ if $k \geq m$ and the $\mathrm{O}(n)$-type 
$\wedge^m(V_+) \otimes \det$ never occurs.
 In Li's notation our $m$ is $r$ and our $k$ is his $n$. In order that his $\sigma'$ exist we must have $r \leq n$. This concludes the proof of (1).

We now prove (2). It is enough to prove that for $\mathbf{x} \in \Omega_{k-1,1}$
$$H^{m+1} (L^2 (G \mathbf{x})) \cong H^{m+1} (L^2 (G/H))$$ 
is nonzero; here $G = \SO (n,1)$ and $H=\SO (n-k+1)$. 

Recall that the Plancherel formula for $G$ decomposes $L^2 (G)$ as 
$$L^2 (G) \cong \int_{\widehat{G}}^{\oplus} d\nu (\pi) \mathcal{H}_\pi \widehat{\otimes} \mathcal{H}_{\pi}^*.$$
This provides an analogous decomposition of $L^{2} (G/H)$ as (recall that $H$ is compact here):
\begin{equation*}
\begin{split}
L^2 (G/H) & = L^2 (G)^H \\
& \cong  \int_{\widehat{G}}^{\oplus} d\nu (\pi) \mathcal{H}_\pi \widehat{\otimes} (\mathcal{H}_{\pi}^*)^H \\
& \cong  \int_{\widehat{G}}^{\oplus} d\nu (\pi) \mathcal{H}_\pi \widehat{\otimes} \mathrm{Hom}_H (\mathcal{H}_{\pi} , \C).
\end{split}
\end{equation*}
Now the Plancherel measure $d\nu$ is supported on principal series $(\pi_{\sigma , \lambda} , \mathcal{H}_{\sigma , \lambda})$ (see e.g. \cite{Pedon} note that $n$ being odd there is no discrete series). Letting $P=MAN$ be the usual (minimal) parabolic subgroup of $G$ we get:
$$L^2 (G/H) \cong  \int_{W \backslash (\widehat{M} \times \mathfrak{a}^*)}^{\oplus} d\nu (\sigma, \lambda ) \mathcal{H}_{\sigma , \lambda} \widehat{\otimes}  \mathrm{Hom}_H (\mathcal{H}_{\sigma , \lambda} , \C),$$
where $W$ is the Weyl groupe $W(\mathfrak{g} , \mathfrak{a})$. 

Now $\mathcal{H}_{\sigma , \lambda} \cong L^2 (K,M, \sigma)$ as a $K$-module. The subgroup $H$ being compact we conclude that
$$\mathrm{Hom}_H (\mathcal{H}_{\sigma , \lambda} , \C) \cong \mathrm{Hom}_H (\C , L^2 (K,M, \sigma)).$$
We apply this to the representation $\sigma=\sigma_m$ of $M = \SO (n-1) = \SO (2m)$ of weight $(1, \ldots , 1)$ corresponding in the decomposition of $\wedge^m \C^{2m}$ into irreducibles to 
the eigenspace $i^{\left(\frac{n}{2}\right)^2}$ for the Hodge operator $*$. By the Frobenius reciprocity formula as a $K$-module $L^2 (K , M , \sigma)$ contains the  irreducible $
\wedge^m (V_+\otimes \C) = \wedge^{m} \C^{n}$. We conclude that 
$$\mathrm{Hom}_H (\mathcal{H}_{\sigma , \lambda} , \C ) \supset  (\wedge^m \C^n)^H.$$
This is non-zero by Lemma \ref{homnonzerobranch} provided $k>m$. Then 
$$\int_{\widehat{G}}^{\oplus} d\nu (\pi) \mathcal{H}_\pi \widehat{\otimes} \mathrm{Hom}_H (\mathcal{H}_{\pi} , \C)
\supset \int_{\widehat{G}}^{\oplus} d\nu (\pi) \mathcal{H}_\pi \otimes (\wedge^m \C^n)^H.$$
Hence $\int_{\mathfrak{a}^*}^{\oplus} d\nu_\sigma \mathcal{H}_{\sigma , \lambda} \otimes (\wedge^m \C^n)^H$
occurs as a direct summand in $L^2 (G/H)$.  Thus, to prove non-vanishing it suffices to show that $H^{m+1}(\mathfrak{g}, K; \int_{\mathfrak{a}^*}^{\oplus} d\nu_\sigma \mathcal{H}_{\sigma , \lambda} ) \neq 0$.  

We now show how the second part of the proof of Proposition 2.8 of \cite{Borel1}, see in particular Eq. (5)--(8),  shows that
$$H^{m+1}(\mathfrak{g}, K; \int_{\mathfrak{a}^*}^{\oplus} d\nu_\sigma \mathcal{H}_{\sigma , \lambda} ) \neq 0.$$
First, by \cite{BW}, Chapter I, Section 2, we have that for a $\mathfrak{g}, K$-module $\mathcal{V}$
$$\mathrm{Ext}^\bullet_{(\mathfrak{g},K)}(\C, \mathcal{V}) \cong H^\bullet(\mathfrak{g}, K; \mathcal{V}).$$
Now by applying Equation (6) of \cite{Borel1} we have
\begin{align*}
H^i(\mathfrak{g}, K; \int_{\mathfrak{a}^*}^{\oplus} d\nu_\sigma \mathcal{H}_{\sigma , \lambda} ) &=  \C \otimes H^i(\mathfrak{a}; \int_{\mathfrak{a}^*}^\oplus \C_{i \lambda} d\nu_{\sigma, \lambda})[-m] \\
&= H^{i-m}(\mathfrak{a}; \int_{\mathfrak{a}^*}^\oplus \C_{i \lambda} d\nu_{\sigma, \lambda}).
\end{align*}
By Equation (8) of \cite{Borel1}, $H^1(\mathfrak{a}; \int_{\mathfrak{a}^*}^\oplus I_\lambda d\nu_{\sigma, \lambda})$ is infinite dimensional.  Hence, setting $i = m+1$ we see this cohomology group is non-zero.

We finally prove (3). We have already proved that $\overline{H}^{m+1} (L^2 (V^k))$, which equals $\overline{H}^{m} (L^2 (V^k))$ by Poincar\'{e} duality, is nonzero when $k=m$.  It remains to prove that $H^{m+1} (L^2 (V^k))= \overline{H}^{m+1} (L^2 (V^k))$. By Proposition \ref{Mateisprop4}, it suffices to show that there is a spectral gap on each of $C^{m}(L^2(\Omega_{k-1,1}))$ and $C^{m}(L^2(\Omega_{k,0}))$. Recall any unitary irreducible representation of $G$ that is weakly contained in $L^2 (V^k)$, and therefore in $L^2(\Omega_{k-1,1})$ or $L^2(\Omega_{k,0})$, is contained in the image of the theta correspondence which maps a subset of the unitary dual of $\mathrm{Mp} (2k)$ into the unitary dual of $G$.  We will prove that the (unique) irreducible unitary representation that is cohomological in degree $m$ is isolated in this image. This will conclude the proof.  

We first recall from e.g. \cite{Pedon} that the unique irreducible unitary representation that is cohomological in degrees $m$ can be obtained as the Langlands' quotient $J(\sigma_m , 0)$ of the representation unitarily induced from the minimal parabolic $P=MAN$ of $G$ from the representation $\sigma_m$ of $M=\SO(n-1) = \SO (2m)$ of weight $(1, \ldots , 1)$, as above, and the trivial character of $A$. The infinitesimal character of $J(\sigma_m , 0)$ is 
$$\rho = (m , m-1 , \ldots , 1 , 0)$$
(considered up to the action of the Weyl group) the infinitesimal character of the trivial representation.   

The representation $J(\sigma_m , 0)$ is not isolated in the unitary dual of $G$ (see e.g. \cite{BSB,Vogan,JIMJ}): it is a limit point of both the unitary principal series $J(\sigma_m , it)$ as $t\to 0$ and the complementary series $J(\sigma_{m-1}, s)$ ($s\in (0,1)$) as $s \to  1$, where $\sigma_{m-1}$ is the standard representation of $M=\SO(2m)$ in $\wedge^{m-1} \C^{2m}$. But $J(\sigma , 0)$ is isolated from any other family of irreducible unitary representations. Indeed: if $\pi_i$ is a sequence that converges toward $J(\sigma_m , 0)$ then the smallest $K$-type $\tau_m $ of $J(\sigma_m, 0)$ eventually occurs as a $K$-type of $\pi_i$. Using the classification of the unitary dual of $\SO(n,1)$ we get (using Frobenius reciprocity as explained in \cite{Pedon} for example) that this forces $\pi_i$ to be the Langlands' quotient of an induced representation of the form $\sigma_m \otimes \lambda$ or $\sigma_{m-1} \otimes \lambda$. Next we use that the infinitesimal character of $\pi_i$ converges toward the infinitesimal character of the trivial representation and reach the conclusion.\footnote{In terms of differential forms, the representations $J(\sigma_m , it)$ correspond to co-closed forms of degree $m$, or closed forms of degree $m+1$, and eigenvalue $t^2$, whereas 
the representations $J(\sigma_{m-1} , s)$ correspond to closed forms of degree $m$ of co-closed forms of degree $m-1$, and eigenvalue $1-s^2$; see e.g. \cite[\S 6.5]{inventiones}.}

The complementary series $J(\sigma_{m-1}, s)$ also approaches the unique irreducible unitary representation that is cohomological in degree $m-1$. But we already have seen that there is a spectral gap in degree $m-1$; see Lemma \ref{L196}. It follows that there exists some positive $\varepsilon$ such that if $s \in (1-\varepsilon , 1)$ then $J(\sigma_{m-1}, s)$ is not contained in the image of the theta correspondence from $\mathrm{Mp} (2k)$ to $G$. 

It therefore remains to prove that the representations $J(\sigma_m , it)$ are not contained in the image of the theta correspondence when $t \neq 0$. To do so we note that the infinitesimal character of $J(\sigma_m , it)$ is 
$$(m , m-1 , \ldots , 2 , 1, it)$$ 
(considered up to the action of the Weyl group).  But Li \cite[\S 5]{LiDuke} describes explicitely the theta correspondence between infinitesimal characters: denoting by $\lambda' = (\lambda_1 ' , \ldots , \lambda_k ')$ the infinitesimal character of an irreducible representation $\pi'$ of $\mathrm{Mp} (2k)$ that occurs in the theta correspondence to $G$ (with $k \leq m$), we recall from \cite[Eq. (30) on p.   926]{LiDuke} that the infinitesimal character of the image of $\pi'$ by the theta correspondence is 
$$\lambda = ( \lambda_1 ' , \ldots , \lambda_k ' , m-k, m-k-1 , \ldots , 1 , 0)$$
(considered up to the action of the Weyl group). In particular if $k \leq m$ and $t \neq 0$ we have:
$$\lambda \neq (m , m-1 , \ldots , 1 , it)$$
and $J(\sigma_m , it)$ ($t\neq 0$) does not occur in the theta correspondence from $\mathrm{Mp} (2k)$ if $k \leq m$.  These groups are non-zero by the existence of the harmonic form $\phi_k$ of Kudla and Millson \cite{KM1}, see Part 3 below for details.  This concludes the proof of (3).

\end{proof}

Combining the results of the previous sections we obtain Theorem \ref{L^2mainthm}.

\bigskip

\newpage

\part{Comparison of the cohomology with $\mathcal{P}(V^k)$ coefficients and $L^2(V^k)$ coefficients}

The purpose of Part 3 is to prove Theorem \ref{part3theorem} of the introduction which we now restate for the convenience of the reader

\textbf{Theorem}
\hfill
\begin{enumerate}
\item Suppose $\ell \neq n$ and $k \leq \frac{n}{2}$, then the map from $H^\ell( \mathcal{P}(V^k))$ to $H^\ell( L^2(V^k))$ is an injection.  Furthermore, if $k < \frac{n-1}{2}$ then the map from $H^k( \mathcal{P}(V^k))$ to $H^k( L^2(V^k))$ is a surjection onto the $\mathrm{MU}(k)$-finite vectors.
\item If $k > \frac{n}{2}$ or $\ell = n$ then the map $H^\ell( \mathcal{P}(V^k))$ to $H^\ell( L^2(V^k))$ is the zero map.
\end{enumerate}

We begin by proving statement $(2)$.  By Theorem \ref{main} $H^\ell( \mathcal{P}(V^k)) = 0$ unless $\ell = k$ or $n$.  Next note that for all $k$ we have $H^n( L^2(V^k)) = H^0( L^2(V^k)) = 0$ by Poincar\'{e} duality.  Finally, we need only show that the map is zero when $\ell = k$.  But, by Theorem \ref{L^2mainNicolas}, when $k > \frac{n+1}{2}$ we have $H^k( L^2(V^k)) = 0$.

Our goal for the remainder of the paper is to prove statement $(1)$.  Thus, we assume from now on that
\begin{equation} \label{part3k}
k \leq \frac{n}{2}.
\end{equation}

\section{The harmonic cocycle $\phi_k$}
Recall $\beta = (\mathbf{x}, \mathbf{x})$ and set $R = (\mathbf{x}_0, \mathbf{x}_0)$.  We now define a $k$-cochain $\phi_k$ with values in the functions on $V^k$ by
\begin{equation}\label{definingequationforphi}
\phi_k(\mathbf{x}) =  \frac{\det(\beta)^{\frac{n-k}{2}}} {\det(R)^{\frac{n-k+1}{2}}} (\eta_1 \wedge \cdots \wedge \eta_k).
\end{equation}

Hence the extension $\widetilde{\phi_k}$ of $\phi_k$ to a $G$-invariant form on $D$ with values in the functions on $V^k$ is given by
\begin{equation} \label{phitilde}
\widetilde{\phi_k}(\mathbf{x},z) =  \frac{\det(\beta)^{(\frac{n-k}{2})}}{\det((\mathbf{x}_z, \mathbf{x}_z))^{(\frac{n-k+1}{2})}} (\eta_1 \wedge \cdots \wedge \eta_k).
\end{equation}

Note that $\widetilde{\phi_k}(\mathbf{x},z)$ is the form $(\varphi_{0,\mathbf{x}})_z$ of \cite{KM1}, page 230.  Now fix $\mathbf{x} \in \Omega_{k,0}$ and consider $\phi_k$ as an element of $C^k(L^2(G\mathbf{x}))$.  Then the transfer of $\phi_k$ to the tube $E$, which we again denote $\phi_k$, was studied in \cite{KM1}, pages 218-219, where it is denoted $\varphi$.  From \cite{KM1}, page 219, since $k \leq \frac{n}{2}$ by assumption \eqref{part3k}, we see $\phi_k$ is harmonic and $L^2$.

Our goal in the following section is to prove the following theorem. 
\begin{thm} \label{Harmonicprojectiontoromanphi}
If $k \leq \frac{n}{2}$ then the harmonic projection of the restriction of $\varphi_k$ to every orbit in $\Omega_{k,0}$ is nonzero. Hence the image of the cohomology class of $\varphi_k$ in the cohomology with $L^2(V^k)$ coefficients is nonzero.
\end{thm}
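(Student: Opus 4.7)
The plan is to prove nonvanishing of the harmonic projection on each orbit by pairing $\varphi_k$ against the explicit harmonic representative $\phi_k$ of \eqref{definingequationforphi}. Fix $\mathbf{x} \in \Omega_{k,0}$ and view both $\varphi_k$ and $\phi_k$ as elements of $C^k(L^2(G\mathbf{x}))$. Since $\phi_k$ is harmonic on the orbit (which is the content of \cite{KM1}, p.~219, and the formula \eqref{phitilde}), one has the identity
\begin{equation*}
\langle \varphi_k,\,\phi_k \rangle_{L^2(G\mathbf{x})} \;=\; \langle \mathcal{H}\varphi_k,\,\phi_k \rangle_{L^2(G\mathbf{x})},
\end{equation*}
so it will suffice to show the left-hand pairing is nonzero to conclude $\mathcal{H}\varphi_k\neq 0$, and hence (by Proposition \ref{Mateisprop2} together with the spectral gap Proposition \ref{Nicolaslowerbound1} at degree $k$, valid precisely because $k\leq n/2$ keeps us away from the exceptional middle degrees) that $[\varphi_k]$ has nonzero image in $H^k(L^2(\Omega_{k,0})) \subset H^k(L^2(V^k))$.

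The pairing will be evaluated by passing through the transfer map $T_{k,0}$ of Section~\ref{transfersection}. Under $T_{k,0}$, up to a positive constant the inner product on $G\mathbf{x}$ equals the $L^2$ inner product of the corresponding $k$-forms on the tube $E_\mathbf{x}=\Gamma_\mathbf{x}\backslash D$. By Section~\ref{tubessection}, $E_\mathbf{x}$ fibers over the compact cycle $C_U$ with fibers totally geodesic copies of $D^k$, and $T_{k,0}(\phi_k)$ is (a positive multiple of) the Poincar\'e dual form to $C_U$, i.e.\ the Hodge star of the pullback of $\vol_{C_U}$. Its $L^2$-norm is finite exactly under the hypothesis $k\leq n/2$. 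The inner product therefore reduces, via Fubini, to $\vol(C_U)$ times a fiber integral over $D^k$ of the pointwise inner product of $T_{k,0}(\varphi_k)|_{\text{fiber}}$ and $T_{k,0}(\phi_k)|_{\text{fiber}}$; the fiber integrand will be shown to be everywhere nonnegative and positive on an open set, so the full integral is strictly positive.

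The main obstacle is the last assertion: checking positivity of the pointwise integrand along a fiber. This is where Fock versus Schr\"odinger bookkeeping enters. One first applies the Bargmann transform to convert the Fock-model $\varphi_k = \varphi_1^{(1)} \wedge \cdots \wedge \varphi_1^{(k)}$ into its Schr\"odinger counterpart, a Gaussian-times-linear-polynomial wedge, and rewrites $\phi_k$ via \eqref{phitilde} in coordinates along the fiber through the basepoint $z_0$. Expanding each $k$-form in the basis $\{\omega_I : I \in \mathcal{S}_{k,n}\}$ of $\wwedge{k}\mathfrak{p}^*$, the pointwise Riemannian inner product on $\wwedge{k}\mathfrak{p}^*$ produces a sum indexed by $I$; the crucial feature is that after one identifies both forms in this common basis, all contributions are proportional to a single positive scalar function on the fiber (the product of a Gaussian in the Schr\"odinger coordinates and the positive rational function $\det(\beta)^{(n-k)/2}\det((\mathbf{x}_z,\mathbf{x}_z))^{-(n-k+1)/2}$ of \eqref{phitilde}), with no sign cancellation between $I$-terms. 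The integral thus converges and is strictly positive, completing the proof.
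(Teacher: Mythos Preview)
Your approach is correct and genuinely different from the paper's. You show $\mathcal{H}\varphi_k\neq 0$ by proving $\langle\varphi_k,\phi_k\rangle_{L^2(G\mathbf{x})}>0$ via pointwise positivity of the integrand: in the Schr\"odinger model $\varphi_k(\mathbf{y})=c\,\varphi_0(\mathbf{y})\sum_I f_I(\mathbf{y}_0)\,\omega_I$ with $c>0$, while $\phi_k(\mathbf{y})$ is a positive function times the same wedge $\sum_I f_I(\mathbf{y}_0)\,\omega_I$ (this is what $\eta_1\wedge\cdots\wedge\eta_k$ unpacks to at the basepoint, and $G$-equivariance propagates it). Hence the pointwise Riemannian pairing is a positive scalar times $\sum_I f_I(\mathbf{y}_0)^2=\det R(\mathbf{y})>0$ a.e., and Cauchy--Schwarz gives convergence since both forms are $L^2$ on the orbit.

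The paper instead proves the stronger statement $\mathcal{H}\varphi_k=\dfrac{\kappa_1(\mathbf{x})}{\kappa_2(\mathbf{x})}\,\phi_k$ with explicit constants, by invoking the Thom--Poincar\'e duality formulas from \cite{M} and \cite{KM1}: both $\varphi_k$ and $\phi_k$ satisfy $\int_E(\,\cdot\,)\wedge\eta=\kappa_i(\mathbf{x})\int_{C_U}\eta$ for every closed bounded $(n-k)$-form $\eta$, and one then checks $\|\mathcal{H}\varphi_k-\tfrac{\kappa_1}{\kappa_2}\phi_k\|^2=0$ by applying these period identities with $\eta=*(\mathcal{H}\varphi_k-\tfrac{\kappa_1}{\kappa_2}\phi_k)$. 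Your argument is more elementary---it avoids the fiber-integral computation of $\kappa_1$ from \cite{M}---but yields only nonvanishing, whereas the paper's exact formula immediately extends to $\mathcal{H}(f(\beta)\varphi_k)=\tfrac{\kappa_1}{\kappa_2}f(\beta)\phi_k$ and feeds directly into the identification of the image module in Theorem~\ref{part3theorem}. For the bare statement of Theorem~\ref{Harmonicprojectiontoromanphi}, however, your route suffices and is arguably cleaner; you should make the identification of the $\omega_I$-coefficients of $\phi_k$ explicit rather than leaving it as an assertion.
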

\begin{cor}
If $k \leq \frac{n}{2}$ then except for top dimensional cohomology, the map from polynomial Fock space cohomology to $L^2$ cohomology is injective.
\end{cor}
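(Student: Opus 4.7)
The plan is to combine the vanishing half of Theorem~\ref{main} with the nonvanishing furnished by Theorem~\ref{Harmonicprojectiontoromanphi}, closing the argument by an irreducibility claim on the source. First I would invoke Theorem~\ref{main}(1): under $k \le n/2 < n$, the only nonzero group $H^\ell(\mathcal{P}(V^k))$ with $\ell \ne n$ occurs in degree $\ell = k$, where
$$H^k(\mathcal{P}(V^k)) = \mathcal{R}_k \varphi_k.$$
In every other degree $\ell \ne n, k$ the source is zero and injectivity is automatic, so matters reduce to showing that the map
$$\iota_* : \mathcal{R}_k \varphi_k \longrightarrow H^k(L^2(V^k))$$
induced by the inclusion $\mathcal{P}(V^k) \hookrightarrow L^2(V^k)^\infty$ is injective.

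Next I would identify the source as an irreducible Harish-Chandra module. As recalled just after Theorem~\ref{main}, whenever $k < (n+1)/2$---which holds since $k \le n/2$---the module $\mathcal{R}_k \varphi_k$ is isomorphic to the space of $\mathrm{MU}(k)$-finite vectors in the holomorphic discrete series representation of $\mathrm{Mp}(2k, \R)$ with parameter $(\tfrac{n+1}{2}, \ldots, \tfrac{n+1}{2})$, and is in particular irreducible as a $(\mathfrak{sp}(2k, \R), \mathrm{MU}(k))$-module. The inclusion $\mathcal{P}(V^k) \hookrightarrow L^2(V^k)^\infty$ commutes with the actions of $\mathfrak{sp}(2k, \R)$ and $\mathrm{MU}(k)$---elements of $\mathcal{P}(V^k)$ are $\mathrm{MU}(k)$-finite because $\mathrm{MU}(k)$ preserves total polynomial degree---so $\iota_*$ is a morphism of $(\mathfrak{sp}(2k, \R), \mathrm{MU}(k))$-modules. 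Hence $\ker \iota_*$ is a submodule of an irreducible module and is either zero or the entire source.

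Finally I would apply Theorem~\ref{Harmonicprojectiontoromanphi}, which ensures $\iota_* [\varphi_k] \ne 0$. This rules out $\ker \iota_* = \mathcal{R}_k \varphi_k$ and forces $\ker \iota_* = 0$. The real obstacle along this route is Theorem~\ref{Harmonicprojectiontoromanphi} itself, whose proof requires an explicit computation of the harmonic projection of the restriction of $\varphi_k$ to each orbit in $\Omega_{k,0}$ and a comparison with the Kudla--Millson harmonic form $\phi_k$ of \eqref{definingequationforphi} via the transfer maps of Section~\ref{transfersection}; the corollary itself is then only a short representation-theoretic deduction.
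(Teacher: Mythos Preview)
Your argument is correct and takes a somewhat different route from the paper's explicit proof in Section~22. The paper argues element by element: for each nonzero class $f(\beta)\varphi_k \in H^k(\mathcal{P}(V^k))$ it shows (via Corollary~\ref{imagenonzero}) that the harmonic projection of the restriction of $f(\beta)\varphi_k$ to each orbit in $\Omega_{k,0}$ equals $\frac{\kappa_1}{\kappa_2} f(\beta)\phi_k \neq 0$, and then deduces, using the canonical primitive $d^*\mathcal{G}$, that $f(\beta)\varphi_k$ cannot be an $L^2$-coboundary. Your approach instead uses the structural fact that $\mathcal{R}_k\varphi_k$ is an irreducible $(\mathfrak{sp}(2k,\R),\mathrm{MU}(k))$-module, so that the $\mathfrak{sp}(2k,\R)$-equivariant map $\iota_*$ is either zero or injective; Theorem~\ref{Harmonicprojectiontoromanphi} then rules out the first alternative.

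The trade-off is this: your route needs only the nonvanishing of $\iota_*[\varphi_k]$ (the second sentence of Theorem~\ref{Harmonicprojectiontoromanphi}), but relies on the irreducibility of $\mathcal{R}_k\varphi_k$ imported from \cite{KM2}; the paper's route avoids irreducibility entirely but needs the stronger orbit-by-orbit nonvanishing for every $f(\beta)\varphi_k$ (Corollary~\ref{imagenonzero}), which in any case falls out of the same computation proving Proposition~\ref{varphi=phi}. Both deductions are short once Theorem~\ref{Harmonicprojectiontoromanphi} and its proof are in hand.
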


\section{The harmonic projection of $\varphi_k$ is a multiple of $\phi_k$}

We now use the transfer $T_{k,0}$ to transfer $\varphi_k$ and $\phi_k$ to $E$.  We will abuse notation and use $\varphi_k$ and $\phi_k$ to denote their transfers to the tube.

The following proposition is essentially a special case of \cite{M}, Lemma III.3.2 and the Main Lemma on page 35.  The only point that requires attention is that there is a linear change of variable required to pass from the formulas of \cite{M} to the formulas of this paper.  Put $\beta = (\mathbf{x}, \mathbf{x})$.

\begin{lem} \label{varphikduality}
For any closed, smooth, bounded $(n-k)$-form $\eta$ on $E$, we have
\begin{equation*}
\int_E \varphi_k(\mathbf{x}) \wedge \eta = \kappa_1(\mathbf{x}) \int_{C_U} \eta
\end{equation*}
where $\kappa_1(\mathbf{x}) = (\frac{\sqrt{\pi}}{2})^k e^{-\frac{ tr \beta}{2}}$.
\end{lem}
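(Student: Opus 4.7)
The plan is to realize $\varphi_k$, when transferred to the tube $E = \Gamma_{\mathbf{x}} \backslash D$, as a Thom-type form for the normal bundle of the totally geodesic cycle $C_U \subset E$ described in Section \ref{tubessection}, and then to use fiber integration to reduce to an integral over $C_U$. Recall that $E$ is a trivial $k$-dimensional vector bundle $\pi : E \to C_U$ (with contractible hyperbolic fibers), so for any closed bounded form $\eta$ on $E$ and any Schwartz-type $k$-form $\alpha$ on $E$ we may write
\begin{equation*}
\int_E \alpha \wedge \eta \;=\; \int_{C_U} \pi_*(\alpha) \wedge \iota^* \eta,
\end{equation*}
where $\iota : C_U \hookrightarrow E$ is the inclusion and $\pi_*$ denotes integration along the fiber (the Thom/Poincaré form argument; since $\eta$ is closed and bounded and the fibers are contractible, $\iota^* \eta$ and $\eta$ are cohomologous after pullback through $\pi$). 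Thus it suffices to show that $\pi_*(\varphi_k)$ is the \emph{constant} function $\kappa_1(\mathbf{x}) = (\sqrt{\pi}/2)^k e^{-\tr \beta / 2}$ on $C_U$.

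First I would invoke the Kudla–Millson/Millson result identifying $\varphi_k(\mathbf{x})$, when restricted to $D$ and evaluated at a fixed basepoint $\mathbf{x}$, with the Thom form of the normal bundle of $D_U$ inside $D$ multiplied by the Gaussian factor $e^{-\tr \beta / 2}$; this is the content of \cite{M}, Lemma III.3.2 together with the Main Lemma on p.~35. The two formulas differ from ours only by the linear change of variables relating the coordinates $(x_1,\dots,x_n,t)$ of Part 1 to the normal/tangential coordinates used in \cite{M}; since this change of variables is orthogonal on $V_+$ and scales the time coordinate trivially, it carries Millson's normalization of the Thom form to ours. This determines $\pi_*(\varphi_k)$ up to the Jacobian of this change of variables, which is~$1$.

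Next I would compute the overall constant. Evaluate both sides on the constant test form $\eta = 1$ on a subset where $C_U$ is compact, or equivalently pull the fiber integral back to a fiber at a point $y_0 \in C_U$ and evaluate. On a fiber over $y_0$ (which is isometric to hyperbolic $k$-space $D^k$), $\varphi_k|_{\pi^{-1}(y_0)}$ is the wedge product of $k$ copies of the one-form $\varphi_1^{(i)}$, each of which, on the corresponding $\R$-direction in the fiber, integrates to $\sqrt{\pi}/2$ after multiplication by the appropriate Gaussian factor (this is the standard Hermite normalization $\int_{\R} e^{-t^2/2}\, dt/\sqrt{2\pi}$ combined with the orthonormality of the Hermite basis at level~1, which produces the factor $\sqrt{\pi}/2$ per direction). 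The Gaussian factor $e^{-\tr \beta/2}$ pulls out as a constant in the basepoint variable $\mathbf{x}$ because $\beta = (\mathbf{x},\mathbf{x})$ is fixed. Multiplying these contributions across the $k$ fiber directions produces the claimed constant $\kappa_1(\mathbf{x}) = (\sqrt{\pi}/2)^k e^{-\tr \beta / 2}$.

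The main obstacle is the verification that the change of coordinates between our conventions and Millson's preserves the Thom-form normalization; once this is in place the rest is a standard fiber integration of a Gaussian against a top form. A secondary technical point is justifying the use of Stokes/Fubini on the noncompact tube $E$: for this one uses that $\eta$ is bounded and $\varphi_k$ has rapid Gaussian decay along the fibers (cf.\ \cite{KM1}, p.~219), so the boundary contributions at infinity in each fiber vanish, which is exactly the situation handled by the Main Lemma of \cite{M}.
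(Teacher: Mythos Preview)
Your proposal is correct and follows essentially the same approach as the paper: the paper's proof is simply a citation of \cite{M}, Lemma~III.3.2 and the Main Lemma on p.~35, which is precisely the Thom-form/fiber-integration argument you outline, together with the remark that only a linear change of variable is needed to match conventions. Your sketch just unpacks what that citation says; the one point to tighten is that the identity $\int_E \alpha \wedge \eta = \int_{C_U} \pi_*(\alpha)\wedge\iota^*\eta$ is not literal for arbitrary closed $\eta$ on $E$ but requires first replacing $\eta$ by $\pi^*\iota^*\eta$ via a Stokes argument using the rapid decay of $\varphi_k$ along the fibers---exactly the point you flag at the end and exactly what the Main Lemma of \cite{M} handles.
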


\begin{cor} \label{hvarphikduality}
For any closed, smooth, bounded, square-integrable $(n-k)$-form $\eta$ on $E$, we have
\begin{equation*}
\int_E \mathcal{H}(\varphi_k)(\mathbf{x}) \wedge \eta = \kappa_1(\mathbf{x}) \int_{C_U} \eta
\end{equation*}
where $\kappa_1(\mathbf{x}) = (\frac{\sqrt{\pi}}{2})^k e^{-\frac{ tr \beta}{2}}$.
\end{cor}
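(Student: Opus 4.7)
The plan is to deduce the corollary from Lemma \ref{varphikduality} by showing that $\mathcal{H}(\varphi_k)$ and $\varphi_k$ pair identically with any closed, smooth, bounded, $L^2$ form $\eta$ on $E$. This reduces to proving
\begin{equation*}
\int_E (\varphi_k - \mathcal{H}(\varphi_k))\wedge\eta = 0.
\end{equation*}
First I would verify that, under the transfer map $T_{k,0}$, the cochain $\varphi_k$ lies in $A^k_{(2)}(E)$: the Gaussian factor in $\varphi_k \in \mathcal{P}(V^k)$ yields Gaussian decay along each orbit $G\mathbf{x} \subset \Omega_{k,0}$, so $\varphi_k$ is $L^2$ on $G\mathbf{x}$, and Proposition \ref{comparisonofnorms} transfers this isometrically to $L^2$ on $E$. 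In particular $\mathcal{H}(\varphi_k)$ is well-defined as the orthogonal projection onto the closed subspace of harmonic $L^2$ $k$-forms on $E$.

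In the generic range $k \leq n/2$ with $k \notin \{(n-1)/2, (n+1)/2\}$, Proposition \ref{Nicolaslowerbound1} provides a positive spectral gap $\epsilon$ for the Casimir on $C^k(L^2(G\mathbf{x}))$, and via the transfer map this gives a spectral gap for the Hodge Laplacian acting on $A^k_{(2)}(E)$. The Hodge decomposition (cf.\ Proposition \ref{Mateisprop2}) then gives
\begin{equation*}
\varphi_k = \mathcal{H}(\varphi_k) + d\gamma, \qquad \gamma := \delta\mathcal{G}\varphi_k \in A^{k-1}_{(2)}(E),
\end{equation*}
with $d\gamma = \varphi_k - \mathcal{H}(\varphi_k) \in A^k_{(2)}(E)$. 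Writing the wedge-pairing as an $L^2$ inner product via $\int_E \alpha \wedge \eta = (\alpha, *\eta)_{L^2(E)}$, and noting that $d^*(*\eta) = \pm *d\eta = 0$ because $\eta$ is closed, I would invoke Gaffney's theorem on the complete Riemannian manifold $E$ (a quotient of hyperbolic space by a discrete group of isometries) to obtain
\begin{equation*}
\int_E d\gamma \wedge \eta = (d\gamma, *\eta)_{L^2(E)} = (\gamma, d^*(*\eta))_{L^2(E)} = 0.
\end{equation*}
Combined with Lemma \ref{varphikduality}, this proves the corollary in the generic range.

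The main obstacle I anticipate lies in the boundary cases $k \in \{(n-1)/2, (n+1)/2\}$ compatible with $k \leq n/2$, e.g., $k = (n-1)/2$ for odd $n$, where Proposition \ref{Nicolaslowerbound1} fails to provide a spectral gap. In such a case $\varphi_k - \mathcal{H}(\varphi_k)$ need not be of the form $d\gamma$ with $\gamma \in L^2$, but lies only in the $L^2$-closure of exact forms. I would handle this by a spectral truncation argument: use spectral projectors $P_{[\epsilon,\infty)}$ on the Hodge Laplacian to write $\varphi_k - \mathcal{H}(\varphi_k) = \lim_{\epsilon \to 0^+} d\gamma_\epsilon$ in $L^2(E)$, apply Gaffney's theorem for each $\epsilon > 0$, and pass to the limit using $L^2$-continuity of the pairing $\alpha \mapsto (\alpha, *\eta)$, guaranteed by the hypothesis $\eta \in L^2(E)$ (which is exactly why the corollary strengthens the hypothesis of Lemma \ref{varphikduality}).
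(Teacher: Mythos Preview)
Your approach is essentially the same as the paper's: write $\varphi_k - \mathcal{H}(\varphi_k)$ as an exact form $d\tau$ via the Hodge decomposition, then show the pairing of $d\tau$ with the closed form $\eta$ vanishes. The paper does the last step by writing $d\tau \wedge \eta = d(\tau \wedge \eta)$ and noting that $\tau \wedge \eta$ is integrable (Cauchy--Schwarz, since both $\tau$ and $\eta$ are $L^2$), whereas you phrase it via the adjoint relation and Gaffney; these are equivalent. Your treatment is in fact more careful than the paper's: the paper simply asserts $\tau = d^*\mathcal{G}\varphi_k$ is square integrable without comment, which tacitly uses the spectral gap of Proposition~\ref{Nicolaslowerbound1}, while you correctly isolate the boundary case $k = (n-1)/2$ (possible under $k \leq n/2$ when $n$ is odd) and supply the spectral-truncation limiting argument there.
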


\begin{proof}
By the Hodge decomposition  we have $\mathcal{H}(\varphi_k) - \varphi_k = d \tau$ where $\tau = d^{\ast} \mathcal{G} \varphi_k$ is square integrable and smooth.  Then we have
\begin{align*}
\int_E (\mathcal{H}(\varphi_k)(\mathbf{x}) - \varphi_k(x)) \wedge \eta &= \int_E (d \tau) \wedge \eta \\
&= \int_E d( \tau \wedge \eta) = 0.
\end{align*}
The last equality follows because $\tau \wedge \eta$ is integrable and smooth.

\end{proof}

Recall from \cite{KM1} we have an analytic continuation $\phi_{k,s}$ of $\phi_k$ given by
\begin{equation} \label{phiks}
\phi_{k,s} = \det(\beta)^{n-k+2s} \det(R)^{-(n-k+2s+1)} \eta_1 \wedge \cdots \wedge \eta_k
\end{equation}

Now fix $\mathbf{x}$ and transfer to the tube $E$.  Then by \cite{KM1}, Lemma 3.2, we obtain
\begin{equation}
\phi_{k,s} = \cosh(t)^{-(n-k+2s)} \sinh(t)^{k-1} dv_2 \wedge dt.
\end{equation}
From this formula we see that $\phi_{k,s}$ is closed for all $s$, is  integrable if $\mathrm{Re}(s) > \frac{k-1}{2}$, and is square-integrable if $\mathrm{Re}(S) \geq 0$.

Then by \cite{KM1} Lemma 3.3 and Proposition 3.4 we obtain
\begin{lem} \label{phiksduality}
For any closed, smooth, square integrable $(n-k)$-form $\eta$ on $E$, we have, for $\mathrm{Re}(s) > \frac{k-1}{2}$
\begin{equation*}
\int_E \phi_{k,s} \wedge \eta = \kappa_2(\mathbf{x},s) \int_{C_U} \eta
\end{equation*}
where $\kappa_2(\mathbf{x},s) = \frac{1}{2} \vol(S^{k-1}) \frac{\Gamma(\frac{k}{2}) \Gamma(s+ \frac{m}{2} - k)}{\Gamma(s+\frac{m}{2} - \frac{k}{2})} = \pi^\frac{k}{2} \frac{\Gamma(s+ \frac{m}{2} - k)}{\Gamma(s+\frac{m}{2} - \frac{k}{2})}$
\end{lem}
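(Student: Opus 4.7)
The plan is to reduce the integral to a fiber integration over the normal bundle to $C_U$ in $E$ using the tube geometry described in Section~\ref{tubessection}, and then to compute the resulting transverse integral explicitly in radial coordinates. Recall that $E \to C_U$ is a topologically trivial bundle whose fibers are totally geodesic copies of hyperbolic $k$-space $D^k$, obtained by exponentiating the normal bundle to $C_U$. In these coordinates $t$ is the hyperbolic distance from $C_U$ along a fiber and $S^{k-1}$ parametrizes the unit directions in the fiber; the identity $\phi_{k,s} = \cosh(t)^{-(n-k+2s)} \sinh(t)^{k-1}\, dv_2 \wedge dt$ cited from \cite{KM1} exhibits $\phi_{k,s}$ as a purely transverse form on each fiber.

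The first step is to justify the integration-by-fibers: since $\mathrm{Re}(s) > \frac{k-1}{2}$, the weight $\cosh(t)^{-(n-k+2s)} \sinh(t)^{k-1}$ is integrable on $[0,\infty)$, and combined with the boundedness (respectively square-integrability) of $\eta$ along the compact base $C_U$, this gives absolute convergence of $\int_E \phi_{k,s} \wedge \eta$. The second step is to observe that because $\eta$ is closed and $\phi_{k,s}$ is closed with rapid decay along each fiber, one may replace $\eta$ inside the integral by its pullback under the deformation retraction $E \to C_U$, i.e.\ by $\pi^*(\eta|_{C_U})$, modulo an exact term whose integral against $\phi_{k,s}$ vanishes by Stokes (this is the content of the argument in \cite{KM1}, Lemma 3.3/Proposition 3.4). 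The degree count forces the wedge $\phi_{k,s} \wedge \pi^*(\eta|_{C_U})$ to pick up only the $dt \wedge dv_2$ part transverse to $C_U$ paired with the horizontal $(n-k)$-form $\eta|_{C_U}$.

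Once this reduction is made, Fubini splits the integral as
\begin{equation*}
\int_E \phi_{k,s} \wedge \eta \;=\; \left(\int_{D^k} \cosh(t)^{-(n-k+2s)} \sinh(t)^{k-1}\, dv_2 \wedge dt\right) \int_{C_U} \eta,
\end{equation*}
and the fiber integral becomes, after passing to polar coordinates on $D^k$,
\begin{equation*}
\vol(S^{k-1}) \int_0^\infty \cosh(t)^{-(n-k+2s)} \sinh(t)^{k-1}\, dt.
\end{equation*}
The substitution $u = \tanh^2(t)$ (or equivalently $u = \sinh^2(t)/\cosh^2(t)$) converts this into a beta integral, and with $n = 2m+\varepsilon$ tracked carefully one gets
\begin{equation*}
\int_0^\infty \cosh(t)^{-(n-k+2s)} \sinh(t)^{k-1}\, dt \;=\; \tfrac{1}{2} B\!\left(\tfrac{k}{2},\, s + \tfrac{m}{2} - k\right) \;=\; \tfrac{1}{2}\,\frac{\Gamma(\tfrac{k}{2})\,\Gamma(s+\tfrac{m}{2}-k)}{\Gamma(s+\tfrac{m}{2}-\tfrac{k}{2})}.
\end{equation*}
Combining with $\vol(S^{k-1}) = 2\pi^{k/2}/\Gamma(k/2)$ gives the two displayed forms of $\kappa_2(\mathbf{x},s)$.

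The main obstacle is the Stokes-type argument replacing $\eta$ by $\pi^*(\eta|_{C_U})$: one must argue that the boundary contribution at infinity along the fibers of the tube genuinely vanishes, which requires combining the decay of $\phi_{k,s}$ (controlled by the condition $\mathrm{Re}(s) > \frac{k-1}{2}$) with the growth of the primitive produced by integrating $\eta - \pi^*(\eta|_{C_U})$ radially out along the fibers. This is where the precise half-plane condition on $s$ enters, and it is exactly the content that \cite{KM1} handles via the explicit normal-bundle description; the remainder of the argument is bookkeeping of gamma-function identities.
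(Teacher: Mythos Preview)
Your proposal is correct and follows exactly the route the paper takes, which is simply to invoke \cite{KM1}, Lemma~3.3 and Proposition~3.4; you have reconstructed the content of those results (fiber integration over the tube, Stokes reduction to $\pi^*(\eta|_{C_U})$, and the beta-integral computation of the transverse factor). One small caution: the symbol $m$ in the displayed formula for $\kappa_2(\mathbf{x},s)$ is inherited from \cite{KM1}, where it denotes $\dim V = n+1$, not a parity parameter as in your remark ``$n = 2m+\varepsilon$''; with $m = n+1$ the beta integral $\tfrac{1}{2}B\bigl(\tfrac{k}{2},\, s + \tfrac{n+1}{2} - k\bigr)$ matches the stated gamma expression on the nose.
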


The right-hand side of the identity in Lemma \ref{phiksduality} can be meromorphically continued to the entire plane in $s$ and is regular at $s=0$.  Since $\eta$ is $L^2$ and $\phi_{k,s}$ is square integrable for $\mathrm{Re}(s) \geq 0$, the left-hand side of the identity can be meromorphically continued to the closed half plane $\mathrm{Re}(s) \geq 0$.  Evaluating both sides at $s=0$, and noting that $\phi_k = \phi_{k,0}$, we obtain

\begin{lem} \label{phikduality}
For any closed, smooth, square integrable $(n-k)$-form $\eta$ on $E$, we have
\begin{equation*}
\int_E \phi_k \wedge \eta = \kappa_2(\mathbf{x}) \int_{C_U} \eta
\end{equation*}
where $\kappa_2(\mathbf{x}) = \kappa_2(\mathbf{x},0)$.
\end{lem}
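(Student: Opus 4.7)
\textbf{Proof plan for Lemma \ref{phikduality}.} The plan is to view Lemma \ref{phiksduality} as an identity of meromorphic functions of $s$ and to extend it from the half-plane $\mathrm{Re}(s) > (k-1)/2$, where it is proved, down to $s=0$ by analytic (or at least continuous) continuation of both sides. The right-hand side is obviously meromorphic in $s$, given explicitly by
$$
\kappa_{2}(\mathbf{x}, s) \int_{C_U} \eta = \pi^{k/2} \, \frac{\Gamma(s + m/2 - k)}{\Gamma(s + m/2 - k/2)} \int_{C_U} \eta,
$$
and the standing assumption $k \leq n/2$ ensures that $\Gamma(s+m/2-k)$ is regular at $s=0$; in particular the right-hand side is holomorphic at $s=0$ and its value there is $\kappa_{2}(\mathbf{x}) \int_{C_U}\eta$.

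The substance of the argument is the continuation of the left-hand side. The first step is to interpret $\int_{E} \phi_{k,s} \wedge \eta$ as an $L^{2}$-pairing: writing $\phi_{k,s} \wedge \eta = \langle \phi_{k,s}, *\eta\rangle \,\mathrm{vol}_{E}$, Cauchy--Schwarz gives
$$
\Bigl| \int_{E} \phi_{k,s}\wedge \eta \Bigr| \leq \|\phi_{k,s}\|_{L^{2}(E)} \, \|\eta\|_{L^{2}(E)},
$$
so the pairing is defined on the closed half-plane $\mathrm{Re}(s) \geq 0$, where by the remark following \eqref{phiks} the form $\phi_{k,s}$ is square integrable. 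On the smaller half-plane $\mathrm{Re}(s) > (k-1)/2$ the form $\phi_{k,s}$ is moreover absolutely integrable, so the $L^{2}$-pairing and the ordinary integral $\int_{E} \phi_{k,s}\wedge \eta$ coincide there; in particular Lemma~\ref{phiksduality} is valid when the left-hand side is read as the $L^{2}$-pairing.

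Next I would show that this $L^{2}$-pairing is holomorphic in $s$ on an open neighborhood of the closed half-plane $\mathrm{Re}(s) \geq 0$ (in particular at $s=0$). Using the explicit description on the tube $E$ from \cite{KM1}, Lemma 3.2,
$$
\phi_{k,s} = \cosh(t)^{-(n-k+2s)} \sinh(t)^{k-1}\, dv_{2}\wedge dt,
$$
one sees that $s \mapsto \phi_{k,s} \in L^{2}(E)$ is holomorphic in a neighborhood of $\mathrm{Re}(s) \geq 0$: the map is entire as an $L^{2}_{\mathrm{loc}}$-valued map, and for large $t$ the integrand in $\|\phi_{k,s}\|_{L^{2}(E)}^{2}$ decays exponentially (uniformly for $s$ in a compact neighborhood of $s=0$) by the standing hypothesis $k \leq n/2$. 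Applying Morera's theorem (or differentiating under the integral, justified by a dominated convergence argument with the local $L^{2}$-bounds just described) to $s\mapsto \langle \phi_{k,s}, *\eta\rangle_{L^{2}(E)}$ shows that this function is holomorphic in a neighborhood of $s = 0$.

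With both sides holomorphic in a connected neighborhood of $\{\mathrm{Re}(s) > (k-1)/2\} \cup \{0\}$ and agreeing on the open half-plane, the identity principle extends the identity of Lemma~\ref{phiksduality} to $s=0$. Setting $s=0$ and using $\phi_{k}=\phi_{k,0}$ yields the claim. The main technical point I expect to have to spell out carefully is the uniform integrability needed to justify holomorphy of the $L^{2}$-pairing across the critical line $\mathrm{Re}(s) = (k-1)/2$, where $\phi_{k,s}$ passes from $L^{1}$ to merely $L^{2}$; once this is handled by the explicit radial formula above, the rest is formal.
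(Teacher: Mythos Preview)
Your proposal is correct and follows essentially the same approach as the paper: analytically continue both sides of the identity in Lemma~\ref{phiksduality} from $\mathrm{Re}(s) > (k-1)/2$ down to $s=0$, using that the right-hand side is explicitly meromorphic and regular at $s=0$ while the left-hand side is well defined on $\mathrm{Re}(s)\geq 0$ as an $L^2$-pairing because $\phi_{k,s}$ is square integrable there. You supply more technical detail than the paper does---in particular the Cauchy--Schwarz interpretation and the Morera/dominated-convergence justification of holomorphy---but the underlying idea is identical.
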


\begin{prop} \label{varphi=phi}
\begin{equation*}
\mathcal{H}(\varphi_k) = \frac{\kappa_1(\mathbf{x})}{\kappa_2(\mathbf{x})} \phi_k = \frac{\Gamma(\frac{m}{2}-\frac{k}{2})}{2^k \Gamma(\frac{m}{2}-k)} e^{- \frac{tr \beta}{2}} \phi_k.
\end{equation*}
\end{prop}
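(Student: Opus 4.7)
The plan is to show that $\psi := \mathcal{H}(\varphi_k) - \tfrac{\kappa_1(\mathbf{x})}{\kappa_2(\mathbf{x})}\phi_k$ vanishes identically on the tube $E$. After the transfer map $T_{k,0}$, both $\mathcal{H}(\varphi_k)$ and $\phi_k$ are square-integrable harmonic $k$-forms on $E$ (for $\phi_k$ by \cite{KM1}, for $\mathcal{H}(\varphi_k)$ by construction), and each is $G_\mathbf{x}$-invariant, so the same is true of $\psi$. Subtracting $\tfrac{\kappa_1(\mathbf{x})}{\kappa_2(\mathbf{x})}$ times Lemma \ref{phikduality} from Corollary \ref{hvarphikduality} yields immediately
\[
\int_E \psi \wedge \eta \;=\; \kappa_1(\mathbf{x})\int_{C_U}\eta \;-\; \tfrac{\kappa_1(\mathbf{x})}{\kappa_2(\mathbf{x})}\,\kappa_2(\mathbf{x})\int_{C_U}\eta \;=\; 0
\]
for every closed, smooth, bounded, square-integrable $(n-k)$-form $\eta$ on $E$.

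The natural test form to plug in next is $\eta=*\psi$. Since $\psi$ is $L^2$ and harmonic, so is $*\psi$: elliptic regularity gives smoothness, and harmonicity gives $d*\psi = -*d^*\psi = 0$, so $*\psi$ is closed. If in addition $*\psi$ is bounded, then the pairing evaluates to $\|\psi\|^2_{L^2(E)}$, forcing $\psi=0$. The constant of proportionality $c = \kappa_1(\mathbf{x})/\kappa_2(\mathbf{x})$ then simplifies to the stated $\tfrac{\Gamma(\tfrac{m}{2}-\tfrac{k}{2})}{2^k\Gamma(\tfrac{m}{2}-k)}e^{-\Tr(\beta)/2}$ upon substituting $\kappa_1(\mathbf{x})=(\sqrt{\pi}/2)^k e^{-\Tr(\beta)/2}$ from Lemma \ref{varphikduality} and $\kappa_2(\mathbf{x})=\pi^{k/2}\Gamma(\tfrac{m}{2}-k)/\Gamma(\tfrac{m}{2}-\tfrac{k}{2})$ from Lemma \ref{phiksduality} at $s=0$.

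The main obstacle is therefore the boundedness of $*\psi$. I see two routes. The first is a direct decay estimate at infinity, modeled on the explicit expression $\phi_k = \cosh(t)^{-(n-k)}\sinh(t)^{k-1}\,dv_2\wedge dt$ of \cite{KM1} in geodesic polar coordinates around the core $C_U$: the same coordinates produce a completely explicit formula for $\mathcal{H}(\varphi_k)$, from which decay (and boundedness) of $*\psi$ at infinity is immediate. The alternative, more conceptual route uses the finite-dimensionality of $G_\mathbf{x}$-invariant $L^2$-harmonic $k$-forms on $E$: for $k<n/2$ the Thom-isomorphism argument of Theorem \ref{T:ksmall} shows that this space is one-dimensional, so a priori $\mathcal{H}(\varphi_k) = c\,\phi_k$ and boundedness of any scalar multiple of $*\phi_k$ is manifest from the formula above; the constant $c$ is then determined by pairing both sides of $\mathcal{H}(\varphi_k)=c\,\phi_k$ with the single test form $\eta=*\phi_k$ and applying the two duality identities. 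Either route reduces the proof to the routine calculation of $\kappa_1/\kappa_2$ displayed in the previous paragraph.
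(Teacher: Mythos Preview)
Your approach is essentially identical to the paper's: the paper computes $\|\mathcal{H}(\varphi_k)-\tfrac{\kappa_1}{\kappa_2}\phi_k\|^2$ by expanding and applying Corollary~\ref{hvarphikduality} and Lemma~\ref{phikduality} separately to the two summands with the test form $\eta=*\psi$, obtaining the same cancellation you describe. You are in fact more scrupulous than the paper in flagging the boundedness hypothesis on $\eta$ in Corollary~\ref{hvarphikduality}; the paper applies that corollary with $\eta=*\psi$ without comment, whereas you correctly isolate this as the one point needing justification and give two reasonable ways to supply it.
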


\begin{proof}
\begin{align*}
&||\mathcal{H}(\varphi_k) - \frac{\kappa_1(\mathbf{x})}{\kappa_2(\mathbf{x})} \phi_k||^2 \\
&= \int_E (\mathcal{H}(\varphi_k) - \frac{\kappa_1(\mathbf{x})}{\kappa_2(\mathbf{x})}\phi_k) \wedge *(\mathcal{H}(\varphi_k) - \frac{\kappa_1(\mathbf{x})}{\kappa_2(\mathbf{x})}\phi_k) \\
&= \int_E \mathcal{H}(\varphi_k) \wedge *(\mathcal{H}(\varphi_k) - \frac{\kappa_1(\mathbf{x})}{\kappa_2(\mathbf{x})}\phi_k) - \int_E \frac{\kappa_1(\mathbf{x})}{\kappa_2(\mathbf{x})}\phi_k \wedge *(\mathcal{H}(\varphi_k) - \frac{\kappa_1(\mathbf{x})}{\kappa_2(\mathbf{x})}\phi_k) \\
&= \kappa_1(\mathbf{x}) \int_{C_U} * \bigg(\mathcal{H}(\varphi_k) - \frac{\kappa_1(\mathbf{x})}{\kappa_2(\mathbf{x})}\phi_k \bigg) - \kappa_2(\mathbf{x}) \frac{\kappa_1(\mathbf{x})}{\kappa_2(\mathbf{x})} \int_{C_U} * \bigg( \mathcal{H}(\varphi_k) - \frac{\kappa_1(\mathbf{x})}{\kappa_2(\mathbf{x})}\phi_k \bigg) = 0
\end{align*}
Here the passage from the second line to the third line follows from Corollary \ref{hvarphikduality} and Lemma \ref{phikduality}.

\begin{cor} \label{imagenonzero}
\begin{equation*}
\mathcal{H}(f(\beta) \varphi_k) = \frac{\kappa_1(\mathbf{x})}{\kappa_2(\mathbf{x})} f(\beta) \phi_k.
\end{equation*}
\end{cor}

\end{proof}

\section{Proof of Theorem \ref{part3theorem}}
We now prove Theorem \ref{part3theorem} $(1)$.  First, suppose for the purpose of contradiction $\varphi_k$ is the coboundary of a square-integrable cochain.  Then the restriction of $\varphi_k$ to $\Omega_{k,0}$ is exact.  From the argument of Proposition \ref{Mateisprop1} we find that $\varphi_k$ has the canonical primitive $\tau = d^* \mathcal{G} \varphi_k$.  Then $\tau$ is defined and satisfies
$$d(\tau|_{G\mathbf{x}}) = \varphi_k|_{G \mathbf{x}} \text{ for all } x \in \Omega_{k,0}.$$
Hence $\mathcal{H}((\varphi_k)|_{G \mathbf{x}}) = 0$ for all $x \in \Omega_{k,0}$, which contradicts Proposition \ref{varphi=phi}.
\begin{rmk}
{\rm If we replace $\varphi_k$ with $f(\beta) \varphi_k$ in the above argument, we find $\mathcal{H}((f(\beta) \varphi_k)|_{G \mathbf{x}}) = 0$ for all $x \in \Omega_{k,0}$, which contradicts Corollary \ref{imagenonzero}.}
\end{rmk}

By Theorem \ref{main}, every nonzero element of $H^k(\mathcal{P}(V^k))$ is of the form $f(\beta) \varphi_k$, and hence the kernel of the map induced by $\mathcal{P}(V^k) \hookrightarrow L^2(V^k)$ is zero and thus is an injection.

To show this map is a surjection onto the $\mathrm{MU}(k)$-finite vectors, we note that as a representation of $\mathrm{Mp}(2k, \R)$, $H^k(L^2(V^k))$ is the holomorphic discrete series representation of weight $(\frac{n+1}{2}, \ldots, \frac{n+1}{2})$ and the class of $\varphi_k$ is a lowest weight vector because it is the image of a lowest weight vector.



\newpage

\end{document}